\newcommand{\defeq}{\colonequals}
\DeclareMathOperator{\Hom}{\mathsf{Hom}}
\DeclareMathOperator{\Bun}{\mathsf{Bun}}
\DeclareMathOperator{\codim}{codim}
\DeclareMathOperator{\Hb}{\mathbb{H}}
\DeclareMathOperator{\Nm}{\mathsf{Nm}}
\DeclareMathOperator{\Pic}{\mathsf{Pic}}
\DeclareMathOperator{\Tr}{\mathsf{Tr}}
\DeclareMathOperator{\Fr}{\mathsf{Fr}}
\DeclareMathOperator{\Nb}{\mathbf{N}}
\DeclareMathOperator{\orb}{\text{orb}}
\DeclareMathOperator{\Rb}{\mathbf{R}}
\DeclareMathOperator{\Cb}{\mathbb{C}}
\DeclareMathOperator{\Fb}{\mathbb{F}}
\DeclareMathOperator{\Xc}{\mathcal{X}}
\newcommand{\Cc}{\mathcal{C}}
\newcommand{\Gb}{\mathbb{G}}
\newcommand{\Gg}{\mathcal{G}}
\DeclareMathOperator{\et}{\text{\'et}}
\DeclareMathOperator{\Zar}{\mathsf{Zar}}
\renewcommand{\lim}{\mathsf{lim}}
\DeclareMathOperator{\coker}{coker}
\DeclareMathOperator{\Pc}{\mathcal{P}}
\DeclareMathOperator{\Br}{\mathsf{Br}}
\DeclareMathOperator{\inv}{\mathsf{inv}}
\DeclareMathOperator{\Pb}{\mathbb{P}}
\DeclareMathOperator{\id}{id}
\DeclareMathOperator{\Mm}{\mathcal{M}}
\DeclareMathOperator{\Aa}{\mathcal{A}}
\DeclareMathOperator{\Qb}{\mathbb{Q}}
\DeclareMathOperator{\Zb}{\mathbb{Z}}
\DeclareMathOperator{\st}{\mathsf{st}}
\DeclareMathOperator{\Ab}{\mathbb{A}}
\DeclareMathOperator{\Coh}{\mathsf{Coh}}
\DeclareMathOperator{\F}{\mathcal{F}}
\DeclareMathOperator{\Gal}{Gal}
\DeclareMathOperator{\Aut}{\mathsf{Aut}}
\DeclareMathOperator{\G}{\mathbb{G}}
\DeclareMathOperator{\GL}{GL}
\DeclareMathOperator{\PGL}{PGL}
\DeclareMathOperator{\SL}{SL}
\DeclareMathOperator{\X}{\mathcal{X}}
\DeclareMathOperator{\Z}{\mathcal{Z}}
\DeclareMathOperator{\vol}{\mathsf{vol}}
\DeclareMathOperator{\Spec}{Spec}
\DeclareMathOperator{\End}{\mathsf{End}}
\DeclareMathOperator{\Eend}{\underline{\End}}
\DeclareMathOperator{\Hhom}{\underline{\Hom}}
\DeclareMathOperator{\Oo}{\mathcal{O}}
\renewcommand{\top}{\text{top}}
\DeclareMathOperator{\ord}{ord}
\DeclareMathOperator{\Ext}{Ext}
\DeclareMathOperator{\Split}{Split}
\let\into\hookrightarrow
\DeclareMathOperator{\pr}{pr}
\newcommand{\iso}{\text{iso}}
\newcommand{\I}{I\!}
   \def\MR#1{}
\begin{document}

\theoremstyle{definition}
\newtheorem{definition}{Definition}[section]
\newtheorem{rmk}[definition]{Remark}

\newtheorem{example}[definition]{Example} 

\newtheorem{ass}[definition]{Assumption}
\newtheorem{warning}[definition]{Dangerous Bend}
\newtheorem{porism}[definition]{Porism}
\newtheorem{hope}[definition]{Hope}
\newtheorem{situation}[definition]{Situation}
\newtheorem{construction}[definition]{Construction}

\theoremstyle{plain}

\newtheorem{theorem}[definition]{Theorem}
\newtheorem{proposition}[definition]{Proposition}
\newtheorem{corollary}[definition]{Corollary}
\newtheorem{conj}[definition]{Conjecture}
\newtheorem{lemma}[definition]{Lemma}
\newtheorem{claim}[definition]{Claim}
\newtheorem{cl}[definition]{Claim}

\title{Mirror symmetry for moduli spaces of Higgs bundles via p-adic integration}
\author{Michael Groechenig \and Dimitri Wyss \and Paul Ziegler}
\address{Department of Mathematics, University of Toronto}
\email{michael.groechenig@utoronto.ca}
\address{	Mathematical Institute, University of Oxford}
\email{paul.ziegler@maths.ox.ac.uk}
\address{institut de Math\'ematiques de Jussieu - Paris Rive Gauche}
\email{dimitri.wyss@imj-prg.fr}
\begin{abstract}We prove the Topological Mirror Symmetry Conjecture by Hausel--Thaddeus for smooth moduli spaces of Higgs bundles of type $SL_n$ and $PGL_n$. More precisely, we establish an equality of stringy Hodge numbers for certain pairs of algebraic orbifolds generically fibred into dual abelian varieties. Our proof utilises p-adic integration relative to the fibres, and interprets canonical gerbes present on these moduli spaces as characters on the Hitchin fibres using Tate duality. Furthermore, we prove for $d$ prime to $n$, that the number of rank $n$ Higgs bundles of degree $d$ over a fixed curve defined over a finite field, is independent of $d$. This proves a conjecture by Mozgovoy--Schiffmann in the coprime case.\end{abstract}
\thanks{
M. G. was supported by a Marie Sk\l odowska-Curie fellowship: This project has received funding from the European Union's Horizon 2020 research and innovation programme under the Marie Sk\l odowska-Curie Grant Agreement No. 701679.
D. W. was supported by the ERC and the Swiss National Science Foundation: This work was supported by the the European Research Council [No. 320593]; and the Swiss National
Science Foundation [No. 153627]. P.Z. was supported by the Swiss National Science Foundation. This research was supported through the program ``Research in Pairs" by the Mathematisches Forschungsinstitut Oberwolfach in 2016. \\ \includegraphics[height=1cm,right]{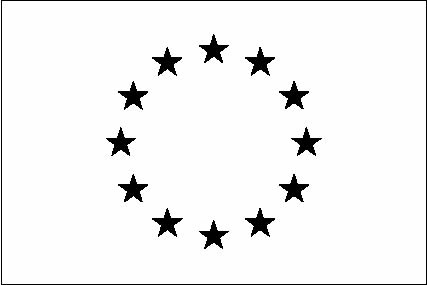}
}

\maketitle
\tableofcontents

\section{Introduction}

Moduli spaces of Higgs bundles are known for their rich and intricate geometry. As manifolds they are distinguished by the presence of a hyperk\"ahler structure; moreover, they admit a completely integrable system. The latter is in fact defined as a morphism of complex algebraic varieties and is referred to as the \emph{Hitchin map}. It yields a fibration of the moduli space whose generic fibres are abelian varieties. Furthermore, even though these complex varieties are not projective, the cohomology of smooth moduli spaces of Higgs bundles is pure. 

In \cite{MR2653248} Ng\^o exploited these properties to prove the Fundamental Lemma in the Langlands Programme. His proof utilises the aforementioned purity of cohomology, and foremost natural symmetries of Hitchin fibres, and connects them to the arithmetic phenomena of \emph{stabilisation} and \emph{endoscopy}. Our article reverses the flow of these ideas. We use the arithmetic of abelian varieties to compare topological and complex-analytic invariants of moduli spaces of Higgs bundles for different structure groups.

Higgs bundles on a smooth complete curve $X$ (or compact Riemann surface) are given by a pair $(E,\theta)$, where $E$ is a principal $G$-bundle and $\theta$ is an additional structure known as Higgs field. The geometric features of the moduli spaces mentioned above are intimately connected with representation theory and arithmetic. For $G$ and $G^{L}$ two Langlands dual reductive groups, the Hitchin fibrations share the same base, and the generic fibres are dual in the sense of abelian varieties. This was observed by Hausel--Thaddeus \cite{MR1990670} in the case of $\SL_n$ and $\PGL_n$, and for general pairs of Langlands dual reductive groups this is a theorem by Donagi--Pantev \cite{MR2957305}. Inspired by the SYZ philosophy, Hausel--Thaddeus conjectured that the moduli spaces of $\SL_n$ and $\PGL_n$-Higgs bundles are mirror partners, and predicted an agreement of appropriately defined Hodge numbers. We prove this conjecture.

Let $n$ be a positive integer, and $d$, $e$ two integers prime to $n$. We choose a line bundle $L \in \mathsf{Pic}(X)$ of degree $d$, and denote by $\Mm_{\SL_n}^L$ the moduli space of Higgs bundles $(E,\theta)$, where $E$ is a vector bundle of rank $n$ together with an isomorphism $\det(E) \simeq L$, and $\theta$ is trace-free. We let $\Mm_{\PGL_n}^e$ be the moduli space of families of $\PGL_n$-Higgs bundles, which admit over geometric points a reduction of structure group to a $\GL_n$-Higgs bundle of degree $e$. Moreover, there exists a natural unitary gerbe on $\Mm_{\PGL_n}^e$, which we denote by $\alpha_L$ \cite[Section 3]{MR1990670}.

\begin{theorem}[Topological Mirror Symmetry Conjecture of \cite{MR1990670}]\label{thm:main}
We have an equality of (stringy) Hodge numbers $h^{p,q}(\Mm_{\SL_n}^L) = h^{p,q}_{\mathsf{st}}(\Mm_{\PGL_n}^e,\alpha_L)$. 
\end{theorem}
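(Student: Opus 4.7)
My plan is to establish the identity by computing both sides as $p$-adic integrals, in the spirit of Batyrev's comparison of Hodge/stringy Hodge numbers via $p$-adic volumes. I would spread the moduli spaces $\Mm^L_{\SL_n}$ and $\Mm^e_{\PGL_n}$, their Hitchin maps to the common base $\mathcal{A}$, and the gerbe $\alpha_L$ out over the ring of integers of a suitable number field, and choose a prime of good reduction so that everything is defined over some $\Zb_q$ with residue field $\Fb_q$. By an orbifold analogue of Batyrev's theorem, the Hodge numbers $h^{p,q}(\Mm^L_{\SL_n})$ should be encoded by the $p$-adic volume of $\Mm^L_{\SL_n}(\Zb_q)$ against a top form, while the stringy Hodge numbers $h^{p,q}_{\st}(\Mm^e_{\PGL_n}, \alpha_L)$ should be encoded by an orbifold $p$-adic volume of $\Mm^e_{\PGL_n}$ in which $\alpha_L$ enters as a unitary character of the inertia groups at $\Fb_q$-points. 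Both integrals would then be pushed forward along the Hitchin maps to integrals over $\mathcal{A}(\Zb_q)$.

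\textbf{Fibrewise reduction via Tate duality.} Having reduced to a fibrewise statement over $\mathcal{A}(\Zb_q)$, I would first treat the open locus where the Hitchin fibres are smooth. By Hausel--Thaddeus and Donagi--Pantev, over this locus the fibre $P_a$ on the $\SL_n$-side and the orbifold fibre on the $\PGL_n$-side are Langlands dual abelian varieties, with the $\PGL_n$-fibre carrying additional orbifold inertia coming from a finite abelian group $\Gamma$ of $n$-torsion. The key input is that the restriction of $\alpha_L$ to such a fibre can be identified, via the Weil pairing / Tate duality, with the character on $\Gamma$ corresponding to the class of $L$. The fibrewise equality of the untwisted count $|P_a(\Fb_q)|$ and the gerbe-twisted orbifold count on the $\PGL_n$-fibre then follows from orthogonality of characters for finite abelian groups (Fourier inversion on $P_a(\Fb_q)$), or equivalently from the fact that dual abelian varieties over a finite field have the same number of points.

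\textbf{Main obstacle.} The principal difficulty would be handling the complement of the smooth (elliptic) locus in $\mathcal{A}$: there the Hitchin fibres acquire singularities, the abelian scheme structure degenerates, and the stringy corrections on the $\PGL_n$-side become nontrivial. The strategy would be either to show that this bad locus has $p$-adic measure zero inside $\mathcal{A}(\Zb_q)$, so that it does not contribute, or to match the contributions explicitly using N\'eron models and Tate duality for abelian varieties over local fields, in a spirit reminiscent of the inputs to Ng\^o's geometric stabilisation. A secondary obstacle is to set up, or cite, an orbifold $p$-adic integration formalism that correctly computes stringy Hodge numbers of a Deligne--Mumford stack twisted by a unitary gerbe; this goes beyond the classical Batyrev/Denef--Loeser framework, and is likely where the bulk of the technical work will lie.
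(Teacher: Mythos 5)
Your overall strategy---$p$-adic integration \`a la Batyrev, push-forward along the Hitchin map, Tate duality identifying $\alpha_L$ with a character on the Hitchin fibres, character-sum cancellation, and discarding the discriminant locus as measure zero---is indeed the architecture of the paper's proof, and you correctly flag both major technical ingredients: an orbifold $p$-adic integration formalism computing gerbe-twisted stringy point counts (built in the appendix), and the passage from finite-field counts to Hodge numbers, which the paper carries out via an equivariant extension of Katz's $p$-adic Hodge-theoretic argument (Theorem~\ref{equivariant2} and Theorem~\ref{thm:stringy-katz}).

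Two precisions. First, the fibrewise comparison in the case where both Hitchin fibres have rational points cannot be reduced to ``dual abelian varieties over a finite field have the same number of points'': the set $\Aa(\Oo_F)\cap\Aa^{\text{good}}(F)$ carrying the whole $p$-adic measure contains many $a$ whose reduction $\bar a$ lies in the discriminant, so $(\Pc_i)_a$ is an abelian variety over $F$ that can have bad reduction and there is no abelian-scheme fibre over $\Fb_q$ to count. The paper instead proves the purely local statement $|B(F)/\phi(A(F))| = |\ker\phi(F)|$ for a self-dual isogeny $\phi\colon A\to B$ over $F$ (Proposition~\ref{prop:self-dual}, via Tate duality and the local Euler characteristic formula), which equalises the fibre volumes in Lemma~\ref{keylemma} uniformly in $a$. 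Second, the Fourier-inversion argument only shows that $f_{\alpha_L}$ restricted to a fibre is either a nontrivial character or constant; pinning the constant value down to $1$ (rather than some other root of unity coming from the $\Br(F)$-ambiguity in the gerbe-to-character identification) requires a genuine arithmetic input, namely that a Hitchin fibre has an $F$-point precisely when the gerbe splits on it---this is condition~(d) of Definition~\ref{situation}, verified for $\SL_n/\PGL_n$ in the proof of Theorem~\ref{tmsthm} using properness, smoothness, and Lang's theorem.
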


The coprimality assumption on $d$ and $e$ with respect to $n$ ensures that the notion of stability and semi-stability coincide. The resulting $\SL_n$-moduli space $\Mm_{\SL_n}^L$ is smooth, while $\Mm_{\PGL_n}^e$ has finite quotient singularities. We use $h^{p,q}_{\mathsf{st}}$ to denote \emph{stringy} Hodge numbers. These are numerical invariants introduced by Batyrev \cite{Ba99}, which include appropriate correction terms to compensate for the presence of singularities. In addition, the gerbe $\alpha$ living on these spaces needs to be taken into account. This is natural from the point of view of duality of the Hitchin fibres.
The proof of this result proceeds by proving an equality for stringy point-counts over finite fields first, by means of $p$-adic integration. We then use $p$-adic Hodge theory to deduce the topological assertion from the arithmetic one. The details will be given in \ref{conclusion}.

Our methods are general enough to be applicable beyond the $\SL_n/\PGL_n$-case. In fact we prove an equality of appropriately defined Hodge numbers for any ``dual pair of abstract Hitchin systems" (see Theorem \ref{Mirror}). We refer the reader to Definition \ref{situation} for a detailed account of what this means. At the heart of the concept lies a pair of maps $(\Mm_1 \xrightarrow{\pi_1} \Aa \xleftarrow{\pi_2} \Mm_2)$ of complex algebraic orbifolds, where the base $\Aa$ is assumed to be smooth, and contains an open dense subset $\Aa^{\lozenge}$, such that over this open subset there exist families of abelian varieties $\Pc_1^{\lozenge} \rightarrow \Aa^{\lozenge} \leftarrow \Pc^{\lozenge}_2$, which act faithfully and transitively on the fibres of $\Mm_i$. Moreover, we assume that $\Pc_1^{\lozenge}$ and $\Pc_2^{\lozenge}$ are dual in the sense of abelian varieties. There are further technical conditions that are omitted for the sake of brevity. They guarantee that $\Mm_1$ and $\Mm_2$ are \emph{minimal}, and hence enable us to compare topological invariants. These conditions are modelled on the same structural properties of the Hitchin map fundamental to \cite{MR2653248}.

Finally, inspired by the set-up of \cite[Section 3]{MR1990670}, we consider gerbes $\alpha_i \in H^2_{\text{\'et}}(\Mm_i,\mu_r)$ satisfying the following condition, which extends the fibrewise duality of the abelian varieties $\Pc_1^{\lozenge,\vee} \simeq \Pc_2^{\lozenge}$ to the torsors $\Mm^{\lozenge}_i$.

\begin{definition}
We say that the pair $(\Mm_1,\alpha_1)$ is dual to $(\Mm_2,\alpha_2)$, if we have canonical equivalences
$\Mm^{\lozenge}_1 \simeq \mathsf{Split}'(\Mm_2^{\lozenge}/\Aa^{\lozenge},\alpha_2),$ and $\Mm_2^{\lozenge} \simeq \mathsf{Split}'(\Mm_1^{\lozenge}/\Aa^{\lozenge},\alpha_2)$,
where $\mathsf{Split}'$ denotes the \emph{principal component} of the stack of fibrewise splittings of a gerbe, as defined in Definition \ref{principalcomponent}.
\end{definition}

It is for systems satisfying these conditions that we prove our main mirror symmetry result. At first we need to recall the definition of the $E$-polynomial (or Serre characteristic). For a smooth complex projective variety $X$ this is defined to be the polynomial
$$E(X;x,y) = \sum_{p,q \in \Nb} (-1)^{p+q}h^{p,q}(X)x^p y^q.$$
There exists a unique extension of the $E$-polynomial to arbitrary complex varieties, such that 
$$E(X;x,y) = E(X \setminus Z;x,y) + E(Z;x,y)$$
for every closed subvariety $Z\subset X$ (see \cite[Definition 2.1.4]{MR2453601}).

Suppose that $\Gamma$ is a finite group acting generically fixed point free on a complex quasi-projective variety $X$, such that for every $\gamma \in \Gamma$ the fixpoint set $X^{\gamma}$ is connected (for simplicity). We define the \emph{stringy $E$-polynomial} as
$$E_{\mathsf{st}}([X/\Gamma];x,y) = \sum_{\gamma \in \Gamma/\text{conj}} E(X^{\gamma}/C(\gamma);x,y) (xy)^{F(\gamma)},$$
where $C(\gamma)$ denotes the centraliser of $\gamma$, and $F(\gamma)$ denotes the so-called \emph{fermionic shift}. We refer the reader to Definition \ref{fermionicshift} for more details.

Furthermore, a $\mu_r$-gerbe $\alpha$ on $[X/\Gamma]$ gives rise to a modified invariant $E_{\mathsf{st}}([X/\Gamma],\alpha;x,y)$. We refer the reader to  Definition \ref{defi:Ealpha}.

\begin{theorem}[Topological Mirror Symmetry, c.f. Theorem \ref{Mirror}]
Let $(\Mm_i \to \Aa, \alpha_i)$ be a dual pair of abstract Hitchin systems in the sense of Definition \ref{situation} over a ring of finite type over $\Zb$. Then we have the equality of stringy $E$-polynomials
$E_{\mathsf{st}}(\Mm_1,\alpha_1;x,y) = E_{\mathsf{st}}(\Mm_2,\alpha_2;x,y)$.
\end{theorem}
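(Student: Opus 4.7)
The plan is to reduce the equality of stringy $E$-polynomials to an arithmetic equality of stringy point counts over finite fields, to prove that equality by rewriting both sides as $p$-adic integrals on $\Mm_i(\Oc_F)$ that agree fibrewise over $\Aa(\Oc_F)$ by Tate local duality, and finally to upgrade the arithmetic statement to the Hodge-theoretic one via $p$-adic Hodge theory. First I would spread out the dual pair $(\Mm_i \to \Aa, \alpha_i)$ to a model over a suitable ring of finite type over $\Zb$, so that for all but finitely many primes $p$ and all residue fields $\Fb_q$ of characteristic $p$ the reductions make sense and the duality hypothesis on the splittings is preserved. Since moduli of Higgs bundles have pure cohomology (and more generally this is forced by the setup of the abstract Hitchin system), an equality of gerbe-twisted stringy point counts $\#_{\st}^{\alpha_1}\Mm_1(\Fb_q) = \#_{\st}^{\alpha_2}\Mm_2(\Fb_q)$ for infinitely many $q$ will imply $E_{\st}(\Mm_1,\alpha_1;x,y) = E_{\st}(\Mm_2,\alpha_2;x,y)$, via the Frobenius action on $p$-adic étale cohomology and the Hodge–Tate decomposition.

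The second step is to express the stringy point counts as $p$-adic orbifold integrals in the style of Batyrev and Denef--Loeser. For a smooth Deligne--Mumford stack with $\Qb$-Gorenstein coarse moduli, integration of a canonical gauge form against the Haar measure on $\Mm_i(\Oc_F)$, weighted by the $\ord$ of the discriminant, recovers $\#_{\st}\Mm_i(\Fb_q)$ up to a universal power of $q$. The gerbe $\alpha_i \in H^2_{\text{\'et}}(\Mm_i,\mu_r)$ enters as an additional locally constant character
\[ \chi_{\alpha_i}\colon \Mm_i(\Oc_F) \longrightarrow \mu_r, \]
obtained by evaluating $\alpha_i$ on the generic point of $\Spec \Oc_F$ and applying the boundary map in Kummer theory. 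Thus both sides of the desired equality become $p$-adic integrals of a gauge form weighted by $\chi_{\alpha_i}$ and by the discriminant factor.

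Third, I would compare the two integrals fibrewise over the base $\Aa(\Oc_F)$. Over a point $a \in \Aa^{\text{good}}(\Oc_F)$ the fibre $\Mm_i|_a$ is a torsor under an abelian variety $\Pc_i|_a$ over $F$ and the two $\Pc_i$ are dual. The hypothesis $\Mm_1 \simeq \Split'(\Mm_2/\Aa,\alpha_2)$ means that the character $\chi_{\alpha_2}$ on $\Mm_2(F)_a$ is, under Tate's local duality pairing $H^1(F,\Pc_2|_a) \times \Pc_1(F)_a \to \Qb/\Zb$, exactly the character cut out by the torsor class $[\Mm_1]|_a$; symmetrically in the other direction. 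A standard orthogonality argument for characters of locally compact abelian groups then shows that the fibrewise integral of $\chi_{\alpha_1}$ over $\Mm_1(\Oc_F)_a$ coincides with the fibrewise integral of $\chi_{\alpha_2}$ over $\Mm_2(\Oc_F)_a$, since both are controlled by the common symmetric Tate pairing of $[\Mm_1]|_a$ and $[\Mm_2]|_a$. Integrating this equality over $a \in \Aa^{\text{good}}(\Oc_F)$ yields an agreement of the "good part" of the two $p$-adic integrals.

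The main obstacle, as I see it, is the treatment of the complement of $\Aa^{\text{good}}$ and the precise bookkeeping of the stringy correction in the presence of the gerbe. Over $\Aa \setminus \Aa^{\text{good}}$ the group scheme $\Pc_i$ degenerates and the simple Tate-duality description of $\chi_{\alpha_i}$ breaks down; one has to argue that the technical minimality/codimension conditions built into Definition \ref{situation} force the $p$-adic volume of the bad locus to contribute a term of order $O(q^{-c})$ for $c$ larger than the Hodge weights that appear, so that it does not affect the $E$-polynomial after reverting the reduction. In parallel, one must verify that the inertia strata decomposition implicit in the definition of $E_{\st}$ and the gerbe-twisted "age" shifts are matched correctly between the two sides by the $\Split'$ duality --- effectively, that the $p$-adic integration picks up exactly the right fermionic correction coming from the $\mu_r$-characters on inertia. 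Once these technical ingredients are in place, assembling the pieces gives the desired equality.
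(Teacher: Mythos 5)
Your proposal follows the paper's three-step reduction quite closely: spread out to a ring of finite type over $\Zb$ and reduce the $E$-polynomial identity to an equality of stringy point counts via $p$-adic Hodge theory (Theorem \ref{thm:stringy-katz}); rewrite the stringy point counts as $p$-adic orbifold integrals of gerbe-induced functions (Theorem \ref{integral} and Lemma \ref{StringyF}); and compare those integrals fibrewise over $\Aa(\Oo_F)$ using Tate duality (Theorem \ref{p-adic-Mirror}). The identification of the gerbe function with a character via the Hasse invariant and its reinterpretation through Tate's local pairing is also the right move (cf.\ Proposition \ref{prop:fi_hasse} and Lemma \ref{lemmaTate}). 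Your concern about the bad locus is actually easier than you fear: by Definition \ref{presituation}(c) its complement in $\Mm$ has codimension $\geq 2$, so by Proposition \ref{intools}(1) its $p$-adic measure is exactly zero, not merely $O(q^{-c})$.

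The genuine gap is in the case of the fibrewise comparison where both fibres $\pi_1^{-1}(a)$ and $\pi_2^{-1}(a)$ \emph{do} have $F$-points and both functions $f_{\alpha_i}$ are constant equal to $1$. Character orthogonality handles the other cases (it forces both integrals to vanish when one of the torsors is nontrivial), but here you are left having to prove that the two fibre volumes $\vol_{\phi^*\omega}(\Pc_1(F)_a)$ and $\vol_{\omega}(\Pc_2(F)_a)$ agree. Saying the two sides are ``controlled by the common symmetric Tate pairing'' does not produce this equality: dual abelian varieties over $F$ need not have the same $p$-adic volume, and the two measures are even defined via different gauge forms, related by pullback along the isogeny $\phi\colon \Pc_1 \to \Pc_2$. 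The paper closes this gap with Proposition \ref{prop:self-dual}, which shows $|B(F)/\phi(A(F))| = |\ker\phi(F)|$ for a self-dual isogeny of abelian varieties over a local field; this identity, proved via Tate duality and the local Euler characteristic formula (Theorem \ref{euler}), is exactly what makes the Key Lemma \ref{keylemma} work. Without this specific arithmetic input the matching case of the fibrewise comparison fails, so you should make this volume identity explicit rather than fold it into an orthogonality appeal.
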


By a ``stringy version'' of the Weil conjectures, the dimensions of the stringy cohomology groups appearing here are governed by ``stringy point-counts'' over finite fields (c.f. \ref{stringyinvariants}). As mentioned above we will prove our mirror symmetry result by a comparison of stringy point-counts of similar varieties over finite fields.

\subsubsection*{Strategy}

Our approach to Theorem \ref{thm:main} is strongly inspired by Batyrev's proof of the following result (see \cite{batyrev1999birational}):

\begin{theorem}[Batyrev]\label{batyrev}
Let $X$ and $Y$ be smooth projective birational Calabi-Yau varieties over the field of complex numbers. Then $X$ and $Y$ have equal Betti numbers.
\end{theorem}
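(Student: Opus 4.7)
The plan is to follow Batyrev's original argument, which uses $p$-adic integration to compare point counts of $X$ and $Y$ over finite fields, and then invokes the Weil conjectures to recover Betti numbers from those counts.

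First, I would spread out the data: choose a finitely generated $\Zb$-subalgebra $R \subset \Cb$ over which $X$, $Y$, and the birational map $\phi \colon X \dashrightarrow Y$ are all defined, and over which $X$ and $Y$ remain smooth and projective with trivial relative canonical bundles. After inverting finitely many elements of $R$, for every closed point $s \to \Spec R$ with residue field $\Fb_q$ the reductions $X_s$ and $Y_s$ are again smooth projective Calabi-Yau over $\Fb_q$. The Calabi-Yau hypothesis furnishes nowhere-vanishing top forms $\omega_X \in H^0(X,\omega_{X/R})$ and $\omega_Y \in H^0(Y,\omega_{Y/R})$, each unique up to a unit in $R$. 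Since $\phi$ is an isomorphism in codimension one and both canonical bundles are trivial, Hartogs' lemma forces $\phi^* \omega_Y = u\cdot \omega_X$ on the locus where $\phi$ is defined, with $u$ a unit there.

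Next, for each closed point $s$ I would invoke Weil's construction of a $p$-adic measure on the $\Zb_q$-points of a smooth scheme attached to a nowhere-vanishing top form. The resulting measures $\mu_X$ on $X(\Zb_q)$ and $\mu_Y$ on $Y(\Zb_q)$ satisfy
$$\vol_{\mu_X}\bigl(X(\Zb_q)\bigr) = q^{-\dim X}\cdot \#X_s(\Fb_q),\qquad \vol_{\mu_Y}\bigl(Y(\Zb_q)\bigr) = q^{-\dim Y}\cdot \#Y_s(\Fb_q),$$
by the standard relation between $p$-adic volumes and point counts. The birational map $\phi$ restricts to an isomorphism between open subsets $U \subset X$ and $V \subset Y$ whose complements have strictly smaller dimension, and hence contribute $p$-adic volume $O(q^{-\dim X - 1})$ after reduction. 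Because $\phi^*\omega_Y = u\cdot \omega_X$ with $u$ a unit, the bijection $U(\Zb_q) \to V(\Zb_q)$ is measure-preserving, and comparing volumes yields $\#X_s(\Fb_q) = \#Y_s(\Fb_q)$ for every such $s$.

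Applying the same reasoning over all finite extensions $\Fb_{q^n}$ gives equality of zeta functions $Z(X_s,t) = Z(Y_s,t)$ at almost all closed points of $\Spec R$. By Deligne's proof of the Weil conjectures, the Frobenius eigenvalues on $H^i_{\text{\'et}}(\bar X_s, \Qb_\ell)$ are pure of weight $i$; eigenvalues of distinct weights cannot cancel, so the zeta function determines each individual $\ell$-adic Betti number. Smooth and proper base change then transports this equality back to the generic fibres over $\Cb$, giving $b_i(X) = b_i(Y)$ for all $i$.

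The main technical obstacle is controlling the measure-zero complement uniformly in $q$: one must arrange the spreading-out so that the indeterminacy loci of $\phi$ and $\phi^{-1}$ extend to closed subschemes of strictly smaller relative dimension over $\Spec R$, ensuring that their $\Zb_q$-points indeed have negligible $p$-adic measure at every closed point. This is where the Calabi-Yau hypothesis is essential, via the Hartogs-type extension argument that forces the indeterminacy locus to have codimension at least two; without it the comparison of $\omega_X$ and $\phi^* \omega_Y$ could acquire zeros or poles along a divisor and the measure-preserving property would fail.
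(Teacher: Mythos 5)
Your proposal reproduces the same $p$-adic integration strategy that the paper itself only sketches and attributes to Batyrev's original article: spread out over a finitely generated $\Zb$-subalgebra, invoke Weil's volume formula to convert point counts into integrals of a gauge form, use the codimension-two comparison of the birational map to equate volumes, and recover Betti numbers from point counts via purity and smooth/proper base change. One small imprecision in your final paragraph: the fact that the indeterminacy locus of a birational map out of a smooth projective variety has codimension at least two is automatic (normality plus the valuative criterion) and has nothing to do with the Calabi--Yau hypothesis; what the Calabi--Yau hypothesis actually buys you is (i) the existence of the nowhere-vanishing top form, and (ii) the much stronger fact that $\phi$ is an \emph{isomorphism in codimension one}, i.e.\ it restricts to a genuine isomorphism between open subsets $U\subset X$ and $V\subset Y$ with \emph{both} complements of codimension $\geq 2$ — since $\phi^*\omega_Y$ and $\omega_X$ are nowhere-vanishing multiples of each other, the Jacobian of $\phi$ on its domain is a unit, ruling out divisorial contractions such as a blow-down, which is exactly the degeneration that would spoil the volume comparison.
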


Batyrev's proof uses the fact that it suffices to compare the point-counts of two such varieties over finite fields by virtue of the Weil Conjectures. Using standard reduction steps one is led to the following set-up: Let $F/\mathbb{Q}_p$ be a local field with ring of integers $\Oo_F$ and residue field $\mathbb{F}_q$. We have smooth and projective Calabi-Yau schemes $X$ and $Y$ over $\Oo_F$ together with a birational transformation $X \dashrightarrow Y$ over $\Oo_F$. Batyrev then invokes a result of Weil \cite[Theorem 2.2.5]{weil2012adeles}, which asserts that the set of $\Oo_F$-integral points of $X$ has a canonical measure satisfying
\begin{equation}\label{eqn:weil}\vol\left(X(\Oo_F)\right) = \frac{\#X(\mathbb{F}_q)}{q^{\dim X}}.\end{equation}
It is therefore sufficient to prove that $X(\Oo_F)$ and $Y(\Oo_F)$ have the same volume. Since the canonical measure can be described in terms of a non-vanishing top-degree form on $X$, and the two varieties are isomorphic up to codimension $2$, the equality of the volumes is remarkably easy to prove.

The first piece of evidence that a similar strategy can be applied to the Hausel--Thaddeus conjecture is provided by the fact that the $p$-adic volume of an orbifold over $\Oo_F$ is related to the stringy point-count.

\begin{theorem}[Theorem \ref{thm:appendix}]
Let $X$ be a smooth scheme over the ring of integers $\Oo_F$ of a local field $F$ with residue field $\mathbb{F}_q$. Assume that an abstract finite abelian group $\Gamma$ acts generically fixed point-free on $X$ preserving the canonical line bundle and that $F$ contains all roots of unity of order $|\Gamma|$. Then we have
$$\vol\left((X/\Gamma)(\Oo_F)\right) = \frac{\#_{\mathsf{st}}[X/\Gamma](\mathbb{F}_q)}{q^{\dim X}},$$
with respect to the canonical orbifold measure on $X/\Gamma$.
\end{theorem}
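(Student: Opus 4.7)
Following the spirit of Batyrev's proof of Theorem~\ref{batyrev}, the approach is to decompose $\Oo_F$-points of the coarse space $X/\Gamma$ into ``twisted sectors'' indexed by $\Gamma$, then compute each sector's volume via a local linearization of the $\Gamma$-action around its fixed points on the special fiber.

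\emph{Step 1 (Twisted-sector decomposition).} First, I would observe that every $\Oo_F$-point of $X/\Gamma$ lifts canonically to an $\Oo_F$-point of the quotient stack $[X/\Gamma]$, i.e.\ to a pair $(P, \varphi)$ consisting of a $\Gamma$-torsor $P \to \Spec \Oo_F$ together with a $\Gamma$-equivariant morphism $\varphi : P \to X$; this uses Henselianness of $\Oo_F$, smoothness of $X$ and the tameness of the action. Since $\Oo_F$ is Henselian with residue field $\Fb_q$, étale $\Gamma$-torsors on $\Spec \Oo_F$ are classified by their Frobenius element $\gamma \in \Gamma$; for each $\gamma$ of order $m$, let $E_\gamma / F$ be the unramified extension with $\Gal(E_\gamma/F) = \langle \gamma \rangle$. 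Unramified descent then identifies $\Gamma$-equivariant maps from the torsor $P_\gamma$ to $X$ with the ``twisted arc space''
\begin{equation*}
\mathcal{T}_\gamma \ := \ \bigl\{\, x \in X(\Oo_{E_\gamma}) : \Fr(x) = \gamma \cdot x \,\bigr\},
\end{equation*}
and, accounting for $\Aut(P_\gamma) = \Gamma$, gives the orbifold-volume decomposition
\begin{equation*}
\vol\bigl((X/\Gamma)(\Oo_F)\bigr) \ = \ \frac{1}{|\Gamma|} \sum_{\gamma \in \Gamma} \int_{\mathcal{T}_\gamma} d\mu_\gamma,
\end{equation*}
where $d\mu_\gamma$ is induced on $\mathcal{T}_\gamma$ by the descended canonical form on $[X/\Gamma]$ (equivalently by the $\Gamma$-invariant top form on $X$).

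\emph{Step 2 (Local diagonalization).} Reduction modulo the uniformizer sends $\mathcal{T}_\gamma$ into $X^\gamma(\overline{\Fb_q})$, so summing over $\Gal(\Fb_{q^m}/\Fb_q)$-orbits reduces the integral to a sum of local contributions at each $\bar x \in X^\gamma(\Fb_q)$. Because $F$ contains all $|\Gamma|$-th roots of unity, the $\Gamma$-action is tame, $X^\gamma$ is smooth over $\Oo_F$, and one can produce $\Gamma$-equivariant étale coordinates $y_1, \ldots, y_n$ on a formal neighbourhood of $\bar x$ in $X_{\Oo_F}$ that diagonalize $\gamma$ with eigencharacters $\zeta^{w_1}, \ldots, \zeta^{w_n}$, where $\zeta \in \Oo_F^\times$ is a fixed primitive $m$-th root of unity and $0 \le w_i < m$. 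The hypothesis that $\gamma$ preserves $\omega_X$ forces $\sum_i w_i \equiv 0 \pmod m$, so the age $F(\gamma) := \tfrac{1}{m}\sum_i w_i$ is a non-negative integer.

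\emph{Step 3 (Volume of a twisted sector).} In these diagonalizing coordinates the equation cutting out $\mathcal{T}_\gamma$ decouples into the scalar equations $\Fr(y_i) = \zeta^{w_i} y_i$, whose solution locus is the rank-one free $\Oo_F$-submodule of $\Oo_{E_\gamma}$ given by the $\zeta^{w_i}$-eigenspace of Frobenius. Careful Haar-measure bookkeeping on the decomposition $\Oo_{E_\gamma} = \bigoplus_{j \in \Zb/m} \Oo_{E_\gamma}[\zeta^j]$ compares the $|\omega_X|$-measure on this locus with the natural $\Oo_F$-measure on $X^\gamma(\Oo_F)$ and produces a Jacobian of $p$-adic absolute value $q^{-F(\gamma)}$, so that
\begin{equation*}
\int_{\mathcal{T}_\gamma} d\mu_\gamma \ = \ q^{-F(\gamma)} \cdot \vol\bigl(X^\gamma(\Oo_F)\bigr).
\end{equation*}
Summing over $\gamma$, applying Weil's formula $\vol(X^\gamma(\Oo_F)) = |X^\gamma(\Fb_q)|/q^{\dim X^\gamma}$ to the smooth $\Oo_F$-scheme $X^\gamma$, and using $F(\gamma) + F(\gamma^{-1}) = \codim(X^\gamma)$ together with $X^\gamma = X^{\gamma^{-1}}$ to reindex, identifies the result with $\#_{\mathsf{st}}[X/\Gamma](\Fb_q)/q^{\dim X}$.

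\emph{Main obstacle.} The essential technical point is the Jacobian computation in Step~3 — showing that the canonical orbifold measure on $(X/\Gamma)(\Oo_F)$, when pulled back to each twisted arc space $\mathcal{T}_\gamma$, differs from the obvious measure on $X^\gamma(\Oo_F)$ by precisely $q^{-F(\gamma)}$. This is an incarnation of Yasuda's change-of-variables between orbifold and twisted-jet integrals; the tameness afforded by the roots-of-unity hypothesis simplifies matters considerably, but the bookkeeping with eigenspaces of Frobenius on $\Oo_{E_\gamma}$ and with the descent of the top form still demands care. A secondary verification is that the decomposition in Step~1 exhausts $(X/\Gamma)(\Oo_F)$, i.e.\ that every $\Oo_F$-point of the coarse space extends uniquely to a stack point — which again follows from tameness and Henselian lifting.
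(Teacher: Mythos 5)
Your Step 1 contains a fatal gap: the claim that every $\Oo_F$-point of $X/\Gamma$ lifts to a pair $(P,\varphi)$ with $P$ a $\Gamma$-torsor over $\Spec\Oo_F$ is false. Such a lift exists precisely when the $\Gamma$-torsor attached to the generic fibre is \emph{unramified} over $F$, and the ramified locus is not of measure zero. Take $X=\Ab^1$ with $\Gamma=\mu_2$ acting by $x\mapsto -x$, so $X/\Gamma=\Ab^1$ with coordinate $y=x^2$. The point $y=\pi$ lies in $(X/\Gamma)(\Oo_F)$, but its associated torsor over $F$ is the ramified quadratic extension $F(\sqrt\pi)$, which has no integral model over $\Oo_F$ as a $\Gamma$-torsor; the ramified locus $\{v(y)\ \text{odd}\}$ has orbifold volume $q^{-1/2}>0$ and is simply omitted by your decomposition. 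Step 2 compounds this with a dual error: reduction modulo $\pi$ does \emph{not} send $\mathcal{T}_\gamma$ into $X^\gamma(\overline{\Fb}_q)$. If $x\in X(\Oo_{E_\gamma})$ satisfies $\Fr(x)=\gamma x$, then $\bar x$ is an $\Fb_q$-rational point of the \emph{unramified twist} $X_{T_\gamma}$, not a fixed point of $\gamma$; in the example $\mathcal{T}_{-1}$ is a rank-one free $\Oo_F$-submodule of $\Oo_{E_{-1}}$ whose generic member reduces to a nonzero point, whereas $X^{-1}=\{0\}$. Consequently your Step 3 identity $\int_{\mathcal{T}_\gamma}d\mu_\gamma=q^{-F(\gamma)}\vol(X^\gamma(\Oo_F))$ is false for the unramified $\mathcal{T}_\gamma$: in the example the left-hand side equals $\tfrac12$ while the right-hand side equals $q^{-1/2}$.

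The underlying confusion is between the two summands of $H^1(F,\Gamma)\cong\Gamma\oplus\Gamma$, one unramified and one coming from $\Hom(\mu(F),\Gamma)$. The unramified summand governs Galois twists $X_T$, which merely redistribute $k_F$-points across a fixed inertia component; the \emph{ramified} summand records the inertia $\gamma$ of the torsor, and it is \emph{this} $\gamma$ which selects the component $[X^\gamma/\Gamma]$ of $I\Xc$ hit by the specialisation map and which produces the fermionic shift. The paper's local computation therefore passes to the totally ramified extension $L=F(\pi^{1/d})$ (where $\gamma$ acts trivially on the residue field, so the extended arc genuinely specialises into $X^\gamma$) and reduces the age computation to the orbifold volume of $\Oo_F^d\pi^e\subset\Ab^1(\Oo_F)$ under $|x^{1-d}\,dx^{\otimes d}|^{1/d}$, which equals $\tfrac1d q^{-e/d}$; the unramified direction is handled separately by twisting $X$ as a scheme over $\Oo_F$ and summing via Lang's theorem. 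Your overall architecture — stratify, linearise the $\Gamma$-action equivariantly (that part is correct and mirrors the paper's Lemma on local étale coordinates), compute a local Jacobian, reindex using $F(\gamma)+F(\gamma^{-1})=\codim X^\gamma$ — is the right shape, but the strata must be indexed by the inertia of the torsor over $\Spec F$, not by unramified torsors over $\Spec\Oo_F$.
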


This is well-known to experts, and various versions exist in the literature \cite{DL2002, Yasuda:2014aa}. However we were unable to find a reference that can be applied directly in our context, particularly concerning the calculation of the fermionic shift in measure-theoretic terms. For this reason we have included a proof in Section \ref{sectorbm}.

The stringy point-count of $X/\Gamma$ twisted by a gerbe $\alpha$ can also be computed in this manner: As we explain in Definition \ref{fi}, the gerbe $\alpha$ induces a measurable function $f_{\alpha}$ on $(X/ \Gamma)(\Oo_F)$ (defined almost everywhere) whose integral determines the stringy point-count twisted by $\alpha$. Analogously to Batyrev's proof of Theorem \ref{batyrev}, using this one deduces Theorem \ref{thm:main} from the following equality:

\begin{theorem}[TMS for $p$-adic integrals, c.f. Theorem \ref{p-adic-Mirror}]\label{thm:p-adic}

Let $F$ be a local field and $(\Mm_i \to \Aa, \alpha_i)$ a dual pair of abstract Hitchin systems over $F$. Then we have the equality
$$\int_{\Mm_1 (\Oo_F)} f_{\alpha_1} d\mu_{\text{orb}}= \int_{\Mm_2(\Oo_F)} f_{\alpha_2} d\mu_\text{orb}.$$
\end{theorem}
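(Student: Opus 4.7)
The plan is to prove the equality by reducing both sides to the same fibrewise integral over the base $\Aa$. Concretely, I would apply a $p$-adic Fubini theorem to write
\[
\int_{\Mm_i^{\mathrm{coarse}}(\Oo_F)} f_{\alpha_i}\, d\mu_{\mathrm{orb}} = \int_{a \in \Aa(\Oo_F)} \left( \int_{(\pi_i^{-1}(a))(\Oo_F)} f_{\alpha_i}|_{\pi_i^{-1}(a)} \, d\mu_{\mathrm{orb},a} \right) d\mu_\Aa(a),
\]
where the base measure $\mu_\Aa$ comes from a trivialisation of the canonical bundle, and $\mu_{\mathrm{orb},a}$ is the induced fibrewise measure, well-defined because $\Aa$ is smooth. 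The goal is then to show that the inner integrals agree pointwise (almost everywhere on $\Aa(\Oo_F)$) under the duality $\Mm_1 \simeq \mathsf{Split}'(\Mm_2/\Aa,\alpha_2)$.

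For the main computation I would restrict to points $a \in \Aa^{\mathrm{good}}(\Oo_F)$, where the fibre of $\Mm_i$ is a torsor under an abelian variety $\Pc_i(a)/F$ and $\Pc_1(a)$, $\Pc_2(a)$ are dual. The gerbe $\alpha_i$ restricted to this fibre is classified by an element of $H^2_{\mathrm{\acute{e}t}}(\pi_i^{-1}(a),\mu_r)$, and pulled back along the $\Pc_i(a)$-action it becomes a $\mu_r$-gerbe on $\Pc_i(a)$. Via Tate local duality for abelian varieties, such a gerbe corresponds to a continuous character $\chi_{i,a} \colon \Pc_j(a)(F) \to \mu_r$. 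The key computation is then to show that the function $f_{\alpha_i}$ on the fibre, integrated against the Haar-like orbifold measure on the $\Pc_i(a)(F)$-torsor $(\pi_i^{-1}(a))(\Oo_F)$, is precisely what computes the ``orthogonality'' mass of $\chi_{j,a}$: it vanishes unless $\alpha_i$ splits fibrewise over $a$, in which case it contributes the volume of the fibre weighted by the number of splittings. The description $\Mm_j \simeq \mathsf{Split}'(\Mm_i/\Aa,\alpha_i)$ then identifies the non-vanishing locus on one side with the $(\pi_j^{-1}(a))(\Oo_F)$-points contributing on the other, giving the fibrewise equality.

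The comparison of these two fibrewise masses is where I expect to do real work: one must check that the volume factors match, which requires that the canonical measures on the two dual Hitchin fibres are compatible under Tate duality — this compatibility is essentially the statement that the Poincar\'e bundle identifies the canonical orientations, and is where the precise definition of $\mathsf{Split}'$ (the \emph{principal component}) has to be used, so that non-principal splittings do not contaminate the count. Once the fibrewise identity holds on $\Aa^{\mathrm{good}}(\Oo_F)$, one integrates over $\Aa(\Oo_F)$ with the same measure on both sides, yielding the desired equality.

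The main obstacle, and what has to be handled carefully, is the contribution of the bad locus $\Aa(\Oo_F) \setminus \Aa^{\mathrm{good}}(\Oo_F)$: on this set the fibres are not abelian-variety torsors and Tate duality does not directly apply. I would dispose of it by showing that both integrals are unchanged if we restrict to the good locus, which follows from estimating that $\mu_\Aa(\Aa(\Oo_F) \setminus \Aa^{\mathrm{good}}(\Oo_F)) = O(q^{-1})$ combined with boundedness of $|f_{\alpha_i}|$ (it takes values in roots of unity) and of the fibrewise volumes. The uniform volume bound on fibres is exactly where the minimality/codimension hypotheses built into Definition \ref{situation} enter; these hypotheses mirror Ng\^o's support and $\delta$-regularity conditions and ensure that the bad fibres contribute a negligible $p$-adic mass, exactly as the codimension $\geq 2$ condition is used in Batyrev's proof of Theorem \ref{batyrev}.
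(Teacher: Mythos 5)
Your overall architecture --- Fubini over the base $\Aa$, Tate duality on the good fibres, dispose of the bad locus --- is the same as the paper's, and you correctly identify the character-orthogonality phenomenon as the engine of the fibrewise comparison. But there are two genuine gaps and one significant misattribution.

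First, the treatment of the bad locus. You propose restricting to $\Aa^{\mathrm{good}}(\Oo_F)$ and estimating $\mu_\Aa(\Aa(\Oo_F) \setminus \Aa^{\mathrm{good}}(\Oo_F)) = O(q^{-1})$. This is insufficient for an \emph{exact} equality of the two integrals, and indeed $\Aa^{\mathrm{good}}(\Oo_F)$ is the wrong set: it consists of $\Oo_F$-points that factor through the open subscheme $\Aa^{\mathrm{good}}$, i.e.\ points with good reduction, and its complement does have strictly positive measure. The correct set is $\Aa(F)^\flat = \Aa(\Oo_F) \cap \Aa^{\mathrm{good}}(F)$ (Definition \ref{notation}): points whose \emph{generic} fibre is good, with arbitrary reduction. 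Its complement in $\Aa(\Oo_F)$ is exactly $\Aa(\Oo_F) \cap (\Aa\setminus\Aa^{\mathrm{good}})(F)$, which has measure exactly zero by Proposition \ref{intools}\eqref{measurezero} since $\Aa\setminus\Aa^{\mathrm{good}}$ is a proper closed subscheme of positive codimension. The abelian-variety-torsor structure on $\pi_i^{-1}(a)_F$, and hence the Tate-duality argument, is available for \emph{all} $a \in \Aa(F)^\flat$, not only those with good reduction, so no error term arises.

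Second, the volume comparison on a single fibre. You attribute it to ``the Poincar\'e bundle identifying the canonical orientations.'' This is too vague and not the actual mechanism. Using translation-invariance of the gauge form $\omega$ on $\Pc_2$, one finds
\[
\vol_{\omega}\left((\Pc_2)_a(F)\right) \;=\; \frac{\left|(\Pc_2)_a(F)\,/\,\phi\left((\Pc_1)_a(F)\right)\right|}{\left|(\ker\phi)(F)\right|}\;\vol_{\phi^*\omega}\left((\Pc_1)_a(F)\right),
\]
and the cardinality factor in front equals $1$ by Proposition \ref{prop:self-dual}, which is a Tate duality counting identity $|B(F)/\phi A(F)| = |\ker\phi(F)|$ valid precisely because the isogeny $\phi$ is \emph{self-dual} (Definition \ref{situation}(b)). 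This is genuine arithmetic input and cannot be recovered from compatibilities of Poincar\'e bundles alone. Relatedly, your phrase that the non-vanishing contribution is ``the volume of the fibre weighted by the number of splittings'' is incorrect: when the induced character is trivial the fibre integral contributes exactly the fibre volume, with no weighting factor, and Corollary \ref{functionone} is needed to identify this trivial-character case with the case where the dual fibre has an $F$-rational point and where $f_i$ is constant of value $1$ (not merely constant).

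Finally, a misattribution: you claim the codimension-$\geq 2$ hypothesis on $\Mm\setminus\Mm'$ is ``where the minimality/codimension hypotheses enter'' to bound bad-fibre volumes. In fact that hypothesis plays no role in the bad locus; the bad locus is discarded outright as measure zero. The codimension hypothesis is used \emph{before} Fubini: it forces the orbifold discrepancy divisor $D$ to vanish (Construction \ref{muorb}), so that $\mu_{\mathrm{orb}}$ is induced by the canonical bundle of $\Mm_i^{\mathrm{coarse}}$ and therefore factors as $|\omega_i \otimes \pi_i^*\eta|$ for a relative gauge form $\omega_i$ on $\Pc_i^\circ$ and a gauge form $\eta$ on $\Aa$. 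This factorisation is the precondition for your ``$p$-adic Fubini'' step, and without the $D=0$ input that step is not justified.
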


The proof proceeds by evaluating both sides fibre-by-fibre along $\Mm_i\to \Aa$  using relative measures. After discarding a subset of measure zero we may only work with the fibres above those rational points $a \in \Aa(\Oo_F)$, which belong to $\Aa^{\lozenge}(F)$ as well. That is we only have to analyse the fibrewise version of the identity above over torsors of abelian varieties defined over $F$. Using a reinterpretation of the functions $f_{\alpha_i}$ in terms of Tate Duality one can show that on a single fibre these functions are either constant of value $1$ or exhibit character-like behaviour. This leads to the cancellation of the corresponding integral. The latter guarantees that only a contribution of those fibres which can be matched by an equal contribution on the other side survives.

A similar idea also applies to a conjecture of Mozgovoy--Schiffmann on the number of points of $\Mm_{\GL_n}^d$, the moduli space of semi-stable $\GL_n$-Higgs bundles of fixed degree $d$.
\begin{theorem}[Theorem \ref{mozschiff}]
Let $d,e$ be positive integers coprime to $n$ and $X/\mathbb{F}_q$ a smooth proper curve. Then we have 
\[\#\Mm_{\GL_n}^d(X)(\mathbb{F}_q) = \#\Mm_{\GL_n}^{e}(X)(\mathbb{F}_q).\]
\end{theorem}

There are independent proofs of this theorem by Yu \cite{YH18}, and by Mellit \cite{mellit2017poincar} without the coprimality assumption. 

\subsubsection*{Previously known cases of TMS}

Theorem \ref{thm:main} was conjectured by Hausel--Thaddeus \cite{MR1990670}. They provided ample evidence for their prediction, particularly a full proof for the cases $n = 2$ and $n=3$. 

In \cite{MR2967059} Hausel--Pauly analysed the group of connected components of Prym varieties. They show that the finite group $\Gamma = J[n]$ acts trivially on the cohomology up to the degree predicted by Theorem \ref{thm:main}.

In \cite[Section 5.3]{hausel2011global} Hausel observes that Ng\^o's \cite[Theorem 6.4.1]{MR2653248} should imply a fibrewise version of Theorem \ref{thm:main}, over an open dense subset of the Hitchin base.

Finally we remark that there is also an analogue of the Topological Mirror Symmetry Conjecture for parabolic Higgs bundles. The paper \cite{biswas2012syz} by Biswas--Dey verified that also in the parabolic case the moduli spaces for the $\SL_n$ and $\PGL_n$-case are generically fibred into dual abelian varieties. This motivates study of the stringy cohomology of moduli spaces of parabolic Higgs bundles. There is a certain family of examples of such spaces, which can be described in terms of Hilbert schemes of the cotangent bundles of an elliptic curve (see \cite{MR1859601}). Using G\"ottsche's formula for the cohomology of Hilbert schemes one can verify the conjecture for these cases, as has been explained to us by Boccalini--Grandi \cite{bg}. Recently, the rank $2$ and $3$ case of the topological mirror symmetry conjecture for strongly parabolic Higgs bundles was established by Gothen--Oliveira in \cite{gothenoliveira}.

In \cite{hmrv} by Hausel--Mereb--Rodriguez-Villegas the authors will show the analogue of the Topological Mirror Symmetry Conjecture for $E$-polynomials of character varieties. 

\subsubsection*{Conventions}
For $R$ a commutative ring, we will sometimes use the terminology \emph{$R$-variety} to refer to a reduced and separated scheme of finite presentation over $\Spec(R)$.

Some aspects of our work require a choice primitive roots of unity. We therefore assume for the sake of convenience that for every field $F$ appearing in this article, and every positive integer $r$, such that $\mu_r(F)$ has order $r$, we have chosen a primitive root of unity $\zeta_r$ of order $r$. Furthermore, we assume that these choices are compatible, that is, satisfy $\zeta_{rr'}^{r} = \zeta_{r'}$. If there is no risk of confusion, we will drop the subscript and simply write $\zeta = \zeta_r$ for a fixed positive integer $r$.

\subsubsection*{Acknowledgements}
A substantial part of the research described in this paper was conducted during a ``Research in Pairs" stay at the \textit{Mathematisches Forschungsinstitut Oberwolfach}. We thank the MFO for its hospitality and the excellent working conditions. We thank Tamas Hausel for the encouragement we received while working on this article. We thank the participants of the Intercity Geometry Seminar, in particular Peter Bruin, Bas Edixhoven, Chris Lazda, and David Holmes, for alerting us about incorrect statements in sections 2 and 3 of the first version of this paper and pointing out numerous typos. We thank the anonymous referee for helpful comments and suggestions, which improved the exposition substantially. 
 It is also a pleasure to thank Johannes Nicaise, Ng\^o B\`ao Ch\^au, Benjamin Antieau, Jacques Hurtubise, Ezra Getzler and H\'el\`ene Esnault for interesting conversations on the subject of this article.


\section{Stringy cohomology and gerbes}

\subsection{Stringy invariants}


We will mostly consider stringy invariants of varieties, which admit a presentation as a global quotient $Y/\Gamma$, where $Y$ is a smooth variety, and $\Gamma$ a finite abstract group. A priori, the invariants will depend on this presentation, but can be shown to be a well-defined invariant of the quotient stack $[Y/\Gamma]$. For more details on this, as well as a treatment of stringy Hodge numbers for more general classes of singular varieties, we refer the reader to Batyrev's \cite{Batyrev1996}. 

\begin{definition}\label{definition:restrictions}
Let $\Xc$ be a Deligne-Mumford stack. We say that $\Xc$ 
\begin{itemize}
\item[(a)] is a \emph{finite quotient stack}, if there exists an algebraic space $Y$ with a generically fixed-point free action of an abstract finite group $\Gamma$ such that $\Xc \simeq [Y/\Gamma]$.

\item[(b)] is a \emph{finite abelian quotient stack}, if (a) holds, and the group $\Gamma$ can furthermore be assumed to be abelian.
\end{itemize}
\end{definition}

\begin{definition}\label{fermionicshift}
Let $\Xc= [Y/\Gamma]$ be a smooth finite quotient stack over a field $k$ as in Definition \ref{definition:restrictions}, such that $|\Gamma|$ is invertible in $k$. Fix an algebraic closure $\bar k$ of $k$. Let $x \in Y$ be a closed point fixed by an element $\gamma \in \Gamma$. The tangent space $T_xY$ inherits therefore a representation by the finite cyclic group $I = \langle \gamma \rangle$ generated by $\gamma$. Over $\overline{k}$ we choose a basis of eigenvectors $(v_1,\dots,v_k)$ and denote by $(\zeta_1,\dots,\zeta_k)$ the corresponding list of eigenvalues.

Let $\zeta$ be our fixed primitive root of order $r = \mathsf{ord}(\gamma)$ in $\bar k$. For each eigenvalue $\zeta_i$ there exists a unique expression $\zeta_i = \zeta^{c_i}$ with $0\leq c_i < r$. With respect to this choice we define the fermionic shift of $\gamma$ at $x$ to be the sum of fractions 
\[F(\gamma,x) = \sum_{i = 1}^k \frac{c_i}{r}.\]
This number is locally constant on $Y^{\gamma}$, and therefore defines a function on $\pi_0(Y^{\gamma})$. Furthermore $F(\gamma, \cdot)$ is constant on $C(\gamma)$-orbits in $\pi_0(Y^\gamma)$ , where $C(\gamma) \subset \Gamma$ denotes the centraliser of $\gamma$. Hence we obtain a function
\[F(\gamma,\cdot): \pi_0([Y^{\gamma}/C(\gamma)]) \to \Qb. \]

There is also a version of the fermionic shift in the literature, where the numbers $c_1,\dots,c_k$ are chosen to satisfy $0 < c_i \leq r$ (c.f. e.g. \cite{DL2002,lo02,Ya06}). We write 
\[w(\gamma,\cdot):  \pi_0([Y^{\gamma}/C(\gamma)]) \to \Qb, \]
for the corresponding locally constant function. For any connected component $\Z \in \pi_0([Y^\gamma/C(\gamma)])$ the two functions $F,w$ are related by the formulas $F(\gamma,\Z) = w(\gamma,\Z)-\dim \Z$ and $w(\gamma,\Z) = \dim \Xc-F(\gamma^{-1},\Z)$.\\
\end{definition}

We reiterate that the definition of the fermionic shift depends on the choice of a primitive root of unity $\zeta$ of order $r$. Over the field of complex numbers it is standard to choose $\zeta = e^{\frac{2\pi i}{r}}$, but for all other fields this choice is a recurring aspect of our work. 
For an element $\gamma$ of a group $\Gamma$ we will denote by $C(\gamma)$ the centraliser of $\gamma$.

\begin{rmk} For a groupoid $A$ (typically the $k$-points of $\X$), we will denote by $A_\iso$ the set of isomorphism classes of $A$. If $A_\iso$ is finite we write $\# A$ for the mass of $A$, that is
\[ \#A =  \sum_{a \in A_\iso} \frac{1}{|\Aut(x)|}.    \]
\end{rmk}

\begin{definition}\label{stringyinvariants}
Let $\Xc$ be a smooth finite quotient stack over a field $k$. Choose a presentation $\Xc=[Y/\Gamma]$ as in Definition \ref{definition:restrictions} with $V$ smooth as well as a primitive root of unity $\xi \in \bar k$ of order $|\Gamma|$.
\begin{itemize}
\item[(a)] If $k = \Cb$ is the field of complex numbers, we denote by $E_{\mathsf{st}}(\Xc;x,y)$ the polynomial 
  \begin{equation*}
    E_{\mathsf{st}}(\Xc;x,y) = \sum_{\gamma \in \Gamma/\text{conj}} \left( \sum_{\Z \in \pi_0(Y^{\gamma}/C(\gamma))}E(\Z;x,y)(xy)^{F(\gamma,\Z)}\right),
  \end{equation*}
 where we define for $\Z = [W/ C(\gamma)]$
$$E(\Z;x,y)=\sum_{p,q,k}(-1)^k \dim (H_c^{p,q;k}(W)^{C(\gamma)})x^py^q.$$
Here $H_c^{p,q;k}(W)$ denotes the space $\operatorname{gr}^W_{p+q}H_c^k(W)^{p,q}$ given by the mixed Hodge structure on the compactly supported cohomology of $W$.
\item[(b)] If $k = \Fb_q$ is a finite field, we denote by $\#_{\mathsf{st}}(\Xc)$ the sum 
\begin{equation*}
  \#_{\mathsf{st}}(\Xc) = \sum_{\gamma \in \Gamma/\text{conj}} \left( \sum_{\Z \in \pi_0(Y^{\gamma}/C(\gamma))} q^{F(\gamma,\Z)}\#\Z(k)\right) \in \Rb.
\end{equation*}

\end{itemize}
\end{definition}
The fermionic shift $F(\gamma,Z)$ depends on the choice of $\zeta$. These stringy invariants are however independent of it: Let $\zeta'$ be another choice, and $F'(\gamma,Z)$ the resulting shift. There exists an integer $k$, prime to $|\Gamma|$, such that $\zeta' = \zeta^k$. Elementary group theory shows that $\gamma^k$ is also a generator of the finite cyclic group $I$ generated by $\gamma$. Therefore we have $F(\gamma,Z) = F' (\gamma^k,Z)$ and the above sums remain the same.
One can also check that these definitions do not depend on the choice of presentation of $\Xc$.

\begin{rmk}(Inertia stacks) \label{inerst} Recall that for any stack $\Xc$ the inertia stack $\I\Xc$ is defined to be $\Xc \times_{\Xc \times \Xc} \Xc$. Concretely, for any scheme $S$ the groupoid of $S$-points $\I\Xc(S)$ equals the groupoid of pairs $(x,\alpha)$ where $x \in \Xc(S)$ is an $S$-point of $\Xc$ and $\alpha \in \Aut_{\Xc}(x)$. One can check that for a finite quotient stack $\Xc = [Y/\Gamma]$ one has an equivalence
\[\I\Xc \cong \bigsqcup_{[\gamma] \in \Gamma/\text{conj}} [Y^{\gamma}/C(\gamma)].\]
In particular the Fermionic shift and the weight from Definition \ref{stringyinvariants} can be considered as locally constant functions
\[F,w: \I\Xc(k)_\iso \to \Qb. \]
Over a finite field $k$ one then has
\[\#_{\mathsf{st}}(\Xc) = \sum_{x \in \I\Xc(k)_\iso} \frac{q^{F(x)}}{|\Aut_{\I\Xc(k)}(x)|}. \]
\end{rmk}

In the next subsection we will introduce a variant of this definition, which also depends on a gerbe $\alpha \in H^2([Y/\Gamma],\mu_r)$ on the quotient stack.


\subsection{Gerbes and transgression}\label{gerbes}\label{GerbesSection}

We begin this subsection by recalling terminology from the theory of gerbes. Only gerbes banded by $A=\mu_r$ and $A=\G_m$ will appear in this article. 

\begin{definition}
  Let $\mathcal{S}$ be a Deligne-Mumford stack and $A$ a commutative group scheme over $\mathcal{S}$.
  \begin{enumerate}[(i)]
  \item A \emph{gerbe} $\alpha$ over $\mathcal{S}$ is a morphism of algebraic stacks $\alpha \to \mathcal{S}$ satisfying the following two conditions:
  \begin{itemize}
  \item For any scheme $\mathcal{S}'$ over $\mathcal{S}$ and any two objects $x,x'\in \alpha(\mathcal{S}')$ there exists an \'etale covering $\mathcal{S}''$ of $\mathcal{S}'$ such that $x$ and $x'$ become isomorphic in $\alpha(\mathcal{S}'')$.
  \item There exists an \'etale covering $\mathcal{S}'$ of $\mathcal{S}$ such that $\alpha(\mathcal{S}')$ is not empty.
  \end{itemize}
\item A \emph{banding} of a gerbe $\alpha$ over $\mathcal{S}$ by $A$ consists of isomorphisms $A_{\mathcal{S}'}\cong \underline\Aut_{\mathcal{S}'}(x)$ of \'etale group sheaves over $\mathcal{S}'$ for every $\mathcal S$-stack $\mathcal S'$ and every object $x \in \alpha(\mathcal S')$, which are compatible with pullbacks. A gerbe together with a $A$-banding is called a $A$-gerbe.

  \end{enumerate}
\end{definition}

Descent data for $A$-gerbes are given by $2$-cocycles with values in $A$. For this reason, the set of isomorphism classes of $A$-gerbes is equal to $H^2_{\text{\'et}}(S,A)$.

The reason we care about gerbes is that for $\Xc$ a Deligne-Mumford stack and $\alpha$ an $A$-gerbe on $\Xc$, by the so-called transgression construction, the inertia stack $\I\Xc$ inherits an $A$-torsor $P_{\alpha}$. The formal definition of this torsor uses the functoriality of the inertia stack construction. In the remainder of this subsection we describe three equivalent pictures of $P_{\alpha}$, hoping that at least one of them will appeal to the reader. 

\subsubsection*{An explicit picture for quotient stacks}

We give the first construction of $P_{\alpha}$: For $\Xc = [Y/\Gamma]$ a quotient of a variety by an abstract group $\Gamma$ we have 
$$\I\Xc = \bigsqcup_{\gamma \in \Gamma/\text{conj}} [Y^{\gamma}/C(\gamma)].$$
  Let $A$ be a commutative group scheme. An $A$-gerbe $\alpha$ on $\Xc$ corresponds to a $\Gamma$-equivariant $A$-gerbe $\alpha$ on $Y$. We will summarise the discussion of \cite{MR1990670}, where a $C(\gamma)$-equivariant $A$-torsor is defined on every stratum $Y^{\gamma}$. By descending this torsor one obtains an $A$-torsor $P_{\alpha}$ on $\I\Xc$.

The $\Gamma$-equivariant structure of $\alpha$ is given by equivalences of gerbes $\eta_{\gamma}\colon \gamma^*\alpha \xrightarrow{\simeq} \alpha$ for every $\gamma \in \Gamma$, which satisfy various compatibility and coherence conditions (most of which are not relevant to us). Restricting this equivalence for a given $\gamma \in \Gamma$ to the fixed point locus $Y^{\gamma}$ one obtains an automorphism of $\alpha|_{Y^{\gamma}}$:
\begin{equation}\label{autgerb}\eta_{\gamma}|_{Y^{\gamma}}\colon \alpha|_{Y^{\gamma}} = (\id_{Y^{\gamma}})^* \alpha|_{Y^{\gamma}} = (\gamma|_{Y^{\gamma}})^*\alpha|_{Y^{\gamma}} \xrightarrow{\eta_{\gamma}} \alpha|_{Y^{\gamma}}. \end{equation}
The groupoid of automorphisms of an $A$-gerbe on $Y^{\gamma}$ is equivalent to the groupoid of $A$-torsors on $Y^{\gamma}$. This construction therefore yields an $A$-torsor $P'_{\alpha}$ on $Y^\gamma$, which is $C(\gamma)$-equivariant by virtue of the $\Gamma$-equivariance of $\alpha$. Descent theory yields an $A$-torsor $P_{\alpha}$ on the component $[Y^{\gamma}/C(\gamma)]$ of $\I\Xc$.

Below we record a technical lemma relating central extension and equivariant structures on gerbes. The proof can be skipped when reading this article for the first time.

\begin{lemma}\label{thatsalemma}
In the following we assume that $Y$ is a scheme endowed with the action of an abstract finite group, and that $A\to Y$ is a smooth group scheme.
  \begin{enumerate}[(a)]
  \item The set of isomorphism classes of $\Gamma$-equivariant structures on a trivial $A$-gerbe $\alpha$ on $Y$ is isomorphic to $H^2_\text{sm. grp. $Y$-sch.}(\Gamma,A)$, that is, the set of isomorphism classes of central extensions 
\begin{equation}\label{eqn:cent_ext}1 \to A \to \widehat{\Gamma} \to \Gamma \to 1,\end{equation}
where $A$, $\widehat{\Gamma}$, and $\Gamma$ are viewed as smooth group schemes on $Y$. We denote the corresponding Ext-group by $\Ext^1_\text{sm. ab. grp. $Y$-sch.}(\Gamma,A)$.
\item If the group $\Gamma$ is \emph{cyclic}, then a central extension $\widehat{\Gamma}$ as in (a) is automatically abelian. Hence in this case isomorphism classes of central extensions of $\Gamma$ by $A$ correspond to elements of $\Ext^1_\text{sm. ab. grp. $Y$-sch.}(\Gamma, A)$. 
\item In case $Y=\Spec k$ for $k$ a field with $H^2_{\text{\'et}}(\Spec k, A) =0$, and $\Gamma$ a cyclic group, there is a short exact sequence 
\begin{equation}\label{eqn:rmk}
  H^1_{\et}(k,\Hom_\text{sm. ab. grp. $k$-sch.}(\Gamma,A))\hookrightarrow H^2_{\text{\'et}}([Y/\Gamma],A) \twoheadrightarrow \Ext^1_\text{sm. ab. grp. $\overline{k}$-sch.}(\Gamma,A_{\overline{k}}).
\end{equation}
Furthermore, if $A$ is a constant \'etale group scheme, then this sequence splits.
  \end{enumerate}
\end{lemma}

\begin{proof}
Since $\alpha$ is assumed to be trivial on $Y$, an equivariant structure on an $A$-gerbe $\alpha$ corresponds to a choice of equivalences of $A$-gerbes
$$\eta_\gamma\colon \alpha \simeq \alpha (\simeq \gamma^*\alpha)$$
for every $S$-valued point $\gamma \in Y(S)$ for $S$ a $Y$-scheme, such that $\eta_{\gamma}$ is the identity for $\gamma = e$ the neutral element, and $\eta_\gamma$ is compatible with pullbacks, and for $\gamma_1,\gamma_2 \in \Gamma(S)$
we have a commutative diagram
\[
\xymatrix{
\alpha \ar[r]^{\eta_{\gamma_1}} \ar[rd]_{\eta_{\gamma_2\gamma_1}} & \alpha \ar[d]^{\eta_{\gamma_2}} \\
& \alpha.
}
\]
Furthermore, there is a compatibility condition, which is needed to be satisfied for every triple $\gamma_1,\gamma_2,\gamma_3 \in \Gamma(S)$ (see the commutative diagram below). An equivalence $\alpha \simeq \alpha$ of $A$-gerbes corresponds to an $A$-torsor. We therefore see that a $\Gamma$-equivariant structure on $\alpha$ assigns to every $\gamma \in \Gamma(S)$ an $A$-torsor $L_{\gamma}$ on $S$, such that for the neutral element $e \in \Gamma$ we have that $L_e \simeq A$ is the trivial torsor, and furthermore we have isomorphisms indexed by pairs $(\gamma_1,\gamma_2) \in \Gamma^2$
$$\phi_{\gamma_1,\gamma_2}\colon L_{\gamma_1} \otimes L_{\gamma_2} \simeq L_{\gamma_1\gamma_2},$$
such that for the neutral element $e \in \Gamma$ one has that
$$\phi_{\gamma,e}\colon L_{\gamma} \otimes L_e \simeq L_{\gamma}$$
is compatible with the chosen trivialisation of $L_e$ (and similarly for $\phi_{e,\gamma}$),
and for every triple $\gamma_1,\gamma_2,\gamma_3 \in \Gamma(S)$ we have a commutative diagram
\[
\xymatrix{
(L_{\gamma_1} \otimes L_{\gamma_2}) \otimes L_{\gamma_3} \ar[r]^-{\phi_{\gamma_1,\gamma_2}\otimes \id} \ar[d]_{\text{assoc. constraint}} & L_{\gamma_1\gamma_2} \otimes L_{\gamma_3} \ar[rd]^-{\phi_{\gamma_1\gamma_2,\gamma_3}} \\
L_{\gamma_1}\otimes (L_{\gamma_2} \otimes L_{\gamma_3}) \ar[r]^-{\id \otimes \phi_{\gamma_2,\gamma_3}} & L_{\gamma_1} \otimes L_{\gamma_2\gamma_3} \ar[r]^{\phi_{\gamma_1,\gamma_2\gamma_3}} & L_{\gamma_1\gamma_2\gamma_3}.
}
\]
This amounts to a monoidal map of stacks in groups 
$$\phi\colon \Gamma \to B_YA.$$
Given such a map, we associate to it the central extension
$$1 \to A \to \Gamma \times_{\phi,B_YA} Y \to \Gamma \to 1,$$
where $Y \to B_YA$ is the canonical map to the quotient stack. Vice versa, given a central extension \eqref{eqn:cent_ext}, it is clear that $\widehat{\Gamma} \to \Gamma$ is an $A$-torsor, and hence we obtain a map of stacks $\Gamma \to B_YA$. The central extension property yields that this is a monoidal map of stacks in groups.

It suffices to prove the analogue of Claim (b) for abstract groups, since commutativity of smooth group schemes is a local property. The corresponding statement for extensions of abstract groups can be deduced from the short exact sequence (*) in \cite{prasad}.

Claim (c): to see why \eqref{eqn:rmk} is true we argue as follows. By the assumption on $k$, a $A$-gerbe on $[\Spec k/ \Gamma]$ is the same as a $\Gamma$-equivariant structure on the trivial $A$-gerbe on $\Spec k$. By (a) and (b) such data correspond to elements of $\Ext^1_{\text{sm. ab. grp. sch.}}(\Gamma,A)$. This Ext-group maps to $\Ext^1(\Gamma,A_{\bar k})$ by base change. The kernel of this homomorphism can be identified with the set of isomorphism classes of extensions $\widehat{\Gamma}$, which split when pulled back to the algebraic closure. Since the set of splitting is a torsor under $\Hom(\Gamma,A)$ we obtain a short exact sequence
$$H^1(k,\Hom(\Gamma,A)) \hookrightarrow \mathsf{Ext}^1_{\text{sm. ab. grp. sch.}}(\Gamma,A) \twoheadrightarrow \mathsf{Ext}^1(\Gamma,A_{\bar k}).$$

If $A$ is a constant \'etale group scheme, then the sequence splits by sending an extension of abstract abelian groups to the constant extension of abelian group schemes.
\end{proof}

\subsubsection*{A purist's approach to transgression: central extensions of inertia groups}

Recall that the inertia stack of a Deligne-Mumford stack $\Xc$ is defined to be $\I\Xc = \Xc \times_{\Xc \times \Xc} \Xc$. A more direct definition can be given as follows:
Let us denote by $S$ a \emph{test scheme}. An $S$-point of $\Xc$ is (by definition) the same as a morphism $S \to \Xc$. There exists a group $\Aut_x(\Xc)$, which is equal to the automorphism group of $x$ in the groupoid $\Xc(S)$. The assignment $S \mapsto \{(x,\alpha)|x \in \Xc(S)\text{ and }\alpha \in \Aut_x(\Xc)\}$ is represented by $\I\Xc$. 

The second viewpoint shows that there exists a morphism $\I\Xc \to \Xc$, such that the fibre over a given $S$-point equals the group $S$-scheme denoted by $\underline{\Aut}_x(\Xc)$. Vice versa, one can use the abstract definition of $\I\Xc$ as the self-intersection of the diagonal of $\Xc$ to deduce the existence of a relative group scheme structure on the morphism $\I\Xc \to \Xc$.

We give the second construction of $P_{\alpha}$: Let $A$ be a commutative group scheme and fix an $A$-gerbe $\alpha$ over a Deligne-Mumford stack $\Xc$. An $S$-point $y \in \alpha(S)$ induces an $S$-point $x \in \Xc(S)$. Moreover we obtain a surjective morphism of automorphism groups $\underline{\Aut}_y(\alpha) \twoheadrightarrow \underline{\Aut}_x(\Xc)$. The kernel of this morphism is equal to the group scheme $A$ and is central in $\underline{\Aut}_y(\alpha)$. Therefore, we obtain a central extension 
$$A \hookrightarrow \underline{\Aut}_y(\alpha) \twoheadrightarrow \underline{\Aut}_x(\Xc).$$
The inertia stack of $\Xc$ is fibred in these group schemes over $\Xc$, and the total spaces of these central extensions can be assembled into an $A$-torsor on $\I\Xc$. The technical details are summarised below.

\begin{construction} \label{TransgressionPurist}
A morphism of stacks $\alpha\to \Xc$ induces a morphism of inertia stacks $\I\alpha \to \I\Xc$. For $\alpha$ an $A$-gerbe (where $A$ is assumed to be a flat commutative group scheme), we obtain a canonical morphism $\I\alpha \to \I\Xc$. For example, the trivial $A$-gerbe $B_{\Xc}A$ yields the stack $\I(B_{\Xc}A) = \alpha \times_{\Xc} A$. Since every $A$-gerbe is \'etale locally equivalent to the trivial $A$-gerbe we conclude that $\I\alpha$ is \'etale locally equivalent to $\alpha \times_{\Xc} A$. This shows that $\I\alpha$ is a stack. As above there exists for every $S$-point $Z \in \I\alpha(S)$ a central embedding $A(S) \hookrightarrow \Aut_z(\I\alpha)$. We can therefore apply the \emph{rigidification} process of \cite[Theorem 5.1.5]{abramovich2003twisted} to obtain a stack $\widehat{\I\Xc} = \I\alpha//A$, which gives a central extension of $\Xc$-group schemes 
$$1 \to A \to \widehat{\I\Xc} \to \I\Xc \to 1.$$
Then, $P_{\alpha} = \widehat{\I\Xc}$ is an $A$-torsor on the Deligne-Mumford stack $\I\Xc$. 
\end{construction}

\subsubsection*{A modern viewpoint on transgression}

We give a construction of $P_{\alpha}$ (which we learned from B. Antieau). An $A$-gerbe on $\Xc$ corresponds to a morphism of $2$-stacks $\Xc \to B^2A = B(BA)$. Since the inertia stack $\I(B^2A)$ is equivalent to $BA \times B^2A$ we obtain the morphism
$\I\Xc \to \I B^2A \to BA$, which is the classifying morphism of the $A$-torsor $P_{\alpha}$ on $\I\Xc$.

\subsection{Transgression for torsion $\G_m$-gerbes}

As before we let $\Xc$ be a Deligne-Mumford stack, and let $r$ be a positive integer invertible on $\Xc$. The Kummer sequence
\begin{equation}\label{eqn:kummer_seq}\cdots \to H^1_{\et}(\Xc,\G_m) \xrightarrow{[r]} H^1_{\et}(\Xc,\G_m) \to H^2_{\et}(\Xc,\mu_r) \to H^2_{\et}(\Xc,\G_m) \xrightarrow{[r]} H^2_{\et}(\Xc,\G_m) \to \cdots\end{equation}
implies that $H^2_{\et}(\Xc,\mu_r)$ surjects onto the $r$-torsion subgroup $H^2_{\et}(\Xc,\G_m)[r]$. That is, for every $\G_m$-gerbe $\beta$, such that $\beta^r$ is a neutral gerbe, there exists a $\mu_r$-gerbe $\alpha$, which induces $\beta$ via the embedding $\mu_r \hookrightarrow \G_m$. 

\begin{lemma}\label{lemma:muGm}
Let $Y$ be an irreducible Noetherian scheme endowed with the action of an abstract finite group $\Gamma$, such that $Y$ admits a Zariski-open covering by $\Gamma$-equivariant affine subsets. Let $r$ be a positive integer, such that $\mu_{r\cdot{}|\Gamma|}$ is a constant \'etale group scheme on $Y$. Then, there exist a map 
$$\tau \colon H^2_{\et}([Y/\Gamma],\G_m)[r] \to H^1_{\et}(\I[Y/\Gamma],\mu_r),$$
such that the diagram
\[
\xymatrix{
H^2_{\et}([Y/\Gamma],\mu_r) \ar[rr]^{\alpha \mapsto P_{\alpha}} \ar[d] & & H^1_{\et}(\I[Y/\Gamma],\mu_r) \ar[d] \\
H^2_{\et}([Y/\Gamma],\G_m)[r] \ar[rr]^{\beta \mapsto P_{\beta}} \ar@{-->}[rru] & & H^1_{\et}(\I[Y/\Gamma],\G_m)
}
\]
commutes.
\end{lemma}

\begin{proof}
As we have seen above, it follows from the Kummer sequence that $H^2_{\et}([Y/\Gamma],\mu_r)$ surjects onto $H^2_{\et}([Y/\Gamma],\G_m)[r]$. It is therefore sufficient to prove that for 
$$\alpha \in \ker(H^2_{\et}([Y/\Gamma],\mu_r) \to H^2_{\et}([Y/\Gamma],\G_m)[r])$$
the associated $\mu_r$-torsor on the inertia stack $$P_{\alpha} \in H^1_{\et}(\I [Y/\Gamma],\mu_r)$$ is trivial.

Let $Y = \bigcup_{i \in I} U_i$ be a covering by $\Gamma$-equivariant Zariski-open affine subsets. Since $Y$ is Noetherian, we may assume that $I$ is finite. Furthermore, by further refining this covering, we achieve that the open subsets $U_i$ do not carry non-trivial line bundles. Below, we will prove triviality of $P_{\alpha}|_{\I U_i}$. The general case follows from the following claim.

\begin{claim} 
A Zariski locally trivial $\mu_r$-torsor on $Y$ is trivial.
\end{claim}
\begin{proof}
At first we observe that $\mu_r$ is by assumption a constant \'etale group scheme. In his Tohoku paper, Grothendieck proved a vanishing result for higher cohomology groups $H^i_{\Zar}(Y,A)$ of a constant abelian group valued sheaf on an irreducible Noetherian space $Y$ (see \cite[p. 168]{tohoku} and also \cite[Tag 02UW]{stacks-project}). In particular we have $H^1_{\Zar}(Y,\mu_r) = 0$, which implies triviality of Zariski locally trivial $\mu_r$-torsors.
\end{proof}

Henceforth we may assume without loss of generality that $Y$ is affine and irreducible.

By virtue of assumption, $\alpha$ induces the trivial $\G_m$-gerbe on $[Y/\Gamma]$. It follows from the Kummer sequence that $\alpha = \delta(L)$, where $L \in \Pic([Y/\Gamma]) = H^1_{\et}([Y/\Gamma],\G_m)$ and 
$$\delta\colon H^1_{\et}([Y/\Gamma],\G_m) \to H^2_{\et}([Y/\Gamma],\mu_r)$$
denotes the boundary map of the Kummer sequence \eqref{eqn:kummer_seq}. The $\mu_r$-gerbe $\delta(L)$ measures the obstruction to the existence of an $r$-th root of $L$. As a groupoid-valued functor, it assigns to a test scheme $S \to Y$ the groupoid of pairs $(M,\phi)$, where $M$ is a line bundle on $S$ and $\phi\colon M^{\otimes r} \simeq L$. 

After replacing $Y$ by a Zariski open subset on which $L$ is trivialisable, we see that the gerbe $\alpha=\delta(L)$ splits when pulled back along $Y \to [Y/\Gamma]$, since $L|_Y\simeq\Oo_Y \simeq \Oo_Y^{\otimes r}$. It follows from Lemma \ref{thatsalemma} that $\alpha$ corresponds to a central extension of $Y$-group schemes
\begin{equation}\label{centext}1 \to \mu_{r,Y} \to \widehat{\Gamma}_{\alpha} \to \Gamma \to 1.\end{equation}
\begin{claim}
The central extension \eqref{centext} is Zariski locally induced by a central extension of abstract groups.
\end{claim}
\begin{proof}
By assumption, $\mu_r$ is a constant \'etale group scheme on $Y$, and $\Gamma$ is the constant group scheme. These facts together with connectivity of $Y$, imply that it suffices to prove that $\widehat{\Gamma}_{\alpha}$ is constant.

Recall that $\alpha = \delta(L)$ where $L \in \Pic([Y/\Gamma])$. Since we replaced $Y$ by an affine open on which (the pullback of) $L$ is trivial, we see that $L$ corresponds to the choice of a $\Gamma$-equivariant structure on $\Oo_Y$. That is, $L$ corresponds to a character $\lambda\colon \Gamma \to \G_m$.

We claim that the central extension \eqref{centext} can be constructed explicitly as a fibre product of $\lambda\colon \Gamma \to \G_{m,Y}$ by the $r$-th power map $[r]\colon \G_m \to \G_m$:
$$1 \to \mu_{r,Y} \to \widehat{\Gamma}_{\alpha} = \Gamma \times_{\lambda,\G_{m,Y},[r]} \G_{m,Y} \xrightarrow{\pr_1} \Gamma \to 1.$$
Indeed, projection to the second component gives rise to a character 
$$\mu\colon \widehat{\Gamma}_{\alpha} \to \G_{m,Y},$$
such that $\mu^r = \lambda \circ \pr_1$. That is, the central extension $\widehat{\Gamma}_{\alpha}$ agrees with the obstruction for $\lambda$ to have an $r$-th root.

The character $\lambda\colon \Gamma \to \G_{m,Y}$ factors through $\mu_{|\Gamma|,Y}$, which we assumed to be a constant group scheme. Therefore, $\widehat{\Gamma}_{\alpha}$ is obtained by pulling back the central extension 
$$1 \to \mu_{r,Y} \to \mu_{r\cdot{}|\Gamma|,Y} \xrightarrow{[r]} \mu_{|\Gamma|,Y} \to 1$$
along $\lambda\colon \Gamma \to \mu_{r,Y}$. The assumption that $\mu_{r\cdot{}|\Gamma|}$ is a constant \'etale group scheme on $Y$ yields that the latter central extension is induced by an extension of abstract groups.
\end{proof}
By definition of the transgression map, the $\mu_r$-torsor $P_{\alpha}$ is given by pulling back the $\mu_r$-torsor $\widehat{\Gamma}_{\alpha}$ along the map $\I[Y/\Gamma] \to \Gamma \times Y$. Since $\widehat{\Gamma}_{\alpha} \to \Gamma$ is a trivial $\mu_r$-torsor by the claim above, we deduce that $P_{\alpha}$ is trivial. This proves what we wanted.
\end{proof}


\subsection{Twisted stringy invariants}
The $\mu_r$-torsor $P_\alpha$ associated to a $\mu_r$-gerbe $\alpha$ by the transgression construction from the previous subsection enables us to define a variant of stringy cohomology of a quotient stack $\Xc = [Y/\Gamma]$, which takes a given $\mu_r$-gerbe on $\Xc$ into account:
\begin{definition}\label{defi:Ealpha}
Let $\Xc = [Y/\Gamma]$ be a complex quotient stack of a smooth complex variety $Y$ by a finite group $\Gamma$. For a positive integer $r$ and a gerbe $f\in H^2_{\text{\'et}}(\Xc,\mu_r)$ we define the \emph{$\alpha$-twisted stringy $E$-polynomial} of $\Xc$ as
\[E_{\mathsf{st}}(\Xc,\alpha;x,y) = \sum_{\gamma \in \Gamma/\text{conj}} \left( \sum_{\Z \in \pi_0([Y^{\gamma}/C(\gamma)])}E(\Z,L_{\gamma};x,y) (xy)^{F(\gamma,\Z)}\right) ,\]
where $L_{\gamma}$ denotes the $\mu_r$-torsor $P_{\alpha}|_{[Y^{\gamma}/C(\gamma)]}$ on $[Y^{\gamma}/C(\gamma)]$ given by transgression and
\begin{equation*}
  E(\Z,L_{\gamma};x,y) = E^{\chi}(L_{\gamma};x,y),
\end{equation*}
where $\chi \colon \mu_r(\Cb) \hookrightarrow \Cb^{\times}$ denotes the standard character, and $E^{\chi}$ denotes the part of the $E$-polynomial corresponding to the $\chi$-isotypic component of the cohomology of the total space $H^*_c(L_{\gamma})$.
\end{definition}

Similarly we can define $\alpha$-twisted versions of stringy points counts for quotient stacks over a finite field $k$. The definition passes via $\ell$-adic cohomology, and hence requires us to choose an embedding $\mu_r(\overline{k}) \subset \overline{\Qb}_{\ell}$, in order to extract an $\ell$-adic local system $L_{\gamma}$ from the $\mu_r$-torsor $P_{\alpha}$. 
\begin{definition}\label{defi:l-torsor}
Let $Y$ be a variety over a finite field $k=\Fb_q$ with an action of a finite abstract group $\Gamma$. Let $\Xc = [Y/\Gamma]$ be the associated quotient stack. For a positive integer $r$ prime to the characteristic of $k$, and $\alpha \in H^2_{\text{\'et}}(\Xc,\mu_r)$ we define 
$$\#^{\alpha}_{\mathsf{st}}(\Xc)= \sum_{\gamma \in \Gamma/\text{conj}} \left( \sum_{\Z \in \pi_0([Y^{\gamma}/C(\gamma)])} q^{F(\gamma,\Z)}\#^{L_\gamma}\Z(k)\right),$$
where $L_{\gamma}$ denotes the induced $\ell$-adic local system on $Y^{\gamma}$ obtained from the $\mu_r$-torsor $P_{\alpha}|_{[Y^{\gamma}/C(\gamma)]}$ and
$$\#^{L_\gamma}\Z(k)=\sum_{x\in \Z(k)_\iso}\frac{\Tr(\Fr_x,L_{\gamma,x})}{|\Aut(x)|},$$
where $\Fr_x$ denotes the Frobenius at $x$.
\end{definition}


\subsection{From point-counts to $E$-polynomials}

Let $X_{\Cb}$ be a complex variety. For a subring $R \subset \Cb$ we refer to an $R$-scheme $X_R$ together with an isomorphism $X_R \times_R \Cb \cong X_{\Cb}$ as an $R$-model of $X_{\Cb}$. Using a finite presentation argument it is easy to show that for every complex variety $X_{\Cb}$ there exists a ring $R \subset \Cb$ of finite type over $\Zb$ such that $X_{\Cb}$ has an $R$-model.

In this section we recall an argument, which allows one to deduce from an agreement of all possible stringy point counts for given $R$-models of two complex varieties $X_{\Cb}$, $Y_{\Cb}$ an agreement of stringy $E$-polynomials $E_{\mathsf{st}}(X_{\Cb};x,y) = E_{\mathsf{st}}(Y_{\Cb};x,y)$.

We begin by proving an equivariant analogue of Katz's Theorem 6.1.2 in the appendix of \cite{MR2453601} (which is a generalisation of Ito's \cite{MR2098399}). This will be obvious to experts, but we are including the details for the sake of completeness. We recommend to readers who are new to this application of $p$-adic Hodge theory to take a look at Katz's explanations in \emph{loc. cit.}

In the following we will always work with compactly supported cohomology when dealing with non-projective varieties.

\begin{definition} Let $G$ be a finite abstract group. We understand $G$-schemes over a base scheme $S$ to be schemes over $S$ with a $G$-action over $S$, and $G$-varieties to be separated $G$-schemes of finite type over a base field, such that every $G$-orbit is contained in an affine open subscheme. For a $G$-representation $V$ over a field $k$ and a character $\chi\colon G\to k^{\times}$, we denote by $V^{\chi}$ the $\chi$-isotypic component of $V$.
\begin{enumerate}
\item[(a)] For a complex $G$-variety $X$ and a complex-valued character $\chi$ of $G$ we let $E_G^{\chi}(X;x,y)\in \Zb[x,y]$ be the polynomial 
$$E_G^{\chi}(X;x,y)=\sum_{p,q \in \Zb} \sum_{i \in \Zb} (-1)^i \dim[\mathsf{gr}_W^{p,q}H_c^{i}(X)^{\chi}]x^p y^q.$$
\item[(b)] For a $G$-variety $X$ over a finite field $\Fb_q$, and a $\overline{\Qb}_{\ell}$-valued character $\chi$ of $G$, we define the $\chi$-twisted point count $\#^{\chi}_G(X) \in \overline{\Qb}_{\ell}$ to be the alternating sum of traces
$$\#_{G}^{\chi}(X) = \sum_{i \in \Zb} (-1)^i \mathsf{Tr}[\Fr,H^i_{c,\text{\'et}}(X,\overline{\Qb}_{\ell})^{\chi}] = \sum_{x \in [X/G](\Fb_q)_\iso}  \mathsf{Tr}[\Fr_X,(L_{\chi})_x],$$
where $L_{\chi}$ denotes the $\ell$-adic local system on the quotient $[X/G]$ induced by $\chi$.
\end{enumerate}
\end{definition}

We can now state an equivariant analogue of Katz's \cite[Theorem 6.1.2]{MR2453601}. We repeat once more that our proof follows closely the one given in \emph{loc. cit.}, and refer the reader to the original source for a less terse account. 

\begin{theorem}\label{equivariant}
Let $G$ be a finite group and $R \subset \Cb$ a subalgebra of finite type over $\Zb$. We fix an abstract isomorphism of $\Cb$ and $\overline{\Qb}_{\ell}$ and let $\chi$ be a complex-valued character of $G$. Assume that $X$ and $Y$ are separated $G$-schemes of finite type over $R$, such that for every ring homomorphism $R \to \Fb_q$ to a finite field $\Fb_q$ we have 
$\#_G^{\chi}(X \times_R \Fb_q) = \#_G^{\chi}(Y \times_R \Fb_q).$ Then, we also have 
\[E_G^{\chi}(X \times_R \Cb;x,y) = E_G^{\chi}(Y \times_R \Cb;x,y).\]
\end{theorem}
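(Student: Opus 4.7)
The plan is to transplant Katz's proof of the non-equivariant version \cite[Theorem 6.1.2]{MR2453601} to the equivariant setting, exploiting the fact that once $|G|$ is invertible the idempotent
\[e_\chi = \frac{1}{|G|}\sum_{g \in G} \chi(g^{-1})\, g\]
exists in all coefficient rings of interest and commutes with every cohomological operation that will appear: base change, compactly supported cohomology, weight and Hodge filtrations, and the comparison morphisms between $\ell$-adic, de Rham and crystalline cohomology. Equivalently, $\chi$ cuts out a rank-one $\overline{\Qb}_\ell$-local system $L_\chi$ on the quotient stack $[X/G]$, and the hypothesis reads
\[\#_G^\chi(X_s) = \sum_i (-1)^i \Tr\bigl(\Fr_s \,\bigm|\, H^i_c([X/G]_{\bar s}, L_\chi)\bigr),\]
so the entire argument can be run inside the standard six-functor formalism for constructible sheaves on Deligne--Mumford stacks, paralleling Katz.

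First I would enlarge $R$ inside $\Cb$ so that it is smooth over $\Spec \Zb[1/N]$ for some $N$ divisible by $|G|\cdot \ell$, and pass to the quotient stacks $[X/G]$ and $[Y/G]$; this ensures that the relative compactly supported cohomology sheaves with coefficients in $L_\chi$ are constructible on $\Spec R$. The assumption is then exactly the statement that their Frobenius traces coincide at every closed point of $\Spec R$. A Chebotarev density argument, as in Katz, upgrades this to an equality of semisimplifications of the $\chi$-isotypic compactly supported cohomologies as virtual $\pi_1^{\text{\'et}}(\Spec R)$-representations. Deligne's Weil II identifies the weight filtration on each $H^i_c$ with the filtration by Frobenius absolute values, so the weight-graded dimensions of $H^\ast_c(X_\Cb)^\chi$ and $H^\ast_c(Y_\Cb)^\chi$ already agree.

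It remains to resolve the Hodge type $(p,q)$ within each weight-graded piece. Here one invokes $p$-adic Hodge theory exactly as in Katz's argument: at a prime $p$ of $R$ of good reduction for $X$ and $Y$, Fontaine's $D_{\text{cris}}$ functor applied to the $\chi$-isotypic $p$-adic étale cohomology produces the underlying filtered $\phi$-module, hence recovers the Hodge filtration. Invertibility of $|G|$ guarantees that $e_\chi$ acts on the filtered $(\phi,N)$-modules and commutes with the crystalline--étale comparison, so Katz's machinery transfers verbatim to the $\chi$-isotypic summand and yields the desired equality $E_G^\chi(X \times_R \Cb) = E_G^\chi(Y \times_R \Cb)$.

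The main obstacle is not any single analytic step but rather the $G$-equivariant book-keeping: one must verify that the projector $e_\chi$ respects the mixed Hodge structure on $H^\ast_c$, the filtered $(\phi,N)$-module structure produced by $D_{\text{cris}}$, and the $\ell$-adic Galois representation, and that the natural comparison morphisms between these three pictures commute with $e_\chi$. All of this follows from the functoriality of each construction in the scheme $X$ together with the fact that $|G|$ has been arranged to be invertible in every coefficient ring; once this compatibility is recorded, the argument reduces to Katz's non-equivariant version applied to the $\chi$-isotypic summand.
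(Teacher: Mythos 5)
There is a genuine gap in your plan: you omit the reduction to smooth projective $G$-varieties, which is the step that makes the $p$-adic Hodge theory applicable at all. The comparison theorem you want to use --- Fontaine--Messing / Faltings, or in your phrasing $D_{\text{cris}}$ producing a filtered $\phi$-module --- is a statement about the $p$-adic \'etale cohomology of a \emph{smooth proper} scheme over a $p$-adic local field with good reduction. Your proposal tries to apply it directly to $H^i_c$ of a possibly non-proper quotient stack with coefficients in $L_\chi$, after first splitting by weight via Weil~II. That is a much stronger assertion than Katz ever needs: there is no off-the-shelf crystalline comparison for mixed compactly supported cohomology of general varieties, and reconstructing the Hodge filtration of the mixed Hodge structure on $H^i_c$ from such Galois data would require serious further input (and would in any case not be ``Katz's machinery transferring verbatim''). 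The weight-matching step via Weil~II is also not free for singular/open varieties; Katz sidesteps both issues by never leaving the smooth proper world.

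The paper handles this exactly the way Katz does in the non-equivariant case: it first invokes Bittner's presentation of the Grothendieck ring \cite[Lemma 7.1]{MR2059227} (replacing Katz's Lemma 6.1.1, and chosen because Bittner's version respects finite group actions) to reduce to the case where $X$ and $Y$ are smooth and projective $G$-schemes over $R$. After this reduction, the direct image sheaves $(R^i f_\ast \Qb_\ell)^\chi$ are automatically \emph{pure} lisse sheaves, Chebotarev gives the isomorphism of semisimplifications, and the Hodge--Tate decomposition of Fontaine--Messing / Faltings for smooth proper $X_E$ applies; the naturality of that decomposition is precisely what makes the projector $e_\chi$ commute with everything, which is the part of your bookkeeping that does go through. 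So the remedy is straightforward --- begin with the Bittner reduction --- but without it the argument does not close.

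Your other ideas are sound and consonant with the paper's proof: working $\chi$-isotypically via $e_\chi$ (equivalently via $L_\chi$ on $[X/G]$), enlarging $R$ to be smooth over $\Zb[1/N]$, and using Chebotarev to compare semisimplifications are all steps the paper takes. The difference is that in the paper purity is \emph{arranged} by Bittner rather than \emph{argued} via Weil~II, and the Hodge filtration is recovered from Hodge--Tate weights of a pure Galois representation rather than from $D_{\text{cris}}$ of a mixed one.
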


\begin{proof}
As in \emph{loc. cit.} we may assume that $X$ and $Y$ are smooth and projective over $R$. This is possible by virtue of Bittner's \cite[Lemma 7.1]{MR2059227}, which we use to replace \cite[Lemma 6.1.1]{MR2453601} in Katz's proof. 

Furthermore, we can achieve regularity of $\Spec R$ by inverting a single element in $R$ (since $R \subset \Cb$ is integral, it is generically smooth). We let $f\colon X \to \Spec R$ and $g\colon Y \to \Spec R$ denote the structural morphisms. As in $\emph{loc. cit.}$ we choose a prime $\ell$, such that $\ell$ is larger than $\dim X$ and $\dim Y$ and such that we have a finite extension $E$ of $\Qb_{\ell}$ together with an embedding $R$ into the valuation ring $\Oo$ of $E$ (in fact, by virtue of Cassels's Embedding Theorem \cite{cassels1976embedding} there are infinitely many prime numbers $\ell$, such that $E$ can be chosen to be $\mathbb{Q}_{\ell}$). For every integer $i$ we then consider the lisse sheaves $(R^if_*\Qb_{\ell})^{\chi}$ and $(R^ig_*\Qb_{\ell})^{\chi}$ on $\Spec R[\frac{1}{\ell}]$. Smoothness and projectivity of $X$ and $Y$ guarantee purity of these sheaves. Hence the equality $\#_G^{\chi}(X_{\Fb_{q^r}}) = \#_G^{\chi}(Y_{\Fb_{q^r}})$ for all finite overfields $\mathbb{F}_{q^r}$ of $\mathbb{F}_q$ implies the identity
$$\det(1-t\Fr_q,(R^if_*\Qb_{\ell})^{\chi}) = \det(1-t\Fr_q,(R^ig_*\Qb_{\ell})^{\chi})$$ 
for all integers $i$. Chebotarev's Density Theorem yields an isomorphism of semi-simplifications $$((R^if_*\Qb_{\ell})^{\chi})^{ss} \cong ((R^ig_*\Qb_{\ell})^{\chi})^{ss}$$ of lisse sheaves of $\Spec R[\frac{1}{\ell}]$. 

Using the embedding $R \hookrightarrow \Oo$ we can pull back our constructions and insights obtained so far to this finite extension of $\mathbf{Z}_{\ell}$. We obtain two smooth projective $E$-varieties $X_E$ and $Y_E$ of good reduction for which we have an equivalence of lisse sheaves over $\Spec E$ 
$$((R^i(f_E)_*\Qb_{\ell})^{\chi})^{ss} \simeq ((R^i(g_E)_*\Qb_{\ell})^{\chi})^{ss}.$$
Fontaine--Messing's \cite{MR902593} or Faltings's \cite{MR924705} shows that $R^i((f_E)_*\Qb_{\ell}) = H^i(X_{\bar E},\Qb_{\ell})$ is a Hodge-Tate representation of $\Gal(E)$ and that there is a natural isomorphism $$\bigoplus_{p+q = i}H^q(X_E,\Omega^p) \otimes \Cb_{\ell}(-p) \simeq H^i(X_{\bar E},\Qb_\ell) \otimes \Cb_{\ell}.$$
The naturality of this isomorphism implies that this isomorphism respects the $G$-action on both sides. We infer that the Hodge Tate numbers of $H^i(X_{\bar E},\Qb_{\ell})^{\chi}$ recover the dimension of the $\chi$-isotypic component of $H^q(X_{E},\Omega^q)$. We conclude the proof by applying the same reasoning to $Y_E$ and using the equivalence of the semi-simplifications of the $\Gal(E)$-representations $H^i(X_{\bar E},\Qb_{\ell})^\chi$ and $H^i(Y_{\bar E},\Qb_{\ell})^{\chi}$.
\end{proof}

We will use a slightly more general result. In order to apply Theorem \ref{equivariant} to compare stringy $E$-polynomials, we have to adjoin formal $r$-th roots of the Lefschetz motive $\mathbb{L} = [\Ab^1]$ to the Grothendieck ring of $G$-varieties $K_0(\mathsf{Var}_G/R)[(\mathbb{L}^{\frac{1}{r}})_{r \geq 2}]$. 

\begin{definition}\label{importantrmk}
\begin{enumerate}[(a)]
\item For $q\in \Zb$ a prime power, a compatible system of roots of $q$ is a sequence $s=(s_r)_{r \geq 2}$ in $\Cb$ with $s_1=q$ and $s_{rr'}^r = s_{r'}$  for all $r,r' \geq 1$.
\item Given a ring homomorphism $R\to \Fb_q$, every compatible system $s$ of roots of $q$ induces a well-defined equivariant point-count homomorphism
$$\#_{\Fb_q}^{s,\chi}\colon K_0(\mathsf{Var}_G/R)[(\mathbb{L}^{\frac{1}{r}})_{r \geq 2}] \to \Cb$$
extending the usual point count homomorphism $$\#^\chi_{\Fb_q} \colon K_0(\mathsf{Var}_G/R) \to \Cb$$ by stipulating $\#_{\Fb_q}^{s,\chi}(\mathbb{L}^{\frac{1}{r}}) = s_r$ for the trivial character $\chi$ and $0$ otherwise. 

\item Since two choices $s$ and $s'$ of roots of $p$ differ by an element $\sigma$ of $\Gal(\bar{\Qb}/\Qb)$, we see that for $X, Y \in K_0(\mathsf{Var}_G/R)[(\mathbb{L}^{\frac{1}{r}})_{r \geq 2}]$ we have $\#_{\Fb_q}^{s,\chi}(X) = \#_{\Fb_q}^{s,\chi}(Y)$ if and only if $\#_{\Fb_q}^{s',\chi}(X) = \#_{\Fb_q}^{s',\chi}(Y)$. Hence we obtain a well-defined relation $\#_{\Fb_q}^{\chi}(X) = \#_{\Fb_q}^{\chi}(Y)$ on $K_0(\mathsf{Var}_G/R)[(\mathbb{L}^{\frac{1}{r}})_{r \geq 2}]$.
\item Similarly, the equivariant $E$-polynomial extends to a function
$$E^{\chi}\colon K_0(\mathsf{Var}_G/\Cb)[(\mathbb{L}^{\frac{1}{r}})_{r \geq 2}] \to \Zb[(x^{\frac{1}{r}},y^{\frac{1}{r}})_{r \geq 2}]$$
by stipulating $E^{\chi}(\mathbb{L}^{\frac{1}{r}}) = (xy)^{\frac{1}{r}}$ for the trivial character, and $0$ otherwise.
\end{enumerate}
\end{definition}

The most natural choice of a compatible system of roots of $q$ is the sequence of positive real roots. However we will also consider the sequence $(\Tr(\Fr_{\Fb_q},\Qb_{\ell}(\frac{1}{r})))_{r\geq 1}$ given by a homomorphism $R \to \Fb_q$ and a compatible system of roots of the Tate twist $\Qb_\ell(1)$ on $\Spec R$ in the following sense:

\begin{lemma} \label{TateRoots}
After replacing $R$ by a finite \'etale extension $R'$ there exist $r$-th roots $\Qb_{\ell}(\frac{1}{r})$ of the Tate twist $\Qb_{\ell}(1)$ as lisse $\ell$-adic sheaves on $\Spec R$ for all $r\geq 1$ satisfying $\Qb_\ell(\frac{1}{rr'})^{\otimes r'} \cong \Qb_\ell(\frac{1}{r})$ for all $r,r'\geq 1$.
\end{lemma}

\begin{proof}
This is a mild generalisation of Ito's \cite[5.3]{MR2098399}. We consider the Tate twist $\Qb_{\ell}(1)$ as a representation of the \'etale fundamental group $\rho\colon \pi_1^{\text{\'et}}(\Spec R,\Cb) \to \Qb_{\ell}^{\times}$. Since it is a continuous $\ell$-adic presentation of a profinite group it factors through $\Zb_{\ell}^{\times}$. We choose an open subgroup $U \subset \Zb_{\ell}^{\times}$ and $V \subset \Zb_{\ell}$, such that we have a logarithm $\log\colon U \to V$ and $\exp\colon V \to U$. There exists a pointed finite \'etale covering $\Spec R' \to \Spec R$, such that $\rho|_{\pi_1^{\text{\'et}}(\Spec R',\Cb)}$ factors through $U$. 

For $r \geq 1$ we define another continuous $\ell$-adic representation of ${\pi_1^{\text{\'et}}(\Spec R',\Cb)}$ by the formula
$$\exp(\frac{1}{r} \log(\rho|_{\pi_1^{\text{\'et}}(\Spec R',\Cb)})).$$
By construction the corresponding lisse \'etale sheaves are $r$-th roots of the Tate twist satisfying the required compatibility condition.
\end{proof}


\begin{theorem}\label{equivariant2}
Let $G$ be a finite group and $R \subset \Cb$ a subalgebra of finite type over $\Zb$. We fix an abstract isomorphism of $\Cb$ and $\overline{\Qb}_{\ell}$ and let $\chi$ be a complex-valued character of $G$. We assume that $X,Y \in K_0(\mathsf{Var}_G/R)[(\mathbb{L}^{\frac{1}{r}})_{r \geq 2}]$, such that for every ring homomorphism $R \to \Fb_q$ to a finite field $\Fb_q$ we have 
$\#_{\Fb_q}^{\chi}(X) = \#_{\Fb_q}^{\chi}(Y).$ Then, we also have 
\[E^{\chi}(X \times_R \Cb;x,y) = E^{\chi}(Y \times_R \Cb;x,y).\]
\end{theorem}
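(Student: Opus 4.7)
My plan is to reduce Theorem \ref{equivariant2} to Theorem \ref{equivariant} via a Vandermonde / Galois argument that separates the different fractional-power contributions. First I would choose a common denominator $N \geq 1$ and decompose
\[
X - Y \;=\; \sum_{k=0}^{N-1} Z^{(k)}\,\mathbb{L}^{k/N}
\]
with each $Z^{(k)} \in K_0(\mathsf{Var}_G/R)$; this is possible because integer powers of $\mathbb{L}$ already belong to $K_0(\mathsf{Var}_G/R)$. The crucial observation is that $\#_G^{s,\chi}(Z^{(k)} \times_R \Fb_p) = \#_G^\chi(Z^{(k)} \times_R \Fb_p)$ does not depend on the compatible system of roots $s$, since $Z^{(k)}$ contains no fractional powers of $\mathbb{L}$. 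Consequently, the hypothesis reads
\[
\sum_{k=0}^{N-1} \#_G^\chi(Z^{(k)} \times_R \Fb_p) \cdot s_N^{\,k} \;=\; 0
\]
for every prime $p$ with $R \to \Fb_p$ and every chosen $N$-th root $s_N$ of $p$.

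The central step will be to isolate these $N$ summands. I would restrict attention to primes $p$ with $p \nmid N$; there remain infinitely many such primes in the image of $\Spec R \to \Spec \Zb$. For such $p$ Kummer theory yields $[\Qb(\zeta_N, p^{1/N}) : \Qb(\zeta_N)] = N$, so $\Gal(\Qb(\zeta_N, p^{1/N})/\Qb(\zeta_N))$ acts simply transitively on the $N$-th roots of $p$ through $\mu_N$. Remark \ref{importantrmk}, which guarantees that Galois-twisting $s$ preserves the vanishing of $\#_G^{s,\chi}(X - Y)$, then produces for every $\zeta \in \mu_N$ the relation
\[
\sum_{k=0}^{N-1} \#_G^\chi(Z^{(k)} \times_R \Fb_p) \cdot \zeta^{\,k}\, s_N^{\,k} \;=\; 0.
\]
Since the coefficient matrix $(\zeta^{\,k})_{\zeta \in \mu_N,\, 0 \le k < N}$ is an invertible Vandermonde matrix, I would conclude $\#_G^\chi(Z^{(k)} \times_R \Fb_p) = 0$ for each $k$ and every such prime $p$.

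To finish, I would represent each $Z^{(k)}$ as a $\Zb$-linear combination of classes of smooth projective $G$-varieties via Bittner's presentation \cite[Lemma 7.1]{MR2059227} and feed the equivariant point-count vanishing into Theorem \ref{equivariant}. The Chebotarev step of its proof only requires agreement on a density-one set of closed points of $\Spec R[1/\ell]$, which is precisely what the cofinite set of primes produced above provides; this yields $E_G^\chi(Z^{(k)} \times_R \Cb) = 0$ for each $k$. Summing gives
\[
E_G^\chi((X - Y) \times_R \Cb) \;=\; \sum_{k=0}^{N-1} E_G^\chi(Z^{(k)} \times_R \Cb) \cdot (xy)^{k/N} \;=\; 0,
\]
which is the desired identity. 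I expect the subtle step to be the Vandermonde disentanglement, which hinges on the $s$-independence of $\#_G^\chi(Z^{(k)} \times_R \Fb_p)$: this is what ensures that Galois-twisting $s$ produces $N$ genuinely distinct linear relations on $N$ unknowns rather than merely Galois conjugates of a single relation.
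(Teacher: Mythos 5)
Your decomposition of $X-Y$ into pieces $Z^{(k)}\mathbb{L}^{k/N}$ with $Z^{(k)}\in K_0(\mathsf{Var}_G/R)$, together with the observation that $\#_G^\chi(Z^{(k)})$ is insensitive to the compatible system of roots so that Remark \ref{importantrmk} converts Galois-twisting into a Vandermonde system, is a genuinely different strategy from the paper's. The paper never tries to isolate the $\mathbb{L}^{k/N}$-coefficients: it instead assembles the virtual lisse sheaves $(R^cf_*\Qb_\ell)^\chi=\bigoplus_i(R^{c-2\alpha_i}f_{i,*}\Qb_\ell(\alpha_i))^\chi$ using the fractional Tate twists $\Qb_\ell(\alpha_i)$ and runs Chebotarev and $p$-adic Hodge theory directly on those, so the fractional exponents stay inside the Galois representations rather than being disentangled at the level of numbers.

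The gap is in the final step, where you claim the partial vanishing of $\#_G^\chi(Z^{(k)})$ suffices to invoke Theorem \ref{equivariant}. At a closed point $x\in\Spec R[1/\ell]$ with residue field $\Fb_q$, $q=p^f$, twisting the compatible system by $\zeta\in\mu_N$ multiplies $q^{k/N}=s_N^{fk}$ by $\zeta^{fk}$, so the matrix $(\zeta^{fk})_{\zeta\in\mu_N,\,0\le k<N}$ has rank only $N/\gcd(f,N)$; your Vandermonde step therefore only yields $\#_G^\chi(Z^{(k)}\times_R\kappa(x))=0$ at points whose residue degree $f$ is coprime to $N$, and you in fact only use $f=1$. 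When $\dim R>1$ the degree-one points with $\kappa(x)=\Fb_p$ form a finite subset of a positive-dimensional special fiber for each $p$, and they are far from a density-one set of closed points of $\Spec R[1/\ell]$; the assertion that ``the cofinite set of primes provides a density-one set of closed points'' is false, so the Chebotarev step cannot be cited as you propose without a separate equidistribution input for degree-one Frobenii. A second, independent defect is that the weight-separation step inside the proof of Theorem \ref{equivariant} extracts each $\det(1-t\Fr_q,(R^if_*\Qb_\ell)^\chi)$ from the point counts over \emph{all} $\Fb_{q^r}$, and from the Vandermonde you only control $r$ with $\gcd(r,N)=1$. This is genuinely insufficient: a single weight-$i$ eigenspace can contain a full orbit $\{q^{i/2}\zeta_N^j\}_{0\le j<N}$, which contributes trace zero to $\Fr_q^r$ for every $r$ coprime to $N$ while its characteristic polynomial is $1-q^{iN/2}t^N$, so such a block is invisible to your data. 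Both defects come from the same source, the deficient rank of the Vandermonde matrix away from degree-one points, and this is exactly what the paper's device of keeping the fractional weights inside a single Galois representation is designed to avoid. As written, the argument does not close; to rescue it you would need either a Chebotarev theorem adapted to degree-one closed points of higher-dimensional arithmetic schemes, or an additional argument recovering the missing $\Fb_{q^r}$ (and higher-degree) data from the partial Vandermonde relations.
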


\begin{proof}
After replacing $R$ by a finite \'etale extension we may choose a compatible system $\Qb_\ell(\frac{1}{r})$ as in Lemma \ref{TateRoots}. 

We fix a character $\chi$ of $G$. The assertion is reduced to the following situation: Let $X_0,\dots,X_m$ and $Y_0,\dots,Y_m$ be smooth projective $G$-varieties over $R \subset \Cb$ (which is smooth over $\Zb$), and $$\alpha_0,\dots,\alpha_m, \beta_0,\dots,\beta_{m'} \in \Qb \cap [0,1),$$ such that we have for every $R \to \Fb_q$ an equality
$$\sum_{i=0}^m q^{\alpha_i}\#^{s,\chi}_{\Fb_q}(X_i) = \sum_{i=0}^{m'} q^{\beta_i} \#^{s,\chi}_{\Fb_q}(Y_i),$$
where $s=(\Tr(\Fr_{\Fb_q},\Qb_{\ell}(\frac{1}{r})))_{r\geq 1}$ is the induced system of roots of $q$.
We denote by $X = \sum_{i=0}^m [X_i] \cdot{} \mathbb{L}^{\alpha_i}$ and $Y = \sum_{j=0}^{m'} [Y_i] \mathbb{L}^{\beta_j}$ the corresponding elements of $K_0(\mathsf{\mathsf{Var}_G}/R)[(\mathbb{L}^{\frac{1}{r}})_{r \geq 2}]$. 

We choose a prime $\ell$, such that $\ell > \dim X_i, \dim Y_j$ for all $i,j$, and, such that we have a finite extension $E$ of $\Qb_{\ell}$ together with an embedding $R$ into the valuation ring $\Oo$ of $E$.


We denote by $f_i\colon X_i \to \Spec R$ and $g_i \colon Y_i \to \Spec R$ the structural morphisms. For a rational number $c \in \Qb \setminus \Nb$ we define $(R^cf_{i,*}\Qb_{\ell})^{\chi} = 0$. We can now consider for every rational number $c > 0$ the lisse $\ell$-adic sheaves
$$(R^c f_*\Qb_{\ell})^{\chi} = \bigoplus_{i=0}^m (R^{c-2\alpha_i}f_{i,*}\Qb_{\ell}(\alpha_i))^{\chi},$$
and similarly,
$$(R^c g_*\Qb_{\ell})^{\chi} = \bigoplus_{i=0}^{m'} (R^{c-2\beta_i}g_{i,*}\Qb_{\ell}(\beta_i))^{\chi}.$$
By Chebotarev Density we have an isomorphism of semi-simplifications $(R^c f_*\Qb_{\ell})^{\chi,ss} \simeq (R^c g_*\Qb_{\ell})^{\chi,ss}$ of lisse sheaves over $\Spec R[\ell^{-1}]$. We now use the morphism $R \to E$, and consider the Galois representations of $\Gal(\bar{E}/E)$ induced by these lisse sheaves by pullback to $E$.

By applying $p$-adic Hodge theory (that is, Fontaine--Messing's \cite{MR902593} or Faltings's \cite{MR924705}, as in the proof of Theorem \ref{equivariant}) to the smooth projective varieties $X_i$ (and $Y_j$), we see that the Hodge-Tate weights of these Galois representation are given by the formal expressions:
$$h^{p,q}_{\chi}(X_{\Cb}) = \sum_{i=0}^m h^{p-\alpha_i,q-\alpha_i}_{\chi}(X_i),$$ respectively $h^{p,q}_{\chi}(Y_{\Cb}) = \sum_{i=0}^{m'} h^{p-\alpha_i,q-\alpha_i}_{\chi}(Y_j)$
where $p,q \in \Qb$, and we use the convention that for non-integral rational number $c,d$ we have $h^{c,d}_{\chi}(X_i) = 0$.
We therefore conclude that $E^{\chi}(X_{\Cb}) = E^{\chi}(Y_{\Cb})$.

\end{proof}

\begin{theorem}\label{thm:stringy-katz}
Let $R \subset \Cb$ subalgebra of finite type over $\Zb$. We fix an abstract isomorphism of $\Cb$ and $\overline{\Qb}_{\ell}$. Let $X_i$ be smooth $\Gamma_i$-varieties for two abstract finite abelian groups $\Gamma_1$ and $\Gamma_2$. Let $\Xc_i = [X_i /\Gamma_i]$ be the resulting quotient $R$-stacks and $\alpha_i$ be a $\mu_r$-gerbe on $\Xc_i$ for $i=1,2$. We suppose that for every ring homomorphism $R \to \Fb_q$ to a finite field $\Fb_q$ we have 
$\#_{\mathsf{st}}^{\alpha_1}(\Xc_1 \times_R \Fb_q) = \#_{\mathsf{st}}^{\alpha_2}(\Xc_2 \times_R \Fb_q).$ Then, we also have \[E_{\mathsf{st}}(\Xc_1 \times_R \Cb,\alpha_1;x,y) = E_{\mathsf{st}}(\Xc_2 \times_R \Cb,\alpha_2;x,y).\]
\end{theorem}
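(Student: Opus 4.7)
The plan is to reduce the statement to the single-character result of Theorem \ref{equivariant2}, by packaging the stringy invariants of both sides as $\chi$-isotypic counts for one product group $G$. Since $\Gamma_1$ and $\Gamma_2$ are abelian, the centralizer $C(\gamma)$ is all of $\Gamma_i$ for every $\gamma \in \Gamma_i$, so the $\alpha_i$-twisted stringy invariants decompose as sums over pairs $(\gamma, Y)$ with $\gamma \in \Gamma_i$ and $Y \in \pi_0(X_i^{\gamma})$; each contribution consists of a fermionic shift factor together with the cohomology of $[Y/\Gamma_i]$ with coefficients in the local system $L_{\gamma,Y}$ induced by the $\mu_r$-torsor $P_{\alpha_i}$.

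Set $G := \mu_r \times \Gamma_1 \times \Gamma_2$, and let $\chi := \chi_{\mu_r} \boxtimes \mathbf{1} \boxtimes \mathbf{1}$, where $\chi_{\mu_r}$ is the standard embedding $\mu_r \hookrightarrow \Cb^\times$. For each $\gamma \in \Gamma_1$, let $\tilde X_1^\gamma \to X_1^\gamma$ denote the total space of the $\Gamma_1$-equivariant $\mu_r$-torsor $P_{\alpha_1}$ restricted to $X_1^\gamma$; it carries commuting actions of $\mu_r$ (the torsor structure) and of $\Gamma_1$ (the equivariant structure), which we extend to a $G$-action by letting $\Gamma_2$ act trivially. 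Since the fermionic shift $F(\gamma, \cdot)$ is $\Gamma_1$-invariant and locally constant on $X_1^\gamma$, we may stratify $X_1^\gamma = \bigsqcup_c X_1^{\gamma, c}$ by its values and set
$$Z_1 := \sum_{\gamma \in \Gamma_1} \sum_{c} [\tilde X_1^{\gamma, c}] \cdot \mathbb{L}^{c} \in K_0(\mathsf{Var}_G/R)\bigl[(\mathbb{L}^{1/r})_{r \geq 2}\bigr],$$
where $\tilde X_1^{\gamma,c}$ is the preimage of $X_1^{\gamma,c}$ in $\tilde X_1^\gamma$. Define $Z_2$ analogously from $\Xc_2$, letting $\Gamma_1$ act trivially.

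A direct computation then identifies
$$\#_G^\chi(Z_i \times_R \Fb_q) = \#^{\alpha_i}_{\mathsf{st}}(\Xc_i \times_R \Fb_q) \quad \text{and} \quad E_G^\chi(Z_i \times_R \Cb; x,y) = E^{\alpha_i}_{\mathsf{st}}(\Xc_i \times_R \Cb; x,y).$$
Indeed, the $\chi$-isotypic part of $H^*_c(\tilde X_1^{\gamma, c}, \overline{\Qb}_\ell)$ equals the $\Gamma_1$-invariants of the $\chi_{\mu_r}$-isotypic subspace, which by the construction of the associated local system is $H^*_c([X_1^{\gamma, c}/\Gamma_1], L_{\gamma})$, while the factor $\mathbb{L}^{c}$ contributes $q^{c}$ in point counts and $(xy)^{c}$ in $E$-polynomials, since $G$ acts trivially on the Tate twist. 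With these identifications, the hypothesis of the theorem reads $\#_G^\chi(Z_1 \times_R \Fb_q) = \#_G^\chi(Z_2 \times_R \Fb_q)$ for every $R \to \Fb_q$, so Theorem \ref{equivariant2} yields $E_G^\chi(Z_1 \times_R \Cb) = E_G^\chi(Z_2 \times_R \Cb)$, which is the desired equality of stringy $E$-polynomials. The main work is therefore already contained in Theorem \ref{equivariant2}; what remains is the careful but essentially formal bookkeeping above, tracing the $\mu_r$-torsor $P_{\alpha_i}$ through the inertia decomposition.
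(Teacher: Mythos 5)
Your proposal is correct and follows essentially the same route as the paper: both lift to the total space of the $\mu_r$-torsor $P_{\alpha_i}$ over each fixed-point stratum, package the stringy invariant as a $\chi$-isotypic class in $K_0(\mathsf{Var}_G/R)[(\mathbb{L}^{1/r})_{r\geq 2}]$ for the product group $G = \Gamma_1 \times \Gamma_2 \times \mu_r$ with $\chi$ the standard character of $\mu_r$, and invoke Theorem \ref{equivariant2}. The only cosmetic differences are that the paper first reduces to $\Gamma_1 = \Gamma_2$ (via $\Xc_1 \simeq [(X_1 \times \Gamma_2)/\Gamma_1\times\Gamma_2]$) before taking the product with $\mu_r$, whereas you reach the same $G$ directly by letting the ``other'' group act trivially, and that you stratify explicitly by fermionic-shift values where the paper writes this more compactly as $\mathbb{L}^{F(\gamma)}$.
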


\begin{proof}
We may assume $\Gamma_1 = \Gamma_2$, since we can replace $\Gamma_1$ and $\Gamma_2$ by $\Gamma=\Gamma_1 \times \Gamma_2$ in the following way: 
$$\Xc_1 = [(X_1 \times \Gamma_2)/\Gamma],$$
and similarly for $\Xc_2$. Moreover after enlarging $R \subset \Cb$ we may assume that these presentations as quotient stacks are also defined over $R$. 
We shall assume that $R$ contains $\mu_r(\mathbf{C})$; this can always be achieved by suitably modifying $R$.
 
For each prime power $q$ let $s_q=(q^\frac{1}{r})_{r\geq 1}$ be the compatible system of positive real roots. Then for every homomorphism $R \to \Fb_q$, the stringy point-count of $\Xc_i$ twisted by $\alpha_i$ is given by $\#^{s_q,\chi}_{\Fb_q}$ applied to
$\sum_{[\gamma] \in \Gamma_i} \mathbb{L}^{F(\gamma)}L_{i,\gamma},$
where $L_{i,\gamma}$ is the $\Gamma$-equivariant local system on $X_i^{\gamma}$ induced by the gerbe $\alpha_i$.

 The local system $L_{i,\gamma}$ is induced by a $\mu_r$-torsor (with respect to a chosen embedding $\chi\colon \mu_{r} \hookrightarrow \overline{\Qb}_{\ell}$). Let $Y_{i,\gamma}$ be the total space of this torsor. It is acted on by $\Gamma \times \mu_r$, and $E_{\Gamma}(X_i^\gamma,L_\gamma;x,y)$ is equal to $E_{\Gamma \times \mu_r}^{\chi}(Y_{i,\gamma};x,y)$, where $\chi$ denotes the character induced by the chosen embedding $\mu_r \hookrightarrow \Qb_{\ell}$ and the projection $\Gamma \times \mu_r \twoheadrightarrow \mu_r$. We can therefore apply Theorem \ref{equivariant2} to $G=\Gamma \times \mu_r$ and $\sum_{\gamma \in \Gamma}[Y_{i,\gamma}]\cdot{}\mathbb{L}^{\alpha_i}$ to deduce the assertion.
\end{proof}


\section{Arithmetic of local fields}

\subsection{Galois theory of local fields} \label{galois}


We fix a prime $p>0$. First we recall some general facts about local fields of residue characteristic $p$, that is, finite extensions of $\Qb_p$ or $\mathbb{F}_p((T))$.

For a field $F$ equipped with a valuation $v\colon F^{\times}\to \Zb$ we denote by $\Oo_F$ the ring of integers of $F$, by $\mathfrak{m}_F$ the maximal ideal of $\Oo_F$ and by $k_F = \Oo_F/\mathfrak{m}_F$ its residue field. We will be interested in local fields $F$ as well as their algebraic extensions, which we equip with the unique prolongation of the valuation on $F$.

Now we fix a local field $F$ and a separable closure $F^\text{s}$ of $F$. We also fix a uniformiser $\pi\in F$.

\begin{definition}\label{locgal}
For an algebraic overfield $L$ of $F$ we let the inertia group $I_L$ be the kernel of the canonical surjective homomorphism $\Gal(L/F)\to \Gal(k_L/k_F)$ and, in case $I_L$ is finite, we let the ramification index $e_{L/F}$ of $L$ over $F$ be the order of $I_L$.

\begin{enumerate}[(i)]

\item  An algebraic field extension $F\subset L$ is called \emph{totally ramified} if the induced extension of residue fields $k_F\subset k_L$ is the trivial one.
\item An algebraic field extension $F\subset L$ is called \emph{unramified} if for every intermediary field $F\subset L' \subset L$, which is finitely generated over $F$ we have $e_{L'/F}=1$.
\item An algebraic field extension $F\subset L$ is called \emph{tamely ramified} if for every intermediary field $F\subset L' \subset L$, which is finitely generated over $F$ the ramification index $e_{L'/F}$ is prime to $p$.
\end{enumerate}
\end{definition}

We will be mainly interested in abelian algebraic extensions of $F$. A fixed separable closure $F^s$ of $F$ contains the following tower of extensions:
\begin{equation*}
  F \subset F^\text{ur} \subset F^\text{tr} \subset F^\text{ab} \subset F^\text{s}
\end{equation*}
Here $F^\text{ab}$ is the maximal abelian extension of $F$, $F^\text{ur}$ is the maximal abelian unramified extension and $F^\text{tr}$ is the maximal tamely ramified abelian extension of $F$.

 By local class field theory there is a canonical homomorphism $r\colon F^{\times}\to \Gal( F^\text{ab} /F)$, which is injective and has dense image. It fits into a diagram of split short exact sequences
\begin{equation} \label{RecDiag}
  \xymatrix{ 
0 \ar[r] & \Oo_F^{\times} \ar[r] \ar[d] & F^{\times} \ar[r]^v \ar[d]^r & \Zb \ar[r]  \ar[d] & 0 \\
             0 \ar[r] & \Gal(F^\text{ab}/F^\text{ur}) \ar[r] & \Gal( F^\text{ab}/F) \ar[r] & \Gal(\overline{k}_F/ k_F) \ar[r] & 0
}
\end{equation}
where each vertical homomorphism is injective with dense image and the right vertical homomorphism sends $1\in \Zb$ to the Frobenius automorphism $x\mapsto x^{|k_F|}$ of $\overline{k}_F$. 

Brauer groups play an important role in local class field theory. We refer the reader to \cite[Proposition XIII.6]{MR0354618} for a proof of the following result, which summarises the main properties of Brauer groups of local fields.

\begin{theorem}\label{BrauerLoc}
The Brauer group $\Br(F)$ of a local field is isomorphic to $\Qb/\Zb$ by means of the Hasse invariant $\inv\colon \Br(F) \to \Qb/\Zb$. For a finite field extension of local fields $L/F$ we have a commutative diagram
\[
\xymatrix{
\Br(F) \ar[r]^{\simeq} \ar[d] & \Qb/\Zb \ar[d]^{\cdot{}[L:F]} \\
\Br(L) \ar[r]^{\simeq} & \Qb/\Zb.
}
\]
\end{theorem}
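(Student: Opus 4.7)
The plan is to derive the isomorphism $\Br(F) \simeq \Qb/\Zb$ by computing Galois cohomology relative to the unramified closure, and then to track how the resulting identification transforms under scalar extension. First, I would identify $\Br(F) = H^2(\Gal(F^\text{s}/F), (F^\text{s})^\times)$ and use the inflation-restriction spectral sequence along the tower $F \subset F^\text{ur} \subset F^\text{s}$. The crucial vanishing $\Br(F^\text{ur}) = 0$ can be deduced by reducing to finite (necessarily totally ramified) extensions of $F^\text{ur}$ and showing that $H^2$ of the relevant Galois groups vanishes via surjectivity of the norm on $F^\text{ur}$-units. This collapses the spectral sequence and yields $\Br(F) \simeq H^2(\Gal(F^\text{ur}/F), (F^\text{ur})^\times)$.

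Next, I would plug in the valuation sequence
\begin{equation*}
0 \to \Oo_{F^\text{ur}}^{\times} \to (F^\text{ur})^{\times} \xrightarrow{v} \Zb \to 0,
\end{equation*}
and argue that $H^i(\Gal(F^\text{ur}/F), \Oo_{F^\text{ur}}^\times) = 0$ for $i \geq 1$. Here one filters $\Oo_{F^\text{ur}}^\times$ by higher unit groups and observes that the associated graded pieces $\overline{k_F}^\times$ and $\overline{k_F}^+$ are torsion $\Gal(F^\text{ur}/F)$-modules, hence vanish in degrees $\geq 2$ because $\Gal(F^\text{ur}/F) \simeq \hat\Zb$ has cohomological dimension one on torsion coefficients; the vanishing in degree one then follows from Hilbert 90 and Lang's theorem applied to the Frobenius generator acting on $\overline{k_F}^\times$ and $\overline{k_F}^+$. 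The long exact sequence in cohomology, together with $H^1(\hat\Zb, \Zb) = \Hom_{\mathrm{cont}}(\hat\Zb, \Zb) = 0$, then gives $\Br(F) \simeq H^2(\hat\Zb, \Zb)$.

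The identification is completed by the canonical isomorphism $H^2(\hat\Zb, \Zb) \simeq \Qb/\Zb$ induced by the connecting homomorphism of $0 \to \Zb \to \Qb \to \Qb/\Zb \to 0$ and the identification $H^1(\hat\Zb, \Qb/\Zb) = \Hom_{\mathrm{cont}}(\hat\Zb, \Qb/\Zb) \simeq \Qb/\Zb$ via evaluation at the arithmetic Frobenius. Composing these with the earlier cohomological identifications defines the Hasse invariant $\inv$.

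For the functoriality part, let $L/F$ be finite with ramification index $e$ and residue degree $f$, so that $[L:F] = ef$. On the model $H^2(\hat\Zb, \Zb)$, the restriction map $\Br(F) \to \Br(L)$ decomposes into two effects: first, the embedding $\Gal(L^\text{ur}/L) \hookrightarrow \Gal(F^\text{ur}/F)$ identifies with multiplication by $f$ on $\hat\Zb$, since the Frobenius of $L$ is the $f$-th power of that of $F$; second, the comparison $v_L|_F = e \cdot v_F$ of normalised valuations turns the right-hand term $\Zb$ of the valuation sequence into a copy of $\Zb$ included via multiplication by $e$. Composing these two contributions yields multiplication by $ef = [L:F]$ on $\Qb/\Zb$. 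I expect the main obstacle to be the vanishing $\Br(F^\text{ur}) = 0$, which, while classical, genuinely uses the structure of local fields rather than formal cohomological manipulations; the subsequent functoriality is essentially diagram-chasing, provided the normalisation conventions (arithmetic versus geometric Frobenius, signs in the connecting map) match those in \eqref{RecDiag}.
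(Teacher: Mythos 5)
The paper does not actually prove Theorem~\ref{BrauerLoc}: it cites Serre's \emph{Local Fields}, Chapter XIII, Proposition~6. So there is no in-paper argument to compare against; what you have produced is a reconstruction of the classical proof (Serre Ch.~XII--XIII, Milne's CFT notes, Cassels--Fr\"ohlich). Your overall plan is sound: collapse the inflation-restriction sequence using $H^1(G_{F^\text{ur}}, \bar F^\times)=0$ (Hilbert~90) and $\Br(F^\text{ur})=0$ to get $\Br(F)\simeq H^2(\Gal(F^\text{ur}/F),(F^\text{ur})^\times)$; feed in the valuation sequence; identify $H^2(\hat\Zb,\Zb)\simeq\Qb/\Zb$; and for functoriality factor the restriction through the residue degree $f$ (acting on the $\hat\Zb$ side) and the ramification index $e$ (acting on the value group $\Zb$), giving $ef=[L:F]$. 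This matches the standard argument.

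One genuine technical flaw in the middle step: you filter $\Oo_{F^\text{ur}}^\times$ by higher unit groups and argue from the graded pieces $\bar k_F^\times$ and $\bar k_F^+$. This presupposes that $\Oo_{F^\text{ur}}^\times$ is reconstructed from its quotients $\Oo_{F^\text{ur}}^\times/U^{(n)}$, i.e.\ that $\Oo_{F^\text{ur}}$ is complete. It is not: the maximal unramified extension is henselian but not complete, so the limit argument does not apply as stated. The standard repair is to observe that $\Oo_{F^\text{ur}}^\times = \colim_L \Oo_L^\times$ over finite unramified $L/F$, use that continuous group cohomology commutes with filtered colimits of discrete modules, and run the higher-unit filtration on each \emph{complete} $\Oo_L$ (where the graded pieces are the finite groups $k_L^\times$ and $k_L^+$, and vanishing follows from Hilbert~90, the triviality of $\Br(k_L)$, and the normal basis theorem). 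The same caution applies to your proof of $\Br(F^\text{ur})=0$ via norm surjectivity: it should be checked at the level of the complete finite stages, or one can instead invoke Lang's theorem that $F^\text{ur}$ is $C_1$. These are routine repairs and do not affect the correctness of the plan, but as written the profinite-cohomology computation of $H^i(\hat\Zb,\Oo_{F^\text{ur}}^\times)$ skips a step that does not hold literally.
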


\par
For $M$ a finite \'etale abelian group scheme over $F$, we denote by $H^i_\text{\'et}(F,M)$ the degree $i$ Galois cohomology group of $M$, that is, the group $H^i(\Gal(F^{\text{sep}}/F),M)$. Alternatively, we can view it as the $i$-th \'etale cohomology group of $M$ over $\Spec F$. For $i=0$ we obtain the finite group of $F$-rational points of $M$ and for $i = 1$ the group of $M$-torsors defined over $\Spec F$. In higher degrees one can give similar geometric interpretations, but we will not need this. The cohomology groups are known to vanish in degrees $i \geq 3$ (see \cite[Section I.2]{MilneADT}).

We denote by $H^i_{\text{ur}}(F,M)$ the Galois cohomology group $H^i_\text{\'et}(\Gal(F^{\text{ur}}/F),M)$. Since $\Gal(F^{\text{ur}}/F) = \Gal(k_F^{\text{sep}}/k_F)$, we see that $H^i_{\text{ur}}(F,M) \simeq H^i_\text{et}(\Oo_F,M)\simeq H^i_\text{\'et}(k_F,M_{k_F})$.

For a finite abelian group $G$, we denote by $G^*$ the group of characters $G \to \Qb/\Zb$. This construction is a special case of the Pontryagin dual defined below. For a finite commutative group scheme $M$ over $F$, we denote by $M^{\vee}\defeq \underline{Hom}(M,\Gb_m)$ its Cartier dual.

\begin{theorem}[{\cite[Corollary I.2.3]{MilneADT}}]\label{finiteduality}
Let $M$ be a commutative finite group scheme over $F$ of order prime to $p$. For every $i \in \mathbb{Z}$ there exists a canonical perfect pairing $H^i_\text{\'et}(F,M) \times H^{2-i}_\text{\'et}(F,M^{\vee}) \to \Qb/\mathbb{Z}$. Furthermore, the annihilator of $H^i_{\text{ur}}(F,M)$ is equal to $H^{2-i}_{\text{ur}}(F,M^{\vee})$.
\end{theorem}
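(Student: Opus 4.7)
The plan is to construct the pairing via cup product and the Hasse invariant, then to prove perfectness by a dévissage to the case $M = \mu_n$ where the statement reduces to local class field theory. For the final assertion on unramified cohomology one compares inflation–restriction sequences on both sides of the pairing.

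First I would define the pairing as the composite
\begin{equation*}
H^i_\text{\'et}(F,M)\times H^{2-i}_\text{\'et}(F,M^{\vee}) \xrightarrow{\cup} H^2_\text{\'et}(F,M\otimes M^{\vee}) \xrightarrow{\ev_*} H^2_\text{\'et}(F,\Gb_m) = \Br(F) \xrightarrow{\inv} \Qb/\Zb,
\end{equation*}
where $\ev\colon M\otimes M^{\vee}\to \Gb_m$ is the canonical evaluation map and $\inv$ is the Hasse invariant of Theorem \ref{BrauerLoc}. Bilinearity and functoriality in $M$ are automatic; non-degeneracy is the content of the theorem. Note that the $p$-primary part is excluded by hypothesis, so $H^i_\text{\'et}(F,M)$ is a finite abelian group in all relevant degrees (it vanishes for $i \geq 3$) and Pontryagin duality makes sense.

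The heart of the proof is a dévissage. Since $\#M$ is coprime to $p$, after passing to a finite Galois extension $L/F$ containing enough roots of unity we may decompose $M_L$ into its connected-\'etale parts and, using the diagonalisability of finite \'etale group schemes of order coprime to the residue characteristic, write $M_L$ as a product of groups of the form $\mu_n$ and $\Zb/n\Zb$. By Shapiro's lemma applied to the finite extension $L/F$, it suffices to verify the pairing is perfect for these basic cases. For $M = \mu_n$ (with $n$ coprime to $p$) Kummer theory identifies $H^1_\text{\'et}(F,\mu_n) \cong F^{\times}/(F^{\times})^n$ and $H^0_\text{\'et}(F,\Zb/n) \cong \Zb/n$, and the pairing becomes the composition of the reciprocity map $r\colon F^{\times}\to \Gal(F^{\ab}/F)$ from Diagram (\ref{RecDiag}) with evaluation on characters; its perfectness is local reciprocity. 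The cases $i=0$ and $i=2$ follow by duality of $H^0$ and $H^2$ (the latter identified with $\Br(F)[n]$), and the vanishing $H^i_\text{\'et}(F,M)=0$ for $i\geq 3$ makes the remaining cases trivial.

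For the unramified statement, I would exploit the inflation–restriction sequence
\begin{equation*}
0 \to H^i_{\text{ur}}(F,M) \to H^i_\text{\'et}(F,M) \to H^{i-1}(I_F, M)^{\Gal(k_F^{\text{sep}}/k_F)} \to \cdots,
\end{equation*}
together with the fact that the residue field $k_F$ has cohomological dimension $1$, so $H^i_{\text{ur}}(F,M) = H^i_\text{\'et}(k_F,M_{k_F})$ vanishes for $i\geq 2$. The point is that the cup-product pairing is compatible with inflation and restriction, and that the composite $H^2_{\text{ur}}(F,\Gb_m)\hookrightarrow \Br(F)\xrightarrow{\inv}\Qb/\Zb$ vanishes (since $\Br(F^{\text{ur}}/F)=0$ on the unramified part by Diagram (\ref{RecDiag})). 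Counting dimensions using the identities $|H^0_{\text{ur}}(F,M)|\cdot|H^1_{\text{ur}}(F,M)| = |H^0(k_F,M)|\cdot|H^1(k_F,M)|$ and their analogues for $M^{\vee}$, combined with the perfectness already established, forces $H^i_{\text{ur}}(F,M)$ and $H^{2-i}_{\text{ur}}(F,M^{\vee})$ to be exact annihilators of each other.

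The main obstacle is the dévissage: one must be careful that the pairing is functorial under restriction/corestriction along $L/F$ and that Shapiro's lemma interchanges $M$ and $M^{\vee}$ correctly (i.e.\ that $\mathrm{Res}_{L/F}(M)^{\vee} \cong \mathrm{Res}_{L/F}(M^{\vee})$ under Cartier duality). Once this bookkeeping is in place, everything is driven by local reciprocity and the cohomological dimension of $k_F$.
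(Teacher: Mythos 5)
The paper does not supply its own proof of this theorem; it simply cites \cite[Corollary I.2.3]{MilneADT}. There the result is obtained within a derived-functor framework: Milne first establishes a perfect pairing $\Ext^r_{G_F}(M,\Gb_m)\times H^{2-r}(G_F,M)\to\Qb/\Zb$ and then, for $M$ of order prime to $p$, identifies $\Ext^r_{G_F}(M,\Gb_m)\cong H^r(G_F,M^\vee)$. Your sketch instead follows the classical cup-product d\'evissage (as in Serre's \emph{Galois Cohomology} or Tate's original treatment): build the pairing from cup product, evaluation, and the Hasse invariant; reduce by Shapiro's lemma to cyclic constituents over a splitting extension; and quote local class field theory for $\mu_n$. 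Both routes are standard and correct in outline; Milne's homological formulation has the advantage of extending uniformly to abelian varieties (Theorem \ref{tateduality}), while yours is more elementary and self-contained.

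Two points in your sketch need correction. The first is minor: since $\#M$ is prime to $p$, a finite group scheme over $F$ is automatically \'etale, so there is no connected--\'etale decomposition to invoke; all you need is a finite Galois $L/F$ over which $M$ and $M^\vee$ become constant. The second is a genuine error: the assertion that ``$\Br(F^{\text{ur}}/F)=0$'' is false. For a local field every Brauer class is split by an unramified extension, so $\Br(F^{\text{ur}}/F)=\Br(F)\cong\Qb/\Zb$, and the Hasse invariant is in fact an isomorphism on this group rather than zero. What actually kills the pairing of two unramified classes is that for \emph{torsion} coefficients the unramified $H^2$ vanishes: $H^2_{\text{ur}}(F,\mu_n)=H^2(\Gal(\bar k_F/k_F),\mu_n)=0$ because the absolute Galois group of the finite residue field has cohomological dimension $1$; thus $a\cup b$ dies already in $H^2_{\text{ur}}(F,\mu_n)$, before it ever reaches $\Br(F)$. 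You do state the correct vanishing $H^{\geq 2}_{\text{ur}}(F,M)=0$ just before the erroneous sentence, so the repair is simply to delete the latter. The remaining order count, using Theorem \ref{euler} together with the identity $|H^1_{\text{ur}}(F,M)|=|M(F)|$ (coinvariants and invariants of a finite $\hat{\Zb}$-module have the same cardinality), then shows the containment $H^{2-i}_{\text{ur}}(F,M^\vee)\subseteq H^i_{\text{ur}}(F,M)^\perp$ is an equality.
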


As we remarked above the cohomology groups $H^i_\text{\'et}(F,M)$ are finite. Their cardinalities are subject to the following constraint.

\begin{theorem} \label{euler}
  Let $M$ be a commutative finite \'etale group scheme over $F$ of order prime to $p$. Then we have
  \begin{equation*}
    |H^1_\text{\'et}(F,M)|=|M(F)| |M^\vee(F)|.
  \end{equation*}
\end{theorem}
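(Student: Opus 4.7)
I would prove this as a two-step reduction, combining local Tate duality with the tame local Euler characteristic formula.

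First, I would apply Theorem \ref{finiteduality} with $i=2$: this yields a perfect pairing $H^2_\text{\'et}(F,M) \times H^0_\text{\'et}(F,M^\vee) \to \mathbb{Q}/\mathbb{Z}$. Since $H^0_\text{\'et}(F,M^\vee) = M^\vee(F)$ is a finite abelian group, its Pontryagin dual has the same cardinality, so $|H^2_\text{\'et}(F,M)| = |M^\vee(F)|$. Together with the tautology $H^0_\text{\'et}(F,M) = M(F)$, the asserted identity is then equivalent to the Euler-characteristic statement
\[
\chi(F,M) := \frac{|H^0_\text{\'et}(F,M)|\cdot|H^2_\text{\'et}(F,M)|}{|H^1_\text{\'et}(F,M)|} = 1.
\]

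Second, to prove $\chi(F,M)=1$, I would use d\'evissage. Since the cohomology of $F$ with finite coefficients vanishes in degrees $\geq 3$ (as recalled just before Theorem \ref{finiteduality}), the function $\chi(F,-)$ is multiplicative on short exact sequences of finite \'etale commutative group schemes. We may therefore reduce to the case where $M$ is annihilated by a prime $\ell \neq p$, and in fact to the case where $M$ is an $\mathbb{F}_\ell[\Gal(F^\text{s}/F)]$-module.

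Third, for such $M$ I would deduce the identity from the Hochschild--Serre spectral sequence associated to the extension $1 \to I_F \to \Gal(F^\text{s}/F) \to \Gal(k_F^\text{s}/k_F) \to 1$. The crucial point is that because $\ell \neq p$, inertia acts on $M$ through its tame quotient $I_F^{\text{tame}}$, which is procyclic of order prime to $p$; moreover $\Gal(k_F^\text{s}/k_F) \cong \hat{\mathbb{Z}}$ has cohomological dimension one. For finite continuous modules over procyclic groups of this type one has $|H^0|=|H^1|$ and vanishing in higher degrees, so the alternating product of cardinalities on the $E_2$-page telescopes to $1$, giving $\chi(F,M)=1$. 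The main obstacle is the careful bookkeeping in this spectral sequence argument, but this is exactly the content of Tate's local Euler characteristic formula in the tame case, and one may alternatively simply invoke \cite[Theorem~I.2.8]{MilneADT} in its coprime-to-$p$ specialisation to conclude at once.
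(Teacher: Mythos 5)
Your proposal is correct and follows essentially the same route as the paper: apply Theorem~\ref{finiteduality} to identify $|H^2_{\text{\'et}}(F,M)|$ with $|M^\vee(F)|$, then reduce to the vanishing of the local Euler characteristic $\chi(F,M)$ for $|M|$ coprime to $p$, for which the paper simply cites \cite[I.2.9]{MilneADT}. Your additional d\'evissage/Hochschild--Serre outline is a correct standard proof of that Euler characteristic formula in the tame case, so there is no gap, just more detail than the paper chooses to give.
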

\begin{proof}
  This is a combination of \cite[I.2.9]{MilneADT} and the identity $|H^2_\text{\'et}(F,M)|=|M^\vee(F)|$ implied by Theorem \ref{finiteduality}.
\end{proof}

Now let $\Gamma$ be a finite abelian group of order $n$ prime to $p$. We denote by $\underline{\Gamma}$ the constant group schemes over $\Spec(F)$ or $\Spec(\Oo_F)$ with value group $\Gamma$ and by $\mu(F)$ the finite group of roots of unity in $F$.
\begin{construction} \label{H1Cons}
  Consider the homomorphisms
  \begin{equation*}
   \Zb \times \mu(F) \xrightarrow{i} F^* \xrightarrow{r} \Gal(F^{\text{ab}}/F),
  \end{equation*}
where the first $i$ sends $1 \in \Zb$ to the chosen uniformiser $\pi \in F^*$ and is given by the inclusion $\mu(F) \into F^*$ on the second factor and $r$ is the reciprocity homomorphism from \eqref{RecDiag}.

  Since $\underline\Gamma$ is a constant abelian group scheme, there are canonical isomorphisms $$H^1_\text{\'et}(F,\underline\Gamma)\cong \Hom(\Gal(F^\text{s}/F),\Gamma)\cong \Hom(\Gal(F^\text{ab}/F),\Gamma),$$ where $\Hom$ denotes continuous group homomorphisms. 
Hence composition with $i$ gives a homomorphism
\begin{equation*}
   H^1_\text{\'et}(F,\underline\Gamma)\to \Gamma\oplus \Hom(\mu(F),\Gamma).
\end{equation*}
\end{construction}

\begin{proposition} 
  The homomorphism
  \begin{equation} \label{H1Iso}
    H^1_\text{\'et}(F,\underline\Gamma)\to \Gamma\oplus \Hom(\mu(F),\Gamma)
  \end{equation}
from Construction \ref{H1Cons} is an isomorphism. 
With respect to this isomorphism the inclusion of $\Gamma$ corresponds to $H^i_{\text{ur}}(F,\underline \Gamma) \subset H^1_\text{\'et}(F,\underline\Gamma)$. 
\end{proposition}
\begin{proof}
As noted above the reciprocity homomorphism $r\colon F^{\times}\to \Gal(F^\text{ab}/F)$ is injective with dense image. Hence it induces an isomorphism $$\Hom(\Gal(F^\text{ab}/F),\Gamma) \cong \Hom(F^{\times},\Gamma).$$

Now the choice of uniformiser $\pi\in F$ gives an isomorphism $F^{\times}\cong \Zb\times \Oo_F^{\times}$. The $\Zb$-module structure on $\Oo_F^{\times}$ extends uniquely to a continuous $\Zb_p$-module structure. Because of the existence of the logarithm the $\Zb_p$-module $\Oo_F^{\times}$ is isomorphic to the direct sum of a free $\Zb_p$-module (which has finite rank if $F$ has characteristic zero and countably infinite rank otherwise) and its torsion subgroup $\mu(F)$ (see \cite[Satz II.5.7]{MR1085974}). Since the order of $\Gamma$ is prime to $p$ we finally obtain an isomorphism
  \begin{equation*}
    \Hom(F^{\times},\Gamma) \cong \Hom(\Zb \times \mu(F),\Gamma) \cong \Gamma \oplus \Hom(\mu(F),\Gamma).
  \end{equation*}
The second claim follows from the above and the diagram \eqref{RecDiag}.
\end{proof}

\begin{lemma} \label{ConstantTateDuality}
  Assume that $F$ contains all roots of unity of order $|\Gamma|$. Then $(\underline{\Gamma})^\vee$ is the constant group scheme with value $\Hom(\Gamma,\mu(F))$. Under the isomorphisms \eqref{H1Iso}
  \begin{equation*}
    H^1(F,\underline{\Gamma}) \cong \Gamma \oplus \Hom(\mu(F),\Gamma)
  \end{equation*}
and 
\begin{equation*}
  H^1(F,\underline{\Gamma}^\vee) \cong \Hom(\Gamma,\mu(F)) \oplus \Hom(\mu(F),\Hom(\Gamma,\mu(F)))
\end{equation*}
the Tate duality pairing $H^1(F,\underline{\Gamma}) \times H^1(F,(\underline{\Gamma})^\vee) \to \Qb/\Zb$ from Theorem \ref{finiteduality} can be described as follows: The first factors of $H^1(F,\Gamma)$ and $H^1(F,(\underline{\Gamma})^\vee)$ jointly pair to zero, and analogously for the second factors. The remaining part of the pairing is given by the canonical evaluation pairings
\begin{equation*}
  \Gamma \times \Hom(\mu(F),\Hom(\Gamma,\mu(F))) \to \Hom(\mu(F),\mu(F)) \cong \Zb/ |\mu(F)| \Zb 
\end{equation*}
and
\begin{equation*}
  \Hom(\mu(F),\Gamma) \times \Hom(\Gamma,\mu(F)) \to \Hom(\mu(F),\mu(F)) \cong \Zb/ |\mu(F)| \Zb .
\end{equation*}
\end{lemma}
\begin{proof}
  The claim about the first (respectively second) factors pairing to zero follows from the last part of Theorem \ref{finiteduality}. For the remaining part of the claim, using functoriality in $\Gamma$ one can reduce to the case $\Gamma=\Zb/n\Zb$. In this case the claim can be verified using \cite[Remark I.2.5(b)]{MilneADT}.
\end{proof}



\subsection{Tate duality}\label{tate}


Following \cite{MilneADT}, we call an abelian torsion group $M$ \emph{of cofinite type} if for each $n\in \Zb$ the $n$-torsion subgroup of $M$ is finite. In the following we will deal with various abelian groups $M$, which are either profinite or torsion of cofinite type. We will always equip the profinite groups with the profinite topology and the torsion groups with the discrete topology. On the intersection of these two classes, namely finite abelian groups, these topologies agree. For such an $M$ we denote by $M^*\coloneqq \Hom_{\text{cts}}(M,\Qb/\Zb)$ its Pontryagin dual. The functor $M\mapsto M^*$ is a contravariant equivalence from the category of profinite groups to the category of torsion groups of cofinite type and vice versa. For a finite group $M$ we have $|M|=|M^*|$. For a profinite group $M$ and a torsion group $N$ of cofinite type (or vice versa) a continuous bilinear pairing $M\times N\to \Qb/\Zb$ is called \emph{non-degenerate} if the induced homomorphism $M\to N^*$ is an isomorphism.

Let $F$ be a local field as above. For an abelian variety $A$ over $F$ we recall the Tate duality pairings on the \'etale cohomology groups $H^i_\text{\'et}(F,A)$ of $A$: 

\begin{lemma}[{\cite[I.3.1]{MilneADT}}]\label{lemma:shift}
For any abelian variety $A$ over $F$ and any $r \geq 0$ there is a canonical isomorphism $$ H^r_\text{\'et}(F,A^{\vee})\xrightarrow{\simeq} \Ext^{r+1}_F(A,\Gb_m).$$ These isomorphisms are functorial in $A$.
\end{lemma}

\begin{construction}
For each $r\geq 0$ there is a natural pairing
\begin{equation} \label{pairing1}
  H^r_\text{\'et}(F,A)\times \Ext^{2-r}(A,\Gb_m) \to H^2_\text{\'et}(F,\Gb_m).
\end{equation}
From the construction of these pairings one sees that they are functorial in $A$ (see \cite[I.0.16]{MilneADT}).

By combining these pairings with the Hasse invariant
\begin{equation*}
  H^2_\text{\'et}(F,\Gb_m)\cong \Qb/\Zb
\end{equation*}
(c.f. Theorem \ref{BrauerLoc}) we obtain functorial pairings
\begin{equation*}
  H^r_\text{\'et}(F,A) \times H^{1-r}_\text{\'et}(F,A^{\vee}) \to \Qb/\Zb.
\end{equation*}
\end{construction}

\begin{theorem}[{Tate, see \cite[I.3.4]{MilneADT}}]\label{tateduality}
Let $A$ be an abelian variety over $F$. The cohomology groups $H^r(F,A)$ are zero for $r\geq 2$. The group $A(F)$ is profinite and the group $H^1_\text{\'et}(F,A)$ is torsion of cofinite type.
For $r=0,1$ the pairing
\begin{equation*}
  H^r_\text{\'et}(F,A) \times H^{1-r}_\text{\'et}(F,A^{\vee}) \to \Qb/\Zb
\end{equation*}
defined above is continuous and non-degenerate.
\end{theorem}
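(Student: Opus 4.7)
The plan is to bootstrap from the duality for finite \'etale group schemes (Theorem \ref{finiteduality}) to the duality for the abelian variety $A$ itself, using the Kummer-type exact sequences
\begin{equation*}
0 \to A[n] \to A \xrightarrow{n} A \to 0
\end{equation*}
for $n$ invertible in $\Oo_F$. The crucial additional input is the Weil pairing, which provides a canonical isomorphism $A[n]^{\dual} \simeq A^{\dual}[n]$, so that Theorem \ref{finiteduality} specialises to a perfect pairing $H^i_{\text{\'et}}(F, A[n]) \times H^{2-i}_{\text{\'et}}(F, A^{\dual}[n]) \to \Qb/\Zb$ for every $n$ coprime to the residue characteristic $p$.

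First I would settle the topological and vanishing statements. The group $A(F)$ is compact, since $A$ is proper over $F$ and $F$ is locally compact, and it is totally disconnected because $F$ is, hence profinite. For the vanishing of $H^r_{\text{\'et}}(F,A)$ in degrees $r \geq 2$ I would exploit the long exact sequences attached to the Kummer sequences above. The strict cohomological dimension of $F$ away from $p$ is $\leq 2$ (implicit in Theorem \ref{finiteduality}, which terminates in degree $2$), so $H^r_{\text{\'et}}(F, A[n]) = 0$ for $r \geq 3$. This forces multiplication by $n$ to be surjective on $H^r_{\text{\'et}}(F, A)$ for $r \geq 2$, and together with the fact that the cohomology is torsion in positive degrees this yields $H^r_{\text{\'et}}(F,A) = 0$; the $p$-primary part has to be handled separately via flat cohomology.

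For the main duality statement, the long exact sequences associated to the Kummer sequence give, for each $r \geq 0$ and each $n$ coprime to $p$, short exact sequences
\begin{equation*}
0 \to H^r_{\text{\'et}}(F,A)/n \to H^{r+1}_{\text{\'et}}(F, A[n]) \to H^{r+1}_{\text{\'et}}(F,A)[n] \to 0
\end{equation*}
and similarly for $A^{\dual}$. The compatibility of the Weil pairing with multiplication by $n$ ensures the finite-level Tate pairings from Theorem \ref{finiteduality} assemble into a compatible system. Passing to the inverse limit over $n$ on the profinite side (realising $A(F)$ as the limit of its $n$-quotients) and to the direct limit on the torsion side ($H^1_{\text{\'et}}(F,A)$ as the colimit of its $n$-torsion subgroups) produces the desired pairings $H^r_{\text{\'et}}(F,A) \times H^{1-r}_{\text{\'et}}(F, A^{\dual}) \to \Qb/\Zb$. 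Non-degeneracy follows from the perfectness at each finite level together with the fact that Pontryagin duality between profinite and discrete torsion groups of cofinite type intertwines the relevant limits; continuity is automatic from the construction.

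The hard part will be the $p$-primary contribution. For $n$ coprime to $p$ the group scheme $A[n]$ is \'etale and Theorem \ref{finiteduality} applies directly, but $A[p^k]$ is a non-\'etale finite flat group scheme, and the corresponding duality requires flat cohomology and deeper input, such as Raynaud's theory of prolongations of $p$-divisible groups or the use of N\'eron models. A secondary technical point is the verification that the limit pairing just constructed coincides with the intrinsic pairing defined via $\Ext^1(A,\Gb_m)$ of Lemma \ref{lemma:shift}; this is a cocycle-level comparison that needs to be checked carefully to guarantee functoriality in $A$.
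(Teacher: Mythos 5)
The paper offers no proof of this theorem: it is cited verbatim from Milne's \textit{Arithmetic Duality Theorems} \cite[I.3.4]{MilneADT} and used as a black box, so there is no in-paper argument to compare against. Your outline --- bootstrapping from the finite \'etale duality of Theorem \ref{finiteduality} via the Kummer sequences $0\to A[n]\to A\xrightarrow{n} A\to 0$, the Weil pairing identification $A[n]^\vee\cong A^\vee[n]$, and a limit argument --- is one standard route to the result, and you do correctly flag where the hard work lies.

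But the two points you defer are not technical checks to be cleaned up at the end; they constitute the bulk of the theorem. First, the $p$-primary part of $A(F)$ and of $H^1_\text{\'et}(F,A)$ lies entirely outside the reach of Theorem \ref{finiteduality}, since $A[p^k]$ is a non-\'etale finite flat group scheme in residue characteristic $p$: one has to pass to the fppf site and invoke flat duality (Milne's ADT, Chapter III), which is a separate theory rather than a patch, and it is precisely this part that Tate's original argument and Milne's proof spend most of their effort on. Second, the pairing you construct by passing to inverse and direct limits of the finite-level pairings must be shown to agree with the intrinsic pairing defined in the paper via $\Ext^{2-r}(A,\G_m)$ and Lemma \ref{lemma:shift}; this compatibility is load-bearing, because the paper's downstream use of Tate duality (Remark \ref{TDCons}, Lemma \ref{lemmaTate}) depends on the $\Ext$-description of the pairing in terms of pulling back $\G_m$-gerbes, not on a limit description. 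A smaller but genuine further gap: realising $A(F)$ as $\varprojlim_n A(F)/nA(F)$ requires knowing the subgroups $nA(F)$ are open and cofinal among open subgroups, which again uses the formal group of the N\'eron model and is nontrivial exactly for $n$ a power of $p$.
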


\begin{rmk} \label{TDCons}
The Tate duality pairing $A(F)\times H^1_\text{\'et}(A,F)\to \Qb/\Zb$ from Theorem \ref{tateduality} can be described as follows: Let $a\in A(F)$ and $T\in H^1_\text{\'et}(A^{\vee},F)$. Under the isomorphism $H^1_\text{\'et}(A,F)\cong \Ext^2(A,\Gb_m)\cong \Hom_D(A,S^2(\Gb_m))$, the torsor $T$ corresponds to a $\Gb_m$-gerbe on $A$. Pulling this gerbe back along $$a\colon \Spec(F)\to A$$ gives a $\Gb_m$-gerbe on $\Spec(F)$, which corresponds to an element of the Brauer group $\Br(F)=H^2_\text{\'et}(F,\Gb_m)$. This element is the image of $a$ and $t$ under the pairing.

This follows from \cite[Section I.0]{MilneADT}, where the pairings \eqref{pairing1} used above are described in the following language: Let $D$ be derived category of the abelian category of \'etale sheaves of abelian groups over $F$. We denote by $S\colon D \to D$ the shift functor $X \mapsto X[1]$. 

Let $\underline{\Zb}$ be the constant \'etale sheaf over $F$ with value group $\Zb$. Then for some $r\geq 0$ there are canonical isomorphisms 
\begin{align*}
  H^r_\text{\'et}(F,A) &\cong \Hom_D(\underline{\Zb},S^r(A)), \\
  \Ext^{2-r}(A,\Gb_m) &\cong \Hom_D(A,S^{2-r}(\Gb_m)),  \\
 \text{ and } H^2_\text{\'et}(F,\Gb_m) &\cong \Hom_D(\underline{\Zb},S^2(\Gb_m))
\end{align*}
 under which the pairing \eqref{pairing1} coincides with the pairing  
\begin{equation*}
  \Hom_D(\underline{\Zb},S^r(A)) \times \Hom_D(A,S^{2-r}(\Gb_m)) \cong \Hom_D(\underline{\Zb},S^r(A)) \times \Hom_D(S^r(A),S^2(\Gb_m)) \to \Hom_D(\underline{\Zb},S^2(\Gb_m))
\end{equation*}
given by composition of morphisms in $D$.
\end{rmk}

The forgetful morphism $\Ext^2(A,\G_m) \to H^2_{\text{\'et}}(A,\G_m)$ is of central importance. The elements of the abelian group $\Ext^2(A,\G_m)$ are isomorphism classes of $\G_m$-gerbes on $A$ endowed with a group structure (see for instance \cite[3.1]{travkin}, or \cite[5.5]{MR2373230} for an exposition using the language of Azumaya algebras). Informally they can be thought of as central extensions
$$1 \to B\G_m \to \widehat{A} \to A \to 1$$
of abelian group stacks.  The canonical map $\Ext^2(A,\G_m) \to H^2_{\text{\'et}}(A,\G_m)$ retains only the isomorphism class of the $\G_m$-gerbe and forgets the group structure. The next lemma shows that after replacing $\G_m$ by $\mu_r$, the analogous morphism is actually an injection, and its image can be described explicitly.

\begin{lemma}\label{lemma:group_structure}
Let $F$ be a local field and $r$ a positive integer invertible in $F$. We denote by $A/F$ an abelian variety, and by $P/F$ an $A$-torsor. Then the natural map $\Ext^2(A,\mu_r) \to H^2_{\text{\'et}}(A,\mu_r)$ is injective. Its image corresponds to those $\mu_r$-gerbes $\alpha$ on $A$, which become trivial when pulled back along the morphisms $A_{F^{s}}=A \times_{F} \Spec F^{s} \to A$ and $\Spec F \xrightarrow{e} A$.

Similarly, we have a canonical equivalence   
\begin{equation*}
 \frac{\ker(\Br(P) \to \Br(P_{F^{s}}))[r]}{\Br(F)[r]} \simeq \Ext^2(A,\mu_r). 
\end{equation*}
\end{lemma}

\begin{proof}
According Lemma \ref{lemma:shift} and Kummer theory we can identify $\Ext^2(A,\mu_r)$ with $H^1_{\text{\'et}}(F,A^{\vee}[r])$. Similarly, the subset of $H^2_{\text{\'et}}(A,\mu_r)$ corresponding to gerbes which are trivial on $A_{F^{s}}$ can be naturally identified with $H^1_{\text{\'et}}(F,A^{\vee}[r])$. To see this one observes that a descent datum for $\mu_r$-gerbes on the trivial $\mu_r$-gerbe on $A_{F^{s}}$ yields a $\Gal(F)$-cocycle taking values in the group of isomorphism classes of $\mu_r$-torsors on $A$, which can be identified with $H^1(F,A^{\vee}[r])$. Vice versa, one can recover $\alpha$ from the corresponding element $H^1(F,A^{\vee}[r])$ up to an element of $\Br(F)$. Since we assume in addition that $e^*\alpha$ is trivial, this establishes the correspondence.

The second assertion is established with the same argument. Since $P$ is an $A$-torsor, there exists an equivalence $P_{F^{s}} \simeq A_{F^{s}}$. As before we can therefore describe descent data on the trivial $\mu_r$-gerbe as a $1$-cocycle in $\mu_r$-torsors. It allows one to recover a gerbe in $\ker(\Br(P) \to \Br(P_{F^{s}}))[r]$ up to an element of $\Br(F)[r]$.
\end{proof}

\begin{corollary}\label{cor:group_torsor}
Let $F$ be a local field and $r$ a positive integer, such that $r$ is invertible in $F$. Let $A/F$ be an abelian variety, and $P/F$ an $A$-torsor. Then there exists a canonical isomorphism
\[
  \frac{\ker(\Br(A) \to \Br(A_{F^{s}}))[r]}{\Br(F)[r]} \simeq \frac{\ker(\Br(P) \to \Br(P_{F^{s}}))[r]}{\Br(F)[r]}.
\]

\end{corollary}

\begin{construction} \label{longseq}
 Using Theorem \ref{tateduality} one sees that for an isogeny $\phi\colon A\to B$ of abelian varieties over $F$, the exact sequence $0\to \ker(\phi) \to A \to B \to 0$ induces the following exact sequence of cohomology groups:
\begin{multline}
  \label{isogeny}
    0\to  \ker(\phi)(F)\to A(F) \xrightarrow\phi B(F) \to H^1(F,\ker(\phi))  
    \to H^1(F,A) \to H^1(F,B) \to H^2(F,\ker(\phi))\to 0
\end{multline}
\end{construction}

\begin{lemma} \label{longseqduality}
  Consider an isogeny $\phi\colon A\to B$ of abelian varieties over $F$ as well as its dual isogeny $\phi^\vee\colon B^\vee \to A^\vee$ with kernel $\ker(\phi^\vee)\cong \ker(\phi)^\vee$. The associated long exact sequences \eqref{isogeny} are dual to each other under Tate duality in the sense that they fit into a commutative diagram
  \begin{equation*}
    \xymatrix@C-=0.25cm{
         \ker(\phi)(F)\ar[r]  \ar[d]^\cong& A(F) \ar[r]^\phi \ar[d]^\cong & B(F) \ar[r] \ar[d]^\cong & H^1(F,\ker(\phi)) \ar[r] \ar[d]^\cong & H^1(F,A) \ar[r] \ar[d]^\cong & H^1(F,B) \ar[r] \ar[d]^\cong & H^2(F,\ker(\phi)) \ar[d]^\cong\\
         H^1(F,\ker(\phi)^\vee)^* \ar[r] & H^1(F,B^\vee)^* \ar[r] & H^1(F,A^\vee)^* \ar[r] & H^1(F,\ker(\phi)^\vee)^*  \ar[r] & B^\vee(F)^* \ar[r] & A^\vee(F)^* \ar[r] & \ker(\phi)^\vee(F)^*,
}
  \end{equation*}
in which the vertical isomorphisms are given by Tate duality.
\end{lemma}

\begin{proof}
  The commutativity of the squares not involving boundary maps is an instance of the naturality of the Tate duality pairings. That these pairings are also compatible with boundary maps follows from their construction. This can be seen as follows: All the pairings appearing arise from the natural pairing 
  \begin{equation}
    \label{ExtPairing}
    \Ext^i(C,\Gb_m) \times H^{2-i}(F,C) \to H^2(F,\Gb_m) \cong \Qb/\Zb,
  \end{equation}
which exists for an \'etale sheaf $C$ of abelian groups on $F$, and is given by composition in the derived category of such sheaves as in Remark \ref{TDCons}. These induce the Tate duality pairings via natural isomorphisms $H^i(F,A^\vee) \cong \Ext^{i+1}(A,\Gb_m)$ for an abelian variety $A$ (c.f. \cite[Lemma I.3.1]{MilneADT}) and $H^i(F,M^\vee)\cong \Ext^i(M,\Gb_m)$ for an \'etale commutative finite group scheme $M$ over $F$ (c.f. the proof of \cite[Cor. I.2.3]{MilneADT}). Thus is suffices to check the compatibility of \eqref{ExtPairing} with boundary maps. This follows from a direct verification using the description as composition in the derived category.
\end{proof}

An isogeny $\phi\colon A \to B$ of abelian varieties $A$ and $B$ is said to be \emph{self-dual}, if there exists an isomorphism $\psi\colon A \cong B^\vee$ such that the diagram
\[
\xymatrix{
A \ar[r]^{\phi} \ar[d]_{\psi}^{\simeq} & B \ar[d]^{\psi^{\vee}}_{\simeq} \\
B^{\vee} \ar[r]^{\phi^{\vee}} & A^{\vee}
}
\]
commutes.
For such an isogeny there are canonical isomorphisms
\begin{equation} \label{SelfDualKernel}
  \ker(\phi)^\vee \cong \ker(\phi^\vee) \cong \ker(\phi).
\end{equation}

The proposition below plays a key role in the proof of our main result.

\begin{proposition}\label{prop:self-dual}
  Let $A\xrightarrow\phi B$ be a self-dual isogeny of abelian varieties over $F$ whose kernel has order prime to $p$. Then we have $|B(F)/\phi(A(F))|=|\ker(\phi)(F)|$.
\end{proposition}
\begin{proof}
Let $K\coloneqq A(F)/\phi(B(F))\subset H^1(F,\ker(\phi))$ and $Q\coloneqq H^1(F,\ker(\phi))/K$. By Theorem \ref{tateduality} the Pontryagin dual of $H^1(F,A)\to H^1(F,B)$ is isomorphic to $B^\vee(F)\xrightarrow{\phi^\vee} A^\vee(F)$, which in turn by our assumption is isomorphic to $A(F)\xrightarrow{\phi} B(F)$. By the sequence \eqref{isogeny} we have an isomorphism $$Q\cong \ker(H^1(F,A)\to H^1(F,B)).$$ Thus taking Pontryagin duals gives an isomorphism $$Q^*\cong \coker(A(F) \xrightarrow\phi B(F))\cong K.$$ Since $Q$ is finite we thus get $|Q|=|Q^*|=|K|$. Hence the exact sequence
\begin{equation*}
  0\to K \to H^1(F,\ker(\phi)) \to Q \to 0
\end{equation*}
implies $|H^1(F,\ker(\phi))|=|K|^2$. 

On the other hand, since by assumption $\ker(\phi)\cong \ker(\phi)^\vee$, by Theorem \ref{euler} we have $|H^1(F,\ker(\phi))|=|\ker(\phi)(F)|^2$. Thus $|K|=|\ker(\phi)(F)|$, which is what we wanted.
\end{proof}


\section{$p$-adic integration}

\subsection{Basic $p$-adic integration}\label{stringy}


As before we fix a local field $F$. We write $|\cdot|$ for its non-archimedean norm and $\mu_F^n$ for the Haar measure on $F^n$ with the usual normalisation $\mu_F^n(\Oo_F^n)=1$. An $F$-analytic manifolds (or $F$-manifold) is essentially defined the same way as over the real numbers, as explained in \cite{MR1743467}: it is a second countable Hausdorff space with an atlas consisting of charts homeomorphic to open subsets of $F^n$, such that the change of coordinate functions  are locally expressible by convergent power series. Similarly one defines $F$-analytic differential forms on $F$-analytic manifolds.

The main example we are interested in comes from algebraic geometry: for a smooth algebraic variety $X$ over $F$ the set of $F$-rational points $X(F)$ admits the structure of an $F$-manifold and any algebraic $n$-form on $X$ induces an  $F$-analytic differential form on $X(F)$.

Given an $n$-dimensional $F$-manifold $X$ and a global section $\omega$ of $(\Omega^n_X)^{\otimes r}$ we can define a measure $d\mu_\omega$ as follows: Given a compact open chart $U \hookrightarrow F^n$ of $X$ and an analytic function $f: U \rightarrow F$ such that $\omega_{|U} = f(x)(d_{x_1} \wedge d_{x_2}\wedge\dots \wedge d_{x_n})^{\otimes r}$ we set
\[ \mu_{\omega}(U)  = \int_U |f|^{1/r} d\mu^n_{F}.\]
This extends to a measure on $X$ as in \cite[3.2]{Yasuda:2014aa}.

For an $F$-manifold $X$ we will denote $\Omega^{\dim(X)}_X$ by $\Omega^\text{top}_X$. We call a nowhere vanishing section of $(\Omega^{\text{top}}_X)^{\otimes r}$ for some $r$ an $r$-gauge form. As in the real case they can be divided in a relative setting. Indeed, for a submersion $f\colon X \to Y$ there is a sheaf of relative top degree forms $\Omega^\text{top}_f = \bigwedge^{\text{top}}\Omega^1_f$. The short exact sequence
$$f^*\Omega_Y^1 \hookrightarrow \Omega_X^1 \twoheadrightarrow \Omega_f^1$$
yields a canonical isomorphism of line bundles
\begin{equation}\label{divided}f^*\Omega_Y^{\text{top}} \otimes \Omega_f^{\text{top}} \simeq \Omega^{\text{top}}_X.\end{equation}
It allows us to assign to a section $\theta$ of $\Omega_f^{\top}$ and $\omega$ of $\Omega_Y^{\top}$ a top degree form $\theta \wedge f^*\omega$.

\begin{proposition} \label{Igusa} Let $f: X \rightarrow Y$ be a submersion of $F$-manifolds and $\omega_X, \omega_Y$ two $r$-gauge forms on $X$ and $Y$. Then there exists a unique analytic section $\theta$ of the sheaf $(\Omega_{f}^{\top})^{\otimes r}$ on $Y$, such that 
$$\theta \wedge f^*\omega_Y = \omega_X$$
with respect to \eqref{divided},
and for an integrable function $\alpha:X \rightarrow \Cb$ we have   
 \[\int_X \alpha \  d\mu_{\omega_X} = \int_Y \left(\int_{f^{-1}(y)} \alpha \ d\mu_{\theta_y}\right) \ d\mu_{\omega_Y},\]
 where $\theta_y$ denotes the restriction of $\theta$ to the fibre $f^{-1}(y)$.
Furthermore, if $X,Y$ are the $F$-rational points of smooth varieties, the submersion $f$ is induced by a smooth morphism and $\omega_X, \omega_Y$ algebraic $r$-gauge forms, then $\theta$ stems from an algebraic section of $(\Omega_{f}^{\top})^{\otimes r}$.
\end{proposition}
\begin{proof} 

In the algebraic case we abuse notation and write also $X$ and $Y$ for the underlying varieties over $\Spec{F}$. We consider the isomorphism of line bundles
\[ (\Omega^{\text{top}}_{X/F})^{\otimes r} \cong (f^*\Omega^{\text{top}}_{Y/F})^{\otimes r} \otimes (\Omega^{\text{top}}_{X/Y})^{\otimes r},\]
which are understood to be sheaves of either algebraic or analytic forms according to the situation. Let $\theta$ be the unique section of $(\Omega^{\text{top}}_{X/Y})^{\otimes r}$ such that $\theta_X = \theta_Y \otimes \theta$. For each $y\in Y$ the section $\theta$ restricts to an $r$-gauge form $\theta_y$ on $f^{-1}(y)$ and we claim that these sections have the required property. Since the above isomorphism of line bundles is compatible with analytification it suffices to show this in the analytic case.

The analytic case for $r=1$ follows from \cite[Theorem 7.6.1]{MR1743467} by inspecting the proof of \textit{loc.cit.} and verifying that the gauge forms $\theta_y$ constructed there coincide with the ones constructed above. For $r>1$ we note that it suffices to prove that the $r$-gauge forms $\theta_y$ have the required property locally on $X$ and $Y$ for the analytic topology. The following claim shows that once we restrict to suitable analytically open subsets we may assume that $\omega_X$ and $\omega_Y$ have $r$-th roots.
\begin{claim}\label{localroot}
Let $\omega$ be an $r$-gauge form on an $F$-analytic manifold $X$. For every $x \in X$ there exists an $F$-analytic $1$-gauge form $\eta$ defined on an open neighbourhood of $x$, such that $|\eta^{\otimes r}| = q^{m}\cdot{}|\omega|$ with $m \in \Qb$.
\end{claim}
\begin{proof}
By choosing a chart containing $x$ we may assume that $X$ is an open subset $U$ of $F^n$ and represent $\omega$ as $g\cdot (dx_1 \wedge \cdots \wedge dx_n)^{\otimes r}$, where $g$ is locally expressible by an analytic power series. We define $m = {\nu_F(g(x))}$. Since $g\colon U \to F$ is continuous, there exists an open neighbourhood $V$ of $x$, such that for all $y \in V$ one has
$$\nu_F(g(y)) = \nu_F(g(x)) \Leftrightarrow |g(y)|_F = |g(x)|_F = q^{-m}.$$
We conclude that for all $y \in V$ one has $1=|(dx_1\wedge \cdots \wedge dx_n)^{\otimes r}| = q^m\cdot{} |g (dx_1\wedge \cdots \wedge dx_n)^{\otimes r}|$, so we can take $\eta$ to be $(dx_1\wedge \cdots \wedge dx_n)^{\otimes r}$.
\end{proof}

This reduces the assertion to the case $r=1$ proven in \emph{loc. cit.}
\end{proof}

\begin{lemma} \label{VarChange}
  Let $f\colon X\to Y$ be an isomorphism of $F$-manifolds and $\omega_Y$ an $r$-gauge form on $Y$. For any  integrable function $\alpha\colon Y\to \Cb$ we have
  \begin{equation*}
    \int_{X}\alpha\circ f\; d\mu_{f^*\omega_Y} = \int_{Y} \alpha\; d\mu_{\omega_Y}.
  \end{equation*}
\end{lemma}
\begin{proof}
  For $r=1$ this is proven in \cite[Section 7.4]{MR1743467}. For $r>1$ by working locally on $X$ we may assume that $\omega_Y$ has an $r$-th root on $Y$, see Claim \ref{localroot}. This allows us to reduce the lemma to the case $r=1$.
\end{proof}

We will briefly recall $p$-adic integration on (not necessarily smooth) $\Oo_F$-varieties, where we essentially follow \cite[Section 4]{Yasuda:2014aa}. For a $\Oo_F$-variety $X$ we write $X_F=X\times_{\Spec(\Oo_F)} \Spec(F)$ and $X_{k_F}=X\times_{\Spec(\Oo_F)} \Spec(k_F)$. Let $X_F^{\text{sm}}$ be the smooth locus of $X_F$ and set
\[ X^\circ= X(\Oo_F) \cap X_F^{\text{sm}}(F),\]
where we think of $X(\Oo_F)$ as a subset of $X(F) = X_F(F)$. Then $X^\circ$ has naturally the structure of an $F$-manifold. Thus we can integrate any section $\omega \in H^0(X_F^{\text{sm}},(\Omega^{\text{top}}_{X/F})^{\otimes r})$ on $X^\circ$. This way we obtain a measure $\mu_\omega$ on $X^\circ$, which we extend by zero to all of $X(\Oo_F)$.

The following facts will be essential for manipulating $p$-adic integrals:

\begin{proposition}\cite[Lemma 4.3, Theorem 4.8]{Yasuda:2014aa}\label{intools} \begin{enumerate}
\item\label{measurezero} For any subscheme $Y \subset X$ of positive codimension $\mu_{\omega}(Y(\Oo_F)) = 0$.
\item\label{chovg} Let $f:Y \rightarrow X$ be a morphism of $\Oo_F$-varieties. Assume $Y$ admits a generically stabiliser-free action by a finite group $\Gamma$, the morphism $f$ is $\Gamma$-invariant and $Y/\Gamma \rightarrow X$ is birational. Then for any open $\Gamma$-invariant subset $A \subset Y(\Oo_F)$ and any $r$-gauge form $\omega$ on $X^\circ$ we have
\[ \frac{1}{|\Gamma|} \int_{A} |f^*\omega|^{1/r} = \int_{f(A)} |\omega|^{1/r}.\]
\end{enumerate}
\end{proposition}


\subsection{Twisting by torsors}\label{twisting}


This subsection is independent of $p$-adic integration, but will be used in the proof of Theorem \ref{thm:appendix} below and also later in Section \ref{cohoact}.

We fix a scheme $S$, and a commutative \'etale group $S$-scheme $\Gamma$. In our applications, the scheme $S$ will be either $\Spec F$ or $\Spec \Oo_F$, and $\Gamma$ a constant group scheme over $S$. 

\begin{definition}\label{defi:twisting}
Let $N$ be an $S$-scheme endowed with a $\Gamma$-action. For a torsor $T \in H^1_{\text{\'et}}(S,\Gamma)$ we define the $T$-twist of $N$ to be the $S$-space $N_T = [(N \times_S T)/\Gamma]$, where $\Gamma$ acts on the fibre product $N \times_S T$ anti-diagonally.  The group scheme $\Gamma$ acts on $N_T$ through its action on $T$.
\end{definition}

We emphasise that $N_T$ is an $S$-space, since it is stabiliser-free. It is an algebraic $S$-space, since $\Gamma$ is assumed to be \'etale (see \cite[Tag 06DC]{stacks-project}).

In general, several $S$-schemes $N'$ with $\Gamma$-action may yield isomorphic quotient stacks $[N'/\Gamma]$. Twisting is a way to produce such examples, as the next lemma shows. 

\begin{lemma}\label{twist_quotient}
There exists a natural equivalence of $S$-stacks $[N/\Gamma] \simeq [N_T/\Gamma]$.
\end{lemma}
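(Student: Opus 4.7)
The plan is to exhibit both quotient stacks as canonically equivalent to a common stack, namely the quotient of $N \times_S T$ by the product action of $\Gamma \times \Gamma$ in which the first factor acts on $N$ and the second factor acts on $T$. Throughout, I use that $\Gamma$ is étale so all quotients along free actions yield algebraic spaces (Stacks Project 06DC as cited just above).

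First I would observe that the second projection $\operatorname{pr}_1 \colon N \times_S T \to N$ is the pullback of the $\Gamma$-torsor $T \to S$ along $N \to S$, hence is itself a torsor for the subgroup $1 \times \Gamma \subset \Gamma \times \Gamma$. Quotienting in stages therefore gives a canonical equivalence
\begin{equation*}
[(N \times_S T)/(\Gamma \times \Gamma)] \;\simeq\; \bigl[\bigl((N \times_S T)/(1 \times \Gamma)\bigr)\big/(\Gamma \times 1)\bigr] \;\simeq\; [N/\Gamma],
\end{equation*}
where the remaining $\Gamma \times 1$-action on the quotient is the original $\Gamma$-action on $N$.

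Next, since $\Gamma$ is commutative, the antidiagonal $\Delta \Gamma := \{(\gamma,\gamma^{-1}) : \gamma \in \Gamma\}$ is a normal subgroup-scheme of $\Gamma \times \Gamma$, and projection onto either factor induces an isomorphism $(\Gamma \times \Gamma)/\Delta\Gamma \simeq \Gamma$. By Definition \ref{defi:twisting} the quotient of $N \times_S T$ by $\Delta \Gamma$ is precisely $N_T$, and quotienting in stages yields
\begin{equation*}
[(N \times_S T)/(\Gamma \times \Gamma)] \;\simeq\; [N_T/\Gamma].
\end{equation*}
Combining the two equivalences gives the desired $[N/\Gamma] \simeq [N_T/\Gamma]$, and the construction is visibly natural in the input data $(N,T)$.

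The main technical point to verify is that the residual $\Gamma$-action on $N_T$ arising from $(\Gamma \times \Gamma)/\Delta\Gamma$ coincides with the action described in Definition \ref{defi:twisting} (namely, the one induced from the $\Gamma$-action on $T$). This is where commutativity of $\Gamma$ is essential: the cosets of $(1,\gamma')$ and $(\gamma',1)$ modulo $\Delta\Gamma$ agree, so the residual action on $N_T$ admits the two equivalent descriptions $\gamma' \cdot [n,t] = [n,\gamma' t] = [\gamma' n, t]$, the first of which is the paper's convention. Beyond this matching of conventions the argument is formal; the only ingredient needed is the standard "quotients in stages" principle for quotient stacks by (flat, étale) group scheme actions.
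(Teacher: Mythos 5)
Your proof is correct and takes essentially the same approach as the paper's. The paper unfolds $[N_T/\Gamma]$ as an iterated quotient of $N\times_S T$ and then invokes a separately stated and proved exchange lemma (Lemma~\ref{exchange}) to swap the two quotient operations before collapsing $[T/\Gamma]\simeq S$; you instead introduce the common stack $[(N\times_S T)/(\Gamma\times\Gamma)]$ and compute it in two ways by quotienting in stages through the normal subgroups $1\times\Gamma$ and $\Delta\Gamma$, which is the same underlying manipulation packaged more symmetrically. (Two very small slips worth noting: you call $\operatorname{pr}_1\colon N\times_S T\to N$ the ``second projection,'' and the isomorphism $(\Gamma\times\Gamma)/\Delta\Gamma\simeq\Gamma$ is induced not by the projections, which do not kill $\Delta\Gamma$, but by the multiplication map $(\gamma_1,\gamma_2)\mapsto\gamma_1\gamma_2$, or equivalently by regarding $\Gamma\times 1$ or $1\times\Gamma$ as complements to $\Delta\Gamma$; neither slip affects the argument.)
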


\begin{proof}
We have $[N_T/\Gamma] \simeq [[(N \times_S T)/_1\Gamma]/_2\Gamma]$, where the first $\Gamma$ acts anti-diagonally, and the second $\Gamma$ acts only on the first component. Lemma \ref{exchange} below allows us to exchange the two quotients and we obtain
$$[[(N \times_S T)/_2\Gamma]/_1\Gamma] \simeq [[N/\Gamma] \times_S T] / \Gamma \simeq [N/ \Gamma],$$
where we have used that $\Gamma$ acts trivially on $[N/\Gamma]$, and through the standard action on $T$. Since $T$ is a $\Gamma$-torsor, we have $[T/\Gamma] \simeq S$. 
\end{proof}

\begin{lemma}\label{exchange}
Let $N$ be an $S$-scheme endowed with commuting actions of fppf group $S$-schemes $\Gamma_1$ and $\Gamma_2$. Then there exists an equivalence 
$[[N/\Gamma_1]/\Gamma_2] \simeq [[N/\Gamma_2]/\Gamma_1].$
\end{lemma}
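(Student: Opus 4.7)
The plan is to show that both iterated quotient stacks are canonically equivalent to the single quotient stack $[N/(\Gamma_1 \times_S \Gamma_2)]$, where $\Gamma_1 \times_S \Gamma_2$ acts on $N$ via the combined action $(g_1,g_2)\cdot{}n = g_1 \cdot{}(g_2 \cdot{}n) = g_2 \cdot{}(g_1 \cdot{}n)$, which is well-defined precisely because the two given actions commute. By the symmetry in the roles of $\Gamma_1$ and $\Gamma_2$, the desired equivalence then follows.

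To establish $[N/(\Gamma_1 \times_S \Gamma_2)] \simeq [[N/\Gamma_1]/\Gamma_2]$, I would work with the functor of points. Since the $\Gamma_1$- and $\Gamma_2$-actions on $N$ commute, the $\Gamma_2$-action descends to a $\Gamma_2$-action on $[N/\Gamma_1]$: on an $S'$-point $(T_1 \to S', T_1 \to N)$ with $T_1$ a $\Gamma_1$-torsor, an element $g_2 \in \Gamma_2(S')$ acts by post-composing the morphism $T_1 \to N$ with the translation $g_2 \cdot{} (-)$ on $N$, which remains $\Gamma_1$-equivariant thanks to commutativity. An $S'$-point of $[[N/\Gamma_1]/\Gamma_2]$ then consists of a $\Gamma_2$-torsor $T_2 \to S'$ equipped with a $\Gamma_2$-equivariant object of $[N/\Gamma_1](T_2)$, i.e., a $\Gamma_1$-torsor $T_1 \to T_2$ together with a $\Gamma_1$-equivariant map $T_1 \to N$ and a compatible $\Gamma_2$-equivariant structure. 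The commuting actions package this data into a $(\Gamma_1 \times_S \Gamma_2)$-torsor $T_1 \to S'$ together with a $(\Gamma_1 \times_S \Gamma_2)$-equivariant morphism $T_1 \to N$, which is precisely the datum of an $S'$-point of $[N/(\Gamma_1 \times_S \Gamma_2)]$. One must check that morphisms correspond, but this is immediate from unwinding.

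Alternatively, and perhaps more cleanly, I would argue at the level of groupoid presentations: $[N/\Gamma_1]$ is presented by the smooth groupoid $\Gamma_1 \times_S N \rightrightarrows N$. Since the $\Gamma_2$-action lifts to this groupoid (again by commutativity), further quotienting by $\Gamma_2$ yields the groupoid $(\Gamma_1 \times_S \Gamma_2) \times_S N \rightrightarrows N$ associated to the combined action, which presents $[N/(\Gamma_1 \times_S \Gamma_2)]$. The fppf hypothesis on $\Gamma_1$ and $\Gamma_2$ ensures that all quotient stacks involved are algebraic and that the presentations behave well under the stackification.

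The main obstacle is not conceptual but a matter of carefully checking the $2$-categorical coherences: verifying that the equivalences constructed on both sides are compatible with pullback and that the descent of the $\Gamma_2$-action to $[N/\Gamma_1]$ is functorial. Since everything is encoded by commuting group actions of fppf group schemes, these checks reduce to routine diagram chases, so no serious difficulty is expected.
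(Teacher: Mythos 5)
Your proposal is essentially the paper's proof in a slightly reorganised form. The paper unwinds the functor of points of $[[N/\Gamma_1]/\Gamma_2]$ and, using fppf descent, identifies it with the groupoid of triples $(P,P',\phi)$ where $P$ is a $\Gamma_2$-torsor on $T$, $P'$ is a $\Gamma_1$-torsor on $T$, and $\phi\colon P\times P'\to N$ is a $(\Gamma_1\times\Gamma_2)$-equivariant map; this description is visibly symmetric in $\Gamma_1,\Gamma_2$, which finishes the proof. That symmetric description is precisely the functor of points of $[N/(\Gamma_1\times_S\Gamma_2)]$, so naming the intermediate stack as you do is cosmetically different but mathematically identical. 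The one place where you are a touch too casual is the sentence "The commuting actions package this data into a $(\Gamma_1\times_S\Gamma_2)$-torsor $T_1\to S'$": the datum you start with is a $\Gamma_1$-torsor over the $\Gamma_2$-torsor $T_2$, not over $S'$, and the passage down to $S'$ is exactly the faithfully flat descent step the paper spells out (the $\Gamma_2$-equivariant structure on $T_1\to T_2$ is a descent datum along $T_2\to S'$, effective by fppf descent for torsors). Making that step explicit would close the only real gap; otherwise the argument is correct.
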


\begin{proof}
Let $T$ be an $S$-scheme. By definition, a morphism $T \to [[N/\Gamma_1]/\Gamma_2]$ relative to the base $S$ is given by a $\Gamma_2$-equivariant morphism
$$P \to [N/\Gamma_1],$$
where $P \to T$ is a $\Gamma_2$-torsor. Unravelling this further, we see that this corresponds to a $\Gamma_1$-equivariant morphism $Q \to N$, where $Q \to P$ is a $\Gamma_1$-torsor endowed with a $\Gamma_2$-equivariant structure on $P$. Faithfully flat descent theory implies that $Q$ is the pullback of a $\Gamma_1$-torsor $P' \to S$, and that we have a $\Gamma_1$-equivariant morphism $P' \to N$.

This shows that morphisms $T \to [[N/\Gamma_1]/\Gamma_2]$ are equivalent to the groupoid of triples $(P,P',\phi)$, where $P$ is a $\Gamma_2$-torsor on $S$, $P'$ a $\Gamma_1$-torsor on $S$, and $\phi$ a $(\Gamma_2 \times \Gamma_1)$-equivariant morphism $P \times P' \to N$.

The same argument as above relates this to the groupoid of morphisms $T \to [[N/\Gamma_2]/\Gamma_1]$. This shows that $[[N/\Gamma_2]/\Gamma_1]$ and $[[N/\Gamma_1]/\Gamma_2]$ are equivalent. 
\end{proof}

We record the following assertion for later use.

\begin{proposition}\label{prop:twists}
Let $U$ be a variety over a finite field $k$ with an action of a finite abelian group $\Gamma$. Let $T \in H^1(k,\underline \Gamma) = \Gamma$ be a $\underline \Gamma$-torsor corresponding to an element $\gamma \in \Gamma$. Then, there is an isomorphism of $\overline{\Qb}_{\ell}$-vector spaces
$$H^i_{\text{\'et}}(U,\overline{\Qb}_{\ell}) \simeq H^i_{\text{\'et}}(U_T,\overline{\Qb}_{\ell})$$
between the \'etale cohomology groups of $U$ and $U_T$ with respect to which the Frobenius operators are related by the formula
$$\mathsf{Fr}_{U_T} = (\gamma^*)^{-1} \cdot \mathsf{Fr}_U.$$
\end{proposition}

\begin{proof}
This follows directly from the definition of twists: Over the algebraic closure $\overline{k}$ a section of $T$ induces an isomorphism between the schemes $U_{\overline{k}}$ and $(U_T)_{\overline{k}}$, and with respect to this isomorphism the Frobenius morphisms are related by multiplication with $\gamma$.
\end{proof}

\begin{corollary}\label{todes} For a smooth variety $M$ over a finite field $k$ with an action of a finite abelian group $\Gamma$ we have
\[ |[M/ \Gamma](k)| = \frac{1}{|\Gamma|} \sum_{T \in H^1(k,\Gamma)} |M_T(k)|.\]
\end{corollary}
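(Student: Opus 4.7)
The plan is to compute the groupoid cardinality of $[M/\Gamma](k)$ directly, by partitioning its objects according to the underlying $\Gamma$-torsor on $\Spec k$. Recall that an object of $[M/\Gamma](k)$ is a pair $(T, f)$ with $T$ a $\Gamma$-torsor on $\Spec k$ and $f\colon T \to M$ a $\Gamma$-equivariant morphism, and that morphisms are $\Gamma$-equivariant isomorphisms of torsors compatible with $f$.

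First I would verify that for a fixed torsor $T \in H^1(k, \Gamma)$, the set of $\Gamma$-equivariant $k$-morphisms $f \colon T \to M$ is in natural bijection with $M_T(k)$. This is an instance of descent along the $\Gamma$-torsor $T \to \Spec k$: a section of $M_T = (M \times_k T)/\Gamma$ lifts to a $\Gamma$-equivariant section of $M \times_k T \to T$, which by projection to the first factor is precisely a $\Gamma$-equivariant map $T \to M$. Under this identification, the natural action of $\Aut_\Gamma(T) \cong \Gamma$ on the set of equivariant maps matches the $\Gamma$-action on $M_T(k)$ from Definition \ref{defi:twisting}. This descent step is the only part requiring any genuine care.

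Granting this, the full subgroupoid of $[M/\Gamma](k)$ with underlying torsor $T$ is the action groupoid $[M_T(k)/\Gamma]$, whose groupoid cardinality equals
\[
\sum_{[\phi] \in M_T(k)/\Gamma} \frac{1}{|\Stab_\Gamma(\phi)|} \;=\; \frac{|M_T(k)|}{|\Gamma|}
\]
by orbit--stabilizer. Summing over isomorphism classes $T \in H^1(k, \Gamma)$ then yields the claimed formula.

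An alternative route using results already in the paper proceeds via the Grothendieck--Lefschetz trace formula: apply it to $[M/\Gamma]$, use the isomorphism $H^*_c([M/\Gamma], \overline{\Qb}_\ell) \cong H^*_c(M, \overline{\Qb}_\ell)^\Gamma$ together with the averaging projector $p = \tfrac{1}{|\Gamma|}\sum_{\gamma \in \Gamma} \gamma^*$, and invoke Proposition \ref{prop:twists} to recognise each term $\sum_i (-1)^i \Tr(F\gamma^*, H^i_c(M))$ as $|M_{T_{\gamma^{-1}}}(k)|$; since $H^1(k,\underline\Gamma) \cong \Gamma$ (Construction \ref{H1Cons}) and $\gamma \mapsto \gamma^{-1}$ is a bijection of $\Gamma$, the sum rearranges to the desired identity. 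I would probably present the direct groupoid argument, as it avoids the hypothesis that $|\Gamma|$ be invertible in $k$ which is implicit in the cohomological route.
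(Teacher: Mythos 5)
Your direct groupoid-cardinality argument is correct, and it differs from what the paper intends. The paper labels the statement a corollary of Proposition \ref{prop:twists} and gives no separate proof, so the intended derivation is the cohomological one you sketch at the end: apply the Grothendieck--Lefschetz trace formula to $[M/\Gamma]$, identify $H^*_c([M/\Gamma],\overline{\Qb}_\ell)$ with $H^*_c(M,\overline{\Qb}_\ell)^\Gamma$ via the averaging projector $\tfrac{1}{|\Gamma|}\sum_{\gamma}\gamma^*$, and use Proposition \ref{prop:twists} together with cyclicity of trace to recognize the $\gamma$-summand as the point count of a twist of $M$. Your direct argument is shorter and more robust: it needs only Galois descent along the \'etale torsor $T\to\Spec k$ and orbit--stabilizer, and sidesteps both the trace formula for quotient stacks and the identification of stack cohomology with $\Gamma$-invariants. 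One small simplification: to obtain the contribution $|M_T(k)|/|\Gamma|$ for a given class $T$ you only need the bijection between equivariant maps $T\to M$ and $M_T(k)$; matching the $\Aut_\Gamma(T)$-action on the former with the $\Gamma$-action on $M_T(k)$ from Definition \ref{defi:twisting} is not required, since only the order $|\Aut_\Gamma(T)|=|\Gamma|$ enters the orbit--stabilizer count.
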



\subsection{The orbifold measure} \label{sectorbm}


We start with some basic terminology for finite global quotient stacks.  For a ring $R$ and an integer $m$, we say that $R$ \emph{contains all roots of unity of order $m$} if for every homomorphism from $R$ to an algebraically closed field $\bar k$, the group $\mu_m(R)$ surjects onto $\mu_m(\bar k)$.

\begin{definition} \label{AdmStack}
  Let $R$ be a ring. A finite abelian quotient stack $\Mm$ over $R$ is \emph{admissible} if it admits a presentation $\Mm\cong [Y/\Gamma]$ by a smooth quasi-projective $R$-variety $Y$ with a generically fixed-point free action by a finite abelian group $\Gamma$ whose order is invertible in $R$ and such that $R$ contains all roots of unity of order $|\Gamma|$. In this case we write $M$ for the geometric quotient $Y/\Gamma$. 
\end{definition}

\begin{lemma} \label{gorenstein}
  Let $\Mm$ be an admissible finite abelian quotient stack over a ring $R$.  Then $M$ is $\Qb$-Gorenstein.
\end{lemma}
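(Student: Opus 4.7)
The plan is to choose an admissible presentation $\Mm\simeq [Y/\Gamma]$ with quotient map $\pi\colon Y\to Y/\Gamma=\Mm^{\text{coarse}}$, and to produce a line bundle on $\Mm^{\text{coarse}}$ representing $|\Gamma|\cdot K_{\Mm^{\text{coarse}}}$ as a Cartier divisor. The starting observation is that the canonical bundle $\omega_{Y/R}=\det\Omega^1_{Y/R}$ carries a tautological $\Gamma$-equivariant structure, and at any geometric point $\bar y\in Y$ the stabiliser $G_{\bar y}\leq\Gamma$ acts on the fibre $\omega_{Y/R,\bar y}$ through a character of $G_{\bar y}$ of order dividing $|G_{\bar y}|$. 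Since $|G_{\bar y}|$ divides $|\Gamma|$, this character becomes trivial after passage to the $|\Gamma|$-th tensor power, so $\omega_{Y/R}^{\otimes|\Gamma|}$ has trivial stabiliser action on every fibre.

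The second step is to descend. Because $|\Gamma|$ is invertible in $R$ and every stabiliser acts trivially on fibres, I would invoke Kempf's descent lemma (equivalently, form the $\Gamma$-invariant pushforward $(\pi_*\omega_{Y/R}^{\otimes|\Gamma|})^\Gamma$, whose exactness is guaranteed by the invertibility of $|\Gamma|$) to obtain a line bundle $L$ on $\Mm^{\text{coarse}}$ together with a $\Gamma$-equivariant isomorphism $\pi^*L\simeq\omega_{Y/R}^{\otimes|\Gamma|}$. The third step is to identify $L$ with the reflexive pluricanonical sheaf $\omega_{\Mm^{\text{coarse}}/R}^{[|\Gamma|]}=\mathcal{O}(|\Gamma|\cdot K_{\Mm^{\text{coarse}}})$. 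On the image $V\subseteq\Mm^{\text{coarse}}$ of the free locus $U\subseteq Y$, the morphism $\pi|_U$ is \'etale, so $L|_V\cong\omega_{V/R}^{\otimes|\Gamma|}$, which matches the restriction of $\omega_{\Mm^{\text{coarse}}/R}^{[|\Gamma|]}$ to $V$. Both sheaves are reflexive of rank one on the normal scheme $\Mm^{\text{coarse}}$, and the aim is to extend this canonical identification globally so that $|\Gamma|\cdot K_{\Mm^{\text{coarse}}}$ is Cartier.

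The genuine obstacle is exactly this extension. When the $\Gamma$-action has pseudoreflections, the complement of $V$ in $\Mm^{\text{coarse}}$ contains codimension-one components (the branch divisor), so reflexivity alone does not suffice. I would address each codimension-one ramification stratum by a local computation in $\Gamma$-equivariant \'etale coordinates: tame ramification, guaranteed by invertibility of $|\Gamma|$, reduces the local picture to a cyclic cover of the form $\Spec R[t]/(t^e-u)\to \Spec R[u]$ with $e\mid|\Gamma|$, and on this model one verifies by a direct calculation with the Riemann--Hurwitz relation $K_Y=\pi^*K_{\Mm^{\text{coarse}}}+\sum(e_i-1)\widetilde B_i$ that the descended bundle $L$ and the reflexive $|\Gamma|$-th power of the canonical sheaf have matching local generators; equivalently, one may first factor out the pseudoreflection subgroup, noting that by Chevalley--Shephard--Todd this intermediate quotient is smooth and reduces the remaining descent to the codimension-$\geq 2$ situation. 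Once the identification $L\simeq\omega_{\Mm^{\text{coarse}}/R}^{[|\Gamma|]}$ is in place, the divisor $|\Gamma|\cdot K_{\Mm^{\text{coarse}}}$ is Cartier and the lemma follows.
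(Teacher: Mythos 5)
Your central step — identifying the descended line bundle $L$ (with $\pi^*L\simeq\omega_{Y/R}^{\otimes|\Gamma|}$) with the reflexive power $\omega_{\Mm^\text{coarse}/R}^{[|\Gamma|]}$ — is actually false whenever $\Gamma$ acts with pseudoreflections, and the very local model you invoke refutes it. On $\Spec R[t]\to\Spec R[u]$, $u=t^e$, one has $du=et^{e-1}\,dt$, hence
\begin{equation*}
(du)^{\otimes|\Gamma|}=e^{|\Gamma|}\,u^{(e-1)|\Gamma|/e}\,(dt)^{\otimes|\Gamma|},
\end{equation*}
so the invariant generator $(dt)^{\otimes|\Gamma|}$ of $L$ and the generator $(du)^{\otimes|\Gamma|}$ of $\omega^{\otimes|\Gamma|}$ do not match: they differ by $u^{(e-1)|\Gamma|/e}$. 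Equivalently, Riemann--Hurwitz $K_Y=\pi^*K_{\Mm^\text{coarse}}+R$ with $R\ne 0$ forces $[L]=|\Gamma|K_{\Mm^\text{coarse}}+|\Gamma|\pi_*\text{-}$(branch contribution), so $L$ and $\omega^{[|\Gamma|]}$ differ by a nonzero multiple of the branch divisor. Thus ``Option 1'' does not close the gap; it exhibits it.

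Your ``Option 2'' (factor out pseudoreflections, apply Chevalley--Shephard--Todd, and descend from the smooth intermediate quotient where the branch locus has codimension $\geq 2$) is the morally correct repair, but the pseudoreflection subgroup is not globally well-defined: it varies from point to point. The argument therefore has to be carried out after \'etale-localizing on $\Mm^\text{coarse}$ and linearizing the tame action formally, and one then obtains $\Qb$-Cartierness of some multiple of $K$ locally, which suffices because the condition is local. This, in effect, is precisely the content of what the paper cites: after recording normality of $\Mm^\text{coarse}$ from \cite[0.2 (2)]{mumford1994geometric}, the paper invokes \cite[Lemma 5.16]{kollar2008birational} to conclude that \emph{every} Weil divisor on $\Mm^\text{coarse}$ is $\Qb$-Cartier (i.e. $\Mm^\text{coarse}$ is $\Qb$-factorial), and in particular $K_{\Mm^\text{coarse}}$ is $\Qb$-Cartier. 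So the paper proves the stronger $\Qb$-factoriality statement by citation, whereas your direct pluricanonical descent as written needs the \'etale-local pseudoreflection reduction spelled out, and the global identification $L\simeq\omega^{[|\Gamma|]}$ should be dropped.
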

\begin{proof}
We have to show that $M$ is normal and that its canonical divisor $K_M$ is $\Qb$-Cartier. By \cite[0.2 (2)]{mumford1994geometric} the scheme $M$ is normal. Lemma 5.16 in \cite{kollar2008birational} yields that every Weil divisor on $M$ is a $\Qb$-Cartier divisor. This concludes the proof.
\end{proof}

Let $\Mm$ be an admissible finite abelian quotient stack over $\Spec(\Oo_F)$ and fix a presentation $\Mm=[Y/\Gamma]$ as in Definition \ref{AdmStack}. We write $\pr \colon Y \rightarrow M=Y/\Gamma$ for the quotient morphism and $\Delta \subset Y$ the locus on which $\Gamma$ does not act freely and $U = Y \setminus \Delta$ for the complement. The $F$-manifold
\[ M(\Oo_F)^\natural = M(\Oo_F)\cap \pr(U)(F)\]
admits a measure through the following construction.

\begin{construction}[Orbifold measure] \label{muorb}

By Lemma \ref{gorenstein} the quotient $M$ is normal and hence we have canonical $\Qb$-Weil divisors $K_Y$ and $K_{M}$. By \cite[Lemma 7.2]{Yasuda:2014aa} there exists a unique $\Qb$-Weil divisor $D$ on $M$ such that $K_{M}+D$ is $\Qb$-Cartier and such that the pullback $\pr^*(K_{M}+D)$ is equal to $K_Y$. In fact the proof of \textit{loc.cit.} shows that $D$ can be expressed in terms of the pushforward of the ramification divisor in $Y$. We let $\mu_\text{orb}$ be the measure on $M(\Oo_F)$ associated to the pair $(M,D)$ by \cite[Definition 4.7]{Yasuda:2014aa}. It is defined as follows: Pick $r\geq 1$ such that $r(K_{M}+D)$ is a Cartier divisor. Then $r(K_{M}+D)$ gives rise to a line bundle $I$ on $M$.\\
Over $\pr(U)$ the line bundle $I$ is a subsheaf of $(\Omega^\text{top}_{\pr(U)})^{\otimes r}$ and locally on $M(\Oo_F)^\natural$ the measure $\mu_{\text{orb}}$ is given by integrating a non-zero section of $I$. As in \cite[4.1]{Yasuda:2014aa} one sees that this gives rise to a well-defined measure on $M(\Oo_F)^\natural$, which can then by extended by zero to all of $M(\Oo_F)$. 
One can check that the divisor $D$, and hence the measure $\mu_\text{orb}$, are independent of the choice of the presentation of $\Mm$.
\end{construction}

We now discuss two special cases, in which $\mu_{\text{orb}}$ can be described more explicitly.

\begin{rmk}\label{orbmes} If the canonical bundle $\Omega_Y^{\text{top}}$ is trivial one has a global form $\omega_{\text{orb}}$ on $M(\Oo_F)^\natural$ that computes $\mu_{\text{orb}}$. Namely if $\omega \in H^0(Y,\Omega_Y^{\text{top}})$ is a nowhere vanishing global section, its norm
\[ \Nm(\omega) = \bigotimes_{\gamma \in \Gamma} \gamma^*\omega \in H^0\left(Y,(\Omega_Y^{\text{top}})^{\otimes |\Gamma|}\right),  \]
is $\Gamma$-invariant. Thus $\Nm(\omega)$ descends to a global section $\omega_{\orb}$ of $(\Omega_{\pr(U)}^{\text{top}})^{\otimes |\Gamma|}$, which extends to a section of the line bundle $I$ appearing in Construction \ref{muorb}. In particular one has $\pr^*\omega_{\orb} =\Nm( \omega)$.
\end{rmk}

\begin{rmk}\label{cod2} Assume that $\codim_Y \Delta \geq 2$ and that $\Omega^{\text{top}}_{\pr(U)}$ is trivialised by some volume form $\omega$. Then the orbifold measure $\mu_{\text{orb}}$ is given by integrating $|\omega|$ over $M(\Oo_F)^\natural \subset U(F)$.

Indeed $\codim_Y \Delta \geq 2$ implies that $D=0$ (since $\Delta$ cannot contain the support of a non-zero divisor) in Construction \ref{muorb} and thus $\omega^r$ is a local non-zero section of the line bundle $I$ which restricts to $\Omega^{\text{top}}_{\pr(U)}$ in the complement of $\Delta$. Hence we have for every measurable set $A \subset M(\Oo_F)^\natural$ 
\[\mu_{\text{orb}}(A) = \int_A |\omega^r|^{1/r} = \int_A |\omega|.\]
\end{rmk}

We now study the volume of $M(\Oo_F)$ with respect to $\mu_{\text{orb}}$, where the inertia stack $\I\Mm_{k_F}$ of the special fibre $\Mm_{k_F} = \Mm \times_{\Spec(\Oo_F)} \Spec(k_F)$ naturally appears. Namely one has a specialisation map
\begin{equation*}
  e: M(\Oo_F)^\natural \rightarrow  \I\Mm(k_F)_\iso,
\end{equation*}
where as before $\I\Mm(k_F)_\iso$ denotes the set of isomorphism classes of the groupoid $\I\Mm(k_F)$.

\begin{construction}[Specialisation map] \label{SpecMap}
 To any $\phi \in X(\Oo_F)^\natural$ we associate the $\Gamma$-torsor $T_\phi$ over $\Spec(F)$ given by the fibre of $Y \rightarrow M=Y/\Gamma$ over $\phi_{|F}$. Consider the commutative diagram
\[ \xymatrix{T_\phi \ar[d] \ar[r] & \Spec(\Oo_{T_\phi}) \ar[d] \ar[r]^-{\tilde{\phi}} & Y \ar[d]^{\pr}\\
							\Spec(F) \ar[r] & \Spec(\Oo_F)  \ar[r]^-{\phi} & M,
}\]
constructed as follows: the ring $\Oo_{T_{\phi}}$ is the normalisation of $\Oo_F$ inside the ring of global sections $\Gamma(T_\phi)$ of $T_\phi$. More concretely, expressing the \'etale $F$-algebra $\Gamma(T_{\phi})$ as a product of fields $L^{\times i}$, where $L/F$ is a finite Galois extension, and therefore itself a local field, we define $\Oo_{T_{\phi}}$ to be the product $\Oo_{L}^{\times i}$. The $\Gamma$-action on $T_\phi$ extends uniquely to a $\Gamma$-action on $\Spec(\Oo_{T_\phi})$.

 The morphism $\tilde{\phi}$ is the unique one extending the inclusion $T_\phi \to Y$. Here the fact that $\pr$ is finite and hence in particular proper allows one to deduce the existence of $\tilde{\phi}$ by applying the evaluative criterion of properness to each discrete valuation ring factor $\Oo_L$ of $\Oo_{T_{\phi}}$. Uniqueness follows from separatedness of $Y$. The uniqueness of $\tilde \phi$ implies that it is $\Gamma$-equivariant, which gives a morphism of quotient stacks
\[ [\Spec(\Oo_{T_{\phi}})/\Gamma] \rightarrow \Mm.\]
The stabiliser group of the unique closed point $x$ of this stack satisfies 
\begin{equation}\label{noncind} \Aut_x([\Spec(\Oo_{T_{\phi}})/\Gamma]) = I_{L/F},\end{equation}
where $I_{L/F}$ denotes the inertia group of the extension $L/F$. We refer the reader to Lemma \ref{lemma:BI} where we give a detailed proof of a closely related statement.


By local class field theory (see \eqref{RecDiag}), the inertia group $I_{L/F} \subset \Gamma$ is cyclic and receives a canonical surjective map $\mu(F) \twoheadrightarrow I$. Hence our choice of a primitive $|\Gamma|$-th root of unity $\zeta$ yields a generator $\gamma$ of $I$. The morphism $\Aut_x \to \I\Mm_{k_F}$ together with the generator $\gamma$ of $I_{L/F}$ define a point in $\I\Mm(k_F)$. 
This yields a map
$$e\colon M(\Oo_F)^{\natural} \to \I\Mm(k_F)_\iso.$$
\end{construction}

The following theorem explains how the orbifold volume is related to the stringy point count.


\begin{theorem}\label{thm:appendix} Let $\Mm=[Y/\Gamma]$ be an admissible finite abelian quotient stack over $\Oo_F$. For any $\gamma \in \Gamma$ and any $x \in [Y^\gamma / \Gamma](k_F) \subset \I\Mm(k_F)$ we have
\[ \mu_{\orb}(e^{-1}(x)) = \frac{q^{-w(\gamma,x)}}{|\Aut(x)|}.\]

\end{theorem}

The proof of Theorem \ref{thm:appendix} will take up the rest of this section. We will first consider two special cases:

\subsubsection*{The affine case} \label{AffineCase}
Assume first that $M=\Ab^n$, that $\Gamma = \left\langle \gamma \right\rangle$ is cyclic of order $d$ and that $\gamma$ acts on $\Ab^n$ diagonally and non-trivial by $\gamma \cdot (x_1,x_2,\dots,x_n) = (\zeta^{c_1}x_1,\zeta^{c_2}x_2,\dots,\zeta^{c_n}x_n)$, where we choose ${1\leq c_i \leq d}$ (see Remark \ref{fermionicshift}). 

\begin{proposition}\label{localcase} Let $0$ denote the image of the origin in $[(\Ab^n)^\gamma/\Gamma](k_F)$. Then we have 
\[\mu_{\orb}(e^{-1}(0)) =\frac{q^{-w(\gamma,0)}}{d}.\]
\end{proposition}
\begin{proof} The proof is similar to \cite[Section 2]{DL2002}.
Let $\pi \in F$ be a uniformiser and define $\lambda: \Ab^n \to \Ab^n/\Gamma$ on the level of coordinate rings as
\[f(x_1,\dots,x_n) \mapsto f(\pi^{\frac{c_1}{d}}x_1,\dots,\pi^{\frac{c_n}{d}}x_n). \]
Then by \cite[(2.3.4)]{DL2002} one has $\lambda(\Ab^n(\Oo_F)) \cap (\Ab^n/\Gamma)(\Oo_F)^\natural = e^{-1}(0)$ and hence by Proposition \ref{intools}
\[ \mu_{\orb}(e^{-1}(0)) = \int_{e^{-1}(0)} |\omega_{\orb}|^{\frac{1}{d}} = \frac{1}{d}\int_{\Ab^n(\Oo_F)} |\lambda^*\omega_{\orb}|^{\frac{1}{d}}, \]
with $\omega_{\orb}$ satisfying $\pr^*\omega_{\orb} = (dx_1 \wedge \dots \wedge	 dx_n)^{\otimes d}$, see Remark \ref{orbmes}. Now a direct verification shows $\lambda^*\omega_{\text{orb}} = \pi^{\sum_i c_i} (dx_1 \wedge \dots \wedge dx_n)^{\otimes d}$ and hence
\[\frac{1}{d}\int_{\Ab^n(\Oo_F)} |\lambda^*\omega_{\orb}|^{\frac{1}{d}} = \frac{1}{d}\int_{\Oo_F^n} |\pi^{\sum_i c_i}|^{\frac{1}{d}} dx_1\dots dx_n = \frac{q^{-w(\gamma,0)}}{d}. \]
\end{proof}

\subsubsection*{The cyclic case}
Now we assume that as before $\Gamma = \left\langle \gamma \right\rangle$ is cyclic of order $d$, but $Y$ is now any smooth $\Oo_F$-variety as in Theorem \ref{thm:appendix}.

\begin{proposition}\label{cycliccase} For any $x \in [Y^\gamma/\Gamma](k_F) \subset \I\Mm(k_F)$ we have
\[ \mu_{\orb}(e^{-1}(x)) = \frac{q^{-w(\gamma,x)}}{d}.\]
\end{proposition}

This follows from Proposition \ref{localcase} by linearising the $\Gamma$-action around $x$. More precisely we have the following well-known lemma (even without assuming that $\Gamma$ is cyclic).
\begin{lemma}\label{locetal} Let $x \in Y(k_F)$ be a closed point fixed by $\Gamma$. There exists a $\Gamma$-invariant open neighbourhood $U \subset Y$ of $x$ and an \'{e}tale morphism $f:U \rightarrow \mathbb{A}^n_{\Oo_F}$ such that the following holds:
\begin{enumerate}
\item[(i)] There exists a diagonal action of $\Gamma$ on $\mathbb{A}^n_{\Oo_F}$ with respect to which the morphism $f$ is $\Gamma$-equivariant.

\item[(ii)] The morphism $f$ induces a $F$-analytic diffeomorphism  
\[\bar{f}: \{\phi\in Y/\Gamma(\Oo_F)\ | \ \phi_{|k_F} = x\}  \rightarrow \{ \phi \in \mathbb{A}^n/\Gamma(\Oo_F) \ | \ \phi_{|k_F} = 0\}.\] 
\end{enumerate}  
\end{lemma}
\begin{proof}
Proposition 3.24 in \cite{MR559531} implies the existence of an open neighbourhood $U' \subset Y$ of $x$ together with an \'etale morphism $f'\colon U' \to \Ab^n_{\Oo_F}$ (we implicitly use that the only non-empty open subsets of $\Spec \Oo_F$ are given by the singleton $\Spec F$ or $\Spec \Oo_F$ itself). Without loss of generality we may assume that $f(x) = 0$.

We define $U'' = \bigcap_{\gamma \in \Gamma} \gamma\cdot{} U'$, which is a $\Gamma$-invariant open neighbourhood of $x$. The \'etale morphism $f$ induces an isomorphism of tangent spaces $T_xY \simeq T_0(\Ab^n)$. In particular we obtain a basis for $T_xY$. For $\gamma \in \Gamma$ we denote by $A_{\gamma}$ the matrix of the linear map $d\gamma\colon T_xY \to T_xY$ computed with respect to the aforementioned basis of $T_xY$. We define 
$$f= \sum_{\gamma \in \Gamma} A_{\gamma}^{-1}\circ f' \circ \gamma.$$
The assumption $(p,|\Gamma|) = 1$ implies that the Jacobi matrix $df$ of $f$ at $x$ is still invertible (it is equal to $|\Gamma| \cdot{} df'$). We conclude that there exists an $\Gamma$-invariant open neighbourhood $U \subset M$ of $x$, such that the morphism $f$ is \'etale on $U$. We let $\Gamma$ act on $\Ab^n$ via $\gamma\mapsto A_\gamma$. Then $f$ is $\Gamma$-equivariant, since we have for $\gamma' \in \Gamma$ and $y \in U$ the following computation:
$$f(\gamma' y) = \sum_{\gamma \in \Gamma} A_{\gamma}^{-1} \circ f \circ \gamma (\gamma' y) = A_{\gamma'} \left( \sum_{\gamma \in \Gamma} A_{\gamma\gamma'}^{-1} \circ f \circ (\gamma\gamma') (y) \right)$$
This concludes the proof of \emph{(i)}. To see $(ii)$, note that as $U$ contains $x$, the quotient $U/\Gamma$ contains $\{\phi\in Y/\Gamma(\Oo_F)\ | \ \phi_{|k_F} = x\} $. Using this, $(ii)$ follows from  standard properties of Henselian rings (see \cite[Theorem 4.2]{MR559531}) and \'etale maps.
\end{proof}

\begin{proof}[Proof of Proposition \ref{cycliccase}] If $x \in [Y^\gamma/\Gamma](k_F)$ lies in the image of the quotient map $Y^\gamma(k_F) \to [Y^\gamma/\Gamma](k_F)$ we can directly apply Lemma \ref{locetal} to a lift of $x$. In this case the proposition follows from Lemma \ref{VarChange} and Proposition \ref{localcase}, since the orbifold measures in question are compatible with $f$.

In general $x$ will be in the image of $Y^\gamma_T(k_F) \to [Y^\gamma_T/\Gamma](k_F) = [Y^\gamma/\Gamma](k_F)$ (see Lemma \ref{twist_quotient}) for some $\Gamma$-torsor $T$ on $\Oo_F$. The same argument as before now applies with $Y$ replaced by $Y_T$.
\end{proof}

After this preparation we are now ready for the

\begin{proof}[Proof of Theorem \ref{thm:appendix}] Let $I \subset \Gamma$ denote the cyclic group generated by $\gamma$. As in the proof of Proposition \ref{cycliccase} we can assume that $x$ is in the image of the quotient morphism $Y^\gamma(k_F) \to [Y^\gamma/\Gamma](k_F)$ (otherwise we twist by an unramified $\Gamma$-torsor). This implies that every $\phi \in e^{-1}(x)$ fits into a commutative diagram 
\[ \xymatrix{  \Spec(\Oo_L) \ar[r]^{\tilde{\phi}} \ar[d]& Y  \ar[d] \\
								\Spec(\Oo_F) \ar[r]^\phi & Y/\Gamma ,}\]
where $L/F$ is a totally tamely ramified Galois extension with Galois group $I$ and $\tilde{\phi}$ is $I$-equivariant. In particular we see that $\phi$ lies in the image of the map $Y/I(\Oo_F) \to Y/\Gamma(\Oo_F)$ induced by the quotient morphism $s: Y/I \rightarrow Y/\Gamma$.

Now let $\mu_{\orb,I}$ and $e_I$ denote the orbifold measure and specialisation morphism for $[Y/I]$. Then Proposition \ref{intools} implies
\[ \mu_{\orb}(e^{-1}(x)) = \frac{1}{|\Gamma/I|} \mu_{\orb,I}(s^{-1}e^{-1}(x)).\]
Now let $s_\gamma$ be the quotient morphism $Y^\gamma/I \to Y^\gamma/\Gamma$. Then one can check that there is a decomposition 
\[s^{-1}e^{-1}(x) = \bigsqcup_{\tilde{x} \in s_\gamma^{-1}(x)} e_I^{-1}(\tilde{x}),\] 
and thus Proposition \ref{cycliccase} implies
\[\mu_{\orb}(e^{-1}(x)) = 	\frac{|s_\gamma^{-1}(x)| q^{-w(\gamma,x)}}{|\Gamma|}.\]	
Essentially from the definition of the quotient stack $[Y^\gamma/\Gamma]$ is follows that $|\Aut(x)|=\frac{|\Gamma|}{|s_\gamma^{-1}(x)|}$, which finishes the proof.
\end{proof}



\section{A stack-theoretic approach to Brauer groups of local fields}


The Brauer group of a non-archimedean local field $F$ is isomorphic to $\mathbb{Q}/\mathbb{Z}$ (see Theorem \ref{BrauerLoc} and the references given there). We refer to the map $\inv\colon \Br(F) \to \mathbb{Q}/\mathbb{Z}$ specified in \emph{loc. cit.} as the \emph{Hasse invariant}. In Subsection \ref{arithmeticduality}, devoted to the proof of Main Theorem \ref{p-Mirror}, we will consider integrals
$$\int_{Y(F)} \exp(-2\pi i \inv(\alpha)) |\omega|,$$
where is $Y$ a proper $F$-scheme (a Hitchin fibre) endowed with a top-degree form $\omega$ and $\alpha \in \Br(Y)$ is a gerbe. The integrand above is to be understood to be the function which associates to an $F$-point $y \in Y(F)$ the complex number $\exp(-2\pi i \inv(y^*\alpha))$, where $y^*\alpha \in \Br(F)$ denotes the pullback of $\alpha$ along the morphism $y \colon \Spec F \to Y$.

The $F$-scheme $Y$ itself embeds into an ambient Deligne-Mumford $\Oo_F$-stack $\Mm$, which is a quotient stack $[U/\Gamma]$ of a smooth scheme $U$ by a finite abelian group $\Gamma$ (the total space of the Hitchin fibration), such that the gerbe $\alpha$ on $Y$ extends to $\Mm$. It will be crucial to understand the aforementioned function $y \to \inv(y^*\alpha)$ in terms of the inertia stack $\I\Mm$ and the transgression construction discussed in Subsection \ref{GerbesSection}.

This section provides the backbone for such a comparison and develops a stack-theoretic interpretation of the Hasse invariant of an element $\alpha \in \Br(F)[e]$ where $e$ is positive integer coprime to the residue characteristic $p$ of $F$, such that $F$ contains all $e$-th roots of unity. Our approach can be summarised as follows.

\begin{enumerate}[(a)]
\item For every tame field extension $L/F$ of ramification degree dividing $e$, we construct a Deligne-Mumford $\Oo_F$-stack $\Xc_{L/F}$, such that one has an open immersion $i_F\colon \Spec F \hookrightarrow \Xc_{L/F}$ and a closed immersion $j_{\sigma}\colon B_{k_F}I \hookrightarrow \Xc_{L/F}$, where $I \subset \Gal(L/F)$ denotes the inertia group (Subsection \ref{sub:X_LF}).
\begin{equation}
\xymatrix@!=1cm{
\Spec F\;\; \ar@{^{(}->}[r]^{i_F} & \Xc_{L/F} & \;\; B_{k_F}I \ar@{_{(}->}[l]_{j_{\sigma}}
}
\end{equation}
\item\label{b} Pullback along the open immersion $\Spec F \hookrightarrow \Xc_{L/F}$ induces an isomorphism $\Br(\Xc_{L/F})[e] \simeq \Br(F)[e]$ (Subsection \ref{sub:comp_Br}).
\item\label{d} The transgression construction (\ref{GerbesSection}) gives rise to a map $\Br(\Xc_{L/F}) \to \Zb/e\Zb = \Qb/\Zb$ (Subsection \ref{sub:comp_Hasse}).
\item\label{e} We prove that the composition of the maps $\Br(F)[e] \xrightarrow{\eqref{b}} \Br(\Xc_{L/F}) \xrightarrow{\eqref{d}} \Qb/\Zb[e]$ agrees with the Hasse invariant (Theorem \ref{thm:comp_Hasse}).
\end{enumerate}

Furthermore, for $y \in Y(F)$ as above, we will see in Subsection \ref{sub:application} that there exists $L/F$, such that $y \colon \Spec F \to Y$ factors as
\[
\xymatrix{
\Spec F \ar[r] \ar[d] & Y \ar[d] \\
\Xc_{L/F} \ar[r] & \Mm.
}
\] 
This factorisation together with \eqref{e} above, accomplishes our goal to compare $\inv(\alpha_y)$ to the transgression construction with respect to the stack $\Mm$.

\subsection{The construction of the stack $\Xc_{L/F}$}\label{sub:X_LF}

Henceforth we fix a non-archimedean local field $F$ of residue characteristic $p$ and finite Galois extension $L/F$ of degree $d$ and inertia degree $e$. The latter divides the former: $e|d$. Furthermore, we assume that $(p,e) = 1$, that is, the extension $L/F$ is tame and that $F$ contains all $e$-th roots of unity. We use the notation $q$ for the cardinality of the residue field $k_F$ and refer to the inertia group by $I=I_{L/F}$. Subsection \ref{galois} contains a brief overview of the terminology of local field theory.

At first we observe that the assumptions above impose strong restrictions on the structure of the Galois group $\Gamma=\Gal(L/F)$. 

\begin{lemma}\label{lemma:abelian}
Let $L/F$ be as above. 
\begin{enumerate}[(a)]
\item For a uniformiser $\pi_L$ of $L$, the subfield $F(\pi_L)$ of $L$ is totally ramified, Galois and of degree $e$ over $F$. The Galois group $\Gal(L/F(\pi_L)) $ is a normal subgroup of $\Gamma$, which maps isomorphically to $\Gamma/ I$. \item For $\pi_L$ as in (a), there are natural isomorphisms $\Gamma/I \cong \Zb/\frac{d}{e}\Zb$ and $I \cong \mu_e(F)$.
\item The Galois group $\Gamma$ is a finite abelian group, which is a split extension of a cyclic group with $d/e$ elements by a cyclic group of order $e$. 
\[
\xymatrix{
0 \ar[r] & I \ar[r] \ar[d] & \Gamma \ar[r] \ar[d] & \Gamma/I \ar@{-->}@/_/[l] \ar[r] \ar[d] & 0 \\
0 \ar[r] & \Zb/e\Zb \ar[r] & \Zb/e\Zb \oplus \Zb/\frac{d}{e}\Zb \ar[r] & \Zb/\frac{d}{e}\Zb \ar[r] & 0
}
\]
Hence the choice of $F(\pi_L)$ induces an isomorphism $\Gamma \cong \Gamma/I \times I$.
\item Every splitting of $\Gamma \to \Gamma/I$ arises from a uniformiser of $L$ in the above way.
\end{enumerate}
\end{lemma}
\begin{proof}

(a) By \cite[Proposition II.5.11]{MR1282723} the field $F(\pi_L)$ is totally ramified and of degree $e$ over $F$. Hence by \cite[Proposition II.5.12]{MR1282723} there exists a uniformiser $\pi'_L$ of $F(\pi_L)$ for which $(\pi'_L)^e$ is in $F$. Since by assumption $F$ contains all $e$-th roots of unity, the field $F(\pi_L)$ is then the splitting field of the polynomial $X^e-(\pi'_L)^e \in F[X]$. Thus it is Galois over $F$ and consequently $\Gal(L/F(\pi_L))$ is normal in $\Gamma$. 

Let $L' \subset L$ be the unique subfield, which is unramified over $F$ and has residue field $k_L$. Since $F(\pi_L)$ is totally tamely ramified over $F$ we find $F(\pi_L) \cap L'=F$. The fields $F(\pi_L)$ and $L'$ together generate $L$. By Galois theory this implies that restriction gives an isomorphism $\Gal(L/F(\pi_L)) \cong \Gal(L'/F) \cong \Gamma/I$.

(b) The Frobenius generator of $\Gal(k_L/k_F)$ gives an isomorphism $\Gal(L/F(\pi_L)) \cong \Gal(k_L/k_F) \cong \Zb/ \frac{d}{e} \Zb$. By the above, there is a natural isomorphism $\mu_e(F) \cong \Gal(F(\pi_L)/F),\; \xi \mapsto (\pi_L \mapsto \xi \pi_L)$.

(c) follows from the previous claims.

(d) Let $\Gamma/ I \to \Gamma$ be a splitting. Then the field $L^{\Gamma/I}$ has degree $e$ over $F$ and residue field $k_F$. Thus we may apply the above considerations to this field to get (d).
\end{proof}

We now turn to defining the principal object of study of this section. It is reminiscent of a root stack and provides a partial compactification of $\Spec F$ measuring tame ramification.

\begin{definition}\label{quoto}
We define $\Xc_{L/F}$ to be the Deligne-Mumford stack $[\Spec( \Oo_L)/\Gamma]$. 
\end{definition} 

\begin{lemma} \label{GaloisTorsor}
Let $X \to Y$ be a finite \'etale Galois morphism of schemes with Galois group $G$. The morphism $X \to Y$ induces an equivalence
\begin{equation*}
	[X/G] \cong Y
\end{equation*}
between the quotient stack $[X/G]$ and $Y$.
\begin{proof}
First we claim that $X$ is a $G$-torsor over $Y$. For this we need to verify that a certain morphism $X \times G \to X \times_Y X$ is an isomorphism. Choosing a geometric point $y$ of $Y$, we use the equivalence between finite \'etale schemes over $Y$ and finite sets with a continuous $\pi_1^{\text{\'et}}(Y,y)$-action. It follows from the definition of an \'etale Galois covering that the image of $X \times G \to X \times_Y X$ under this equivalence is an isomorphism. Thus $X$ is a $G$-torsor over $Y$.

By descent, it suffices to prove the claim that $[X/G] \to Y$ is an isomorphism \'etale-locally on $X$. Thus it follows from the above.
\end{proof}

\end{lemma}
The following describes the generic fibre of $\Xc_{L/F}$:
\begin{lemma}\label{lemma:open}
There is an open immersion 
\[
\xymatrix@!=2cm{
\Spec (F)\; \ar@{^(->}[r]^{j_F} & \;\Xc_{L/F}.
}
\]
\end{lemma}

\begin{proof}
By the construction of $\Xc_{L/F}$ as a quotient stack, the morphism $\Spec \Oo_L \to \Xc_{L/F}$ is an atlas. We have a $\Gamma$-invariant open immersion $\Spec L \hookrightarrow \Spec \Oo_L$, which descends to an open immersion
$$[\Spec L/\Gamma] \hookrightarrow \Xc_{L/F}.$$
Since $[\Spec L/\Gamma] \cong \Spec F$ by Lemma \ref{GaloisTorsor} this proves the claim.
\end{proof}

In the definition of $\Xc_{L/F}$ we allowed $L/F$ to be an arbitrary finite Galois extension, satisfying the assumptions stated at the beginning of the subsection. For two non-isomorphic field extensions $L_1,L_2/F$ the associated Deligne-Mumford stacks $\Xc_{L_1/F}$ and $\Xc_{L_2/F}$ can be isomorphic. In fact, as shown by the following lemma, it suffices to work with totally ramified field extensions (that is, $I = \Gamma$).
\begin{lemma}\label{lemma:wlog}
For every uniformiser $\pi_L$ of $L$, the inclusion $F(\pi_L) \into L$ induces an equivalence
$$\Xc_{F(\pi_L)/F} \cong \Xc_{L/F}.$$
\end{lemma}
\begin{proof}
Since $L$ is unramified over $F(\pi_L)$ by Lemma \ref{lemma:abelian}, the morphism $\Spec (\Oo_L) \to \Spec (\Oo_{F(\pi_L)})$ is a finite \'etale Galois cover with Galois group $\Gal(L/F(\pi_L)) \cong \Gamma /I$. Hence by Lemma \ref{GaloisTorsor} there is a natural isomorphism $[\Spec(\Oo_L)/(\Gamma/I)] \cong \Spec(\Oo_{F(\pi_L)})$. Using the isomorphism $\Gamma \cong \Gamma/I \times I$ from Lemma \ref{lemma:abelian} we get isomorphisms
\begin{equation}
[\Spec(\Oo_L)/\Gamma] \cong [[\Spec(\Oo_L)/(\Gamma/I)]/I] \cong [\Spec(\Oo_{F(\pi_L)}) /I] \cong \Xc_{F(\pi_L)/F}.
\end{equation}
This proves the lemma.
\end{proof}
The lemma below describes the special fibre of $\Xc_{L/F}$ defined in the following definition.
\begin{definition}
The closed reduced complement of the open immersion $i_F\colon \Spec F \hookrightarrow \Xc_{L/F}$ of Lemma \ref{lemma:open} is denoted by $\Xc_{L/F}^{\bullet}$. 
\end{definition}

\begin{lemma}\label{lemma:BI}
For every splitting $\sigma\colon \Gamma \to I$ of the inclusion $I \hookrightarrow \Gamma$, we obtain an equivalence of stacks $\Xc_{L/F}^{\bullet} =[\Spec k_L / \Gamma]\simeq B_{k_F}I$, and thus a closed immersion 
$$j_{\sigma}\colon B_{k_F}I = \Xc_{L/F}^{\bullet}\hookrightarrow \Xc_{L/F}.$$
\end{lemma}
\begin{proof}
At first we remark that a splitting $\sigma$ exists by virtue of Lemma \ref{lemma:abelian}.
Furthermore, it is shown there that $\sigma$ is induced by the choice of a subfield $F(\pi_L) \subset L$. Such a field has residue field $k_F$ and Galois group $\Gal(F(\pi_L)/F)$ isomorphic to $I$. By virtue of Lemma \ref{lemma:wlog} we have an equivalence of stacks 
$$\Xc_{L/F} \cong \Xc_{F(\pi_L)/F}$$ 
and thus may assume $L=F(\pi_L)$ and $\Gamma = I$ without loss of generality. The open inclusion $i_F$ corresponds to the quotient of 
$$\Spec L \hookrightarrow \Spec \Oo_L$$
with respect to the $I$-action. The reduced closed compliment of this open immersion is given by 
$$\Spec k_L \hookrightarrow \Spec \Oo_L.$$
Since we assume $L=F(\pi_L)$ is a totally ramified extension of $F$ we obtain $k_L = k_F$. After quotienting by $I$ we obtain a closed immersion
$$B_{k_F} I = [\Spec k_F / I] \hookrightarrow \Xc_{L/F}.$$
This concludes the proof.
\end{proof}

\subsection{\'Etale cohomology groups of $\Xc_{L/F}$}\label{sub:et_cohomology}

In this subsection we compute several \'etale cohomology groups of the stacks $\Xc_{L/F}$. These computations play a role in our determination of the Brauer group of $\Br(\Xc_{L/F})$.

\begin{lemma}\label{lemma:PicX}
The Picard group of $\Xc_{L/F}$ is isomorphic to $\Zb/e\Zb$.
\end{lemma}

\begin{proof}
As we saw in Lemma \ref{lemma:wlog} we have an isomorphism $\Xc_{L/F} \cong \Xc_{F(\pi_L)/F}$, where $F(\pi_F)/F$ is a Galois extension with Galois group equal to the inertial group $I$. Henceforth we assume without loss of generality that $L/F$ is a totally ramified field extension, that is, satisfies $\Gamma = I$.

By virtue of the definition of $\Xc_{F(\pi_L)/F}$ as a quotient stack, we have that $\Pic(\Xc_{F(\pi_L)/F})$ is isomorphic to the set of equivalence classes of $I$-equivariant line bundles on $\Spec \Oo_{F(\pi_L)}$. Every line bundle on $\Spec \Oo_{F(\pi_L)}$ is trivial, as $\Oo_{F(\pi_L)}$ is a local ring. A $I$-equivariant structure on the trivial line bundle corresponds to an element of the Galois cohomology group $H^1(I,\Oo_{F(\pi_L)}^{\times})$. This yields a isomorphism of abstract groups 
$$\Pic(\Xc_{F(\pi_L)/F}) \simeq H^1(I,\Oo_{F(\pi_L)}^{\times}).$$
The short exact sequence of $I$-modules
$$1 \to 1 + \mathfrak{m}_{F(\pi_L)} \to \Oo_{F(\pi_L)}^{\times} \to k_F^{\times} \to 1$$
yields the exact sequence
$$H^0(I,k_{F(\pi_L)}^{\times})= k_F^{\times} \to H^1(I,1 + \mathfrak{m}_{F(\pi_L)}) \to H^1(I,\Oo_{F(\pi_L)}^{\times}) \to H^1(I,k_{F(\pi_L)}^{\times}) \to H^2(I,1 + \mathfrak{m}_{F(\pi_L)}).$$
A priori, the groups $H^i(I,1 + \mathfrak{m}_{F(\pi_L)})$ for $i= 1,2$ are pro-$p$-groups as $1+\mathfrak{m}_{F(\pi_L)}$ is a pro-$p$-group. Since $I$ is a cyclic group of order $e$, which is coprime to $p$, we have that the cohomology groups $H^i(I,M)$ for $i > 0$ vanish for every finite $I$-module on which multiplication by $e$ is invertible. This shows that $H^i(I,1 + \mathfrak{m}_{F(\pi_L)}) =0$ vanishes for $i=1,2$.
\begin{claim}
$H^1(I,k_F^{\times}) \simeq \Zb/e\Zb$
\end{claim}
\begin{proof}
We have $H^1(I,k_F^{\times}) = \Pic([\Spec k_F/I])$, that is, the group of isomorphism classes of $I$-equivariant line bundles on $\Spec k_F$. Since line bundles on $\Spec k_F$ are trivial, we obtain 
$$\Pic([\Spec k_F/I]) \simeq \Hom(I,\G_m) = \Hom(\mu_e,\G_m)=\Zb/e\Zb$$
and this concludes the proof of the claim.
\end{proof}
We therefore obtain an exact sequence
$$0 = H^1(I,1 + \mathfrak{m}_{F(\pi_L)}) \to H^1(I,\Oo_{F(\pi_L)}^{\times}) \to H^1(I,k_{F}^{\times}) \to H^2 (I,1 + \mathfrak{m}_{F(\pi_L)}) = 0$$
concluding the proof of the lemma.
\end{proof}

The conclusion of the following lemma holds for arbitrary non-negative integers $i$. We opted for the inclusion of a conceptual argument for degrees $i=0,1,2$ which avoids computations with spectral sequences.
\begin{lemma} \label{LocalTransgression}
For every splitting $\sigma$ of the embedding $I \hookrightarrow \Gamma$, pullback along the map $j_{\sigma}\colon B_{k_F}I \to \Xc_{L/F}$ induces isomorphisms
$$H^i_{\et}(\Xc_{L/F},\F) \simeq H^i_{\et}(B_{k_F}I,\F) \text{ for $i=0,1,2$},$$
where $\F$ denotes a finite \'etale sheaf of groups on $X$ of order coprime to $p$.
\end{lemma}
\begin{proof}
Pullback along the inclusion $\Spec k_L \hookrightarrow \Spec \Oo_L$ induces an equivalence of finite \'etale sites \cite[Proposition 3.4.4]{MR559531}. In particular, $\iota^*$ induces equivalences between the groupoid of $\Gamma$-equivariant $\F$-torsors. We obtain an isomorphism
$$H^i_{\et}(\Xc_{L/F},\F) \simeq H^i_{\et}(B_{k_F}I,\F) \text{ for $i=0,1$}$$
since $\F$ is a finite \'etale group scheme, and $H^1_{\et}$ is isomorphic to the group of isomorphism classes of \'etale $\F$-torsors, and $H^0_{\et}$ to the group of sections of the finite \'etale group space $\F$, that is, isomorphisms of $\F$ with the trivial $\F$-torsor.

The case of degree $2$ cohomology is dealt with similarly using the language of gerbes.
\begin{claim}
Pullback along the closed immersion $\iota\colon \Spec k_L \hookrightarrow \Spec \Oo_L$ induces an equivalence of the $2$-groupoids of $\F$-gerbes 
\begin{equation}\label{above}\iota^*\colon\mathsf{Gerbe}(\Spec k_L,\F) \simeq \mathsf{Gerbe}(\Spec \Oo_L,\F).\end{equation}
\end{claim}
\begin{proof}
For a $2$-groupoid $\Cc$ we denote by $\pi_0$ the set of isomorphism classes, and for $X \in \Cc$ we write $\pi_1(\Cc,X)$ for the set of $1$-automorphisms of $X$ up to $2$-isomorphisms, and $\pi_2(\Cc,X)$ for the set of $2$-automorphisms of $\id_X$. A functor $F\colon \Cc \to \mathcal{D}$ of $2$-groupoids is an equivalence if and only if $\pi_0(F)\colon \pi_0(\Cc) \to \pi_0(\mathcal{D})$ is an isomorphism, and for every $X \in \Cc$ and $i=1,2$ the induced map $\pi_i(\Cc,X) \to \pi_i(\mathcal{D},F(X))$ is an isomorphism.

For the $2$-groupoids above, one has
$$\pi_0(\mathsf{Gerbe}(U,\F))\simeq H^2_{\et}(U,\F),$$
and for every $\Gg \in \mathsf{Gerbe}(U,\F)$ we have
$$\pi_i(\mathsf{Gerbe}(U,\F),\Gg) \simeq H^{2-i}_{\et}(U,\F)$$
(note that the right hand side does not depend on $\Gg$). 

Pullback along $\iota$ induces isomorphisms $H^i_{\et}(\Spec \Oo_L,\F) \simeq H^i_{\et}(\Spec k_F,\F)$ of \'etale cohomology groups (see \cite[Remark III.3.11]{MR559531}). We therefore deduce that 
the functor $\iota^*$ from \eqref{above} is an equivalence as asserted.
\end{proof}
The equivalence of $2$-groupoids of gerbes asserted by the lemma above yields that $\iota^*$ also induces an equivalence of $2$-groupoids of $\Gamma$-equivariant $\F$-gerbes on $\Spec \Oo_L$ and $\Spec k_L$. We deduce that $H^2_{\et}(\Xc_{L/F},\F) \simeq H^2_{\et}(B_{k_F}I,\F)$.
\end{proof}

\begin{lemma} \label{preLocalTransgression}
  There is a split short exact sequence
  \begin{equation*}
    \Zb/e\Zb \hookrightarrow H^2_{\et}(B_{k_F}I,\mu_{e}) \twoheadrightarrow \Zb/e\Zb.
  \end{equation*}
\end{lemma}

\begin{proof}
The set of isomorphism classes of $\mu_e$-gerbes on $[\Spec k_F/I] = [\Spec k_L/\Gamma]$ are in correspondence with central extensions of finite \'etale group schemes
$$1 \to \mu_e \to \widehat{\Gamma} \to \Gamma \to 1$$
up to isomorphism (see Lemma \ref{thatsalemma}). By part (c) of \emph{loc. cit.} we have a split short exact sequence
$$\Hom(\mu_e,\mu_e) \hookrightarrow H^2_{\et}(B_{k_F}I,\mu_{e}) \twoheadrightarrow \Ext(\mu_e,\mu_e),$$
where the Hom and Ext-groups are meant for abstract abelian groups, since $\mu_e$ is a constant group scheme over $k_F$ by virtue of our standing assumptions. Since $\mu_e$ is a cyclic group of order $e$, we may identify $\Hom(\mu_e,\mu_e)$ and $\Ext(\mu_e,\mu_e)$ with $\Zb/e\Zb$.
%
%
%
%
This concludes the proof of the lemma.
\end{proof}

\begin{lemma}\label{lemma:localTransgression}
A choice of a splitting $\sigma\colon \Gamma \to I$ of the inclusion $I \hookrightarrow \Gamma$ induces a split short exact sequence
  \begin{equation}
    \Zb/e\Zb \hookrightarrow H^2_{\et}(\Xc_{L/F},\mu_{e}) \twoheadrightarrow \Zb/e\Zb.
  \end{equation}
\end{lemma}

\begin{proof}
Every choice of a splitting $\sigma\colon \Gamma \to I$ of the inclusion $I \hookrightarrow \Gamma$ yields an isomorphism 
$$j_{\sigma}^*\colon H^2_{\et}(\Xc_{L/F},\mu_{e}) \simeq  H^2_{\et}(B_{k_F}I,\mu_e)$$ (see Lemma \ref{LocalTransgression}, let $\F=\mu_e$). We apply Lemma \ref{preLocalTransgression} to conclude the assertion above.
\end{proof}

\subsection{On the Brauer group of $\Xc_{L/F}$}\label{sub:comp_Br}

In this subsection we relate the Brauer group of $\Xc_{L/F}$ to $\Br(F)$. A Brauer class $\alpha \in \Br(\Xc_{L/F})$ can be pulled back along the inclusion $i_F\colon \Spec F \hookrightarrow \Xc_{L/F}$. We claim that the resulting Brauer class $i_F^*\alpha$ on $F$ has order dividing $d$. Indeed, its pullback to $L$ extends to $\Spec \Oo_L$. This follows from the commutative diagram
\[
\xymatrix{
\Spec L \ar[r] \ar[d] & \Spec \Oo_L \ar[d] \\
\Spec F \ar[r] & \Xc_{L/F}.
}
\]
Since $\Br(\Oo_L) = 0$ we have $(i_F^*\alpha)_L = 0$. The commutative diagram of Theorem \ref{BrauerLoc}
\[
\xymatrix{
\Br(F) \ar[r]^{\inv}_{\simeq} \ar[d] & \Qb/\Zb \ar[d]^{[d]} \\
\Br(L) \ar[r]^{\inv}_{\simeq} & \Qb/\Zb,
}
\]
where $[d]$ denotes multiplication by $d$,
yields that $i_F^*\alpha$ has order dividing $d$.

\begin{lemma}\label{lemma:Brinj}
Restriction along $i_F$ induces an injective map $i_F^*\colon \Br(\Xc_{L/F}) \hookrightarrow \Br(F)[d]$.
\end{lemma}

\begin{proof}
We have already seen above that $i_F^*$ factors through $\Br(F)[d] \subset \Br(F)$. It remains to show that it is injective. We will show this using the Brauer-Severi construction. For this purpose we let $\alpha \in \Br(\Xc_{L/F})$ be a Brauer class on $\Xc_{L/F}$, such that $i_F^*\alpha = 0$. Let $B$ be an Azumaya algebra on $\Xc_{L/F}$ representing the Brauer class $\alpha$.

To an Azumaya algebra $B$ of rank $r^2$ on a Deligne-Mumford stack $\Xc$, one can associate a $\Pb^r$-bundle, called the Brauer-Severi variety $Y_{B} \xrightarrow{p_{B}} \Xc$. The Azumaya algebra $p_B^*B$ splits on $\Xc$ if and only if $Y_{B} \to \Xc$ has a section. This is discussed in \cite{MR559531} for $\Xc$ being a scheme. The generalisation to Azumaya algebras on Deligne-Mumford stacks is an application of faithfully flat descent theory. 

Let $B$ be an Azumaya algebra on $\Xc_{L/F}$, which splits when restricted to $\Spec(F) \hookrightarrow \Xc_{L/F}$. We see that we have a $\Gamma$-equivariant section of $Y_{B} \times_{\Xc_{L/F}} \Spec \Oo_L$ over $\Spec L$. By faithfully flat descent the morphism $p_{B}$ is proper and smooth. 
The valuative criterion of properness therefore yields an extension of this section to $\Spec \Oo_L$, and uniqueness implies that this section is $\Gamma$-equivariant. We conclude that we have a section of $Y_{B} \to \Xc_{L/F}$ and thus a splitting of the Azumaya algebra $B$ on $\Xc_{L/F}$. 
\end{proof}

Our next goal is to prove that $\Br(\Xc_{L/F})$ has order $e$. This implies $\Br(\Xc_{L/F}) \simeq \Br(F)[e] = \Zb/e\Zb$. This is the content of the following proposition. Its proof relies on our computation of \'etale cohomology groups of $\Xc_{L/F}$ given earlier.

\begin{proposition}\label{prop:BrX}
Pullback along $i_F$ induces an isomorphism $i_F^*\colon \Br(\Xc_F) \simeq \Br(F)[e]$. In particular, since $\Br(F) \simeq \Qb/\Zb$, we have that $\Br(\Xc_{L/F})$ has order $e$.
\end{proposition}

\begin{proof}
Let $[e]\colon \G_m \to \G_m$ be the $e$-th power map. The Kummer sequence 
$$1 \to \mu_e \to \G_m \xrightarrow{[e]} \G_m \to 1$$
yields a long exact sequence of \'etale cohomology groups
$$\cdots \to H^1_{\et}(\Xc_{L/F},\G_m) \xrightarrow{[e]} H^1_{\et}(\Xc_{L/F},\G_m) \to H^2_{\et}(\Xc_{L/F},\mu_e) \to H^2_{\et}(\Xc_{L/F},\G_m) \xrightarrow{[e]} H^2_{\et}(\Xc_{L/F},\G_m) \to \cdots.$$
According to Lemma \ref{lemma:PicX} we have that $H^1_{\et}(\Xc_{L/F},\G_m)$ is isomorphic to $\Zb/e\Zb$. Thus, the cokernel of $H^1_{\et}(\Xc_{L/F},\G_m) \xrightarrow{[e]} H^1_{\et}(\Xc_{L/F},\G_m)$ is isomorphic to $\Zb/e\Zb$. In Lemma \ref{lemma:localTransgression} we established a non-canonical isomorphism $H^2_{\et}(\Xc_{L/F},\mu_e) \simeq \Zb/e\Zb \oplus \Zb/e\Zb$. We therefore have the following exact sequence
$$\Zb/e\Zb \xrightarrow{0} \Zb/e\Zb \to \Zb/e\Zb \oplus \Zb/e\Zb \to H^2_{\et}(\Xc_{L/F},\G_m) \xrightarrow{[e]} H^2_{\et}(\Xc_{L/F},\G_m).$$
Thus, we obtain $|H^2_{\et}(\Xc_{L/F},\G_m)[e]| = e$.

Lemma \ref{lemma:gabber_var} below shows that $H^2_{\et}(\Xc_{L/F},\G_m) = \Br(\Xc_{L/F})$, and we have seen in Lemma \ref{lemma:Brinj} that $\Br(\Xc_{L/F}) \hookrightarrow \Br(F)[d]$, and therefore $\Br(\Xc_{L/F})[e] \hookrightarrow \Br(F)[e] = \Zb/e\Zb$. Hence the above shows $$\Br(\Xc_{L/F})[e] \simeq \Zb/e\Zb.$$ 

We have seen in Lemma \ref{lemma:wlog} that we may assume without loss of generality that $L/F$ is totally tamely ramified. We then have $d=[L:F] = e$, and therefore by virtue of \ref{lemma:Brinj} an embedding $\Br(\Xc_{L/F}) \hookrightarrow \Qb/\Zb[e]$. Since the right hand side has order $e$, we see that the order of every element in $\Br(\Xc_{L/F})$ divides $e$. This implies $\Br(\Xc_{L/F})[e] = \Br(\Xc_{L/F})$ and thus finishes the proof.
\end{proof}

\begin{lemma}\label{lemma:gabber_var}
Let $\Gamma$ be a finite abstract group acting on a scheme $U$, which admits an ample invertible sheaf. Then, the quotient stack $[U/\Gamma]$ satisfies
$$\Br([U/\Gamma]) \simeq H^2_{\et}([U/\Gamma],\G_m)^{tors}.$$
\end{lemma}

We explain the proof of the lemma below. It relies on a geometric strategy to decide whether a given cohomology class $\alpha \in H^2_{\et}(\Xc,\G_m)$ on an algebraic stack $\Xc$ lies in the image of $\Br(\Xc) \hookrightarrow H^2_{\et}(\Xc,\G_m)$.

\begin{definition}
Let $(U_i \to \Xc)_{i\in I}$ be a smooth cover of $\Xc$ by schemes and let $(\alpha_{ij}) \in Z^2_{\et}(\Xc,\G_m)$ be a \v Cech cocycle for $\Xc$, representing an element $\alpha \in H^2_{\et}(\Xc,\G_m)$.  An $\alpha$-twisted coherent sheaf on $\Xc$ is given by coherent sheaves $\mathcal{F}_i \in \Coh(U_i)$ and isomorphisms
$$\phi_{ij}\colon \mathcal{F}_j|_{U_{ij}} \to \mathcal{F}_j|_{U_{ij}},$$ such that 
on $U_{ijk}$ the twisted cocycle identity $\phi_{ij}\circ \phi_{jk} = \alpha_{ijk}\cdot{}\phi_{ik}$ holds.
\end{definition}

Let $\alpha$ be a $\G_m$-gerbe on $\Xc$. Then, $\alpha$ comes from an Azumaya algebra $B$  if and only if there exists a locally free $\alpha$-twisted sheaf of finite rank: indeed, an Azumaya algebra $B$ is a $B$-module and thus gives rise to a twisted sheaf, vice versa, if $\F$ is an $\alpha$-twisted sheaf we have an Azumaya algebra $B=\Eend(\F)$, which represents $\alpha$. 

\begin{proof}[Proof of Lemma \ref{lemma:gabber_var}]
We recall that according to a theorem of Gabber \cite{gabber}, the Brauer group $\Br(U)$ of a scheme $U$ which admits an ample invertible sheaf can be identified with the torsion part $H^2_{\text{\'et}}(U,\G_m)^{tors}$ of degree $2$ \'etale cohomology of $\G_m$. This proves the assertion above for $\Gamma$ being the trivial group.

For the proof of the general case we observe that we have a finite and \'etale quotient map $q\colon U \to [U/\Gamma]$. Let $\alpha \in H^2_{\et}([U/\Gamma],\G_m)^{tors}$ be a cohomological Brauer class. The pullback $q^*\alpha \in H^2_{\et}(U,\G_m)^{tors}$ corresponds by virtue of Gabber's theorem to an element of $\Br(U)$. Equivalently, there exists a locally free $q^*\alpha$-twisted sheaf $E$ on $U$. The pushforward $q_*E$ is a locally free $\alpha$-twisted sheaf on $[U/\Gamma]$. This shows that $\alpha \in \Br([X/\Gamma])$.
\end{proof}

In Lemma \ref{lemma:BI} we saw that a splitting $\sigma\colon \Gamma \to I$ of the inclusion $I \hookrightarrow \Gamma$ induces an equivalence of stacks $[\Spec k_F/ I] \cong [\Spec k_L/\Gamma]$.

\begin{proposition} \label{BrLocalTransgression}
For every splitting $\sigma$ of the embedding $I \hookrightarrow \Gamma$, pullback along the map $j_{\sigma}\colon B_{k_F}I \to \Xc_{L/F}$ induces an isomorphism 
\begin{equation}\label{eqn:isomu}
j_{\sigma}^*\colon\Br(\Xc_{L/F})[e] \simeq \Br(B_{k_F}I)[e].
\end{equation}
\end{proposition}
\begin{proof}
The Kummer sequence
$$1 \to \mu_e \to \G_m \xrightarrow{[e]} \G_m \to 1$$
yields an exact sequence
$$H^2_{\et}(U,\mu_e) \to \ker(H^2_{\et}(U,\G_m) \xrightarrow{[e]} H^2_{\et}(U,\G_m)) = \Br(U)[e] \to 0,$$
where $U$ denotes a Deligne-Mumford stack on which $e$ is invertible. In particular, we see that $H^2_{\et}(U,\mu_e) \twoheadrightarrow \Br(U)[e]$ is surjective. Applying this to $U=[\Spec k_L/\Gamma]$ and $U=\Xc_{L/F}$ we obtain the following commutative diagram
\[
\xymatrix{
H^2_{\et}(\Xc_{L/F},\mu_e) \ar[r]^-{\simeq} \ar@{->>}[d] & H^2_{\et}([\Spec k_L/\Gamma],\mu_e) \ar@{->>}[d] \\
\Br(\Xc_{L/F})[e] \ar[r]^-{j_{\sigma}^*} & \Br([\Spec k_F/I])[e]. 
}
\]
We deduce that $j_{\sigma}^*\colon \Br(\Xc_{L/F})[e]  \twoheadrightarrow \Br([\Spec k_F/I])[e]$ is a surjection. Proposition \ref{prop:BrX} guarantees that $\Br(\Xc_L/F)[e]$ is a finite group of order $e$. We therefore have that $j_{\sigma}^*\colon \Br(\Xc_{L/F})[e]  \twoheadrightarrow \Br([\Spec k_F/I])[e]$ is an isomorphism.
\end{proof}

\begin{lemma}\label{encore}
The isomorphism \eqref{eqn:isomu} is independent of the choice of $\sigma$.
\end{lemma}

\begin{proof} 
Let $\sigma_1,\sigma_2\colon \Gamma \to I$ be two splittings, and let $\alpha_i\colon B_{k_F}I \xrightarrow{\simeq} [k_L/\Gamma]$ be the corresponding equivalences. Then, we obtain an autoequivalence $\alpha_2^{-1}\alpha_1\colon B_{k_F}I \to B_{k_F}I$. It remains to show that the diagram
\[
\xymatrix{
\Br(B_{k_F}I) \ar[rd] \ar[r]^{(\alpha_2^{-1}\alpha_1)^*} & \Br(B_{k_F}I) \ar[d] \\
& H^1(k_F,\mu_e) \oplus H^0(k_F,\Zb/e\Zb)
}
\]
commutes.

To see this we argue as follows: maps from $[\Spec k_F/I]$ to itself, are classified by $I$-equivariant $I$-torsors on the point $\Spec k_F$. That is, the set of such self-maps up to isomorphism is in bijection with $H^1(k_F,I) \oplus \Hom(I,I)$. Let $[\Spec k_F/I] \to [\Spec k_F/I]$ be a map inducing the identity $\id_I\colon I \to I$. Then the induced map $H^1(k_F,I^{\vee}) \simeq \Br([k_F/I]) \to \Br([k_F/I]) \to H^1(k_F,I^{\vee})$ is the identity. Since $\alpha_2^{-2}\alpha_1$ satisfies this property, this concludes the proof.
\end{proof}

\subsection{Transgression and Hasse's invariant}\label{sub:comp_Hasse}

In the following we assume that $\mu_{e^2}$ is contained in $k$. We denote by $\tau\colon \Br(B_{k_F}I)[e] \to H^1_{\et}(\I B_{k_F}I,\mu_e)$ the transgression morphism of Lemma \ref{lemma:muGm}. Recall that the inertia stack $\I B_{k_F} I$ is equivalent to $I\times B_{k_F} I$ (see Remark \ref{inerst}). Let 
\begin{equation}\label{eqn:IB}\Spec k_F \times I \to \I B_{k_F} I\end{equation} be the map induced from the equivalence above and the natural map $\Spec k_F \to B_{k_F} I$. Furthermore, we have an isomorphism $I \simeq \mu_e$.  
Pullback along \eqref{eqn:IB} induces a map
\begin{equation}
\Tr\colon H^1_{\et}(\I B_{k_F}\mu_e,\mu_e) \to \Hom(\mu_e,H^1_{\et}(k_F,\mu_e)) \simeq \Zb/e\Zb,
\end{equation}
where the isomorphism is given by evaluating an element of $H^1_{\et}(k_F,\mu_e) = \Hom(\Gal(\overline{k}_F/k_F))$ at the Frobenius element $\Fr$, and using $\Hom(\mu_e,\mu_e) \simeq \Zb/e\Zb$.
The composition 
$$\tilde{\tau} = \Tr(\tau\circ j_{\sigma}^*)\colon \Br(\Xc_{L/F}) \to Hom(\mu_e,\mu) = \Zb/e\Zb$$
is independent of $\sigma$ (according to Lemma \ref{encore}). The result below asserts that for elements of $\Br(F)[e]$, the value of the transgression map $\tilde{\tau}$ agrees with the Hasse invariant. A more general result can be found in the authors' recent \cite{GWZ18}. In \emph{loc. cit.} we extend this comparison to arbitrary Brauer classes of order coprime to $p$. The proof given below is similar to the one in \emph{loc. cit.} and is included for the sake of keeping the paper self-contained.

\begin{theorem}\label{thm:comp_Hasse}
Let $(i_F^*)^{-1}\colon \Br(F)[e] \to \Br(\Xc_{L/F})$ be the inverse to the isomorphism of Proposition \ref{prop:BrX}, and $\tilde{\tau}\colon \Br(\Xc_{L/F}) \to \Zb/e\Zb$ the morphism defined above. Then the diagram
\[
\xymatrix{
\Br(F)[e] \ar[r]^{(i_F^*)^{-1}} \ar[rd]_{\inv} & \Br(\Xc_{L/F})[e] \ar[d]^{\tilde{\tau}} \\
& \Qb/\Zb[e]
}
\]
commutes.
\end{theorem}

\begin{proof}
According to Lemma \ref{lemma:wlog} we may assume that $L/F$ is totally ramified. We begin the proof by recalling the definition of the Hasse invariant. The Brauer group of $F$ is isomorphic to $H^2(\Gal(\bar{F}/F),\bar{F}^{\times})$. However, a gerbe on $F$ splits on an unramified cover, which means that it belongs to the image of the cohomology group $H^2(\Gal(F^{un}/F),(F^{un})^{\times})=H^2(\widehat{Z},(F^{un})^{\times})$.

Recall that the Hasse invariant is defined by composition 
$$H^2(\Gal(F^{un}/F) ,(F^{un})^{\times}) \simeq H^2(\Gal(\overline{k}/k),\Zb)) \simeq H^1_{\text{\'et}}(\widehat{\Zb},\Qb/\Zb) \simeq \Qb/\Zb.$$
The first map is induced by $\nu\colon (F^{un})^{\times} \to \Zb$, sending the uniformiser ${\pi}$ to $1$. This map has a section $\Zb \to (F^{un})^{\times}$, given by $1\mapsto \pi$.

For a fixed Brauer class $\alpha \in \Br(F)[e]$, we denote by $\phi=(\phi_{ijk})$ the $\mathbb{Z}$-valued $2$-cocycle on the site of unramified coverings of $F$ (the site equivalent to the small \'etale site of $\Oo_F$) which corresponds to $\alpha$ under the isomorphism of the paragraph above. That is, the gerbe $\alpha$ is represented by $(\pi^{\phi_{ijk}})$.

By assumption, the order $\ord(\alpha)$ of $\alpha$ is a divisor of $e$, and is coprime to $p$. This implies that $(e\cdot{}\phi_{ijk})$ is a coboundary. We denote by $\psi=(\psi_{ij})$ the $1$-cochain, such that $d(\psi) = e \cdot \phi$.

Recall that we assume that $L/F$ is a totally ramified field extension. We denote by $\pi^{1/e}$ a uniformiser for $L$. We define an $L^{\times}$-valued $1$-cochain $(\pi^{\frac{\psi_{ij}}{e}})$, such that

\begin{equation}\label{eqn:3}d(\pi^{\frac{\psi_{ij}}{e}}) = \pi^{\phi_{ij}}.\end{equation}

Next we define a $2$-cochain taking values in $\Zb/e\Zb = \Hhom(\mu_n,\G_m)$. It is given by 
$$c_{ij}\colon \xi \mapsto \xi^{\psi_{ij}}$$
for $\xi \in \mu_e$. We claim that this $1$-cochain is a coycle, that is, satisfies $d(c_{ij}) = 1$. Indeed, $\xi \in \mu_e$ gives rise to a field automorphism $\sigma_{\xi}$ of $L/F$ (and all unramified base changes thereof) sending $\pi^{\frac{1}{e}} \mapsto \xi \cdot{} \pi^{\frac{1}{e}}$. We apply $\sigma_{\xi}$ to \eqref{eqn:3} and obtain
$$d(\xi^{\psi_{ij}} \pi^{\frac{\psi_{ij}}{e}}) = \pi^{\phi_{ij}}.$$
This shows $d(\xi^{\psi_{ij}}) = 1$ as asserted above.  
\begin{claim}
The cohomology class induced by the $1$-cocycle $(c_{ij})$ agrees with $\tilde{\tau}(\alpha) \in \Zb/e\Zb\simeq H^1(k,\Zb/e\Zb)$.
\end{claim}
\begin{proof}
By definition, the $2$-cocycle $(\pi^{\phi_{ijk}})$ represents $\alpha$ on the small site of unramified \'etale schemes over $F$ (equivalently, the small \'etale site of $\Oo_F$). We pull back $\alpha$ to the $\Gamma$-torsor $\Spec L \to \Spec F$  where we have chosen an $e$-th root $\pi$, denoted by $\pi^{\frac{1}{e}}$. The gerbe $\alpha_L$ obtained thereby is also represented by the cocycle $(\pi^{\phi_{ijk}})$, now seen as an $L^{\times}$-valued cocycle. It is a coboundary, as shown by the following computation: 
$$(\pi^{\phi_{ijk}}) = \left((\pi^{\frac{1}{e}})^{e\cdot{}\phi_{ijk}}\right) = \left( (\pi^{\frac{1}{e}})^{d(\psi_{ij})} \right).$$
The $1$-cochain $\left( (\pi^{\frac{1}{e}})^{(\psi_{ij})} \right)$ represents a splitting of the pulled back gerbe $\alpha_L$.
There is an action of $\xi \in \mu_e$ on $L/F$, given by $\pi^{\frac{1}{e}} \mapsto \xi \mapsto \xi \cdot{} \pi^{\frac{1}{e}}$. The splitting of $\alpha_L$, given by $\left( (\pi^{\frac{1}{e}})^{(\psi_{ij})} \right)$, is sent to 
$$\left(\xi \mapsto  (\xi^{\psi_{ij}}\pi^{\frac{1}{e}})^{(\psi_{ij})} \right) = \left( c_{ij}(\xi)\cdot{} \pi^{\psi_{ij}} \right) $$
by the action of $\xi \in \mu_e$. 
Therefore, $(c_{ij})$ is a $2$-cocycle representing $\tilde{\tau}(\alpha)$.
\end{proof}

One can identify $\Zb/e\Zb$ with $\mu_e^{\vee} = \Hom(\mu_e,\G_m)$, and thereby obtains a $1$-cocycle $(\psi_{ij})$ in $\Zb/e\Zb$. The boundary map of the Bockstein sequence
$$\Zb \hookrightarrow \Zb \twoheadrightarrow \Zb/e\Zb$$
sends $(\phi_{ijk})$ to the cocycle $(\psi_{ij})$ in $\Zb/e\Zb$. The commutative diagram (with exact rows)
\[
\xymatrix{
\Zb \ar@{^(->}[r] \ar[d] & \Zb \ar@{->>}[r] \ar[d] & \Zb/e\Zb  \ar[d] \\
\Zb \ar@{^(->}[r] & \Qb \ar@{->>}[r] & \Qb/\Zb 
}
\]
together with the definition of the Hasse invariant as image of $(\psi_{ijk}) \in H^2(k,\Zb) \simeq H^1(k,\Qb/\Zb)$, shows that $\tilde{\tau}(\alpha) = \inv(\alpha)$. This concludes the proof.
%
\end{proof}

%

\subsection{Hasse invariants for gerbes on Deligne-Mumford stacks}\label{sub:application}

We now turn to a description of the typical set-up to which we will apply the results of this section. For the applications we have in mind, $Y$ will be a Hitchin fibre and $\Mm$ the moduli stack of stable Higgs bundles for the groups $\SL_n$ or $\PGL_n$ of coprime degree (see Section \ref{tms}).
\begin{situation}\label{situationY}
\begin{enumerate}
\item\label{M} Let $\Mm = [U/\Gamma]$ be an admissible finite abelian quotient stack over $\Oo_F$. We denote by $M$ the coarse moduli space of $\Mm$, it is an algebraic $\Oo_F$-space. 
\item\label{Y} We suppose that we have a proper algebraic $\Oo_F$-scheme $Y/\Oo_F$ with a map $Y \to M$, such that there is a factorisation 
\[
\xymatrix
{
Y \times_{\Spec \Oo_F} F \ar[r] \ar[d] & \Mm \ar[d] \\
Y \ar[r] & M.
}
\]
\item\label{neutral} Furthermore, we fix a $\G_m$-gerbe $\alpha$ on $\Mm$ of order $r$ coprime to $p$, such that $F$ contains all $r\cdot{}|\Gamma|$-th roots of unity, and denote by $\inv_{\alpha}\colon M(\Oo_F)^{\natural} \to \Qb/\Zb$ the map sending $y\colon \Spec F \to \Mm$ to $\inv(y^*\alpha)$. Recall from Subsection \ref{sectorbm} that $M(\Oo_F)^{\natural}$ consists of certain $\Oo_F$-points of $M$, for which the induced $F$-rational point lifts to $\Mm$. 
\end{enumerate}
\end{situation}

\begin{definition}\label{defi:taualpha}
Let $\tau\colon \Br(\Mm_{k_F})[r] \to H^1_{\et}(\I \Mm_{k_F},\mu)$ be the transgression morphism of Lemma \ref{lemma:muGm}. For $\alpha \in \Br(\Mm_{k_F})[r]$ we denote by 
$$\tau'_{\alpha}\colon \I\Mm(k_F)_\iso \to H^1(k_F,\mu_r) = \mu_r.$$
the induced function sending $y \in \I\Mm(k_F)_\iso$ to the restriction of the $\mu_r$-torsor $\tau(\alpha)$ on $\I\Mm_{k_F}$ along $y\colon \Spec k_F \to \I\Mm_{k_F}$. We let $\tau_{\alpha}(y) \in \Zb/r\Zb$ be the unique element, such that $\zeta^{\tau_{\alpha}} = \tau'_{\alpha}$, where we recall that $\zeta \in \mu_r$ denotes the fixed primitive root of unity of order $r$.
\end{definition}

Under the assumptions of Situation \ref{situationY} we will show in Corollary \ref{cor:YF} that the function $\inv_{\alpha}$ on $Y(F)$ factors through $e_{\Mm}\colon Y(F) \to \I\Mm(k_F)_\iso$, and the resulting function on $\I\Mm(k_F)$ agrees with the function $\tau_{\alpha}$. We will deduce this from the following more general assertion.

\begin{proposition}\label{prop:YIM}
Assume that \eqref{M} and \eqref{neutral} of Situation \ref{situationY} are satisfied. Then, there is a commutative diagram of sets
\[
\xymatrix{
M(\Oo_F)^{\natural} \ar[rd]^{\inv_{\alpha}} \ar[d]_{e_{\Mm}} & \\
\I\Mm(k_F)_\iso \ar[r]^-{\tau_{\alpha}} & \Qb/\Zb.
}
\]
\end{proposition}

\begin{proof}
In Construction \ref{SpecMap} it is shown that the restriction of $y \in M(\Oo_F)^{\natural}$ to $\Spec F$ extends to a map
$$y'\colon [\Spec \Oo_L/\Gal(L/F)] \to \Mm$$
for an appropriately chosen finite tame field extension $L/F$ with abelian Galois group. By definition, the quotient stack $[\Spec \Oo_L/\Gal(L/F)]$ is $\Xc_{L/F}$. By virtue of Theorem \ref{thm:comp_Hasse} we have
$$\inv(y^*\alpha) = \tilde{\tau}(y'^*\alpha).$$
By definition, the right hand side agrees with $\tau_{\alpha}(y)$.
\end{proof}

\begin{corollary}\label{cor:YF}
Assume that the assumptions of Situation \ref{situationY} are satisfied. Then, there is a commutative diagram of sets
\[
\xymatrix{
Y(F) \ar[rd]^{\inv_{\alpha}} \ar[d]_{e_{\Mm}} & \\
\I\Mm(k_F)_\iso \ar[r]^{\tau_{\alpha}} & \Qb/\Zb.
}
\]
\end{corollary}

\begin{proof}
Let $y \in Y(F)$. According to assumption \eqref{Y} of Situation \ref{situationY} the scheme $Y$ is proper over $\Oo_F$. Therefore, $y$ extends uniquely to an $\Oo_F$-point $\tilde{y} \in Y(\Oo_F)$. Furthermore, the restriction of $\tilde{y}|_{\Spec F}$ factors through the quotient stack $\Mm$, again by assumption \eqref{Y}. This shows that $y \in Y(F)$ gives rise to an element of $\tilde{y} \in M(\Oo_F)^{\natural}$. 
Thus, the value of the specialisation map $e_{\Mm}\colon M(\Oo_F)^{\natural} \to \I\Mm(k_F)_\iso$ at $\tilde{y}$ is well-defined. We can now apply Proposition \ref{prop:YIM} to conclude the proof.
\end{proof}

\subsection{$p$-adic integrals of the Hasse invariant} Let $r\geq 1$ be an integer prime to $p$ such that $F$ contains all $r$-th roots of unity. Recall that we fix a generator $\zeta \in \mu_r(F)$. This choice gives rise to an embedding $\mu_r(F) \hookrightarrow \Cb^{\times}$. We also fix an isomorphism $\overline{\Qb}_{\ell} \simeq \Cb$. By combining these choices we obtain an embedding $\mu_r(F) \hookrightarrow \overline{\Qb}_{\ell}^{\times}$.
\begin{definition} \label{DefL}
  Let $\Mm$ be an admissible finite abelian quotient stack over $\Oo_F$ and $\alpha$ a $\mu_r$-gerbe on $\Mm$. By Subsection \ref{GerbesSection} the gerbe $\alpha$ induces a $\mu_r$-torsor $P_\alpha$ on $\I\Mm$. We denote by $L_{\alpha}$ the $\ell$-adic local system on $\I(\Mm \times_{\Oo_F} k)$ induced from $P_\alpha$ via the embedding $\iota$.
\end{definition}

\begin{construction}\label{fi}
Let $\Mm$ be an admissible finite abelian quotient stack over $\Oo_F$ and $\alpha$ a $\mu_r$-gerbe on $\Mm$. We obtain a function $$\I\Mm(k)_\iso \to \mu_r(F) \subset \Cb,\; x \mapsto \Tr_{\Fr_x}(L_{\alpha}|_x).$$ By composing this with the specialisation map $e_{\Mm}:M(\Oo_F)^{\natural} \to \I\Mm(k)_\iso$ we obtain a function
\begin{equation}\label{deffa}
 f_{\alpha}\colon M(\Oo_F)^\natural \to \Cb,\; x \mapsto \Tr_{\Fr_{e(x)}}(L_{\alpha}|_{e(x)}).
\end{equation}
Note that by definition  $f_\alpha$ agrees with $\tau_\alpha \circ e_{\Mm}$ under the exponential.
\end{construction}

On the other hand there is a natural function on $\Mm(\Oo_F)^\natural$ associated with $\alpha$, namely

\begin{align*} \Mm(\Oo_F)^\natural &\rightarrow  \Cb \\
															x &\mapsto \exp(2\pi i \cdot \inv(x^*\alpha)),
\end{align*}
where $\inv:\Br(F) \rightarrow \Qb/\Zb$ denotes the Hasse invariant. It follows from Proposition \ref{prop:YIM}, that this functions agrees with $f_\alpha$.

\begin{corollary} \label{StringyF}
 Let $\Mm$ be an admissible finite abelian quotient stack over $\Oo_F$ and $\alpha$ a $\mu_r$-gerbe on $\Mm$. The associated function $f_\alpha\colon M(\Oo_F)^\natural \to \Cb$ satisfies
 \begin{equation*}
   \frac{\#_{\st}^{\alpha}(\Mm_{k_F})}{q^{\dim \Mm}}=\int_{M(\Oo_F)^\natural} \bar{f}_{\alpha}\mu_\text{orb}= \int_{M(\Oo_F)^\natural} \exp(-2\pi i \cdot {\inv}_{\alpha})\mu_\text{orb},
 \end{equation*}
where $\#_{\st}^{\alpha}(\Mm_{k_F})$ is defined in \ref{defi:l-torsor} and $\bar{f}_\alpha$ denotes the complex conjugate of $f_\alpha$.
\end{corollary}
\begin{proof}
We choose a presentation $\Mm = [Y/\Gamma]$ as in Definition \ref{AdmStack}. By Theorem \ref{thm:appendix} we have

$$\int_{M(\Oo_F)^\natural} f_{\alpha}\mu_\text{orb} =  q^{-\dim \Mm} \sum_{\gamma \in \Gamma} \sum_{\Z \in \pi_0([Y^\gamma/\Gamma])} q^{F(\gamma^{-1},\Z)} \sum_{x \in \Z(k_F)_{\text{iso}}} \frac{\Tr_{\Fr_x}(L_\alpha|_x)}{|\Aut(x)|},$$
where we also used the relation $-w(\gamma,\Z) = F(\gamma^{-1},\Z)-\dim \Mm$. Now under the identification $[Y^\gamma/\Gamma] = [Y^{\gamma^{-1}}/\Gamma]$ we have $L_{\alpha}|_{[Y^\gamma/\Gamma]} = L^{-1}_{\alpha}|_{[Y^{\gamma^{-1}}/\Gamma]}$, see \eqref{autgerb}, and thus 
\[\sum_{\Z \in \pi_0([Y^\gamma/\Gamma])} q^{F(\gamma^{-1},\Z)} \sum_{x \in \Z(k_F)_\iso} \frac{\Tr_{\Fr_x}(L_\alpha|_x)}{|\Aut(x)|}= \sum_{\Z \in \pi_0([Y^{\gamma^{-1}}/\Gamma])} q^{F(\gamma^{-1},\Z)} \sum_{x \in \Z(k_F)_\iso} \frac{\overline{\Tr_{\Fr_x}(L_\alpha|_x)}}{|\Aut(x)|},  \]
which implies the first equality. The second equality follows directly from Proposition \ref{prop:YIM}.
\end{proof}

\section{Mirror Symmetry}\label{sec:mirror}


In this section we formulate various comparison theorems for dual abstract Hitchin systems. The definition of dual abstract Hitchin systems is motivated by the theory of $G$-Higgs bundles, notably the $\SL_n$ and $\PGL_n$ cases, which appear in Hausel--Thaddeus's conjecture. 

\subsection{Relative splittings of gerbes}


Let $U \to V$ be a proper morphism of algebraic spaces with geometrically connected fibres and $r\geq 1$ a positive integer which is invertible on $V$.
According to \cite{MR0260746} there exists an algebraic stack of line bundles $\widetilde{\mathsf{Pic}}(U/V)$. Its $\G_m$-rigidification (see \cite[Section 5]{abramovich2003twisted}) will be denoted by $\mathsf{Pic}(U/V)$. We denote by $\mathsf{Pic}^{\tau}(U/V)$ the open substack of line bundles on $U$ of torsion degree, that is, those line bundles inducing a torsion element in the geometric fibres of $\pi_0(\mathsf{Pic}(U/V))$. 

\begin{definition}
Let $A$ be a smooth group scheme over $V$. We call an $A$-gerbe $\alpha \in H^2_{\text{\'et}}(U,A)$ \emph{$V$-arithmetic} if there exists an \'etale covering family $\{V_i \to V\}_{i \in I}$ such that $\alpha$ splits when pulled back to $U \times_V V_i$ for all $i \in I$. 
\end{definition}

These definitions are already interesting to us when $V$ is the spectrum of a field. 
Let $K$ be a field and $X$ a projective and geometrically connected $K$-scheme. A $\G_m$-gerbe $\beta$ on $X$ is $K$-arithmetic, if and only if $\beta_{K^{\mathrm{sep}}} \in \Br(X_{K^{\mathrm{sep}}})$ is trivial. We denote the corresponding subgroup of $\Br(X)$ by $\Br(X)_K$. There is a short exact sequence 
\begin{equation}\label{BrK}
0 \to \Br(K) \to \Br(X)_K \to H^1(K,\Pic(X_{K^{\mathrm{sep}}})) \to 0,
\end{equation}
which expresses the difference between $\Br(K)$ and $\Br(X)_K$.

If $\alpha$ is a $\G_m$-gerbe on $U$, then one defines the $V$-stack of relative splittings $\widetilde{\Split}(U/V,\alpha)$ as follows: for every affine scheme $W$ with a morphism $f\colon W \to V$ one associates the groupoid of splittings of $f^*\alpha$ on $U \times_V W$. Since we can tensor splittings of $f^*\alpha$ with line bundles on $U \times_V W$, we see that $\widetilde{\Pic}(U/V)$ acts on $\widetilde{\Split}(U/V,\alpha)$. Furthermore, any pair of $f^*\alpha$-splittings differs by a line bundle on $U \times_V W$ unique up to isomorphism. In other words, $\widetilde{\Split}(U/V,\alpha)$ is a $\widetilde{\Pic}(U/V)$-quasitorsor.

\begin{lemma}
Let $\alpha$ be $V$-arithmetic, then $\widetilde{\Split}(U/V,\alpha)$ is a $\widetilde{\Pic}(U/V)$-torsor.
\end{lemma}

\begin{proof}
We already observed that $\widetilde{\Split}(U/V,\alpha)$ is a $\widetilde{\Pic}(U/V)$-quasitorsor. The $V$-arithmeticity assumption on $\alpha$ implies the existence of an \'etale covering $\{V_i \to V\}_{i \in I}$, such that $\alpha$ splits when pulled back to $U \times_V V_i$. In particular, $\widetilde{\Split}(U/V,\alpha)$ has a $V_i$-rational point. This implies the torsor property.
\end{proof}

Since the stack $\widetilde{\Split}(U/V,\alpha)$ is a torsor under $\widetilde{\Pic}(U/V)$, its fibres contain infinitely many connected components. For instance, for $U \to V$ a relative family of smooth proper curves, this sheaf of sets of connected components would be a $\Zb$-torsor. The following two definitions will be used to single out a much smaller subset $\widetilde{\Split}(U/V,\alpha)$.

\begin{definition} For $\alpha \in H^2_{\text{\'et}}(U,\mu_r)$ a $\mu_r$-gerbe on $U$ we denote by $\widetilde{\mathsf{Split}}_{\mu_r}(U/V,\alpha)$ the $V$-stack sending a test scheme $S \to V$ to the groupoid of splittings of the pullback of $\alpha$ to $U \times_V S$. The $\mu_r$-rigidification of this stack in the sense of \cite[Section 5]{abramovich2003twisted} will be denoted by $\mathsf{Split}_{\mu_r}(U/V,\alpha)$.
\end{definition}

We now make use of the embedding of smooth group schemes $\mu_r \hookrightarrow \G_m$. It gives rise to a natural map from the stack $\widetilde{\mathsf{Tors}}_{\mu_r}(U/V)$ of $\mu_r$-torsors, to $\widetilde{\Pic}(U/V)$. The Kummer sequence implies that after rigidification of the stacks, one obtains an isomorphism
$\mathsf{Tors}_{\mu_r} \simeq \widetilde{\Pic}(U/V)[r].$

\begin{definition}\label{principalcomponent}
For a $V$-arithmetic gerbe $\alpha \in H^2_{\text{\'et}}(U,\mu_r)$ we denote the induced $\mathsf{Pic}^{\tau}(U/V)$-torsor $${\mathsf{Split}_{\mu_r}}(U/V,\alpha) \times^{\mathsf{Pic}(U/V)[r]} \mathsf{Pic}^{\tau}(U/V)$$ by $\mathsf{Split}'(U/V,\alpha)$. This torsor will be referred to as the \emph{principal component} of the space of relative splittings of $\alpha$.
\end{definition}

%
%
%

It is clear that the existence of a splitting of $\alpha$ on $X/K$ as above, implies that $\mathsf{Split}'(X)$ has an $K$-rational point. The converse is not true, as shown by the case where $\alpha$ is the pullback of a non-split gerbe $\beta$ on $\Spec K$ and we assume $X$ to have a $K$-rational point $x$. In this situation, $\alpha$ cannot be split, as $x^*\alpha = \beta$. On the other hand, $\mathsf{Split}'(X,\alpha) \simeq \Pic^{\tau}(X)$ has a $K$-rational point. This is essentially the only thing that can happen as the next lemma proves.

\begin{lemma}\label{lemma:splitK}
Let $K$ be a field and $X$ a geometrically connected proper $K$-scheme and $\alpha$ on $X$ be an $K$-arithmetic $\mu_r$-gerbe. We denote by $\beta$ the induced $\G_m$-gerbe on $X$. If $\mathsf{\Split}'(X,\alpha)$ has a $K$-rational point, then $\beta$ agrees with the pullback of a $\G_m$-gerbe on $\Spec K$.
\end{lemma}

\begin{proof}
It follows from the assumption that $\beta$ is sent to zero in $H^1(K,\Pic^{\tau}(X_{K^{\mathrm{sep}}}))$, and hence also induces zero in $H^1(K,\Pic(X_{K^{\mathrm{sep}}}))$. We infer from exactness of \eqref{BrK} that $\beta$ lies in the image of the map $\Br(K) \to \Br(X_K)$ and thus, $\beta$ is isomorphic to the pullback of an element of $\Br(K)$.
\end{proof}

Our main motivation to work with arithmetic gerbes comes from geometry. The principal component of splittings provides an algebraic analogue of the manifold of unitary splittings of a torsion gerbe. We record this observation in the following remark. It will not be needed in any of the arguments, but it provides the key to translate our algebraic viewpoints on gerbes into the analytic language of flat unitary gerbes used in Section 3 of \cite{MR1990670}.

Recall that we denote by $\Pic^{\tau}(U/V)$ the torsion component of the relative Picard variety (if it exists), that is, $\Pic^{\tau}(U/V)$ is the preimage of the maximal torsion subgroup of $\pi_0(\Pic(U/V))$.

\begin{rmk}\label{rmk:unitary}
Let $U \to V$ be a smooth projective morphism of complex varieties, and $\alpha$ a $V$-arithmetic $\mu_r$-gerbe on $U$. If $\Pic^{\tau}(U/V) = \Pic^0(U/V)$, then the complex space $\mathsf{Split}'(U/V,\alpha)$ is isomorphic to the manifold of flat unitary splittings of the unitary gerbe induced by $\alpha$.
\end{rmk}

\begin{lemma}\label{lemmaTate} Assume we are in Situation \ref{situationY} and that the $F$-fibre $Y_F$  a torsor under an abelian variety $A$. Let $\alpha$ be an arithmetic $\mu_r$-gerbe. Consider the space $T=\Split'(Y/ F,\alpha)$ whose generic fibre is canonically an $A^\vee$-torsor over $\Spec(F)$. If $Y_F(F)\not= \emptyset$ then for any trivialisation $h\colon Y_F\cong A$ there exists a root of unity $\xi$, such that the function 
\begin{equation*}
  f_\alpha\circ h^{-1}\colon A(F) \to M(\Oo_F)^\natural \to \Cb
\end{equation*}
is equal to 
\[a \mapsto \xi\cdot \exp(2\pi i(a,[T])),\]
 where $f_\alpha$ is as in \eqref{deffa} and $(a,[T])$ is the Tate duality pairing.
\end{lemma}

\begin{proof}
Let us assume first that $Y_F$ is the trivial $A$-torsor $A$ together with the  trivialisation given by the identity map. The claim then follows from Corollary \ref{cor:YF} and the definition of the Tate Duality pairing for an abelian variety $A/F$: given a rational point $x \in A(F)$ and the torsor $T \in H^1(F,A^\vee)$ we use the isomorphism $H^1(F,A^\vee) \simeq \Ext^2(A,\G_m)$ to produce a gerbe $\alpha_T \in \Br(A)$. By virtue of Remark \ref{TDCons} we have that $(x,T)$ equals the Hasse invariant of the gerbe $x^*\alpha_T \in \Br(F)$ obtained by pulling back $\alpha_T$ along $x \colon \Spec F \to A$. We have shown in Corollary \ref{cor:YF} that $f_\alpha$ is equal to $\inv(x^*\alpha)$ under the embedding $\Qb/\Zb \to \Cb$ given by the exponential, thus we obtain $f_\alpha = \exp(2\pi i(-,T))$.

In the general case, where we choose an arbitrary trivialisation of $Y$, we apply Corollary \ref{cor:group_torsor}. It follows from the statement of \emph{loc. cit} that the choice of trivialisation leads to $f_\alpha = \xi \cdot{}\exp(2\pi i(-,T))$, and the scalar $\xi$ is a root of unity.
\end{proof}

\subsection{The setting for mirror symmetry of abstract Hitchin systems}

In this subsection we denote by $R$ a Noetherian commutative ring. The reader is invited to think of it as a subring of the field of complex numbers $\mathbb{C}$ or the valuation ring $\Oo_F$ of a non-archimedean local field $F$.  
\begin{definition}\label{presituation}
Let $\Aa/R$ be a smooth $R$-variety, $\Mm$ a smooth admissible finite abelian quotient stack over $R$ in the sense of Definition \ref{AdmStack} together with a morphism $\pi\colon \Mm \to \Aa$ and $\Pc$ an $\Aa$-group scheme acting on $\Mm$ relative to $\Aa$. We say that $(\Mm,\Pc,\Aa)$ is an \emph{abstract Hitchin system} over $R$, if there exists an open dense subscheme $\Aa^{\lozenge} \subset \Aa$ with respect to which the following conditions are satisfied:
\begin{enumerate}
\item[(a)] We denote the coarse moduli space of $\Mm$ by $M$. We assume that the map $M \to \Aa$ is proper. 

\item[(b)] The base change $\Pc^{\lozenge} = \Pc \times_{\Aa} \Aa^{\lozenge}$ is an abelian $\Aa^{\lozenge}$-scheme. 

\item[(c)] There exists an open dense subset $\Mm' \subset \Mm$, which is a $\Pc$-torsor relative to $\Aa$. Furthermore we assume that $\Mm^{\lozenge} \defeq \Mm \times_{\Aa} \Aa^{\lozenge}$ is contained in  $\Mm'$, and that $\codim (\Mm \setminus \Mm') \geq 2$.
\end{enumerate}
\end{definition}

The definition above is directly modelled on the properties of the $G$-Hitchin system studied in \cite[Section 4]{MR2653248}.

Condition (c) above implies that the stack $\Mm$ is generically a scheme, since a torsor over $\Pc/\Aa$ is at least an algebraic space, and algebraic spaces of finite type are generically schematic (\cite[Tag 06NH]{stacks-project}). The generic fibre of $\Pc/\Aa$ is connected (as is implied by (b)), but special fibres may have several connected components.

\begin{definition}\label{situation} A \emph{dual pair of abstract Hitchin systems} over $R$ consists of two abstract Hitchin systems $(\Mm_i,\Pc_i,\Aa)$ over $R$ for $i=1,2$ together with $\Aa$-arithmetic $\mu_r$-gerbes $\alpha_i$ on $\Mm_i$ for $i=1,2$ for some integer $r$ such that there exists an open dense subset $\Aa^{\lozenge} \subset \Aa$ satisfying the conditions of Definition \ref{presituation} for $i=1,2$ with respect to which the following conditions hold: 
\begin{enumerate}
\item[(a)] 
We require there to be an \'etale  isogeny $\phi\colon \Pc_1 \to \Pc_2$ of group $\Aa$-schemes. In positive or mixed characteristic we assume that over $\Aa^{\lozenge}$ the orders of the geometric fibres of the group scheme $\ker \phi$ are invertible in $R$.
\item[(b)] Over $\Aa^{\lozenge}$ there exists an isomorphism $\psi\colon (\Pc_1^{\lozenge})^{\vee} \xrightarrow{\simeq} \Pc_2^{\lozenge}$, with respect to which the isogeny $\phi$ is self-dual. That is, the diagram
	\[
	\xymatrix{
	(\Pc_2^{\lozenge})^{\vee} \ar[r]^{\phi^{\vee}} \ar[d]_{\psi^{\vee}} & (\Pc_1^{\lozenge})^{\vee} \ar[d]^{\psi} \\
	\Pc_1^{\lozenge} \ar[r]^{\phi} & \Pc_2^{\lozenge}
	}
	\]
commutes.

\item[(c)] For $i=1,2$ we denote by $i'$ the unique element in the set $\{1,2\}\setminus \{i\}$. By definition we have that ${\mathsf{Split}'(\Mm_i^{\lozenge}/\Aa^\lozenge,\alpha_i}) $ is an $(\Pc_{i}^{\lozenge})^{\vee}$-torsor. The isomorphism of (b) defines a $\Pc_{i'}^{\lozenge}$-torsor structure on the same space. We stipulate that for $i=1,2$ we have isomorphisms of $\Pc^{\lozenge}_i$-torsors
$${\mathsf{Split}'(\Mm_i^{\lozenge} / \Aa^{\lozenge},\alpha_i)} \simeq \Mm^{\lozenge}_{i'}.$$ 
\item[(d)]  For every local field $F$, every homomorphism $R \to \Oo_F$ and every element $a \in \Aa(\Oo_F) \cap \Aa^{\lozenge}(F)$ with image $a_F \in \Aa(F)$, both fibres $\pi_1^{-1}(a_F)$ and $\pi_2^{-1}(a_F)$ have an $F$-rational point if and only if both gerbes $\alpha_{1}|_{\pi_{1}^{-1}(a_F)}$ and $\alpha_{2}|_{\pi_{2}^{-1}(a_F)}$ split.
\item[(e)] The integer $r$ is invertible in $R$ and $R$ contains all $r$-th roots of unity.
\end{enumerate}
\end{definition}

Note that condition (c) implies the ``if'' direction of condition (d), but not the ``only if'' direction by Lemma \ref{lemma:splitK}.

\begin{rmk}
  For a ring homomorphism $R \to R'$ by base change change every dual pair of abstract Hitchin systems over $R$ induces such a pair over $R'$.
\end{rmk}
We can now state our main result, an abstract version of the Topological Mirror Symmetry Conjecture by Hausel--Thaddeus.

\begin{theorem}[Topological Mirror Symmetry]\label{Mirror}
Let $R$ be a subalgebra of $\Cb$ of finite type over $\Zb$. Let $(\Mm_i,\Pc_i,\Aa,\alpha_i )$ be a pair of dual abstract Hitchin systems over $R$. Then we have an identity of stringy $E$-polynomials
$$E_{\mathsf{st}}(\Mm_1\times_R \Cb,\alpha_1) = E_{\mathsf{st}}(\Mm_2 \times_R \Cb,\alpha_2).$$
\end{theorem}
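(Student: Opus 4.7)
The plan is to combine the arithmetic--to--Hodge-theoretic bridge of Theorem \ref{thm:stringy-katz} with the interpretation of stringy point-counts as $p$-adic integrals from Lemma \ref{StringyF}, and then to prove the resulting equality of $p$-adic integrals fibrewise along $\pi_i \colon \Mm_i \to \Aa$ using Tate duality. First I would enlarge $R$ (while staying of finite type over $\Zb$) so that $r$ is invertible in $R$ and $R$ contains all $r$-th roots of unity; this is permitted since only finitely many ``bad" primes need to be inverted and a finite \'etale extension can be absorbed into $R$. By Theorem \ref{thm:stringy-katz} it suffices to show $\#_{\st}^{\alpha_1}(\Mm_1 \times_R \Fb_q) = \#_{\st}^{\alpha_2}(\Mm_2 \times_R \Fb_q)$ for every homomorphism $R \to \Fb_q$. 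For each such $\Fb_q$ pick a local field $F$ with residue field $\Fb_q$ together with a lift $R \to \Oo_F$. Since $\dim \Mm_1 = \dim \Mm_2$ (both equal $\dim \Aa$ plus the common dimension of $\Pc_1^\circ$ and $\Pc_2^\circ$ from the isogeny $\phi$), Lemma \ref{StringyF} reduces the equality of stringy point-counts to the equality of $p$-adic integrals
\begin{equation*}
\int_{\Mm_1^{\text{coarse}}(\Oo_F)} f_{\alpha_1} \, d\mu_{\text{orb}} = \int_{\Mm_2^{\text{coarse}}(\Oo_F)} f_{\alpha_2} \, d\mu_{\text{orb}},
\end{equation*}
which is exactly Theorem \ref{thm:p-adic}.

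To prove this equality, I would fix an algebraic gauge form on the smooth variety $\Aa$ and apply the relative integration formula of Proposition \ref{Igusa} to factor each side through $\Aa(\Oo_F)$ as an iterated integral $\int_{\Aa(\Oo_F)} \bigl(\int_{\pi_i^{-1}(a)} f_{\alpha_i} \, d\mu_a\bigr) d\mu_\Aa$, with $\mu_a$ the fibrewise measure furnished by Proposition \ref{Igusa}. Since $\Aa \setminus \Aa^{\text{good}}$ is a closed subscheme of positive codimension in the smooth $\Aa$, Proposition \ref{intools}\eqref{measurezero} gives $\mu_\Aa\bigl((\Aa\setminus \Aa^{\text{good}})(\Oo_F)\bigr)=0$; the outer integrals are therefore concentrated on $\Aa(\Oo_F) \cap \Aa^{\text{good}}(F)$. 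For such a good $a$, the fibres $(\Mm_i)_a$ are torsors under the abelian varieties $A_i := (\Pc_i)_a^\circ$, and by condition (b) these two abelian varieties are dual via $\psi$, with $\phi$ a self-dual isogeny between them. It then suffices to prove the fibrewise equality of inner integrals for all good $a$.

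The fibrewise computation is where Tate duality enters. If $(\Mm_i)_a(F) = \emptyset$ then $\pi_i^{-1}(a) \cap \Mm_i^{\text{coarse}}(\Oo_F) = \emptyset$ and the inner integral vanishes. Otherwise a choice of rational point trivialises $(\Mm_i)_a \simeq A_i$, and Lemma \ref{lemmaTate} identifies $f_{\alpha_i}|_{A_i(F)}$, up to a root of unity, with the character $a \mapsto C^{-1}(\langle a, [T_i]\rangle)$ where $[T_i] \in H^1(F,A_i^\vee)$ is the class of $\mathsf{Split}'((\Mm_i)_a, \alpha_i)$; by condition (c) and the duality identification of (b), $[T_i]$ corresponds to the class of the opposite fibre $(\Mm_{i'})_a$. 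Orthogonality of characters on the compact group $A_i(F)$ annihilates the integral unless the character is trivial, i.e.\ unless $[T_i] = 0$, i.e.\ unless $(\Mm_{i'})_a(F) \neq \emptyset$; by condition (d) this happens precisely when $\alpha_{i'}$ splits on $(\Mm_{i'})_a$, which by the same symmetric analysis on the other side is exactly the locus where the integral from $\Mm_{i'}^{\text{coarse}}$ is itself nonzero. Thus both fibrewise integrals simultaneously vanish or simultaneously equal the (orbifold) volumes of the respective fibres.

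It remains to match the two nonzero volumes. On the common contributing locus the integrals reduce, via the Haar measures transported from $A_i(F)$ through the trivialisation, to expressions involving $|A_i(F)|$ and the component groups of $(\Pc_i)_a$. The self-dual isogeny $\phi\colon A_1 \to A_2$, which has kernel of order prime to $p$ by (a), transforms one Haar measure into the other with Jacobian $|\ker(\phi)(F)|$, and Proposition \ref{prop:self-dual} supplies the crucial arithmetic identity $|A_2(F)/\phi(A_1(F))| = |\ker(\phi)(F)|$, matching the two volumes. Integrating the resulting fibrewise equality over $\Aa(\Oo_F)$ yields Theorem \ref{thm:p-adic}, which via the opening reduction establishes the equality of stringy $E$-polynomials. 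The main obstacle I anticipate is the bookkeeping at reducible fibres: the isogeny $\phi$ is only defined on neutral components, so one must decompose $(\Mm_i)_a(F)$ into cosets for $A_i(F)$ inside the extension by the component group and verify that the character-theoretic vanishing, the roots of unity produced by Lemma \ref{lemmaTate}, and the index calculation of Proposition \ref{prop:self-dual} all combine coset-by-coset in a manner compatible with condition (d); this is precisely the role played by the arithmeticity of the gerbes and by the compatibility encoded in (b) and (c).
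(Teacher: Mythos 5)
Your proposal is correct and matches the paper's proof in its essentials: it makes the same two reductions (complex $E$-polynomials to finite-field stringy point-counts via Theorem \ref{thm:stringy-katz}, then point-counts to $p$-adic integrals via Lemma \ref{StringyF} applied to $F=\Fb_q((t))$), and then establishes the integral identity fibrewise along $\pi_i$ by discarding $\Aa\setminus\Aa^{\text{good}}$ as a measure-zero set, invoking translation invariance and Tate duality, using orthogonality of characters to kill the contribution of fibres without rational points, and matching nonzero volumes via Proposition \ref{prop:self-dual} — exactly the content of Lemma \ref{keylemma}, Corollary \ref{functionone}, and Proposition \ref{fibrewise_mirror}. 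The obstacle you anticipate with component groups never materialises, because condition (b) of Definition \ref{presituation} guarantees that $\Pc^{\text{good}}$ has connected (abelian-variety) fibres so that $(\Pc_i)_a = (\Pc_i^\circ)_a$ on the good locus; similarly your preliminary enlargement of $R$ is redundant since Definition \ref{situation}(e) already stipulates that $r$ is invertible in $R$ and that $R$ contains all $r$-th roots of unity.
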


As stated in the introduction we will deduce this result in complex geometry from an analogue over non-archimedean local fields. 

For a dual pair of abstract Hitchin system we denote by $\pi_i^I \colon \I\Mm_i \to \Aa$ the inertia stacks of $\Mm_i$ together with the induced morphisms to the base $\Aa$. 

\begin{theorem}[Arithmetic Mirror Symmetry]\label{p-Mirror}
 Let $k$ be a finite field. Let $(\Mm_i,\Pc_i,\Aa,\alpha_i )$ a pair of dual abstract Hitchin systems over $k$. Then
$$\#_{\st}^{\alpha_1}(\Mm_1) = \#_{\st}^{\alpha_2}(\Mm_2).$$
In fact the identity holds fibrewise i.e. for every $a \in \Aa(k)$ we have
$$\mathsf{Tr}(\Fr,(R\pi^I_{1,*}N_{\alpha_1}(F_1))_a) = \mathsf{Tr}(\Fr,(R\pi^I_{2,*}N_{\alpha_2}(F_2))_a),$$
where $N_{\alpha_i}$ denotes the $\ell$-adic local system on $\I\Mm_i$ induced by the $\mu_r$-gerbe $\alpha_i$ and some embedding $\mu_r(k)\into \bar \Qb_\ell^\times$ as in Definition \ref{DefL}, $F_i\colon \I\Mm_i \to \mathbb{Q}$ denotes the locally constant functions given by the fermionic shift (see \ref{fermionicshift}), and $N_{\alpha_i}(F_i)$ indicates Tate twist by the fermionic shift.
\end{theorem}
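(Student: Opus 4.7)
The plan is to reduce both assertions to an application of $p$-adic integration relative to the Hitchin base $\Aa$. First I would lift the given dual pair of abstract Hitchin systems from $k$ to a dual pair over $\Oo_F$ for a suitable local field $F$ with residue field $k$ (possible by smoothness of $\Aa$ and standard deformation arguments, possibly after enlarging $F$). By Lemma \ref{StringyF} each stringy count $\#_{\st}^{\Gg_i}(\Mm_i)$ then equals $q^{\dim \Mm_i}$ times the $p$-adic integral $\int_{\Mm_i^{\text{coarse}}(\Oo_F)} f_{\Gg_i}\, d\mu_{\text{orb}}$, so the global assertion reduces to the equality of these two integrals. For the fibrewise refinement, I would pick for each $a \in \Aa(k)$ a lift $\tilde a \in \Aa(\Oo_F)$ whose generic fibre lies in $\Aa^{\text{good}}(F)$; such lifts exist because the bad locus has positive codimension in $\Aa$, and by proper base change together with Grothendieck--Lefschetz the trace $\mathrm{Tr}(\mathrm{Fr},(R\pi^I_{i,*}L_i(F_i))_a)$ equals the stringy count of $M_{i,a}$ twisted by the restriction of $\Gg_i$ and by the fermionic shift, which can in turn be computed $p$-adically on $M_{i,\tilde a}^{\text{coarse}}(\Oo_F)$ via Lemma \ref{StringyF}.

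The central computation proceeds fibrewise along the Hitchin map by means of the relative integration formula, Proposition \ref{Igusa}. After discarding the $p$-adic measure-zero subset of $\Aa(\Oo_F)$ consisting of points whose generic fibre lies in the bad locus (Proposition \ref{intools}(\ref{measurezero})), I would evaluate $\int f_{\Gg_i}$ on each surviving fibre $M_{i,a}^{\text{coarse}}(\Oo_F)$ with $a \in \Aa^{\text{good}}(F)\cap \Aa(\Oo_F)$. Two cases arise: if $M_{i,a}(F)=\emptyset$, then properness of $\Mm_i^{\text{coarse}}$ (condition (a) of Definition \ref{presituation}) together with the valuative criterion forces $M_{i,a}^{\text{coarse}}(\Oo_F)=\emptyset$, so the fibrewise integral vanishes; if $M_{i,a}(F) \neq \emptyset$, a trivialisation $h\colon M_{i,a} \isoto \Pc_{i,a}$ combined with Lemma \ref{lemmaTate} identifies $f_{\Gg_i}\circ h^{-1}$ with a scalar multiple of the Tate-duality character $x \mapsto C^{-1}((x,[M_{i',a}]))$, where the relevant class in $H^1(F,\Pc_{i,a}^{\vee})$ arises from condition (c) of Definition \ref{situation} applied to $\mathsf{Split}'$ and from the self-duality $\psi\colon \Pc_1^{\vee} \isoto \Pc_2$. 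Character orthogonality on the compact open image of $M_{i,a}^{\text{coarse}}(\Oo_F)$ inside $\Pc_{i,a}(F)$ then kills the integral unless the character is trivial, which by Tate duality is equivalent to $M_{i',a}(F) \neq \emptyset$; condition (d) of Definition \ref{situation} is the precise input needed to synchronise these vanishing conditions on the two sides. Finally, at those fibres where both integrals survive, Proposition \ref{prop:self-dual} applied to the self-dual isogeny $\phi$ matches the two resulting volumes.

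The main obstacle is to organise these ingredients so that the cancellation is genuinely clean. One must set up the trivialisations and Tate pairings on fibres consistently so that Lemma \ref{lemmaTate} produces a character paired against the correct dual-torsor class, which requires identifying the restriction of $\Gg_i$ with the class in $\Ext^2(\Pc_{i,a},\mu_r)$ extracted from condition (c) via $\mathsf{Split}'$. The prefactor $\xi$ of Lemma \ref{lemmaTate} must be shown to cancel (or to contribute only a harmless global constant) between the two sides, and the fibrewise argument must be verified to be independent of the choice of lift $\tilde a$; this independence follows because the Frobenius trace on the special fibre is intrinsic. I expect the most delicate step to be the treatment of fibres over the bad locus in the refined fibrewise statement: although they form a measure-zero set for the global integral, they must be handled through the choice of a good lift $\tilde a \in \Aa(\Oo_F)$ with $\tilde a_F \in \Aa^{\text{good}}(F)$, and the entire Tate-duality mechanism must be made applicable to the lifted fibre $M_{i,\tilde a}$ rather than to $M_{i,a}$ directly.
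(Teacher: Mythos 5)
Your overall strategy --- reduce both assertions to a fibrewise $p$-adic identity along the Hitchin map, and deduce the fibrewise identity from Tate duality plus character orthogonality plus the self-dual isogeny --- is the right one, and the second paragraph of your proposal correctly sketches the content of Proposition \ref{fibrewise_mirror}. However, there are two genuine problems in how you pass from the finite-field statement to the $p$-adic one.

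First, you do not need, and should not invoke, a deformation argument to ``lift'' the dual pair from $k$ to $\Oo_F$. The paper simply takes $F = k((t))$, so $\Oo_F = k[[t]]$ has residue field $k$, and base changes the entire situation along $k \to k[[t]]$. This is a literal base change with no content to verify: no smoothness of $\Aa$ is needed, no roots of unity need to be adjoined (the residue field is unchanged), and, crucially, there is no obstruction to ``lifting'' the gerbes $\Gg_i$ or the isogeny $\phi$, which there certainly would be if you tried to pass to mixed characteristic. Your appeal to ``smoothness of $\Aa$ and standard deformation arguments'' is both unnecessary and, as stated, unjustified.

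Second, and more seriously, your treatment of the fibrewise statement does not work. You propose to choose a single lift $\tilde a \in \Aa(\Oo_F)$ of $a$ with $\tilde a_F \in \Aa^{\text{good}}(F)$ and to compute $\mathrm{Tr}(\mathrm{Fr},(R\pi^I_{i,*}L_i(F_i))_a)$ as a $p$-adic integral over $M_{i,\tilde a}^{\text{coarse}}(\Oo_F)$ via Lemma \ref{StringyF}. That lemma requires the stack to be an admissible finite abelian quotient stack over $\Oo_F$ --- in particular, to have a presentation by a \emph{smooth} $\Oo_F$-variety --- and a single Hitchin fibre $M_{i,\tilde a}$ is generally not smooth over $\Oo_F$ (its special fibre can be a singular Hitchin fibre even when $\tilde a_F$ is good). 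Moreover, a single fibre has measure zero for $\mu_{\text{orb}}$ on $\Mm_i^{\text{coarse}}(\Oo_F)$, so there is no ``restriction of the global orbifold measure'' one could use instead. The paper sidesteps all of this by taking $B \subset \Aa(\Oo_F)$ to be the full reduction-preimage of $a$ (a compact open set of positive measure) and applying Theorem \ref{integral} to the whole admissible stack $\Mm_i$: the specialisation map $e \colon \Mm_i^{\text{coarse}}(\Oo_F)^\# \to I\Mm_i(k_F)$ sends exactly the points lying over $B$ to the fibre of $I\Mm_i \to \Aa$ over $a$, and Grothendieck--Lefschetz then identifies the resulting weighted point count with the Frobenius trace. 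The fibrewise equality then follows from Theorem \ref{p-adic-Mirror} applied to this $B$ (which is precisely why \ref{p-adic-Mirror} is stated for arbitrary measurable $B$, not just $B = \Aa(\Oo_F)$). You should replace your single-lift computation with this thickening argument.

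One smaller remark: your claim that ``if $M_{i,a}(F)=\emptyset$, then properness forces $M_{i,a}^{\text{coarse}}(\Oo_F)=\emptyset$, so the fibrewise integral vanishes'' conflates the stack with its coarse space; the clean statement (which the paper uses in the proof of Proposition \ref{fibrewise_mirror}) is simply that in the asymmetric case the non-trivial character integrates to zero, regardless of whether $M_{i,a}^{\text{coarse}}(\Oo_F)$ is literally empty.
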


\begin{proof}[Reduction of Theorem \ref{Mirror} to \ref{p-Mirror}]
This is an application of Theorem \ref{thm:stringy-katz}.
\end{proof}

\subsection{Proof of Arithmetic Mirror Symmetry}\label{arithmeticduality}

In this subsection we prove Theorem \ref{p-Mirror} by means of $p$-adic integration. Let $F$ be the local field $k((t))$. By pulling back the pair of dual abstract Hitchin systems $(\Mm_i,\Pc_i,\Aa,\alpha_i)_{i\in \{1,2\}}$ along $\Spec(\Oo_F) \rightarrow \Spec(k)$ one obtains a pair of dual abstract Hitchin systems over $\Oo_F$, which we denote by the same letters.

We start by describing the orbifold measures on $\Mm_i(\Oo_F)^\natural$ in terms of volume forms. As it is enough to prove the fibrewise assertion of Theorem \ref{p-Mirror} we fix an $a\in \Aa(k)$. By replacing $\Aa$ with a neighbourhood of $a$ if necessary we may assume that $\Omega^{\text{top}}_{\Aa/\Oo_F}$ is trivial and we fix a global volume form $\omega_{\Aa}$ on $\Aa$. 

We also fix a global, translation-invariant, trivialising section $\tilde{\omega}_2$ of $\Omega^{\text{top}}_{\Pc_2/\Aa}$, which exists since $\Pc_2 \rightarrow \Aa$ is an $\Aa$-group scheme. As the kernel of the isogeny $\phi$ form Definition \ref{situation} is prime to the characteristic of $k$, the pull-back $\tilde{\omega}_1 = \phi^*\tilde{\omega}_2$ is a global, translation-invariant, trivialising section of $\Omega^{\text{top}}_{\Pc_1/\Aa}$. Through the isomorphism
\[ \Omega^{\text{top}}_{\Pc_i/\Oo_F} \cong \pi_i^*\Omega^{\text{top}}_{\Aa/\Oo_F} \otimes \Omega^{\text{top}}_{\Pc_i/\Aa},\]
we thus obtain a global volume form
\[ \omega_i = \pi_i^*\omega_{\Aa} \wedge \tilde{\omega}_i \]
on $\Pc_i$. A similar definition will be given for $\Mm'_i$ below, but at first we need the following lemma.



\begin{lemma}\label{relativeforms} 
Let $V$ be a scheme, and $A \xrightarrow p V$ a smooth group $V$-scheme. We denote its zero section by $s \colon V \to A$. 
For every $A$-torsor $T \xrightarrow q V$ there exists a canonical isomorphism of sheaves
\[q^*s^*\Omega^m_{A/V} \to \Omega^m_{T/V}.\]
\end{lemma}
\begin{proof}
  For $(T \to V)=(A \to V)$, one obtains a canonical isomorphism $q^*s^*\Omega^m_{A/V} \to \Omega^m_{A/V}$ by extending sections of $s^*\Omega^m_{A/V}$ to sections of $\Omega^m_{A/V}$ invariant under left translation (c.f. \cite[Proposition 4.2.2]{NeronModels}).

For an arbitrary torsor $T$ and a trivialisation $T_{V'} \cong A_{V'}$ over some covering $V'$ of $V$ one obtains an isomorphism 
\begin{equation} \label{BlubIso}
  (q^*s^*\Omega^m_{A/V})_{V'} \cong (\Omega^m_{A/V})_{V'} \cong (\Omega^m_{T/V})_{V'}
\end{equation}
 by composition. The left invariance noted above implies that the isomorphism \eqref{BlubIso} remains unchanged if one changes the trivialisation by the action of an element of $A(V')$. Hence \eqref{BlubIso} descends to a canonical isomorphism $q^*s^*\Omega^m_{A/V} \cong \Omega^m_{T/V}$.
\end{proof}

By Lemma \ref{relativeforms}, the relative forms $\tilde{\omega}_i$ induce a section of $\Omega^\top_{\Mm'_i/\Aa}$ on the $\Pc_i$-torsor $\Mm_i' \subset \Mm_i$ denoted by the same symbol. As before, we define a top degree form
$$\omega_i = \pi^*\omega_{\Aa} \wedge \tilde{\omega}_i.$$

From Remark \ref{cod2} we conclude that the orbifold measure $\mu_{\text{orb},i}$ on $\Mm_i^{\text{coarse}}(\Oo_F)^\natural$ is given by integrating $|\omega_i|$, since the intersection $\Delta \cap \Mm_i' = \emptyset$ is empty, as $\Mm_i'$ cannot contain orbifolds points as a relative $\Pc_i$-torsor.


\begin{definition}\label{notation}
Let $\Aa(\Oo_F)^{\flat}$ be the set
\[\Aa(\Oo_F)^{\flat} =\Aa(\Oo_F) \cap \Aa^{\lozenge}(F).\]
For every $b\in \Aa(\Oo_F)^\flat$ we write $\tilde{\omega}_{i,b}$ for the volume form on the fibres of $\Mm_i \rightarrow \Aa$ and $\Pc_i \rightarrow \Aa$ over $b$. 
\end{definition}

The following lemma allows us to compare the volumes of fibres of dual Hitchin systems. An alternative argument, based on the behaviour of \emph{N\'eron models} with respect to duality of abelian varieties, can be found in the authors' \cite{GWZ18}.

\begin{lemma}[Key Lemma]\label{keylemma}
Let $b \in \Aa(\Oo_F)^{\flat}$ be a rational point such that $\pi_i^{-1}(b)(F)$ is non-empty for $i \in \{1,2\}$. Then we have 
$$\int_{\pi_1^{-1}(b)(F)} |\tilde{\omega}_{1,b}|= \int_{\pi_2^{-1}(b)(F)} |\tilde{\omega}_{2,b}|.$$
\end{lemma}

\begin{proof} Let $\Pc_{i,b}$ denote the fibre of $\Pc_i \rightarrow \Aa$ over $b$. As $\pi_i^{-1}(b)(F)$ is non-empty, there is an isomorphism $\pi_i^{-1}(b) \cong \Pc_{i,b}$ over $F$ and hence
\[ \int_{\pi_i^{-1}(b)(F)} |\tilde{\omega}_{i,b}| = \int_{\Pc_{i,b}(F)} |\tilde{\omega}_{i,b}|. \]
By Proposition \ref{intools} we have
\[ \frac{1}{|\ker \phi(F)|} \int_{\Pc_{1,b}(F)} |\tilde{\omega}_{1,b}| =  \int_{\phi(\Pc_{1,b}(F))} |\tilde{\omega}_{2,b}|. \]
The right hand side can be equated to
$$ \frac{1}{|\Pc_{2,b}(F)/\phi(\Pc_{1,b}(F))|} \int_{\Pc_{2,b}(F)} |\tilde{\omega}_{2,b}|,$$
since by translation invariance of $\tilde{\omega}_{2}$ we have
$$\int_{\Pc_{2,b}(F)} |\tilde{\omega}_{2,b}| = \sum_{[y] \in \Pc_{2,b}(F)/\phi(\Pc_{1,b}(F))} \int_{y\phi(\Pc_{1,b}(F))} |\tilde{\omega}_{2,b}| = |\Pc_{2,b}(F)/\phi(\Pc_{1,b}(F))|\cdot{}\int_{\phi(\Pc_{1,b}(F))} |\tilde{\omega}_{2,b}|$$


By condition (b) of Definition \ref{situation}, the isogeny $\phi$ is a self-dual isogeny and therefore we can apply Proposition \ref{prop:self-dual} to deduce $|(\Pc_2)_b(F)/\phi(\Pc_1)_b(F)| = |\ker \phi(F)|$. This concludes the proof.
\end{proof}




Recall from Construction \ref{fi}, that the $\mu_r$-gerbe $\alpha_i$ on $\Mm_i$ induces a function 
\[ f_{\alpha_i}:M_i(\Oo_F)^\natural \rightarrow \Cb.     \]
The presence of these functions allows us to generalise the previous Lemma \ref{keylemma} to arbitrary fibres.

\begin{theorem}\label{p-adic-Mirror}
For every $b \in \Aa(\Oo_F)^{\flat}$ we have an equality of integrals
$$\int_{\pi_1^{-1}(b)(F)} f_{\alpha_1}|\tilde{\omega}_{1,b}| = \int_{\pi_2^{-1}(b)(F)}f_{\alpha_2}|\tilde{\omega}_{2,b}|.$$
\end{theorem}

\begin{proof}
There are four cases to consider.
\begin{itemize}
\item[(1)] If $\pi_1^{-1}(b)(F) = \pi_2^{-1}(b)(F) = \emptyset$, then both integrals are $0$.

\item[(2)] If $\pi_1^{-1}(b)(F) = \emptyset$, but $\pi_2^{-1}(b)(F)\neq \emptyset$ the left hand side will be $0$, so we have to prove that the integral on the right vanishes. By Definition \ref{situation} (b) the fibres $\Pc_{1,b}$ and $\Pc_{2,b}$ are dual abelian varieties and we denote the Tate duality pairing by
$$(-,-)\colon \Pc_{2,b}(F) \times H^1(F,\Pc_{1,b}) \to \mathbb{Q}/\mathbb{Z} \subset \Cb.$$

By virtue of Lemma \ref{lemmaTate} for an isomorphism of $\Pc_{2,b}$-torsors $\pi_2^{-1}(b) \xrightarrow h \Pc_{2,b}$, the function $f_{\alpha_2} \circ h^{-1}$ is equal to $\xi_b \exp(2\pi i(-,t_1))$ on $\Pc_{2,b}(F)$. Here $\xi_b \in \mu(\Cb)$ is a constant and $t_1 \in H^1(F,\Pc_{1,b})$ denotes the isomorphism class of $\pi_1^{-1}(b) \cong \Split'(\pi^{-1}_2(b),\alpha_2)$. Since the Tate duality pairing is non-degenerate we deduce that up to the constant $\xi_h$, the function
\[f_{\alpha_2} \circ h^{-1}: \Pc_{2,b}(F) \to \Cb, \]
is a non-trivial character on $\Pc_{2,b}(F)$, as by assumption $t_1 \neq 0$. Translation-invariance of $\tilde{\omega}_{2,b}$ therefore implies 
\[\int_{\pi_2^{-1}(b)(F)}f_{\alpha_2}|\tilde{\omega}_{2,b}| = \int_{\Pc_{2,b}(F)} f_{\alpha_2} \circ h^{-1} |\tilde{\omega}_{2,b}| = 0. \]


\item[(3)] The case that $\pi_2^{-1}(b)(F) = \emptyset$, but $\pi_1^{-1}(b)(F) \neq \emptyset$ is treated analogously to case (2).

\item[(4)] If $\pi_i^{-1}(b)(F) \neq \emptyset$ for both $i= 1,2$, then by Definition \ref{situation} (d) the gerbe $\alpha_{i|\pi^{-1}_i(b)}$ splits for $i=1,2$. From Proposition \ref{prop:YIM} we thus see
\[ f_{\alpha_i|\pi_i^{-1}(b)(F)} \equiv 1. \] 
The proposition now follows from our Key Lemma \ref{keylemma}, which shows that $\Pc_{1,b}(F)$ and $\Pc_{2,b}$ have the same volume.
\end{itemize}
These four cases cover all possibilities and therefore establish the formula claimed in the proposition.
\end{proof}

Finally we are ready for the

\begin{proof}[Proof of Theorem \ref{p-Mirror}]
It is enough to prove for every $a \in \Aa(k)$ the fibrewise assertion
$$\mathsf{Tr}(\Fr,(R\pi^I_{1,*}L_1(F_1))_a) = \mathsf{Tr}(\Fr,(R\pi^I_{2,*}L_2(F_2))_a),$$
as the global equality 
$$\#_{\st}^{\alpha_1}(\Mm_1) = \#_{\st}^{\alpha_2}(\Mm_2)$$
follows from summing over all $a \in \Aa(k)$.

Let $\Mm_{i,a}$ be the fibre of $\Mm_i \to \Aa$ over $a$. Using the Grothendieck-Lefschetz trace formula (see \cite[Theorem 12.1(iv)]{kiehl2013weil}) we can write 
\[\mathsf{Tr}(\Fr,(R\pi^I_{i,*}L_i(F_i))_a) = \sum_{x \in \I\Mm_{i,a}(k)_\iso} q^{F_i(x)} \frac{\mathsf{Tr}(\Fr,(L_{\alpha_i})_x)}{|\Aut(x)|}.\] 

Now let $\iota:\I\Mm_{i,a}(k) \rightarrow \I\Mm_{i,a}(k)$ denote the involution sending a pair $(m,\phi) \in \I\Mm_{i,a}(k)$ to $(m,\phi^{-1})$. Then one has for every $x\in \I\Mm_{i,a}(k)$ the relations $F_i(x) = -w_i(\iota(x)) + \dim \Mm_i$ and 
\[ \mathsf{Tr}(\Fr,(L_{\alpha_i})_x) = \overline{\mathsf{Tr}(\Fr,(L_{\alpha_i})_{\iota(x)})},  \]
see \eqref{autgerb} and the proof of Corollary \ref{StringyF} for a similar argument. Thus we get 
\[\mathsf{Tr}(\Fr,(R\pi^I_{i,*}L_i(F_i))_a) = q^{\dim \Mm_i} \sum_{x \in \I\Mm_{i,a}(k)_\iso} q^{-w_i(x)} \frac{\overline{\mathsf{Tr}(\Fr,(L_{\alpha_i})_x)}}{|\Aut(x)|}.\] 
By Theorem \ref{thm:appendix} the right hand side can be written as a $p$-adic integral, namely
\[\sum_{x \in \I\Mm_{i,a}(k)_\iso} q^{-w_i(x)} \frac{\overline{\mathsf{Tr}(\Fr,(L_{\alpha_i})_x)}}{|\Aut(x)|} = \int_{e^{-1}(\I\Mm_{i,a}(k)_\iso)} \bar{f}_{\alpha_i}\mu_{\text{orb},i} =  \int_{e^{-1}(\I\Mm_{i,a}(k)_\iso)} \bar{f}_{\alpha_i} |\omega_i|.\]
Next let $\Aa(\Oo_F)^\flat_a = \{b \in \Aa(\Oo_F)^\flat \ |\ b|_{\Spec(k_F)} = a \}$. Since 
\[e^{-1}(\I\Mm_{i,a}(k)_\iso) = \{b \in \Mm(\Oo_F)^\natural \ |\ \pi_i(b)|_{\Spec(k_F)}= a\},\]
and the complement of $\Aa(\Oo_F)^\flat_a$ inside $\Aa(\Oo_F)_a = \{b \in \Aa(\Oo_F) \ |\ b|_{\Spec(k_F)} = a\}$ has measure zero with respect to $\omega_{\Aa}$ by Proposition \ref{intools}. Furthermore, the resulting map of analytic $F$-manifolds
$$e^{-1}(\I\Mm_{i,a}(k)_\iso \to \Aa(\Oo_F)^\flat_a$$ is a submersion, since it is induced by the smooth morphism $\Mm_i^{\lozenge} \to \Aa^{\lozenge}$ (and taking $F$-rational points). 
Thus we can apply Proposition \ref{Igusa} and get
\[ \int_{e^{-1}(\I\Mm_{i,a}(k)_\iso)} f_{\alpha_i} |\omega_i| = \int_{b\in \Aa(\Oo_F)^\flat_a} |\omega_{\Aa}| \int_{\pi^{-1}(b)(F)} f_{\alpha_i} |\tilde{\omega}_{i,b}|. \]
Theorem \ref{p-Mirror} now follows from Theorem \ref{p-adic-Mirror}.
\end{proof}


\section{The Topological Mirror Symmetry Conjecture by Hausel--Thaddeus}\label{tms}


In this section we explain how our main theorem implies the Topological Mirror Symmetry conjecture by Hausel--Thaddeus \cite{MR1990670}. This is mostly a matter of recalling that the assumptions in \ref{situation} are satisfied. The reader is referred to the original sources by Hitchin \cite{MR887284} and Simpson \cite{MR1320603} for an introduction to Higgs bundles.

\subsection{Moduli Spaces of Higgs Bundles}
\label{sec:HiggsModuli}

Our strategy requires us to consider moduli spaces of Higgs bundles over various base schemes. For the sake of avoiding awkward language we fix a Noetherian scheme $S$, and consider a smooth and proper morphism $X \to S$ whose geometric fibres are connected curves of a fixed genus $g$. Below we will recall the definition and basic properties of moduli spaces of Higgs bundles over $X/S$. We assure the minimalists amongst the readers that only the following down-to-earth cases are relevant to us: $S = \Spec \Cb$, $S = \Spec \Fb_q$, $S = \Spec \Oo_F$, $S = \Spec F$, $S = \Spec R$, where $F$ is a local field and $R \subset \Cb$ is a finite type subalgebra of $\Cb$.

\begin{definition}
\begin{itemize}
\item[(a)] Let $D$ be a line bundle on $X$. A $D$-Higgs bundle is a pair $(E,\theta)$, where $E$ is a vector bundle on $S$, and $\theta\colon E \to E \otimes D$ an $\Oo_X$-linear morphism.
\item[(b)] For integers $n$ and $d$ we denote by $\Mm_{\GL_n}^d(X)$ the moduli space of \emph{stable} $D$-Higgs bundles of rank $n$ and degree $d$.
\item[(c)]For a line bundle $L$ of degree $d$ on $X$, a line bundle $D$ of arbitrary degree and an integer $n$ we denote by $\Mm_{\SL_n}^L(X)$ the moduli space of \emph{stable} $D$-Higgs bundles $(E,\theta)$ together with an isomorphism $\det(E) \simeq L$ satisfying $\mathsf{Tr}\,\theta = 0$. 
\end{itemize}
\end{definition}

For the rest of this section we fix a line bundle $L$ on $X$ of degree $d$ as well as a line bundle $D$ and an integer $n$. Traditionally one chooses $D$ to be equal to the canonical line bundle $\Omega_{X/S}^1$. However we do not need this restriction, and the general case is of independent interest. We denote by $J_X$ the Jacobian of $X$ over $S$ and by $\Gamma=J_X[n]$ the associated finite flat group scheme of $n$-torsion points.

\begin{rmk}
Existence of the moduli space in this generality can be deduced easily from algebraicity of the stack of vector bundles $\Bun_n(X/S)$ on $X$ (Olsson's algebraicity result for mapping stacks \cite[Theorem 1.1]{olsson2006hom} implies algebraicity of $\Bun_n(X/S)$). For this d\'evissage argument one considers the stack of all $D$-Higgs bundles and the forgetful map $\mathsf{Higgs}_D(X/S) \to \Bun_n(X/S)$. It follows from \cite[Th\'eor\`eme 7.7.6]{MR0163911} that this map is representable and affine. Therefore the stack $\mathsf{Higgs}_D(X/S)$ is itself algebraic. Since stable $D$-Higgs bundle form an open substack, we obtain algebraicity of the stack of stable $D$-Higgs bundles. The corresponding moduli space can be obtained by rigidifying this stack with respect to the group $\G_m$. Rigidification preserves algebraicity (see \cite{abramovich2003twisted}), and hence we deduce that $\Mm_{\SL_n}^L(X)$ is representable by an algebraic space.
\end{rmk}

Henceforth we will leave $D$ implicit, and simply refer to $D$-Higgs bundles as Higgs bundles.  However, we emphasise that according to our conventions, $\Mm_{\SL_n}^L(X)$ is the space of stable $\SL_n$-Higgs bundles. Nonetheless, the case of principal interest is when $n$ and $d$ are coprime integers (see Theorem \ref{thm:proper}). It is well-known that in this case $\Mm_{\SL_n}^L(X)$ is a smooth variety, which is acted on by the finite group scheme of $n$-torsion points $\Gamma$. As we work in a more general setting than usual we provide a proof of smoothness:

\begin{lemma}\label{Lsmooth}
Assume that $D \otimes (\Omega_{X/S}^1)^{-1}$ is a line bundle, which is either of strictly positive degree or equal to $\Oo_{X/S}$. Then the moduli space $\Mm_{SL_n}^L(X/S)$ is smooth over $S$.
\end{lemma}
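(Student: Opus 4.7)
My plan is to check smoothness via the infinitesimal criterion applied to the fibres of $\Mm_{\SL_n}^L(X/S)\to S$. Writing $\mathsf{Higgs}_{\SL_n}$ for the stack of stable $\SL_n$-Higgs bundles (of which $\Mm_{\SL_n}^L$ is the $\G_m$-rigidification) the standard deformation-theoretic analysis identifies the tangent-obstruction theory with the hypercohomology of the two-term complex
$$\Cc_0^{\bullet}\;=\;\bigl[\Eend_0(E)\xrightarrow{\;[-,\theta]\;}\Eend_0(E)\otimes D\bigr],$$
placed in degrees $0$ and $1$ on a geometric fibre $X_s$, where $\Eend_0(E)$ denotes the trace-free endomorphisms. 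The tangent space is $\Hb^1(\Cc_0^{\bullet})$ and the obstruction space is $\Hb^2(\Cc_0^{\bullet})$. Because we already know $\Mm_{\SL_n}^L(X/S)$ is an algebraic space of finite presentation over $S$, establishing smoothness reduces to showing that $\Hb^2(\Cc_0^{\bullet})$ vanishes at every geometric point, so that formal smoothness of the stack of stable $\SL_n$-Higgs bundles holds fibrewise.

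The next step is to translate the vanishing of $\Hb^2(\Cc_0^{\bullet})$ into an assertion about Higgs bundle morphisms, via Grothendieck--Serre duality on the curve $X_s$. The locally free complex $\Cc_0^{\bullet}$ has Serre dual (up to shift) the complex
$$\Cc_0^{\bullet,\vee}\otimes\omega_{X_s}\;=\;\bigl[\Eend_0(E)\otimes D^{-1}\otimes\omega_{X_s}\xrightarrow{\;[-,\theta]\;}\Eend_0(E)\otimes\omega_{X_s}\bigr]$$
in degrees $-1,0$, where we have used the trace pairing to identify $\Eend_0(E)^{\vee}\cong\Eend_0(E)$. Serre duality then yields
$$\Hb^2(\Cc_0^{\bullet})^{\vee}\;\cong\;H^0\!\bigl(X_s,\ker\bigl([-,\theta]\colon \Eend_0(E)\otimes M\to \Eend_0(E)\otimes\omega_{X_s}\bigr)\bigr),$$
where $M\defeq D^{-1}\otimes\omega_{X_s}$. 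Sections of this kernel are precisely trace-free morphisms of Higgs bundles $\phi\colon(E,\theta)\to(E\otimes M,\theta\otimes \mathrm{id}_M)$.

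The final step is to use stability of $(E,\theta)$ together with the hypothesis on $D$ to show no such nonzero $\phi$ exists. By assumption $D\otimes(\Omega^1_{X/S})^{-1}$ is either trivial or of strictly positive degree, equivalently $M$ has non-positive degree with equality only when $D=\omega_{X_s}$. In the trivial case $M=\Oo_{X_s}$, stability of $(E,\theta)$ forces any Higgs endomorphism to be a scalar, and trace-freeness then forces $\phi=0$. In the negative case $\deg M<0$, we tensor $(E,\theta)$ by $M$ to obtain another stable Higgs bundle of strictly smaller slope, and the standard slope argument (a nonzero morphism between stable objects of distinct slopes would produce a subobject of the target and a quotient of the source with incompatible slope inequalities) shows $\mathrm{Hom}_{\mathrm{Higgs}}((E,\theta),(E\otimes M,\theta\otimes \mathrm{id}))=0$. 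In either case $\phi=0$, giving $\Hb^2(\Cc_0^{\bullet})=0$.

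The main technical subtlety I expect is ensuring that it is legitimate to pass from obstruction vanishing on geometric fibres of $S$ to smoothness of the relative moduli space; this is handled by noting that the stack of stable $\SL_n$-Higgs bundles over $X/S$ is algebraic and of finite presentation (by the representability argument already sketched in the paper), so fibrewise formal smoothness plus finite presentation yields smoothness over $S$, and $\G_m$-rigidification preserves this property. Everything else is bookkeeping in the deformation theory and the slope stability argument, neither of which should produce obstacles.
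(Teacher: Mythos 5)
Your proposal is correct and follows essentially the same route as the paper: identify the deformation complex $[\Eend_0(E)\xrightarrow{[-,\theta]}\Eend_0(E)\otimes D]$, reduce smoothness to the vanishing of $\Hb^2$, use Serre duality to re-express $\Hb^2{}^{\vee}$ as $\Hom$ from $(E,\theta)$ to the twist $(E\otimes D^{-1}\otimes\omega_{X_s},\theta)$, and kill it by the slope/stability argument in the two cases $\deg(D\otimes\omega^{-1})>0$ and $D=\omega_{X_s}$. (Your degree convention placing the complex in degrees $0,1$ is the standard one; the paper's stated placement in degrees $-1,0$ appears to be a typo, as the paper then refers to $\Hb^2$ as the obstruction space, which only makes sense for the $0,1$ grading.)
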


\begin{proof}
Without loss of generality we may assume that $S$ is affine. The deformation theory of (twisted) $\SL_n$-Higgs bundles $(E,\theta)$ (over an arbitrary base) is governed by the (relative) hypercohomology of the complex (here $\Eend_0(E)$ denotes the sheaf of trace-free endomorphisms of $E$)
$$C^{\bullet}(E,\phi)= [\Eend_0(E) \to \Eend_0(E) \otimes D]$$
sitting in degrees $-1$ and $0$. We refer the reader to \cite[4.14]{MR2653248} for derivation of this fact in a general context.
We have natural isomorphisms $\Hb^0(X,C^{\bullet}(E,\phi)) \simeq \End(E,\phi)$, $\Hb^1(X,C^{\bullet}(E,\phi)) \simeq T_{(E,\phi)}\Mm_{\SL_n}^L$, and $\Hb^2(X,C^{\bullet}(E,\phi))$ equals the space of obstructions. In order to show that $\Mm_{\SL_n}^L(X/S)$ is smooth, we have to show vanishing of $\Hb^2(X,C^{\bullet}(E,\phi))$ for a stable Higgs bundle $(E,\phi)$. Since stable $\SL_n$-Higgs bundles have a discrete group of automorphisms the group $\End(E,\phi) \simeq \Hb^0(X,C^{\bullet}(E,\phi))$ vanishes. Serre duality applied to the family of curves $X/S$ implies 
$$\Hb^2(X,C^{\bullet}(E,\phi))^{\vee} \simeq \Hb^0(X,C^{\bullet}(E,\phi)^{\vee} \otimes \Omega_{X/S}^1).$$
The complex $C^{\bullet}(E,\phi)^{\vee} \otimes \Omega_{X/S}^1$ is given by 
$$[\Eend_0(E) \otimes D^{-1} \otimes \Omega_X^1 \to \Eend_0(E) \otimes \Omega_X^1] \simeq [\Hhom(E,E\otimes D^{-1} \otimes \Omega_X^1) \to \Hhom(E,E\otimes D^{-1} \otimes \Omega_X^1) \otimes D].$$
Therefore, $\Hb^0$ of this complex describes the space of trace-free homomorphisms of Higgs bundles $$\Hom_0((E,\phi),(E\otimes D^{-1} \otimes \Omega_X^1,\phi).$$ 
Let us assume that the degree of $D \otimes (\Omega_{X/S}^1)^{-1}$ is strictly positive. Since $(E,\phi)$ and $(E\otimes D^{-1} \otimes \Omega_X^1,\phi)$ are stable and the second Higgs bundle is of strictly smaller degree than the first, we have that this space of homomorphisms is $0$. Similarly, if $D \otimes (\Omega_{X/S}^1)^{-1}$ is equal to $\Oo_{X/S}$, then we have $\End_0((E,\phi),(E,\phi)) = 0$ as noted above.
\end{proof}

\begin{definition}\label{defi:SL_n_L}
We denote by $\Mm_{\PGL_n}(X/S)$ the moduli stack of families of $\PGL_n$-Higgs bundles, which admit a presentation as a stable Higgs bundle over each geometric point. The notation $\Mm_{\PGL_n}^d(X/S)$ refers to the moduli stack of stable $\PGL_n$-Higgs bundles, which admit a presentation by a vector bundle of degree congruent to $d$ modulo $n$ over each geometric point. 
\end{definition}

We will sometimes denote $\Mm_{\SL_n}^L(X/S)$ and $\Mm_{\PGL_n}^d(X/S)$ simply by $\Mm_{\SL_n}^L$ and $\Mm_{\PGL_n}^d$.
\begin{rmk}
\begin{enumerate}[(a)]
\item The connected components of the moduli stack of stable $\PGL_n$-bundles are parametrised by congruence classes of integers modulo $n$. That is, we have $\Mm_{\PGL_n} = \bigsqcup_{\bar{d} \in \Zb/n\Zb} \Mm_{\PGL_n}^{d}$.
\item The stack $\Mm_{\PGL_n}^d$ is equivalent to the quotient stack $[\Mm_{\SL_n}^L(X)/\Gamma]$. In particular, we see that the resulting quotient stack only depends on the degree $d$ (modulo $n$) of the line bundle $L$.
\end{enumerate}
\end{rmk}

The Hitchin base $\Aa$ is defined to be the affine $S$-space corresponding to the locally free sheaf $$\oplus_{i=2}^nH^0(X,D^{\otimes i}).$$ It receives a morphisms $\chi_{\SL_n}$ and $\chi_{\PGL_n}$ (called the Hitchin map) from the moduli spaces $\Mm_{\SL_n}^L(X/S)$ and $\Mm_{\PGL_n}^d(X/S)$. These maps are given by the familiar construction of characteristic polynomials, applied to the Higgs field $\theta$ itself. 

\begin{theorem}[Hitchin, Nitsure, Faltings]\label{thm:proper}
If $d$ and $n$ are coprime, then the morphisms $\chi_{\SL_n},\chi_{\PGL_n} $ are proper.
\end{theorem}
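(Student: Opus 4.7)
My plan is to verify the valuative criterion of properness, following the classical strategy of Hitchin, Nitsure, and Faltings. First I would observe that $\Mm_{\SL_n}^L(X/S)$ is quasi-projective---hence of finite type and separated---over $\Aa$, by Nitsure's GIT construction of the moduli of semistable Higgs bundles, combined with the coprimality hypothesis which ensures that semistability coincides with stability. Properness therefore reduces to checking that for every DVR $R$ with fraction field $K$, every diagram
\begin{equation*}
\xymatrix{
\Spec K \ar[r] \ar[d] & \Mm_{\SL_n}^L \ar[d]^\chi \\
\Spec R \ar[r] \ar@{-->}[ur] & \Aa
}
\end{equation*}
admits a unique dashed filler, possibly after a finite extension of $R$.

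Next I would transport the extension problem to coherent sheaves on spectral curves via the Beauville-Narasimhan-Ramanan (BNR) correspondence. For $a \in \Aa(R)$, the spectral curve $\tilde X_a \subset \mathsf{Tot}(D_R)$ is finite flat of degree $n$ over $X_R$, hence projective over $\Spec R$. A Higgs bundle $(E,\theta)$ with characteristic polynomial $a$ corresponds (on any base) to a pure one-dimensional coherent sheaf $\mathscr{F}$ on $\tilde X_a$ whose pushforward to $X$ is $E$; the $\SL_n$-conditions $\det(E) \simeq L$ and $\mathsf{Tr}(\theta) = 0$ translate into fixing the norm $\Nm(\mathscr{F})$ to a specific line bundle on $X_R$ determined by $L$ and $a$.

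Then I would invoke the properness of Simpson's moduli space of semistable pure sheaves of fixed reduced Hilbert polynomial on the projective $R$-scheme $\tilde X_a$; the norm condition cuts out a proper closed sub-$R$-scheme (a relative compactified Prym). Applying the valuative criterion to this proper $R$-scheme extends $\mathscr F_K$ to a semistable sheaf $\mathscr F_R$ on $\tilde X_a$ with the correct norm, and its BNR transform is the desired Higgs bundle over $\Spec R$.

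The hardest step is to ensure that the extended special fiber is \emph{stable}, not merely semistable, so that it defines an honest $R$-point of $\Mm_{\SL_n}^L$ rather than of its semistable compactification. This is precisely where the coprimality of $d$ and $n$ is used: a slope computation on the spectral curve shows that for any proper non-zero subsheaf of $\mathscr F$ the slope difference from $\mathscr F$ is a rational number whose denominator divides $n$, so under the coprimality hypothesis strict inequality is forced and semistability coincides with stability. Uniqueness of the diagonal filler then follows from the separatedness of the moduli space of stable objects.
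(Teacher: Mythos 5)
Your proposal is correct in outline, but it takes a genuinely different route from the paper, which does not prove the theorem at all: it simply cites Nitsure \cite{MR1085642} (who establishes properness of the $\GL_n$-Hitchin map via his GIT construction of the moduli space together with a Langton-type elementary-modification argument, the $\SL_n$-case then following since $\Mm_{\SL_n}^L$ sits as a closed subscheme of $\Mm_{\GL_n}^d$ over the closed subspace $\Aa_{\SL_n}\subset\Aa_{\GL_n}$) and Faltings \cite{MR1211997} (whose argument for general reductive $G$ is Lie-theoretic). Neither cited proof passes through the spectral correspondence. Your route---transporting the extension problem to Simpson's proper moduli of semistable pure one-dimensional sheaves on the projective spectral curve $\tilde X_a/R$ via BNR, and extracting stability from coprimality---is a valid alternative, more in the spirit of Schaub's and Simpson's treatments, and conceptually aligned with the Hitchin-fibration viewpoint the rest of the paper relies on.

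Two details deserve care if you wish to turn the sketch into a complete proof. First, over a possibly non-reduced or reducible spectral curve $\tilde X_a$, the BNR dictionary identifies sheaves $\mathscr F$ with $\pi_*\mathscr F$ locally free of rank $n$ with Higgs bundles whose Higgs field merely satisfies the Cayley--Hamilton identity of $a$, not a priori with those whose characteristic polynomial is exactly $a$. That the extended $R$-family nevertheless has characteristic polynomial $a$ on the nose follows from separatedness of $\Aa$: the characteristic-polynomial morphism of the extended family agrees with $a$ over $K$, hence over all of $\Spec R$. Second, the determinant/norm condition need not (and should not) be imposed as a closed locus inside Simpson's moduli, which would drag in delicate questions about norm maps on compactified Jacobians of singular curves. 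It is cleaner to observe that $\pi_*\mathscr F_R$ is a rank-$n$ vector bundle on $X_R$, flat over $R$, and that $\Pic^d_{X_R/R}$ is proper, so the isomorphism $\det(\pi_*\mathscr F_K)\simeq L_K$ extends automatically to $R$. Your stability argument via coprimality is correct as stated: subsheaves of $\mathscr F$ on $\tilde X_a$ correspond to $\theta$-invariant subsheaves of $E$, Gieseker- and slope-(semi)stability agree on a curve, and $(n,d)=1$ forbids any slope equality $d'/n'=d/n$ with $0<n'<n$.
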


See Nitsure's \cite{MR1085642} for a proof of properness in the case of $\GL_n$-Higgs bundles, which implies the assertion for $\SL_n$, or Faltings's \cite{MR1211997} for a proof of the case of $G$-Higgs bundles for reductive $G$.

We conclude this subsection by mentioning a connection between the notion of twisting (see Subsection \ref{twisting}) and the moduli spaces $\Mm_{\SL_n}^L(X)$ for varying $L$. The proof of Theorem \ref{thm:independent} is based on this property. We let $F$ be a local field and take $S=\Spec(\Oo_F)$. We let $M$ be a line bundle of degree $0$ on $X$ and assume that $n$ is an integer prime to the residue characteristic of $F$. We associate to $M$ a torsor under $\Gamma = J_X[n]$ as follows: The multiplication by $n$ map is an isogeny
$$0 \to \Gamma \to J_X \xrightarrow{[n]} J_X \to 0.$$
This sequence induces a long exact sequence of (unramified) Galois cohomology groups as part of which we get a boundary map
$$\delta\colon J_X(\Oo_F) \to H^1_{\text{ur}}(F,\Gamma).$$
The torsor associated to $M$ is $\delta(M)$. It can also be understood as the fibre $[n]^{-1}(M)$ with its natural $\Gamma$-action. 

\begin{lemma}\label{lemma:twisty}
There is an isomorphism 
$$\Mm_{\SL_n}^{LM}(X) \simeq (\Mm_{\SL_n}^L(X))_{\delta(M)},$$
where $()_{\delta(M)}$ denotes the twist by the torsor $\delta(M)$ (as in Definition \ref{defi:twisting}).
\end{lemma}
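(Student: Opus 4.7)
The plan is to build the isomorphism directly out of tensoring with an $n$-th root of $M$. The guiding intuition is that \'etale-locally on $S$ one can pick an $n$-th root $N$ of $M$; then $(E,\theta)\mapsto (E\otimes N,\theta)$ gives an isomorphism $\Mm_{\SL_n}^L\to \Mm_{\SL_n}^{LM}$ because $\det(E\otimes N)=L\otimes N^{\otimes n}=LM$ and tracefreeness of the Higgs field is preserved when tensoring with a line bundle. The choice of $N$ is only well defined up to an element of $\Gamma=J_X[n]$, and changing $N$ to $N\otimes \gamma$ alters the map by the action of $\gamma$ on $\Mm_{\SL_n}^L$. This is precisely the twisting recipe of Definition~\ref{defi:twisting}, with the ambiguity encoded in the $\Gamma$-torsor $\delta(M)$ of $n$-th roots of $M$.

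To globalize, I would use that $\delta(M)=[n]^{-1}(M)\subset J_X$ carries a tautological universal $n$-th root, i.e.\ a line bundle $\mathcal N$ on $X\times_S \delta(M)$ whose restriction to each geometric point of $\delta(M)$ represents the corresponding $n$-th root and which satisfies $\mathcal N^{\otimes n}\cong (p_X)^*M$. Using $\mathcal N$, define a morphism
\begin{equation*}
\Phi\colon \Mm_{\SL_n}^L\times_S \delta(M)\longrightarrow \Mm_{\SL_n}^{LM},\qquad \bigl((E,\theta),N\bigr)\longmapsto (E\otimes N,\theta\otimes \id_N).
\end{equation*}
The identity $\det(E\otimes N)=L\otimes N^{\otimes n}=LM$ and $\mathsf{Tr}(\theta\otimes\id_N)=\mathsf{Tr}(\theta)=0$ show that $\Phi$ lands in $\Mm_{\SL_n}^{LM}$, and stability is preserved under tensoring by a line bundle. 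With respect to the antidiagonal action of $\Gamma$ (where $\gamma\in \Gamma$ acts on $\Mm_{\SL_n}^L$ by $E\mapsto E\otimes \gamma$ and on $\delta(M)$ by $N\mapsto N\otimes \gamma^{-1}$), the map $\Phi$ is invariant:
\begin{equation*}
\Phi\bigl((E\otimes\gamma,\theta), N\otimes\gamma^{-1}\bigr)=(E\otimes\gamma\otimes N\otimes \gamma^{-1},\theta)=(E\otimes N,\theta)=\Phi\bigl((E,\theta),N\bigr).
\end{equation*}
Hence $\Phi$ descends to a morphism $\bar\Phi\colon (\Mm_{\SL_n}^L)_{\delta(M)}\to \Mm_{\SL_n}^{LM}$.

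To verify that $\bar\Phi$ is an isomorphism, I would pass to an \'etale cover of $S$ that trivializes the torsor $\delta(M)$. Over such a cover, a section $N_0$ of $\delta(M)$ exists and the map $(E,\theta)\mapsto (E\otimes N_0,\theta)$ is an isomorphism $\Mm_{\SL_n}^L\to \Mm_{\SL_n}^{LM}$ with explicit inverse $(E',\theta')\mapsto (E'\otimes N_0^{\otimes -1},\theta')$; moreover the twist $(\Mm_{\SL_n}^L)_{\delta(M)}$ becomes canonically identified with $\Mm_{\SL_n}^L$ via Lemma~\ref{twist_quotient}, and under this identification $\bar\Phi$ is exactly the above local isomorphism. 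Since isomorphism is \'etale-local on $S$, $\bar\Phi$ is an isomorphism.

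The only real subtlety is the careful bookkeeping of the two $\Gamma$-actions so that the antidiagonal invariance comes out correctly; once that is set up, everything else is formal. No deep input is needed: the lemma is, in essence, just the observation that the different components $\Mm_{\SL_n}^L(X)$ as $\deg L$ varies within a fixed class modulo $n$ are inner forms of each other, twisted by the $J_X[n]$-torsor of $n$-th roots relating the determinants.
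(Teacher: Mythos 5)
Your proof is correct and in substance follows the same route as the paper's: pass to a cover (for you an \'etale cover of $S$, for the authors the unramified extension $\Oo_{F^{ur}}$) over which the $\Gamma$-torsor $\delta(M)$ of $n$-th roots of $M$ trivializes, use tensoring by the chosen root to identify $\Mm_{\SL_n}^L$ with $\Mm_{\SL_n}^{LM}$, and observe that the descent ambiguity is exactly the twist by $\delta(M)$. The only difference is presentational: you globalize the construction by introducing the universal $n$-th root $\mathcal N$ on $X\times_S\delta(M)$ and checking antidiagonal $\Gamma$-invariance explicitly, whereas the paper works directly with a single trivialization over $\Oo_{F^{ur}}$ and invokes the descent obstruction; both are equivalent instances of the same argument.
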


\begin{proof}
We denote by $F^{\text{ur}}$ the unramified closure of $F$, and by $\Oo_{F^{\text{ur}}}$ its ring of integers. The latter is a discrete valuation ring with algebraically closed residue field $\overline{k}_F$. Hence there exists a line bundle $M^{\frac{1}{n}} \in \Pic(X_{\Oo_{F^{\text{ur}}}})$ for which $(M^{\frac{1}{n}})^n \simeq M$. There is a morphism 
$$\Mm_{\SL_n}^{LM}(X)_{\Oo_{F^{\text{ur}}}} \to \Mm_{\SL_n}^L(X)_{\Oo_{F^{\text{ur}}}}$$
given by tensoring a family of Higgs bundles with $M^{\frac{1}{n}}$. The obstruction for this isomorphism to descend to an isomorphism defined over $\Oo_F$ is precisely given by $\delta(M) \in H^1_{\text{ur}}(F,\Gamma)$. This shows that we have an isomorphism $\Mm_{\SL_n}^{LM}(X) \simeq (\Mm_{\SL_n}^L(X))_{\delta(M)}$.
\end{proof}

\subsection{The Prym variety and its properties}

We denote by $\pi\colon Y \to X \times \Aa$ the universal family of spectral curves. The moduli stacks $\Mm_{\SL_n}^L(X)$ and $\Mm_{\PGL_n}^d(X)$ are acted on by smooth commutative group schemes $\Pc_{\SL_n}/\Aa$ and $\Pc_{\PGL_n}/\Aa$: First we consider the $\SL_n$ side, where $\Pc_{\SL_n}$ is given by the relative \emph{Prym variety} of the universal family of spectral curves $Y \to X \times \Aa$.

Let $\Aa^{\lozenge} \subset \Aa$ be the open dense subscheme corresponding to smooth spectral curves. Over this open subset, the fibres of the Hitchin map $\chi\colon \Mm_{\SL_n}^L \rightarrow \Aa$ admit the following well-known description. For a finite morphism $C \to C'$ of relative curves over $S$ we refer the reader to Hausel--Pauly's \cite[Section 3]{MR2967059} for the definition of the norm map $\Nm_{C/C'} \colon \Pic(C/S) \to \Pic(C'/S)$. Although the authors of \emph{loc. cit.} work under more restrictive assumptions, their treatment of the norm map is easily generalised to our more general situation. For a line bundle $L$ on $C'$ we denote by $\Nm_{C/C'}^{-1}(L)$ the $\G_m$-rigidification of the stack obtained by taking the preimage of $L$ under the morphism $\Nm_{C/C'}$.
We will sometimes abbreviate $\Pc_{\SL_n}$ as $\Pc$.
\begin{lemma}\label{Norm_determinant}
Let $a \in \Aa^{\lozenge}(S)$ be an $S$-valued point. 
\begin{itemize}
\item[(a)] The fibre $\chi^{-1}(a) = \Mm_{\SL_n}^L \times_{\Aa} S$ is naturally equivalent to the stack $$\Nm_{Y\times_{\Aa}S/X}^{-1}(L \otimes \det(\pi_*\Oo_{Y \times_{\Aa} S})).$$ 
\item[(b)] The equivalence of (a) is an equivalence of $\Pc_a$-torsors.
\item[(c)] For line bundles $M_1$ and $M_2$ on $X$ we have the following identity in $H^1_{\text{\'et}}(S,\Pc)$:
$$[{\Nm^{-1}_{Y\times_{\Aa}S/X}(M_1)}][\Nm^{-1}_{Y\times_{\Aa}S/X}(M_2)] = [\Nm^{-1}_{Y\times_{\Aa}S/X}(M_1M_2)],$$
where $[{\;}]$ denotes the class of the $\Pc$-torsor in $H^1_{\text{\'et}}(S,\Pc)$.
\item[(d)] For a line bundle $M \in \Pic(X)$ we have an identity of torsors $[{\Nm^{-1}_{Y\times_{\Aa}S/X}}(M^n)] = 0$ in $H^1_{\text{\'et}}(S,\Pc)$. 
\end{itemize}
\end{lemma}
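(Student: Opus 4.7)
The approach relies on the Beauville--Narasimhan--Ramanan (BNR) correspondence, which for a smooth spectral cover $\pi\colon Y_a\to X_S$ identifies $\GL_n$-Higgs bundles $(E,\theta)$ on $X_S$ of characteristic polynomial $a$ with line bundles $\mathcal{L}$ on $Y_a$: one takes $E=\pi_*\mathcal{L}$, with Higgs field given by multiplication by the tautological section of $\pi^*D$. Tensor product on the $Y_a$-side corresponds to the natural twisting action of $\Pic(Y_a/S)$ on the Higgs side.

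For part (a), the condition $\Tr\,\theta=0$ is already built into the choice of the base $\Aa$ (since it discards the $H^0(X,D)$-factor of the $\GL_n$-Hitchin base) and is therefore automatic once $a\in\Aa$. The remaining condition $\det E\cong L$ is handled by the pushforward--determinant formula
$$\det(\pi_*\mathcal{L})\;\cong\;\Nm(\mathcal{L})\otimes \det(\pi_*\Oo_{Y_a}),$$
which is standard (it may be verified locally after trivializing $\mathcal{L}$). Combined with BNR, this identifies $\chi^{-1}(a)$ with the stack of line bundles on $Y_a$ whose norm equals $L\otimes\det(\pi_*\Oo_{Y_a})$ (up to the sign convention fixed in the statement), and the $\G_m$-rigidifications match via the scalar automorphisms of $\mathcal{L}$. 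Part (b) is then immediate: the action of the Prym $\Pc_a\subset\ker(\Nm)$ on $\chi^{-1}(a)$ induced by tensoring a Higgs bundle by a line bundle corresponds under BNR to the tensor-product action on $\Nm^{-1}(L\otimes\det(\pi_*\Oo_{Y_a}))$.

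Parts (c) and (d) reduce to formal properties of the norm map. Since $\Nm\colon \Pic(Y_a/S)\to\Pic(X_S/S)$ is a homomorphism of abelian sheaves on $S$ with identity component of kernel equal to $\Pc$, the class $[\Nm^{-1}(M)]\in H^1_{\text{\'et}}(S,\Pc)$ is (up to a choice of connected component) the image of $[M]\in \Pic(X_S)(S)$ under the connecting homomorphism of the resulting short exact sequence, and additivity of connecting maps yields (c). For (d) one uses the elementary identity $\Nm\circ\pi^*=[n]$ on $\Pic(X_S/S)$, a consequence of the fact that $\pi$ is finite of degree $n$; thus $\pi^*M$ is a global section of $\Nm^{-1}(M^n)$ over $S$, which therefore represents the trivial torsor class.

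The main obstacle is concentrated in part (a): one has to pin down the sign and twist conventions in the pushforward--determinant formula so that the statement comes out exactly as claimed, and to verify that the BNR equivalence and its compatibility with the $\G_m$-rigidification, the norm map, and the tensor action all go through in the relative setting over an arbitrary Noetherian $S$ and with a general twisting line bundle $D$ in place of the more familiar $D=\Omega^1_{X/S}$. Once (a) is correctly set up, (b)--(d) are essentially formal consequences.
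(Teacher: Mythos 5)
Your proposal is correct and follows essentially the same route as the paper: part (a) via the BNR correspondence together with the determinant-of-pushforward formula $\Nm(L')\cong\det(\pi_*L')\otimes\det(\pi_*\Oo_{Y})^{-1}$ (the paper cites this from Hausel--Pauly), part (b) as an immediate consequence of compatibility with the $\Pc_a$-action, and part (d) from $\Nm\circ\pi^*=[n]$ exactly as the paper does. The only variation is in (c), where the paper constructs a morphism of $\Pc_a$-torsors directly from the bilinear tensor-product map $\Nm^{-1}(M_1)\times\Nm^{-1}(M_2)\to\Nm^{-1}(M_1M_2)$ and invokes that any torsor morphism is an isomorphism, whereas you phrase the same multiplicativity as additivity of a connecting homomorphism; both arguments work, but note that over $\Aa^{\mathrm{good}}$ the group $\Pc$ is the full kernel of $\Nm$ (not merely its identity component), which in fact makes your formulation cleaner than you suggest.
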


\begin{proof}
The first part is a consequence of the formula $$\Nm_{Y\times_{\Aa}S/X}({L'}) = \det(\pi_*{L'}) \cdot{} \det(\pi_*\Oo_{{Y\times_{\Aa} S}})^{-1}$$ for a line bundle $L'$ on $Y \times_{\Aa} S$ (see \cite[Corollary 3.12]{MR2967059}). 
The second and third parts follow from the multiplicativity of the norm map: Indeed, the tensor product of line bundles on $Y$ induces a map
$${\Nm^{-1}_{Y\times_{\Aa}S/X}(M_1)} \times {\Nm^{-1}_{Y\times_{\Aa}S/X}(M_2)} \to {\Nm^{-1}_{Y\times_{\Aa}S/X}(M_1M_2)}.$$
This map has the property of being a bilinear map of $\Pc_a$-torsors, and therefore induces a morphism of $\Pc_a$-torsors 
$${\Nm^{-1}_{Y\times_{\Aa}S/X}(M_1)} \otimes {\Nm^{-1}_{Y\times_{\Aa}S/X}(M_2)} \to {\Nm^{-1}_{Y\times_{\Aa}S/X}(M_1M_2)}.$$
As a morphism of $\Pc_a$-torsors this is automatically an isomorphism.

The fourth assertion is a consequence of the identity $\Nm_{Y\times_{\Aa}S/X}(\pi^*M) = M^{\otimes n}$ and multiplicativity: Multiplication with $\pi^*M$ induces a morphism of torsors ${\Nm^{-1}_{Y\times_{\Aa}S/X}(\Oo_X)} \to {\Nm^{-1}_{Y\times_{\Aa}S/X}(\pi^*M)}$, which is automatically an isomorphism.
\end{proof}

We will now turn to a description of the $\PGL_n$-counterpart of the Prym-variety $\Pc_{\SL_n}/\Aa$. At first we recall its definition, which renders the description of the action on $\Mm_{\PGL_n}^e$ relative to $\Aa$ tautological. We then turn to the verification of the properties demanded in Definition \ref{situation}. Recall that we denote $\Pic(X)[n]$ by $\Gamma$.

\begin{definition}
We define $\Pc_{\PGL_n} = \Pc_{\SL_n}/\Gamma$.
\end{definition}

We include the proof of the assertion below for the convenience of the reader since the original reference does not comment on the self-duality property of the isogeny. In the following, for an $\Aa$-scheme $Z$ we denote by $Z^\lozenge$ the base change $Z \times_{\Aa} \Aa^\lozenge$.

\begin{proposition}[c.f. {\cite[Lemma 2.2 and 2.3]{MR2967059}}]  \label{dual}
There is an isomorphism of abelian $\Aa^{\lozenge}$-schemes $(\Pc_{\SL_n}^{\lozenge})^{\vee}$ and $\Pc_{\PGL_n}^{\lozenge}.$ With respect to this isomorphism the quotient map $\Pc_{\SL_n}^{\lozenge} \to \Pc_{\PGL_n}^{\lozenge}$ is a self-dual isogeny. Furthermore, the identification $\Gamma \cong \Gamma^\vee$ associated to this self-dual isogeny via \eqref{SelfDualKernel} is the same as the one associated to the self-dual isogeny $J_X \xrightarrow{[n]} J_X$.
\end{proposition}

\begin{proof}
We begin the proof by fixing notation. The relative Jacobian of the trivial family of curves $X \times_S \Aa/\Aa$ will be denoted by $J$. The relative Jacobian of the universal spectral curve $Y / \Aa$ will be denoted by $\widetilde{J}$. Similarly we denote by $J^1$ and $\widetilde{J}^1$ the relative moduli spaces of degree $1$ line bundles.

Henceforth we restrict every $\Aa$-scheme to the open subset $\Aa^{\lozenge}$. To avoid awkward notation we will omit the corresponding superscript.

The relative norm map induces a morphism of abelian $\Aa$-schemes $\widetilde{J} \xrightarrow{\Nm} J$. Similarly, pullback of line bundles yields $\pi^*\colon J \to \widetilde{J}$. We claim that these two morphisms are dual to each other with respect to the canonical isomorphism $J^{\vee} \simeq J$ induced by the Poincar\'e bundle (and similarly for $\widetilde{J}$). To see this we observe that we have a commutative diagram (the horizontal arrows represent the Abel-Jacobi map)
\[
\xymatrix{
Y \ar[r]^{\mathsf{AJ}_{Y}} \ar[d]_{\pi} & \widetilde{J}^1 \ar[d]^{\Nm} \\
X \ar[r]^{\mathsf{AJ}_{X}} & J 
}
\]
to which we can apply the contravariant $\Pic^0(?/\Aa^\lozenge)$ functor to obtain the commutative diagram of abelian schemes
\[
\xymatrix{
\widetilde{J} & \widetilde{J} \ar[l]_{\id} \\
J \ar[u]^{\pi^*} & J \ar[l]_{\id} \ar[u]_{\Nm^{\vee}}.
}
\]
We will now show that the dual of the isogeny 
\begin{equation}\label{eqn:isogeny}
\Gamma \to \Pc_{\SL_n} \to \Pc_{\PGL_n} 
\end{equation}
is equivalent to itself. A convenient framework for the argument is provided by the theory of abelian group stacks, as explained in \cite{AriApp}. Equivalently, one could employ a derived category of commutative group sheaves. For (nice) abelian group stacks there exists a duality functor given by $\Hhom(-,B\G_m)$. It sends an abelian scheme to its dual, and a finite \'etale group scheme $\Gamma$ to $B\Gamma^{\vee}$, the classifying stack of its Cartier dual. The sequence of maps in \eqref{eqn:isogeny} is sent to the fibre sequence
\begin{equation}\label{eqn:isogeny2}
\Pc_{\PGL_n}^{\vee} \to \Pc_{\SL_n}^{\vee} \to B\Gamma^{\vee},
\end{equation}
where the first map is the sought-for dual isogeny. It is sufficient to show that $\Pc_{\SL_n}^{\vee} \to B\Gamma^{\vee}$ is equivalent to the map $\Pc_{\PGL_n} \to B\Gamma$. This amounts to the following three claims: That the dual of $\Pc_{\SL_n}$ is $\Pc_{\PGL_n}$, that $\Gamma^{\vee} \simeq \Gamma$, and that the isogeny \eqref{eqn:isogeny} is self-dual. We will obtain these assertions by analysing the two commutative diagrams below, which are related by the duality functor $\Hhom(-,B\G_m)$. 
\[
\xymatrix{
\Gamma \ar[r] \ar[d] & \Pc_{\SL_n} \ar[d] & & & B\Gamma^{\vee} & \Pc^{\vee}_{\SL_n} \ar[l] \\
J \ar[r]^{\pi^*} \ar[d]_{n} & \widetilde{J} \ar[d]^{\Nm} & & \ar@{<~>}[l] & J \ar[u] & \widetilde{J} \ar[u] \ar[l]_{\Nm} \\
J \ar[r]^{\id} & J & & & J \ar[u]^n & J . \ar[u]_{\pi^*} \ar[l] 
}
\]
Furthermore, the top row of the first diagram is the fibre of the vertical arrows and hence the top row of the second diagram is the corresponding cofibre. By explicitly computing the fibres in the second diagram we obtain a commuting square
\[
\xymatrix{
B\Gamma^{\vee} & \Pc_{\SL_n}^{\vee} \ar[l] \\
B\Gamma \ar[u]^{\simeq} & \Pc_{\PGL_n}, \ar[l] \ar[u]_{\simeq}
}
\]
where we have observed that $B\Gamma$ is the cofibre of $J \xrightarrow{n} J$ and $\Pc_{\PGL_n}$ is by definition the cofibre of $J \xrightarrow{\pi^*} \widetilde{J}$. This concludes the proof of the first assertion. The last part of the claim follows from the construction above.
\end{proof}

The following verifies condition (c) of Definition \ref{presituation}.
\begin{proposition}[{Ng\^o, \cite[Proposition 4.3.3]{MR2653248}}] \label{MPrime}
There exist natural open and dense open subspaces $\Mm_{\SL_n}^L(X)'$ of $\Mm_{\SL_n}^L(X)$ and $\Mm_{\PGL_n}^d(X)'$ of $\Mm_{\PGL_n}^d(X)$ on which the Prym varieties $\Pc_{\SL_n}$ and $\Pc_{\PGL_n}$ act faithfully and transitively. Their complements are of codimension $\geq 2$. 
\end{proposition}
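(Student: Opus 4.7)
The plan is to follow Ng\^o's original argument \cite[Prop.~4.4.3]{MR2653248}, adapted to our $\SL_n$ and $\PGL_n$ setting. The guiding idea is to define $(\Mm_{\SL_n}^L)'$ as the locus of Higgs bundles whose associated spectral data is a line bundle on an integral spectral curve; the corresponding $(\Mm_{\PGL_n}^d)'$ is then obtained by descent along the quotient by $\Gamma = \Pic(X)[n]$. The desired faithful transitive action is then produced from the standard group structure on the Picard scheme of an integral curve, and the main work is the codimension estimate.

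First, stratify the Hitchin base. Let $\Aa^{\mathrm{int}} \subset \Aa$ be the open subscheme consisting of those $a$ for which the spectral curve $Y_a \subset |D|$ is integral (reduced and irreducible), and recall that $\Aa^{\mathrm{good}} \subset \Aa^{\mathrm{int}}$ is the smaller open of smooth spectral curves. A direct parameter count for non-integral characteristic polynomials (either forcing a factorisation of the section of $\Sym^n D$ defining $Y_a$, or forcing a non-reduced factor) shows that $\Aa \setminus \Aa^{\mathrm{int}}$ has codimension $\geq 2$ in $\Aa$; this is where the genus hypothesis $g \geq 2$ (and positivity of $D$) enters. Over $\Aa^{\mathrm{int}}$ the Beauville--Narasimhan--Ramanan spectral correspondence identifies $\chi^{-1}(a)$ with the $\Nm^{-1}(L \otimes \det \pi_* \Oo_{Y_a})$-component of the compactified Jacobian $\overline{\mathrm{Jac}}(Y_a)$ parametrising rank-one torsion-free sheaves on the integral curve $Y_a$.

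Define $(\Mm_{\SL_n}^L)' \subset \Mm_{\SL_n}^L$ to be the open substack sitting over $\Aa^{\mathrm{int}}$ whose fibre over $a$ is the open locus of \emph{invertible} sheaves, i.e. the smooth locus $\overline{\mathrm{Jac}}(Y_a)^{\mathrm{reg}}$. On this open subscheme the group scheme $\mathrm{Jac}(Y_a)$ acts simply transitively (by tensor product), and the norm constraint cuts this down to a simply transitive action of the Prym $\Pc_{\SL_n,a}$; assembling these fibrewise actions gives a faithful transitive action of $\Pc_{\SL_n}$ on $(\Mm_{\SL_n}^L)'$ relative to $\Aa$. The $\PGL_n$ version follows by passing to the quotient by $\Gamma$.

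It remains to estimate the codimension of the complement in $\Mm_{\SL_n}^L$. Decompose it as the disjoint union of the preimage $\chi^{-1}(\Aa \setminus \Aa^{\mathrm{int}})$ and the locus of non-invertible torsion-free sheaves sitting over $\Aa^{\mathrm{int}} \setminus \Aa^{\mathrm{good}}$. Since $\chi$ is equidimensional (flatness of the Hitchin map on the smooth locus, combined with properness for $d$ coprime to $n$), the first piece has codimension $\geq 2$ by the base estimate above. For the second piece one uses that on an integral singular curve the non-invertible locus of $\overline{\mathrm{Jac}}$ has codimension $\geq 1$ in the fibre, while $\Aa^{\mathrm{int}} \setminus \Aa^{\mathrm{good}}$ itself has codimension $\geq 1$ in $\Aa$, so by equidimensionality of $\chi$ the total codimension in $\Mm_{\SL_n}^L$ is $\geq 2$.

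The principal difficulty is the codimension bookkeeping in the last paragraph: concretely, verifying that the bad loci in $\Aa$ genuinely have codimension $\geq 2$ requires a careful analysis of the discriminant of the characteristic polynomial map (separating out reducible versus non-reduced versus generically smooth-but-singular cases), and one needs the fibrewise codimension-one statement about compactified Jacobians of integral curves, both of which are delicate at the boundary where several kinds of degeneration collide. All of these ingredients are exactly what Ng\^o establishes in loc.~cit., which we invoke.
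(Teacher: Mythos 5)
Your overall outline tracks Ng\^o's argument, but there is a genuine gap in the codimension bookkeeping: the claim that $\Aa \setminus \Aa^{\mathrm{int}}$ has codimension $\geq 2$ in $\Aa$ is \emph{false} in the edge case $D = \Omega^1_X$ (which the paper explicitly allows), and your whole strategy hinges on it. Concretely, take $n = 2$, $g = 2$, $D = \Omega^1_X$. Then $\Aa_{\SL_2} = H^0(X, \omega^{\otimes 2})$ has dimension $3g - 3 = 3$, while the reducible locus is the image of the squaring map $H^0(X,\omega) \to H^0(X,\omega^{\otimes 2})$, $f \mapsto -f^2$, whose image is the $2$-dimensional Veronese cone (the canonical map $X \to \mathbb{P}^1$ is surjective, so $f_1^2, f_1 f_2, f_2^2$ are linearly independent). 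The reducible locus therefore has codimension exactly $1$. Since $\chi$ is flat (proper map of smooth varieties with equidimensional fibres), the preimage of this locus in $\Mm_{\SL_2}^L$ also has codimension $1$. Your proposed $(\Mm_{\SL_n}^L)'$, defined by restricting to $\Aa^{\mathrm{int}}$ and taking invertible sheaves, therefore has a codimension-$1$ complement, contradicting the statement.

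The deeper issue is that Ng\^o's open subscheme is \emph{not} defined by restricting over $\Aa^{\mathrm{int}}$. Ng\^o defines $\Mm'$ pointwise as the locus of Higgs bundles $(E,\phi)$ for which the Higgs field $\phi_x \in \mathrm{ad}(E)_x \otimes D_x$ is a \emph{regular} element of the Lie algebra for every $x \in X$ — a condition that makes sense (and is frequently nonempty) even over non-integral spectral curves. In the $\SL_2$, $g=2$, $D=\omega$ example, over a reducible $Y_a = (t-f)(t+f)$ there are still regular Higgs bundles: at a zero of $f$ one merely needs $\phi_x$ to be a nonzero nilpotent. In the spectral picture these are precisely the line bundles on the (possibly non-integral but Gorenstein) spectral curve. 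The codimension estimate then proceeds \emph{not} by a parameter count on the Hitchin base, but via the classical fact that the non-regular locus in a reductive Lie algebra $\mathfrak g$ has codimension $3$: imposing that a section of $\mathrm{ad}(E)\otimes D$ over the curve $X$ meet this codimension-$3$ locus at some point cuts down the moduli by $3 - \dim X = 2$ dimensions. That argument yields codimension $\geq 2$ uniformly, without any separate estimate on $\Aa\setminus\Aa^{\mathrm{int}}$. (As a minor aside, identifying the invertible locus with the smooth locus of $\overline{\mathrm{Jac}}(Y_a)$ is also not quite accurate for general singular spectral curves, though that does not affect the transitivity part of your argument.) To fix the proposal, replace the restriction to $\Aa^{\mathrm{int}}$ by the fibrewise regularity condition and replace the discriminant analysis by the Lie-theoretic codimension-$3$ argument.
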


Altogether we have shown:
\begin{proposition} \label{AbsHSysSummary}
Assume that $\Gamma$ is constant over $S$. Then, $\Mm_{\SL_n}^L(X)\to \Aa$ and $\Mm_{\PGL_n}^d(X) \to \Aa$ are abstract Hitchin systems over $R$.
\end{proposition}
\begin{proof}
It follows from the assumption that $\Mm_{\PGL_n}^d(X)$ is the quotient of $\Mm_{\SL_n}^L$ with respect to the abstract finite abelian group $\Gamma$. Since $\Mm_{\SL_n}^L$ is smooth (Lemma \ref{Lsmooth}), we deduce that is a finite abelian quotient stack. The Hitchin system $\chi$ and the action of the Prym varieties $\Pc_{\SL_n}$ and $\Pc_{\PGL_n}$ was shown above (Proposition \ref{MPrime}) to satisfy the requirements of the definition of abstract Hitchin systems.
\end{proof}
We conclude this subsection with two technical lemmas that will be needed later.

\begin{lemma}\label{order2}
For $a \in \Aa^{\lozenge}(S)$ we denote by $P^0_a$ the fibre $(\Pc_{\SL_n})_a$, and by $P^L_a$ the fibre $(\Mm_{\SL_n}^L)_a$. Moreover, for the finite flat morphism $\pi\colon Y \to X \times \Aa$, we write $M = \det(\pi_*\Oo_Y)$.
\begin{enumerate}[(a)]
\item We have an abstract isomorphism of $P_a^0$-torsors $P_a^L  \cdot{} P_a^{L'} \simeq P_a^{LL'M^{-1}}$.
\item For an integer $d$ we have an abstract isomorphism of $P_a^0$-torsors between $(P^{L}_a)^d$ and $P_a^{L^dM^{-d + 1}}$.
\item A line bundle $N \in \Pic(X)$ induces an abstract isomorphism of $P_a^0$-torsors $P^L_a \simeq P_a^{L N^n}$.
\end{enumerate}
\end{lemma}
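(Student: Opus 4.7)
The plan is to reduce all three identities to Lemma \ref{Norm_determinant} via the identification $P^L_a \simeq \Nm_{Y_a/X}^{-1}(L\otimes M)$ of (a) and (b) of that lemma. The only additional ingredient needed is a rigidity property of the line bundle $M=\det(\pi_*\Oo_Y)$.

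First I would record the following observation on $M$. Because $Y\to X\times\Aa$ is the universal spectral cover of a $D$-Higgs bundle, the pushforward decomposes as $\pi_*\Oo_Y\cong \bigoplus_{i=0}^{n-1} D^{-i}$, so $M=D^{-n(n-1)/2}$ is in fact pulled back from $\Pic(X)$. Consequently, for every $k\in\Zb$ with $k(n-1)/2\in\Zb$ (in particular for every even $k$), one has
\[
M^k \;=\; D^{-k n (n-1)/2} \;=\; \bigl(D^{-k(n-1)/2}\bigr)^{n},
\]
so $M^k$ lies in the image of the $n$-th power map on $\Pic(X)$. By Lemma \ref{Norm_determinant}(d), the $P^0_a$-torsor $\Nm^{-1}(M^k)$ is then trivial for all such $k$.

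For part (a), I combine Lemma \ref{Norm_determinant}(a) with the multiplicativity statement (c) to obtain
\[
P^L_a\cdot P^{L'}_a \;\simeq\; \Nm^{-1}(LM)\cdot \Nm^{-1}(L'M) \;\simeq\; \Nm^{-1}(LL'M^2),
\]
while $P^{LL'M^{-1}}_a\simeq \Nm^{-1}(LL')$. These $P^0_a$-torsors differ by $\Nm^{-1}(M^{2})$, which is trivial by the observation above (take $k=2$). Part (b) is handled by the same mechanism: iterating (or applying (c) of Lemma \ref{Norm_determinant} directly) gives $(P^L_a)^d\simeq \Nm^{-1}(L^d M^d)$ and $P^{L^d M^{-d+1}}_a\simeq \Nm^{-1}(L^d M^{-d+2})$, so the two torsors differ by $\Nm^{-1}(M^{2d-2})$, trivial since $2d-2$ is even. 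Part (c) is the cleanest:
\[
P^{LN^n}_a \;\simeq\; \Nm^{-1}(LN^n M) \;\simeq\; \Nm^{-1}(LM)\cdot \Nm^{-1}(N^n) \;\simeq\; P^L_a,
\]
where the last isomorphism uses Lemma \ref{Norm_determinant}(d) applied directly to $N^n$.

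I do not expect any genuine obstacle here; the only subtle point is the parity observation that $M^{2k}$ is always an $n$-th power in $\Pic(X)$, which is what makes the formulas in (a) and (b) work out (and in particular explains the appearance of the correction factor $M^{-1}$ in the exponent on the right hand sides).
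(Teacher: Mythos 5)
Your proof is correct and follows the same skeleton as the paper's: reduce everything to Lemma \ref{Norm_determinant}, using the identification $P_a^L\simeq\Nm^{-1}(L\otimes M)$, multiplicativity (c), and $n$-th power triviality (d). What you add is the explicit decomposition $\pi_*\Oo_Y\cong\bigoplus_{i=0}^{n-1}D^{-i}$, whence $M=D^{-n(n-1)/2}$ and $M^{2k}$ is always an $n$-th power in $\Pic(X)$, so $\Nm^{-1}(M^{2k})$ is a trivial $P^0_a$-torsor by part (d). You need this because the sign as stated in Lemma \ref{Norm_determinant}(a) makes $P_a^L\cdot P_a^{L'}\simeq\Nm^{-1}(LL'M^2)$, which sits $\Nm^{-1}(M^2)$ away from the target $P_a^{LL'M^{-1}}=\Nm^{-1}(LL')$. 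The paper's own proof does not invoke this step: the formula $\Nm_{Y/X}(L')=\det(\pi_*L')\cdot\det(\pi_*\Oo_Y)^{-1}$ on which the proof of Lemma \ref{Norm_determinant}(a) rests actually yields $P_a^L\simeq\Nm^{-1}(L\otimes M^{-1})$ (the sign in the lemma's statement appears to be inverted relative to its own proof), and with that sign (a) falls out directly as $\Nm^{-1}(LL'M^{-2})=P_a^{LL'M^{-1}}$ with nothing left to cancel, which is evidently what the paper's proof of \ref{order2}(a) (a bilinear map of torsors followed by an immediate conclusion) intends. So your parity observation is a valid and in fact instructive workaround --- it shows that the two sign conventions produce the same torsor --- but it imports the explicit formula for $\pi_*\Oo_Y$ as an additional input, one which the paper does use later (in the proof of Theorem \ref{thm:independent}) but not here. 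Your treatment of (b) and (c) then matches the paper's (``by induction'' on (a); respectively triviality of $\Nm^{-1}(N^n)$, which the paper realizes geometrically as multiplication by $\pi^*N$), again modulo the even-power cancellation in (b).
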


\begin{proof}
According to Lemma \ref{Norm_determinant} we identify the torsor $P_a^L$ with (rigidification of) the fibre $\Nm^{-1}_{Y\times_{\Aa}S/X}(LM)$. Multiplication of line bundles $\Pic(Y) \times \Pic(Y) \to \Pic(Y)$ yields a $P_a^0$-bilinear map $P_a^{L} \times P_a^{L'} \to P_a^{LL'M}$. This induces a morphism of torsors $P_a^L  \cdot{} P_a^{L'} \to P_a^{LL'M}$. Since a morphism of torsors is automatically an isomorphism, we conclude that assertion (a) must hold. Statement (b) follows by induction.

Assertion (c) is based on the fact that multiplication by $\pi^*N$ induces an isomorphism of torsors $P_a^L \to N \cdot{} P_a^L$. Using Lemma \ref{Norm_determinant} again, in combination with the formula $\Nm_{Y\times_{\Aa}S/X}(L \cdot \pi^*N) = \Nm_{Y\times_{\Aa}S/X}(L) \cdot N^n$ (see \cite[Proposition 3.10]{MR2967059}), we obtain an isomorphism of $P_a^0$-torsors $P^L_a \simeq P_a^{L N^n}$.
\end{proof}

Over a local field every torsor over an abelian variety is of finite order. In our particular situation, we can show that the order of the $P_a^0$-torsor $P^L_a$ divides $n$.

\begin{lemma}\label{order}
Let $F$ be a local field whose residue characteristic satisfies $p > n$, and $X$ a curve over $\Spec \Oo_F$. We assume that $a \in \Aa^{\lozenge}(F)$ is an $F$-point, which extends to an $\Oo_F$-point of $\Aa$. We denote by $d_{L,a}$ the order of the $P_a$-torsor $P^L_a$. Then we have $d_{L,a}|n$.
\end{lemma}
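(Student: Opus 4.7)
\textbf{Proof plan for Lemma \ref{order}.} My approach is to show directly that $n \cdot [P_a^L] = 0$ in $H^1_{\text{\'et}}(F, P_a^0)$, which immediately implies $d_{L,a} \mid n$. The key input is already built into Lemma \ref{Norm_determinant}: the map $N \mapsto [\Nm^{-1}(N)]$ assembles into a group homomorphism $\Pic(X) \to H^1_{\text{\'et}}(F, P_a^0)$ which kills every $n$-th power. Since $[P_a^L]$ lies in the image of this map, its order must divide $n$.

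More concretely, first I would invoke Lemma \ref{Norm_determinant}(a) to identify the class of the $P_a^0$-torsor $P_a^L$ with $[\Nm^{-1}(L\otimes M)]$ in $H^1_{\text{\'et}}(F, P_a^0)$, where $M = \det(\pi_*\Oo_Y)|_a$ (the remark just before the lemma indicates that over a local field this class has finite order, by Tate duality \ref{tateduality}, so that $d_{L,a}$ is a well-defined positive integer). Next, by Lemma \ref{Norm_determinant}(c), multiplication by $n$ on $H^1$ corresponds to raising the line bundle to its $n$-th power:
\begin{equation*}
n\cdot [P_a^L] \;=\; n\cdot [\Nm^{-1}(L\otimes M)] \;=\; [\Nm^{-1}((L\otimes M)^{n})] \;=\; [\Nm^{-1}(L^{n})]\cdot [\Nm^{-1}(M^{n})].
\end{equation*}
Finally, Lemma \ref{Norm_determinant}(d) shows that both factors vanish, since each is the class associated to an $n$-th power in $\Pic(X)$. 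Hence $n\cdot [P_a^L]=0$, proving that $d_{L,a}\mid n$.

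There is no real obstacle here once the formalism of Lemma \ref{Norm_determinant} is in place; the content is purely that the torsor $P_a^L$ originates from the $\Nm$-image of a line bundle on $X$ and that $\Nm$ is multiplicative and absorbs $n$-th powers. The hypotheses that $F$ is local and $p>n$ are not strictly needed for the divisibility assertion itself (the same argument gives $n\cdot [P_a^L]=0$ over any base $S$), but they are what guarantee that $d_{L,a}$ is finite (via Tate duality) and that the various $n$-torsion group schemes are \'etale, which is the form in which the lemma will be used later on.
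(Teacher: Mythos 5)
Your proof is correct and takes essentially the same route as the paper: both identify $P_a^L$ with a fibre $\Nm^{-1}(M')$ via Lemma \ref{Norm_determinant}(a), then use the multiplicativity of $\Nm$ (parts (b)--(d)) to conclude that $n\cdot[P_a^L]=[\Nm^{-1}(M'^n)]=0$ in $H^1_{\text{\'et}}(F,\Pc)$. Your added remark that the $p>n$ and locality hypotheses serve only to guarantee finiteness and \'etaleness is a fair observation, not a deviation from the argument.
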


\begin{proof}
This follows from Lemma \ref{Norm_determinant}. To see this, observe that there is a line bundle $M'$ on $X$, such that $P^L_a \simeq {\Nm^{-1}_{Y\times_{\Aa}S/X}(M')}$. By virtue of \ref{Norm_determinant}(b-d) we have 
$n\cdot{} [P^L_a] = [{\Nm^{-1}_{Y\times_{\Aa}S/X}((M')^n)}] = 0$ in $H^1_{\text{\'et}}(S,\Pc)$.
\end{proof}

\subsection{On a conjecture by Mozgovoy--Schiffmann}

In this subsection we use $p$-adic integration to show independence of the point counts of moduli spaces of Higgs bundles from the degree $d$ as long as it is coprime to $n$, the independence of $d$ part of \cite[Conjecture 1.1]{mozgovoy2014counting} for $d$ and $n$ coprime. These arguments are independent from Section \ref{sec:mirror}.

There is an alternative proof of this fact by Yu \cite{YH18} using automorphic methods.  A full proof of the conjecture, without the coprimality assumption on $(d,n)$, has been obtained by Mellit \cite{mellit2017poincar} by combinatorial means.

\begin{theorem}\label{mozschiff}
Let $n$ and $d$ be positive coprime integers, let $k$ be a finite field of characteristic $p$ prime to $n$ and $d$, let $X/k$ be smooth proper curve of genus $g$, and let $D$ be a line bundle on $X$ such that $D \otimes (\Omega_X^1)^{-1}$ is of strictly positive degree or equal to $\Oo_X$. We assume that $k$ contains a primitive $n^{2g}$-th root of unity. Then, for any integer $e$ prime to $n$ and $p$ we have $\#\Mm_{\GL_n}^d(X)(k) = \#\Mm_{\GL_n}^{e}(X)(k).$
\end{theorem}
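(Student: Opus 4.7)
My plan is to reduce the statement to the Arithmetic Mirror Symmetry theorem (Theorem~\ref{p-Mirror}) applied to the $\SL_n/\PGL_n$ dual pair and then eliminate the gerbe dependence by character orthogonality on $\Pic^d(X)(k)$. First I will exploit the fibration $(\det,\mathsf{tr})\colon \Mm_{\GL_n}^d \to \Pic^d(X)\times H^0(X,D)$: shifting the Higgs field by $-\mathsf{tr}(\theta)/n\cdot\mathrm{id}_E$ identifies every fibre with $\Mm_{\SL_n}^L$ for the corresponding $L$, whence
\begin{equation*}
\#\Mm_{\GL_n}^d(k) = \#H^0(X,D)(k)\cdot \sum_{L\in\Pic^d(X)(k)}\#\Mm_{\SL_n}^L(k). \tag{$\dagger$}
\end{equation*}
If $\Pic^d(X)(k)=\emptyset$ both sides of the desired equality vanish; otherwise $|\Pic^d(X)(k)|=|J_X(k)|$ by Lang's theorem. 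It therefore suffices to show that $S_d\defeq\sum_{L\in\Pic^d(X)(k)}\#\Mm_{\SL_n}^L(k)$ is independent of $d$ coprime to $n$.

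Fix an auxiliary integer $e$ coprime to $n$. For every $L\in\Pic^d(X)(k)$, the pair $(\Mm_{\SL_n}^L,\Mm_{\PGL_n}^e)$ together with the Hausel--Thaddeus gerbes $\Gg_1$ (on $\Mm_{\SL_n}^L$) and $\alpha_L$ (on $\Mm_{\PGL_n}^e$) is a dual pair of abstract Hitchin systems in the sense of Definition~\ref{situation}; the verification rests on Proposition~\ref{dual} for the duality of the Pryms and Lemma~\ref{order} for the $n$-torsion of the torsor classes. Because $\Mm_{\SL_n}^L$ is a smooth variety, its inertia stack is itself and by the transgression recipe of Subsection~\ref{gerbes} the local system induced on it by any gerbe is trivial, so $\#_{\mathsf{st}}^{\Gg_1}(\Mm_{\SL_n}^L)=\#\Mm_{\SL_n}^L(k)$. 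Theorem~\ref{p-Mirror} therefore reads
\begin{equation*}
\#\Mm_{\SL_n}^L(k) = \#_{\mathsf{st}}^{\alpha_L}(\Mm_{\PGL_n}^e).
\end{equation*}

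Summing this identity over $L\in\Pic^d(X)(k)$, I will invoke character orthogonality. Writing $\Mm_{\PGL_n}^e=[\Mm_{\SL_n}^{L'}/J_X[n]]$ for any $L'\in\Pic^e(X)(k)$, the $\gamma\in J_X[n]$ stratum of $\#_{\mathsf{st}}^{\alpha_L}(\Mm_{\PGL_n}^e)$ is weighted by the Frobenius trace of the transgression $L_\gamma^{\alpha_L}$; by the construction of $\alpha_L$ in \cite[Section 3]{MR1990670}, this trace depends on $L$ multiplicatively via the character $\chi_\gamma\colon \Pic(X)(k)\to\mu_n$ obtained from the Weil pairing with $\gamma$ (times an $L$-independent factor). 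For $\gamma=0$ the transgression is tautologically trivial and the stratum contributes $\#\Mm_{\PGL_n}^e(k)$ independently of $L$; for $\gamma\neq 0$ the non-degeneracy of the Weil pairing makes $\chi_\gamma$ restrict to a non-trivial character of $J_X(k)$, so character orthogonality on the $J_X(k)$-torsor $\Pic^d(X)(k)$ annihilates that entire stratum. Therefore
\begin{equation*}
S_d = \sum_{L\in\Pic^d(X)(k)} \#_{\mathsf{st}}^{\alpha_L}(\Mm_{\PGL_n}^e) = |J_X(k)|\cdot \#\Mm_{\PGL_n}^e(k),
\end{equation*}
which is manifestly independent of $d$; substituting back into $(\dagger)$ completes the proof, and the identity also yields the $e$-independence of $\#\Mm_{\PGL_n}^e(k)$ as a by-product.

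The main technical obstacle I anticipate is the character-orthogonality step: explicitly identifying the $L$-dependence of the Frobenius trace on $L_\gamma^{\alpha_L}$ with a Weil-pairing character of $\Pic(X)(k)$ requires carefully unwinding the gerbe $\alpha_L$ from \cite{MR1990670} together with the transgression formalism of Subsection~\ref{gerbes}, and verifying that the resulting pairing remains non-degenerate on $k$-points of $J_X$. Once this is in hand the averaging is formal, and the rest of the argument reduces to Theorem~\ref{p-Mirror} and Lang's theorem.
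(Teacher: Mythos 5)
Your approach is correct in outline but takes a genuinely different and considerably heavier route than the paper's own proof. The paper proves this statement directly on the $\GL_n$ side, without invoking any mirror symmetry: after lifting to a local field $F$ with residue field $k$, one integrates fibrewise along the $\GL_n$-Hitchin map $\pi\colon \Mm_{\GL_n}^d \to \Aa_{\GL_n}$ and observes that the regular locus $\bigsqcup_d (\Mm_{\GL_n}^d)'$ is simply the relative Picard space of the universal spectral curve; by Lang's theorem the $(\Pc_{\GL_n})_a$-torsor $\pi^{-1}(a)'$ is trivial for every $a\in \Aa(\Oo_F)$, so each fibre has the same $p$-adic volume $\int_{\Pc_a(F)}|\omega|$, manifestly independent of $d$. (The paper even remarks that the resulting volume identity holds for arbitrary $d$, and that the argument is ``independent of Section \ref{sec:mirror}.'') Your proof, by contrast, reduces to the $\SL_n$ case via the $(\det,\mathsf{tr})$-fibration, applies the full strength of Theorem~\ref{p-Mirror} for every $L \in \Pic^d(X)(k)$, and then averages away the gerbe dependence by Fourier analysis on $J_X(k)$. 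That averaging is essentially the same mechanism the paper deploys later, in the proof of Theorem~\ref{miraflores} (Lemmas~\ref{lemma:twistandshout} and~\ref{lemma:twistandshout2}), so it is not crazy — but it is substantially more machinery than the statement needs, and in particular your route cannot establish the volume identity outside the coprime regime, whereas the paper's can.

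Concretely, the step you yourself flag as the main obstacle is a real gap. You assert that for $\gamma\neq 0$ the trace $\Tr(\Fr,L_\gamma^{\alpha_L})$ depends on $L$ through a nontrivial character of $J_X(k)$, ``by the Weil pairing,'' but this needs justification. The gerbe $\alpha_L$ only depends on the class of $L$ in $\Pic^d(X)(k)/nJ_X(k)$, which Lang's theorem identifies with $H^1(k,J_X[n])$, and the character relating $\alpha_{L_1}$ and $\alpha_{L_2}$ is the local Tate pairing $(\nu,\gamma)$ with $\nu \in H^1_{ur}(F,\Gamma)$ (Lemma~\ref{lemma:twistandshout}), not the geometric Weil pairing on $\bar k$-points. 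One must then check that this pairing, restricted to $H^1_{ur}$, is nontrivial in the variable $\nu$ for $\gamma\neq 0$ — which requires tracking the nondegeneracy assertion in Theorem~\ref{finiteduality} and the decomposition of Construction~\ref{H1Cons}. Until that is carried out, the orthogonality step is not established. If you want a complete proof along your lines, you should invoke Lemma~\ref{lemma:twistandshout2} and the Fourier-inversion argument in the proof of Theorem~\ref{miraflores}; but if you only want to prove the statement at hand, the paper's direct Lang-plus-$p$-adic-integration argument is much shorter and avoids all of the gerbe formalism.
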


\begin{proof}
We choose a local field $F$ with $k_F = k$ and lift $X$ and $D$ to $\Oo_F$. Note that there are no obstructions to lifting, since $X$ is a curve.

We will show that the $p$-adic volume of the moduli space $\Mm_{\GL_n}^d(X)(\Oo_F)$ is independent of $d$. Since for $d$ and $n$ coprime, the moduli space $\Mm_{\GL_n}^d$ is smooth, we obtain the asserted comparison of point-counts over $k_F$, by evoking Weil's equation \eqref{eqn:weil} 
$$\vol\left(\Mm_{\GL_n}^d(X)(\Oo_F)\right) = \frac{\#\Mm_{\GL_n}^d(X)(k_F)}{q^{\dim \Mm_{\GL_n}^d(X)}}.$$
The comparison of $p$-adic volumes however remains true for arbitrary $d$. Henceforth $d$ can be an arbitrary integer.

The morphism $\pi\colon \Mm_{\GL_n}^d(X) \to \Aa_{\GL_n} = \bigoplus_{i=1}^{n}H^0(X,D^{\otimes i})$ is also an abstract Hitchin system in the sense of Definition \ref{presituation}. In particular, there exists a regular part $\Mm_{\GL_n}^d(X)'$ which is a torsor under a group $\Aa_{\GL_n}$-scheme $\Pc_{\GL_n}$. The disjoint union of these open subschemes is isomorphic to a relative Picard space
$$\bigsqcup_{d \in \mathbb{Z}}\Mm_{\GL_n}^d(X)' \simeq {\Pic}(Y/\Aa_{\GL_n}),$$
where $Y\to \Aa_{\GL_n}$ denotes the universal family of spectral curves. This observation has the following consequence: for $a \in \Aa_{\GL_n}(\Oo_F)$ we have that the corresponding $(\Pc_{\GL_n})_a$-torsor $\pi^{-1}(a)'$ is trivial: Indeed, the group scheme $\Pc_{\GL_n,a}$ decomposes over $k_F$ as 
$$1 \to \Pc_{\GL_n,a,k_F}^{\circ} \to \Pc_{\GL_n,a,k_F} \to \pi_0(\Pc_{\GL_n,a,k_F}) \to 1.$$
By Lang's theorem \cite[Theorem 2]{10.2307/2372673} the group $H^1_{\text{\'et}}(k_F,\Pc^\circ_{\GL_n,a,k_F})$ is zero. Hence every connected component of $\pi^{-1}(a)'_{k_F}$ has a $k_F$-rational point, which by smoothness extends to $\Oo_F$. This shows that every connected component of $\bigsqcup_{d \in \mathbb{Z}}\pi^{-1}(a)'$ has an $\Oo_F$-rational point.

Arguing as in Section \ref{arithmeticduality}, we choose a gauge form $\eta$ on $\Aa_{\GL_n}$ and a relative gauge form $\omega$ over $\Aa$ (that is, a translation-invariant generators of the sheaf $\Omega^{\mathsf{top}}_{\Pc^{\circ}/\Aa}$) and obtain
$$\int_{\Mm_{\GL_n}^d(\Oo_F)}  d\mu_{\orb}  = \int_{a \in \Aa_{\GL_n}(\Oo_F)^{\flat}} \left(\int_{\pi^{-1}(a)(F)} |\omega_a|\right) |\eta| = \int_{a \in \Aa(\Oo_F)^{\flat}} \left(\int_{\Pc_a(F)} |\omega_a| \right)|\eta|.$$
The last equality holds because the torsors $\pi^{-1}(a)$ are trivial by the paragraph above. We conclude the proof by observing the independence of $d$ of the right hand side.
\end{proof}

There exists a variant of the above results for $\SL_n$-Higgs bundles:
\begin{theorem}\label{thm:independent}
Let $n$ and $d$ be positive coprime integers and $k$ be a finite field of characteristic $p>n$. We consider a smooth proper curve $X/k$ of genus $g$ endowed with a line bundle $D$, such that $D \otimes (\Omega_X^1)^{-1}$ is of strictly positive degree or equal to $\Oo_X$. Let $L$ be a line bundle on $X$ of degree $d$. We assume that $k$ contains a primitive $n^{2g}$-th root of unity. Then, there exists a degree $0$ line bundle $N$ on $X$, such that for any $e$ and $n$ coprime we have $\#\Mm_{\SL_n}^L(X)(k) = \#\Mm_{\SL_n}^{L^eN^{e-1}}(X)(k)$.
\end{theorem}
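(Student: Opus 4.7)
The strategy is to adapt the $p$-adic integration argument used in the preceding $\GL_n$ theorem to the $\SL_n$ setting. First, I would lift $X$, $D$, and $L$ from $k$ to $\Oo_F$ with $F = k((t))$; no obstruction arises, since $X$ is a curve. Because $\Mm_{\SL_n}^{L'}$ is smooth and proper whenever $\gcd(n,\deg L')=1$, Weil's formula \eqref{eqn:weil} reduces the claim to proving the equality of $p$-adic volumes
$$\vol(\Mm_{\SL_n}^L(\Oo_F)) = \vol(\Mm_{\SL_n}^{L^eN^{e-1}}(\Oo_F))$$
for a suitably chosen degree-$0$ line bundle $N$.

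Applying Proposition \ref{Igusa} to the Hitchin map $\chi\colon \Mm_{\SL_n}^{L'} \to \Aa$ decomposes each $p$-adic volume as a fibrewise integral over $\Aa(\Oo_F)^{\flat}$ (after discarding the measure-zero locus over $\Aa\setminus \Aa^{\text{good}}$). A good fibre $\chi^{-1}(a)$ is the $\Pc_a$-torsor $P_a^{L'}$, and by translation-invariance of algebraic top-forms on the Prym (Lemma \ref{relativeforms}) its $p$-adic volume equals $\vol(\Pc_a(F))$ when $P_a^{L'}(F) \neq \emptyset$, and vanishes otherwise. Hence the two global volumes agree as soon as, for almost every $a\in \Aa(\Oo_F)^\flat$, the torsors $P_a^L$ and $P_a^{L^eN^{e-1}}$ are simultaneously trivial or non-trivial.

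By Lemma \ref{Norm_determinant}, the class of $P_a^{L'}$ in $H^1(F,\Pc_a)$ equals $[\Nm_a^{-1}(L' M)]$, where $M = \det(\pi_*\Oo_Y)$. The crucial observation is that $\pi_*\Oo_Y \simeq \bigoplus_{i=0}^{n-1} D^{-i}$ globally, so $M \simeq D^{-n(n-1)/2}$ is a line bundle on $X$ which is independent of $a \in \Aa^{\text{good}}$. Combined with the multiplicativity and the $n$-th-power identity of Lemma \ref{Norm_determinant}(c)(d), this yields the additive formula in $H^1(F,\Pc_a)$
$$[P_a^{L^eN^{e-1}}] - [P_a^L] = (e-1)\,[\Nm_a^{-1}(LN)].$$

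The main obstacle is then the choice of $N$ of degree $0$ such that $(e-1)[\Nm_a^{-1}(LN)] = 0$ in $H^1(F,\Pc_a)$ for every $a\in \Aa(\Oo_F)^\flat$ and every $e$ coprime to $n$. For $n=2$ this is automatic since $e-1$ is even and the target group has exponent dividing $n=2$ by Lemma \ref{order}. In general, $N$ is constructed by exploiting the $n$-divisibility of $J_X(F)$ (available because $n$ is invertible in $F$ and $F$ contains all $n$-th roots of unity by assumption) together with the fact that $\Nm_a(\Pic(Y_a)(F))$ always contains $\Pic(X)(F)^n$, so that the obstruction $[\Nm_a^{-1}(LN)]$ can be made to lie in a subgroup annihilated by each $e-1$ with $\gcd(e,n)=1$. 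Once such $N$ is exhibited, the two torsor classes coincide for every good $a$, the fibrewise integrands agree, and the equality of the global $p$-adic volumes follows from integration over $\Aa(\Oo_F)^\flat$, completing the proof via Weil's formula.
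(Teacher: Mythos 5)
Your overall framework is correct and matches the paper's: lift to $\Oo_F$, reduce to equality of $p$-adic volumes via Weil's formula, fiber the integral over the Hitchin base by Proposition~\ref{Igusa}, reduce to translation-invariant volumes on the Prym, and observe that the fibrewise contribution is $\vol(\Pc_a(F))$ or $0$ according to whether the Hitchin fibre torsor is trivial. Your additive formula $[P_a^{L^eN^{e-1}}]-[P_a^L]=(e-1)[\Nm_a^{-1}(LN)]$ is also a valid consequence of Lemma~\ref{Norm_determinant}(c) and the identity $[P_a^{L'}]=[\Nm_a^{-1}(L'M)]$.

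The gap is in the final step, where you try to choose $N$ so that $(e-1)[\Nm_a^{-1}(LN)]=0$ for every $e$ coprime to $n$. Since you need one $N$ that works for \emph{all} such $e$, and for odd $n$ the value $e=2$ is allowed, this would force $[\Nm_a^{-1}(LN)]=0$ for almost every $a\in\Aa(\Oo_F)^\flat$. That is not achievable: $\deg(LN)=d$ is coprime to $n$, so the class $[\Nm_a^{-1}(LN)]$ is exactly the Hitchin fibre torsor of a degree-$d$ $\SL_n$-moduli space, and the whole $p$-adic machinery in this paper exists precisely because such torsors are generically nontrivial (the point $a$ need only land in $\Aa^{\text{good}}(F)$, not in $\Aa^{\text{good}}$ over $\Oo_F$, so Lang's theorem does not apply to the special fibre). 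Your appeal to ``$n$-divisibility of $J_X(F)$'' and $\Nm_a(\Pic(Y_a)(F))\supset\Pic(X)(F)^n$ does not rescue this: neither of these gives annihilation by $e-1=1$. The $n=2$ observation is correct but does not generalise.

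The paper's proof uses a different and strictly weaker target. One does not need the torsor classes $[P_a^L]$ and $[P_a^{L^eN^{e-1}}]$ to be \emph{equal}; one only needs the indicator functions $\delta_{P_a^L}$ and $\delta_{P_a^{L^eN^{e-1}}}$ to agree. The key fact, which your proposal does not invoke, is that the order of $[P_a^L]$ in $H^1(F,\Pc_a)$ divides $n$ (Lemma~\ref{order}); hence for $\gcd(e,n)=1$ the $e$-th power map preserves triviality, so $\delta_{(P_a^L)^e}=\delta_{P_a^L}$. It then suffices to choose $N$ so that $[P_a^{L^eN^{e-1}}]=[(P_a^L)^e]$, which is an entirely different (and solvable) equation: by Lemma~\ref{order2}(b) one has $(P_a^L)^e\simeq P_a^{L^e M^{\pm(e-1)}}$, and since $\deg M$ is divisible by $n$ one may write $M^{\mp 1}=NQ^{\mp n}$ with $\deg N=0$, whence Lemma~\ref{order2}(c) gives the desired identification. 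Your proof needs this missing idea — replace ``make the classes coincide'' by ``make one class the $e$-th power of the other'' — to close the gap.
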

\begin{proof}
We choose a local field $F$ with $k_F = k$, and lift $X$, $D$ and $L$ to $\Oo_F$. This is possible since $X$ is a curve.

Let $a \in \Aa^{\lozenge}(F)$. We write $P^0_a$ to denote the Prym variety acting faithfully and transitively on the Hitchin fibre $\chi^{-1}(a) \subset \Mm_{\SL_n}^L(X)$. Recall that this action is induced by base change of the relative action of $\Pc$ on $\Mm_{\SL_n}^L(X)$, which endows the latter with a torsor structure over $\Aa^{\lozenge}$. 

 We will now compute the $p$-adic volume of $\Mm_{\SL_n}^L$. According to \ref{intools} we have
$$\vol \left(\Mm_{\SL_n}^L(X)(\Oo_F)\right) = \vol \left(\Mm_{\SL_n}^L(X)(\Oo_F) \cap \Mm_{\SL_n}^{L,\lozenge}(X)(F) \right).$$
The right hand side can be computed as a double integral by applying \ref{Igusa} (and a second time \ref{intools}):
\begin{align*}\vol \left(\Mm_{\SL_n}^L(X)(\Oo_F) \cap \Mm_{\SL_n}^{L,\lozenge}(X)(F) \right) &= \int_{\Aa(\Oo_F) \cap \Aa^{\lozenge}(F)} \left(\int_{P^{L}_a(F)} |\omega_a|\right) |\eta|\\ &= \int_{\Aa(\Oo_F) \cap \Aa^{\lozenge}(F)} \left(\int_{P^0_a(F)} \delta_{P^L_a} |\omega_a|\right) |\eta|,\end{align*}
where $\delta_{P^{L}_a}$ denotes the indicator function of the subset $\{a \in \Aa(\Oo_F) \cap \Aa^{\lozenge}(F)|P^{L}_a \text{ is the trivial torsor}\}$, $\eta$ a gauge form on $\Aa$ and $\omega$ a global translation-invariant generator of $\Omega^{\mathsf{top}}_{\Pc^{\circ}/\Aa}$.

Since $e$ is chosen to be prime to $n$, and the order of $P^L_a$ in $H^1(F,P^0_a)$ divides $n$ (Lemma \ref{order}), we see $\delta_{(P^{L}_a)^e} = \delta_{P^L_a}$. It suffices therefore show the existence of a line bundle $N$ of degree $0$, such that the torsor $(P^{L}_a)^e$ is isomorphic to $P_a^{L^eN^{e-1}}$.

According to Lemma \ref{order2}(b) we have an equivalence $(P^{L}_a)^e \simeq P_a^{L^eM^{e-1}}$ of $P_a^0$-torsors. The degree of $M$ is equal to 
$$\sum_{i=0}^{n-1} (-\deg D)i = (-\deg D)n(n-1),$$
and hence is divisible by $n$. We let $Q$ be a line bundle on $X$, such that $\deg Q^n = \deg M$. We define $N = M \cdot{} Q^{-n}$. According to Lemma \ref{order2} we have equivalences of $P_a^0$-torsors.
$$P_a^{L^eM^{e-1}} \simeq P_a^{L^eM^{e-1}Q^{-n(e-1)}}\simeq P^{L^e N^{e-1}}.$$
This implies $\delta_{P^L_a} = \delta_{P^{L^e N^{e-1}}_a}$, and therefore we have
$$\vol \left(\Mm_{\SL_n}^L(\Oo_F)\right) = \vol \left(\Mm_{\SL_n}^{L^e N^{e-1}}(\Oo_F)\right).$$
The connection between $p$-adic volumes and point counts yields $\#\Mm_{\SL_n}^L(k) = \#\Mm_{\SL_n}^{L^eN^{e-1}}(k)$.
\end{proof}

Applying the usual reduction argument one deduces from this equality of point counts an agreement of Betti and Hodge numbers. 

\begin{corollary}
Let $n$ and $d$ be positive coprime integers. We consider a smooth proper curve $X/\Cb$ endowed with a line bundle $D$, such that $D \otimes (\Omega_X^1)^{-1}$ is of strictly positive degree or equal to $\Oo_X$. Let $L$ be a line bundle on $X$ of degree $d$. The Betti and Hodge numbers of the complex manifolds $\Mm_{\SL_n}^L(X)$ and $\Mm_{\GL_n}^d(X)$ are independent of $d$.
\end{corollary}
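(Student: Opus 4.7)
The plan is to follow exactly the same reduction from arithmetic to topology that was used to derive Theorem \ref{Mirror} from Theorem \ref{p-Mirror}, but with Theorem \ref{thm:independent} and its $\GL_n$-analogue (the theorem preceding it) playing the role of the input. First I would spread the data out to a subring $R \subset \Cb$ of finite type over $\Zb$: enlarge $R$ so that it contains a primitive $n^{2g}$-th root of unity, that $n$ is invertible in $R$, and so that the curve $X$, the line bundles $D$ and $L$, and the auxiliary degree $0$ line bundle $N$ produced in the proof of Theorem \ref{thm:independent} all descend to $R$. Then, for every ring homomorphism $R \to k$ to a finite field of sufficiently large characteristic, Theorem \ref{thm:independent} gives $\#\Mm_{\SL_n}^L(k) = \#\Mm_{\SL_n}^{L^e N^{e-1}}(k)$ for all integers $e$ coprime to $n$, while the $\GL_n$ theorem gives $\#\Mm_{\GL_n}^d(k) = \#\Mm_{\GL_n}^{e}(k)$ for all such $e$.

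The second step is to invoke Katz's theorem (the non-equivariant specialization of Theorem \ref{equivariant}) to promote these agreements of point counts over all finite fields to equalities of $E$-polynomials over $\Cb$:
\begin{equation*}
E(\Mm_{\SL_n}^L \times_R \Cb;x,y) = E(\Mm_{\SL_n}^{L^e N^{e-1}} \times_R \Cb;x,y),
\end{equation*}
and analogously $E(\Mm_{\GL_n}^d \times_R \Cb;x,y) = E(\Mm_{\GL_n}^e \times_R \Cb;x,y)$. Since the moduli spaces involved are smooth but not projective, this is precisely the step which requires Bittner's presentation of the Grothendieck ring of varieties by smooth projectives to reduce to the smooth projective setting where the $p$-adic Hodge theory comparison applies directly, exactly as in the proof of Theorem \ref{equivariant}.

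The third and final step is to translate the arithmetic substitutions $L \leadsto L^e N^{e-1}$ and $d \leadsto e$ into corresponding degree shifts on the complex moduli spaces. Over $\Cb$, tensoring a Higgs bundle by a line bundle $M$ yields an isomorphism $\Mm_{\SL_n}^{L} \cong \Mm_{\SL_n}^{L M^n}$ (and similarly for $\GL_n$), and every complex degree $0$ line bundle admits an $n$-th root. Consequently the complex moduli spaces $\Mm_{\SL_n}^L$ and $\Mm_{\GL_n}^d$ depend (up to isomorphism) only on the residue class of $d$ modulo $n$. Combined with the identities of $E$-polynomials above, this shows that the Hodge numbers are invariant under the multiplication action of $(\Zb/n\Zb)^{\times}$ on itself; since this action is transitive, the Hodge and Betti numbers are constant as $d$ ranges over positive integers coprime to $n$, which is the claim. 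The main subtlety is the non-projectivity of the moduli spaces, but this has already been absorbed through Bittner's theorem in the proof of Theorem \ref{equivariant} earlier in the paper.
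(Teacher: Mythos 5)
Your proposal is correct and is exactly the "usual reduction argument" the paper alludes to but does not spell out: spread out to a finite-type $\Zb$-subalgebra of $\Cb$, apply Theorem \ref{thm:independent} and its $\GL_n$-analogue over all residue fields, invoke the (non-equivariant case of) Theorem \ref{equivariant} to upgrade point-count equalities to $E$-polynomial equalities, and then use divisibility of $\Pic^0(X)(\Cb)$ to identify the complex moduli spaces with one another up to the degree shift $d \mapsto ed$, which as $e$ ranges over $(\Zb/n\Zb)^{\times}$ sweeps out all residues coprime to $n$. The only implicit ingredient (in both your argument and the paper's) is that passing from $E$-polynomials to individual Hodge and Betti numbers uses the purity of the compactly supported cohomology of these semiprojective moduli spaces, but since the paper leaves this tacit here as well, your reconstruction faithfully captures the intended proof.
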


\subsection{Topological mirror symmetry for moduli spaces of Higgs bundles}\label{conclusion}
We continue to use the setup from the previous subsections for $S=\Spec(R)$, where $R$ is a Noetherian ring in which $n$ is invertible. We also assume that $n$ and $d$ are coprime, and that the \'etale group scheme $\Gamma=\mathsf{Pic}^0(X)[n]$ is constant over $R$. Furthermore, we suppose that $\mu_{n\cdot{}|\Gamma|} = \mu_{n^{2g+1}}$ is constant over $R$.


 As explained in \cite{MR1990670}, the moduli space $\Mm_{\SL_n}^L(X)$ is endowed with a natural $\mu_n$-gerbe $\alpha_{\SL_n,L}$. The definition of \emph{loc. cit.} is stated for the case $R=\mathbb{C}$ but their arguments can be applied to this more general situation with minor modifications. Indeed, one defines $\alpha_{\SL_n,L}$ as the obstruction to the existence of a universal family of Higgs bundles on $\Mm_{\SL_n}^L(X) \times_R X$. That is, the gerbe $\alpha_{\SL_n,L}$ is represented by the morphism of stacks
$$\mathbb{M}_{\SL_n}^L(X) \to \Mm^L_{\SL_n}(X),$$
where the left hand side denotes the stack of stable ($L$-twisted) $\SL_n$-Higgs bundles, and the right hand side is the associated coarse moduli space (or $\mu_n$-rigidification).

Recall that $\Gamma$ denotes the group $R$-scheme $J_X[n]=\Pic^0(X)[n]$ of $n$-torsion points in the Jacobian. By our assumptions this group scheme is constant, and by abuse of notation we will denote again by $\Gamma$ is group of global sections. The group $R$-scheme $\Gamma$ acts on $\Mm_{\SL_n}^L(X)$ by tensoring families of Higgs bundles, and Hausel--Thaddeus observe in \emph{loc. cit.} that the gerbe $\alpha_{\SL_n,L}$ is endowed with a natural $\Gamma$-equivariant structure. We therefore obtain a $\mu_n$-gerbe $\alpha_{\PGL_n,L}$ on $\Mm_{\PGL_n}^d(X)$ by descending the gerbe $\alpha_{\SL_n,L}$ on $\Mm_{\SL_n}^L(X)$ to the quotient $\Mm_{\PGL_n}^d(X) = [\Mm_{\SL_n}^L(X)/\Gamma]$. 

Let now $e\geq 1$ be another integer coprime to $n$. Let $e'=ad$ be a multiple of $d$, which is congruent to $e$ modulo $n$ and $L'=L^{a}$. Thus $L'$ has degree $e'$ and the previous construction yields gerbes $\alpha_{\SL_n,L'}$ on $\Mm_{SL_n}^{L'}(X)$ and $\alpha_{\PGL_n,L'}$ on $\Mm_{\PGL_n}^e$.

\begin{theorem}[Hausel--Thaddeus]\label{thm:HT}
The gerbes $\alpha_{\SL_n,L}$ and $\alpha_{\PGL_n,L'}$ are arithmetic gerbes and there are canonical isomorphisms
$$\mathsf{Split}'((\Mm_{\SL_n}^L)^{\lozenge}/\Aa^{\lozenge},\alpha_{\SL_n,L}^e) \simeq (\Mm_{\PGL_n}^e)^{\lozenge}/\Aa^{\lozenge},$$
$$\mathsf{Split}'((\Mm_{\PGL_n}^e)^{\lozenge}/\Aa^{\lozenge},\alpha_{\PGL_n,L'}^d) \simeq (\Mm_{\SL_n}^L)^{\lozenge}/\Aa^{\lozenge}$$
of $\Pc^\lozenge_{\PGL_n}$- (resp. $\Pc^\lozenge_{\SL_n}$)-torsors.
\end{theorem}

The proof of this result can be found in \cite[Proposition 3.2 \& 3.6]{MR1990670}. It applies mutatis mutandis to the slightly more general context we are working in. We remark that they denote $\mathsf{Split}'$ by $\mathsf{Triv}$. Furthermore their definition of $\mathsf{Triv}$ is a priori via unitary splittings of $\mu_r$-gerbes. However, in the proof they actually argue with the torsor $(\mathsf{Split}_{\mu_r} \times \Pic^{\tau})/{\Pic[r]}$, which corresponds exactly to our definition of $\mathsf{Split}'$ (see Remark \ref{rmk:unitary}).

The following verifies condition (d) of Definition \ref{situation}:
\begin{lemma} \label{Assd}
  Assume that $R=\Oo_F$ for a local field $F$ (and $R$ satisfies the assumptions stated at the beginning of this subsection). For every element $a \in \Aa(\Oo_F) \cap \Aa^{\lozenge}(F)$ with image $a_F \in \Aa(F)$, the fibres $\chi_{\SL_n}^{-1}(a_F)$ and $\chi_{\PGL_n}^{-1}(a_F)$ both have an $F$-rational point if and only if both gerbes $\alpha_{\SL_n,L}^e|_{\chi_{\SL_n}^{-1}(a_F)}$ and $\alpha_{\PGL_n,L'}^d|_{\chi_{\PGL_n}^{-1}(a_F)}$ split.
\end{lemma}
\begin{proof}
The ``if'' direction follows from Theorem \ref{thm:HT}.

Now to the ``only if'' direction. We first notice that the existence of $F$-rational points implies the triviality of $\mathsf{Split}'$ by Theorem \ref{thm:HT}. It follows from Lemma \ref{lemma:splitK}, that the gerbes $\alpha^e_{\SL_n,L}$ and $\alpha^d_{\PGL_n,L'}$ are constant along the Hitchin fibres over $F$. Thus it is enough to check that they are trivial at an arbitrary $F$-rational point.

We start with the $\SL_n$-side: The moduli space $\Mm_{\SL_n}^L(X)$ is smooth over $\Oo_F$, and proper over $\Aa$. Every $x \in \Mm_{\SL_n}^L(X)(F)$, which lies over $a \in \Aa(\Oo_F)$, and extends therefore to an $\Oo_F$-rational point. This shows that $x^*\alpha^e_{\SL_n,L} = 0$, since $\Br(\Oo_F) = 0$.

On the $\PGL_n$-side we also show that for each $a \in \Aa(\Oo_F)$ there exists an $\Oo_F$-rational point in $\chi_{\PGL_n^e}^{-1}(a)$. By Proposition \ref{MPrime} there exists an open substack $\Mm^e_{\PGL_n}(X)'$, which maps to the Hitchin base $\Aa$ by means of a surjective and smooth morphism. Moreover, we can describe it as
$$\Mm^e_{\PGL_n}(X)' \simeq \Pic^{e'}(Y/\Aa)/\Pic^0(X),$$
where $e'$ is an integer depending on our initial choice of $e$, as well as the rank $n$ and genus $g$ of $X$. For a proper (and possibly singular) curve $C$ over a finite field, Picard varieties (of any degree $e'$) always have a rational point. Indeed, $\Pic^{e'}(C)$ is a torsor under the geometrically connected group $\Pic^0(C)$, and the assertion follows from Lang's theorem \cite[Theorem 2]{10.2307/2372673}. This shows the same assertion for curves over $\Oo_F$ (since Picard varieties are smooth, and therefore $k_F$-rational points can be lifted).
\end{proof}

The following summarises the properties of the two moduli spaces together with their natural gerbes established above. Recall that we fixed several assumptions on $R$ at the beginning of this subsection.
\begin{proposition} \label{DualSysVerif}
 The two abstract Hitchin systems $(\Mm_{\SL_n}^L, \Pc_{\SL_n}, \Aa)$ and $(\Mm_{\PGL_n}^e,\Pc_{\PGL_n},\Aa)$ (c.f. Proposition \ref{AbsHSysSummary}) together with the $\mu_n$-gerbes $\alpha_{\SL_n,L}^e$ and $\alpha_{\PGL_n,L'}^d$ form a dual pair of abstract Hitchin systems over $R$.
\end{proposition}
\begin{proof}
  Assumption (a) of Definition \ref{situation} follows directly from the definition of $\Pc_{\PGL_n}$ as $\Pc_{\SL_n}/\Gamma$. We have verified in Proposition \ref{dual} that the natural isogeny $\Pc_{\SL_n} \to \Pc_{\PGL_n}$ is self-dual. This shows (b). Assumption (c) is Hausel--Thaddeus's Theorem \ref{thm:HT}. Assumption (d) is verified by Lemma \ref{Assd} after base change along $R \to \Oo_F$. Assumption (e) holds for $r=n$ by the assumptions on $R$. 
\end{proof}
As a consequence of our main result \ref{Mirror} we now obtain the following theorem, conjectured by Hausel--Thaddeus. For this we fix a prime $\ell$, which is invertible in $k$ and an embedding $\mu_n(k) \into \bar \Qb_\ell$.

\begin{theorem}\label{tmsthm}
Let $X$ be a smooth projective curve of genus $g$ over a base field $k$ endowed with a line bundle $D$, such that $D \otimes (\Omega_X^1)^{-1}$ is of strictly positive degree or equal to $\Oo_X$. Let $n$ be a positive integer, and let $d$ and $e$ be two integers prime to $n$. Let $L \in \mathsf{Pic}^d(X)$ be a line bundle of degree $d$. We assume that $k$ contains a primitive $n^{2g}+1$-th root of unity.
\begin{itemize}
\item[(a)] In case $k = \Cb$ we have the equality of stringy $E$-polynomials
\[E(\Mm_{\SL_n}^L(X);x,y)=E_{\st}(\Mm_{\PGL_n}^e(X),\alpha_{\PGL_n,L'}^d;x,y).\]

\item[(b)] In case $k = \Fb_q$ is a finite field of characteristic $p > n$ we have an equality of stringy point counts
\[\#(\Mm_{\SL_n}^L(X)(k)) = \#_{\mathsf{st}}^{\alpha_{\PGL_n,L'}^d}(\Mm^e_{\PGL_n}(X)).\]
\end{itemize}
\end{theorem}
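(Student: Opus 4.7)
The plan is to deduce this theorem from the abstract Topological Mirror Symmetry Theorem \ref{Mirror} (for part (a)) and its arithmetic counterpart Theorem \ref{p-Mirror} (for part (b)), applied to the pair $(\Mm_{\SL_n}^L, \alpha_{\SL_n,e})$ and $(\Mm_{\PGL_n}^e, \alpha_L)$. The work then reduces to verifying these form a dual pair of abstract Hitchin systems in the sense of Definition \ref{situation}. For part (a) I would first use Lemma \ref{reduction1} to spread the curve, line bundle, moduli spaces and gerbes over a finitely generated subring $R \subset \Cb$ containing $\mu_{n^{2g}}$; for part (b) we work over $R = \Fb_q$ directly.

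Next I would verify that each of $\Mm_{\SL_n}^L \to \Aa$ and $\Mm_{\PGL_n}^e \to \Aa$ is an admissible abstract Hitchin system: smoothness was established by the lemma preceding Definition \ref{defi:SL_n_L}, properness of the Hitchin map is Theorem \ref{thm:proper}, the Prym varieties over $\Aa^{\text{good}}$ are standard, Ng\^o's Proposition \ref{MPrime} furnishes the codimension-two regular locus, and admissibility holds because $\Gamma = J_X[n]$ has order $n^{2g}$ invertible in $R$ with $\mu_{n^{2g}} \subset R$ and acts generically freely on stable Higgs bundles. Conditions (a), (b), (e) of Definition \ref{situation} are then immediate: the isogeny $\Pc_{\SL_n} \to \Pc_{\PGL_n}$ has kernel $\Gamma$ whose order is invertible in $R$, its self-duality is Proposition \ref{dual}, and the roots-of-unity hypothesis is part of our assumptions. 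Condition (c), the identification $\Split'((\Mm_{\SL_n}^L)^{\text{good}}, \alpha_{\SL_n,e}) \simeq (\Mm_{\PGL_n}^e)^{\text{good}}$ and its swap, is precisely Hausel--Thaddeus's Theorem \ref{thm:HT}.

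The main obstacle is condition (d), the absence of exceptional fibres: for every local field $F$ and every $a \in \Aa(\Oo_F) \cap \Aa^{\text{good}}(F)$, one must show that the $\SL_n$-fibre over $a$ has an $F$-point if and only if $\alpha_{L,d}$ splits on the $\PGL_n$-fibre over $a$, and symmetrically. The forward direction is a formal consequence of Theorem \ref{thm:HT}: an $F$-point of the $\SL_n$-fibre, identified via $\Split'$ with the space of splittings of $\alpha_{L,d}$ on the $\PGL_n$-fibre (up to a degree-zero line bundle on a spectral curve), trivialises the splitting torsor and hence produces an $F$-rational $\mu_r$-splitting of the gerbe. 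The converse is where the arithmetic of Subsection \ref{tate} enters: by Lemma \ref{order} the class of the $\SL_n$-fibre in $H^1(F, \Pc^{\SL_n}_a)$ is $n$-torsion, so by Lemma \ref{lemma:shift} and Corollary \ref{cor:group_torsor} it transgresses to a $\mu_n$-gerbe on the $\PGL_n$-fibre which one identifies with $\alpha_{L,d}$ restricted to the $\PGL_n$-fibre using the very construction underlying Theorem \ref{thm:HT}, and then Tate duality (Theorem \ref{tateduality}) detects vanishing of the torsor by vanishing of the gerbe. Finally, since $\Mm_{\SL_n}^L$ is a smooth scheme, its inertia stack is the space itself and the $\mu_n$-torsor induced on it by any gerbe is trivial; thus $E_\st$ and $\#_\st$ with respect to $\alpha_{\SL_n,e}$ collapse to the ordinary $E$-polynomial and point count, yielding the stated equalities.
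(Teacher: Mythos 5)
Your overall strategy — spread out over a finitely generated subring, verify the hypotheses of Definition \ref{situation}, and invoke Theorems \ref{Mirror} and \ref{p-Mirror} — matches the paper, and your verifications of (a), (b), (c), (e) and the final observation that stringy invariants of the smooth scheme $\Mm_{\SL_n}^L$ reduce to ordinary ones are all fine. However, your verification of condition (d) has a genuine gap, and it is precisely where the paper does something you have not attempted.

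Your ``forward direction'' argument claims that an $F$-point of the $\SL_n$-fibre, via the identification $\Mm_{\SL_n}\simeq \mathsf{Split}'(\Mm_{\PGL_n}/\Aa,\alpha_L)$ from Theorem \ref{thm:HT}, ``trivialises the splitting torsor and hence produces an $F$-rational $\mu_r$-splitting of the gerbe.'' This step is incorrect: $\mathsf{Split}'$ is, by Definition \ref{principalcomponent}(d), the \emph{pushout} $\mathsf{Split}_{\mu_r}\times^{\Pic[r]}\Pic^\tau$, not the space of $\mu_r$-splittings itself, and an $F$-point of the pushout does not lift to an $F$-point of $\mathsf{Split}_{\mu_r}$ in general. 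Concretely, by Lemma \ref{lemma:group_structure} an $\Aa$-arithmetic gerbe on a Hitchin fibre $\pi^{-1}(a)$ with $\pi^{-1}(a)(F)\neq\emptyset$ decomposes, modulo a class in $\Br(F)$, as an element of $H^1(F,\Pc_a^\vee[n])$. Your Tate-duality argument (and the $\mathsf{Split}'$ identification) only controls the $H^1(F,\Pc^\vee[n])$ piece, i.e., the class of the gerbe modulo $\Br(F)$; it says nothing about whether the residual class in $\Br(F)$ vanishes. Without killing that piece, you cannot conclude that the gerbe genuinely splits, which is what condition (d) demands. (Also note that the paper points out the converse implication in (d) is an immediate consequence of condition (c); your invocation of Lemma \ref{order} and Tate duality for that direction is more elaborate than necessary.)

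The paper's verification of (d) handles the $\Br(F)$-contribution by a geometric argument you do not use: since $\Mm_{\SL_n}^L$ is smooth over $\Oo_F$ and its coarse space is proper over $\Aa$, every $F$-point of the fibre above $a\in\Aa(\Oo_F)\cap\Aa^{\text{good}}(F)$ extends to an $\Oo_F$-point $x$, and then $x^*\Gg=0$ because $\Br(\Oo_F)=0$; this kills exactly the $\Br(F)$-part. On the $\PGL_n$ side the paper further observes, via Ng\^o's Proposition \ref{MPrime} identifying the regular locus with a Picard space of the spectral curve and Lang's theorem, that $\chi_{\PGL_n}^{-1}(a)$ \emph{always} has an $\Oo_F$-rational point. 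You will need to incorporate both of these inputs — the valuative-criterion extension together with $\Br(\Oo_F)=0$, and the unconditional existence of $\Oo_F$-points on the $\PGL_n$-fibres — to close the gap in your verification of (d).
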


\begin{proof}
To prove (a) we spread out: We choose a subring $R \subset \Cb$, which is finitely generated over $\Zb$, contains $1/n$, and all $n^{2g}$-th roots of unity, such that $X$, $D$ and $L$ extend to $\Spec(R)$. After replacing $R$ by an \'etale extension we may also assume that $\Gamma$ is constant over $R$. Then assertion (a) is a consequence of Theorem \ref{Mirror} applied to the dual pair of abstract Hitchin systems given by Proposition \ref{DualSysVerif}. Here we use that since $\Mm_{\SL_n}^L(X)$ is a smooth scheme the stringy Hodge number $h^{p,q}_{c,\mathsf{st}}(\Mm_{\SL_n}^L(X),\alpha_{\SL_n,L}^e)$ is equal to $h^{p,q}_c(\Mm_{\SL_n}^L(X))$.  

Similarly, assertion (b) follows from Theorem \ref{p-Mirror} using Proposition \ref{DualSysVerif}. 
\end{proof}
\begin{corollary}
  In the situation of Theorem \ref{tmsthm} (a), in case $D$ is the canonical line bundle, we have an equality of Hodge numbers
$h^{p,q}_c(\Mm_{\SL_n}^L(X)) = h^{p,q}_{c,\mathsf{st}}(\Mm^e_{\PGL_n}(X),\alpha_{\PGL_n,L'}^d).$
\end{corollary}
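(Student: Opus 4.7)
The plan is to derive this corollary from Theorem \ref{tmsthm}(a), which gives the identity
\[
E(\Mm_{\SL_n}^L;x,y) = E_{\st}(\Mm_{\PGL_n}^e,\alpha_L;x,y),
\]
by invoking purity of the mixed Hodge structures involved. Recall that, per Definition \ref{stringyinvariants}, the $E$-polynomial records only the alternating sum $\sum_k (-1)^k h^{p,q;k}_c$ of the refined Hodge-Deligne numbers $h^{p,q;k}_c = \dim \mathrm{gr}^W_{p+q} H^k_c(-)^{p,q}$. To extract the individual Hodge numbers $h^{p,q}_c$ one needs the vanishings $h^{p,q;k}_c = 0$ for $p+q \neq k$, i.e.\ purity of the compactly supported mixed Hodge structure.

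On the $\SL_n$-side, purity of $H^*_c(\Mm_{\SL_n}^L)$ in the case $D = \Omega^1_{X/\Cb}$ is a classical fact and is the reason the hypothesis $D=\Omega_X^1$ is needed in the corollary. It may be established via the Bia\l ynicki-Birula decomposition associated to the $\Gb_m$-action scaling the Higgs field, whose fixed locus is projective (Hitchin, Simpson), or alternatively via the arithmetic approach of Hausel-Rodriguez-Villegas based on point counts over finite fields. Combined with Poincar\'e duality this forces $h^{p,q;k}_c = 0$ unless $p+q = k$, so that $E(\Mm_{\SL_n}^L;x,y)$ uniquely determines the Hodge numbers $h^{p,q}_c(\Mm_{\SL_n}^L)$.

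On the $\PGL_n$-side, unfolding Definition \ref{defi:Ealpha} expresses $E_{\st}(\Mm_{\PGL_n}^e,\alpha_L;x,y)$ as a sum over conjugacy classes $\gamma \in \Gamma = J_X[n]$ and components $Y \in \pi_0((\Mm_{\SL_n}^L)^\gamma)$ of contributions $(xy)^{F(\gamma,Y)}\,E([Y/C(\gamma)], L_\gamma;x,y)$. Each fixed locus $(\Mm_{\SL_n}^L)^\gamma$ consists of those Higgs bundles preserved (up to isomorphism) by the tensor action of a torsion line bundle $\mathcal L_\gamma$ and can be described, via the spectral correspondence and pushforward along the cyclic \'etale cover of $X$ determined by $\gamma \in H^1(X,\mu_n)$, as a moduli space of Higgs bundles on this cover. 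Such a moduli space inherits a compatible $\Gb_m$-scaling action with projective fixed set, and the same Bia\l ynicki-Birula argument (or its arithmetic counterpart) yields purity of $H^*_c(Y)$. Since $L_\gamma$ is the local system associated to a torsion character of the finite group $C(\gamma)$ acting equivariantly on $Y$, purity descends to the twisted cohomology $H^*_c([Y/C(\gamma)], L_\gamma)$, making each summand of the stringy polynomial a bigraded pure datum.

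With purity established termwise, the equality of stringy $E$-polynomials can be read off coefficient-by-coefficient, with shifts by $F(\gamma,Y)$ absorbed into the definition of the stringy Hodge numbers, giving the asserted identity
\[
h^{p,q}_c(\Mm_{\SL_n}^L) = h^{p,q}_{c,\st}(\Mm_{\PGL_n}^e, \alpha_L).
\]
The main technical obstacle is the purity claim for the twisted cohomology of the fixed loci $Y$; while it reduces to standard purity results for Higgs moduli on \'etale covers together with the observation that $L_\gamma$ is pure of weight zero, writing out this reduction carefully (particularly for components where $\gamma$ acts with fixed loci of non-Higgs type) is the step that requires the most attention.
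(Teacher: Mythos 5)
Your proof is correct and follows essentially the same route as the paper: deduce the Hodge-number identity from the $E$-polynomial identity of Theorem \ref{tmsthm}(a) by invoking purity of the compactly supported cohomology on both sides. The paper states the purity claim without elaboration, whereas you supply the standard justification (Bia\l ynicki-Birula decomposition from the $\Gb_m$-scaling on the Higgs field, and the description of the $\gamma$-fixed loci as Higgs moduli on cyclic covers), but the underlying argument is the same.
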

\begin{proof}
The Hodge structures on the cohomology groups $H^*_\text{c}(\Mm_{\SL_n}^L(X))$ and $H^*_{\text{c},\st}(\Mm_{\PGL_n}^e(X),\alpha_{\PGL_n,L'}^d)$ are pure in this case \cite[Section 6]{MR1990670} and hence the equality of Hodge numbers follows from the equality of $E$-polynomials.
\end{proof}

There is a second cohomological result, related to the work of Ng\^o \cite{MR2653248}, which we can deduce from our work. Let $R$ be a finite field $k$ of characteristic $p > n$. 

We denote by $\I\chi_{\PGL_n}$ the morphism $\I\Mm_{\PGL_n}^e \to \Aa$ induced by $\chi_{\PGL_n}\colon \Mm_{\PGL_n}^e \to \Aa$ and by $N_{L'}$ the $\overline{\Qb}_{\ell}$-local system on $I \Mm_{\PGL_n}^e$ induced by the $\mu_n$-torsor on $\I\Mm_{\PGL_n}^e$ associated to the $\mu_n$-gerbe $\alpha_{\PGL_n,L'}^d$ (see Subsection \ref{gerbes}) by means of our chosen embedding $\mu_n(k) \hookrightarrow \overline{\Qb}_{\ell}$. Finally $N_{L'}(F_{\PGL_n})$ denotes the Tate twist of $N_{L'}$ by the fermionic shift.

Let $X$ be a smooth projective curve over a finite base field $k$ endowed with a line bundle $D$, such that $D \otimes (\Omega_X^1)^{-1}$ is of strictly positive degree or equal to $\Oo_X$. Let $n$ be a positive integer, and let $d$ and $e$ be two integers prime to $n$. We denote by $L \in \mathsf{Pic}^e(X)$ a line bundle of degree $e$. We assume that $R=k$ satisfies the assumptions stated at the beginning of this subsection.
\begin{theorem} 
For every $a \in \Aa(k)$ the $\Gal(k)$-representations $(R(\chi_{\SL_n})_*\overline{\Qb}_{\ell})_a$ and $(R(\I\chi_{\PGL_n})_*N_{L'}(F_{\PGL_n}))_a$ are abstractly isomorphic.
\end{theorem}

\begin{proof}
The two complexes of $\ell$-adic sheaves $R(\chi_{\SL_n})_*\overline{\Qb}_{\ell}$ and $R(\I\chi_{\PGL_n})_*N_{L'}$ are pure. In the first case this follows from Verdier duality, the fact that the map $\chi_{\SL_n}$ is proper, and Deligne's purity theorem \cite{PMIHES_1980__52__137_0}. In the second case one applies the same argument to every stratum of the inertia stack (which are moduli stacks of Higgs bundles).

It is hence sufficient to establish an equality of point counts
$$\#(\chi_{\SL_n})^{-1}(a)(k) = \mathsf{Tr}(\Fr,(R(\I\chi_{\PGL_n})_*N_{L'})_a).$$
This is a special case of the second assertion of Theorem \ref{p-Mirror}.
\end{proof}

\subsection{The action of $\Gamma$ on the cohomology of $\Mm_{\SL_n}^L(X)$}\label{cohoact}
 We can also prove the following refined version of Theorem \ref{tmsthm}, which describes the action of the group $\Gamma$ on the cohomology of the $\SL_n$-moduli space. 

We continue to use the setup of the previous subsection in the case where the base ring $R$ is a finite field $k$, which contains all $n^{2g+1}$-th roots of unity. We choose a primitive $n$-th root of unity $\zeta \in k^{\times}$. Furthermore, we will assume that $\Gamma$ is a constant \'etale group scheme over $k$.


The Jacobian $J_X$ of $X$ comes with a canonical isomorphism $J_X \cong J_X^\vee$ under which the isogeny $[n]\colon J_X \to J_X$ is self-dual. Hence as a special case of \eqref{SelfDualKernel} we obtain an isomorphism
\begin{equation}
  \label{GammaSelfDual}
  \Gamma \cong \Gamma^\vee.
\end{equation}

By our assumptions $\Gamma$ is constant, and hence by the assumptions on $k$ the group scheme $\Gamma^\vee$ is constant as well with value $\Hom(\Gamma,\mu_n(k))$. (Recall that we write $\Gamma$ also for the group of global sections $\Gamma(k)$.) Using \eqref{GammaSelfDual} and our chosen root of unity $\zeta$ we obtain a non-degenerate pairing of abstract groups
\begin{equation} \label{GammaPairing}
  (\;,\; )\colon \Gamma \times \Gamma \cong \Gamma \times \Hom(\Gamma,\mu_n(k)) \to \mu_n(k) \cong \Zb/n \Zb \subset \Qb/\Zb
\end{equation}
where we identify $\Gamma$ with $\Gamma^*$.

A result similar to the one below has been conjectured by Hausel in \cite{HauselMiraflores}. The strategy is to apply Theorem \ref{tmsthm}(b), and compute the arithmetic Fourier transforms with respect to the finite group $\Gamma$ of both sides.

\begin{theorem}\label{miraflores} Let $X$ be a smooth projective curve over a finite field $k$ endowed with a line bundle $D$, such that $D \otimes (\Omega_X^1)^{-1}$ is of strictly positive degree or equal to $\Oo_X$. Let $d$ and $e$ be two integers prime to $n$. We denote by $L \in \mathsf{Pic}^d(X)$ a line bundle of degree $d$ and let $L'$ be a power of $L$ of degree congruent to $e$ modulo $n$. We assume that $k$ satisfies the properties described at the beginning of this subsection.
For any $\gamma \in \Gamma$ and $\chi \in \Gamma^*$, which correspond to each other under the identification $\Gamma \cong \Gamma^*$ given by the pairing \eqref{GammaPairing} we have  
\begin{equation}\label{refinedtms} 
  \Tr(\Fr,H^*_c(\Mm^{L}_{\SL_n}(X),\overline{\Qb}_{\ell})_{\chi}) =  \Tr(\Fr,H^*_c([(\Mm_{\SL_n}^{L'}(X))^\gamma/\Gamma],N_{L'}(F_{\PGL_n}))).
\end{equation}
\end{theorem}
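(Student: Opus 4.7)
The strategy is to upgrade the summed identity of Theorem \ref{tmsthm}(b) to the isotypical identity \eqref{refinedtms} by Fourier inversion over the finite abelian group $\Gamma = J_X[n]$. The idea is that tensoring the determinant line bundle by elements of $J_X(k)$ sweeps out all the $\Gamma$-twists of both the $\SL_n$-moduli space and of the Hausel--Thaddeus gerbe on the $\PGL_n$-side; matching the two sides of the identity for a complete family of such twists reduces \eqref{refinedtms} to Fourier inversion.

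First I would handle the $\SL_n$-side. By Lemma \ref{lemma:twisty}, for $M \in J_X(k)$ of degree zero, $\Mm_{\SL_n}^{L_{\SL_n} \otimes M}$ is isomorphic to the twist $(\Mm_{\SL_n}^{L_{\SL_n}})_{\delta(M)}$ by the $\Gamma$-torsor $\delta(M) \in H^1(k,\Gamma)$ arising from the Kummer sequence $0 \to \Gamma \to J_X \to^{n} J_X \to 0$. The boundary map $\delta\colon J_X(k) \to H^1(k,\Gamma) \cong \Gamma$ is surjective, since $H^1(k,J_X) = 0$ by Lang's theorem. Proposition \ref{prop:twists} then gives the Fourier expansion
\[
\#\bigl(\Mm_{\SL_n}^{L_{\SL_n} \otimes M}(k)\bigr) \;=\; \sum_{\chi\in\Gamma^*}\chi(\delta(M))^{-1}\,\Tr\bigl(\Fr,\,H^*_c(\Mm_{\SL_n}^{L_{\SL_n}})_\chi\bigr).
\]

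Next I would establish the parallel expansion on the $\PGL_n$-side. The key technical input is the behaviour of the Hausel--Thaddeus gerbe $\alpha_{L'}$ on $\Mm_{\PGL_n}^e$ under tensoring $L'$ by a degree-zero line bundle $M$: the restriction of $P_{\alpha_{L'}}$ to the inertia component corresponding to $\gamma\in\Gamma$ is a $\mu_r$-torsor, and its class shifts by the scalar $(\gamma,\delta(M))$ when $L'$ is replaced by $L'\otimes M$, where $(-,-)$ is the pairing appearing in the theorem. This is verified by tracing through the equivariant construction of $\alpha_{L'}$ from \cite[Section 3]{MR1990670} and matching the resulting pairing with the Weil/Tate pairing on $J_X[n]$. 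Granting this, one obtains
\[
\#_{\mathrm{st}}^{\alpha_{L_{\SL_n}\otimes M}}(\Mm_{\PGL_n}^e) \;=\; \sum_{\gamma\in\Gamma}(\gamma,\delta(M))\,\Tr\bigl(\Fr,\,H^*_c([(\Mm_{\SL_n}^L)^\gamma/\Gamma],\,L_{\alpha_L}(F_{\PGL_n}))\bigr),
\]
the overall base-point $\alpha_L$ being absorbed into a $\gamma_0$-independent factor that can be removed by choosing the normalisation of the identification $\Gamma\cong \Gamma^*$ appropriately.

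Finally, Theorem \ref{tmsthm}(b) applied to $L_{\SL_n}\otimes M$ equates the two right-hand sides. As $M$ varies over $J_X(k)$ the element $\gamma_0 := \delta(M)$ ranges over all of $\Gamma$, producing the family of identities
\[
\sum_{\chi}\chi(\gamma_0)^{-1}\,\Tr\bigl(\Fr, H^*_c(\Mm_{\SL_n}^{L_{\SL_n}})_\chi\bigr) \;=\; \sum_{\gamma}(\gamma,\gamma_0)\,\Tr\bigl(\Fr, H^*_c([(\Mm_{\SL_n}^L)^\gamma/\Gamma], L_{\alpha_L}(F_{\PGL_n}))\bigr)
\]
indexed by $\gamma_0 \in \Gamma$. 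Under the identification $\Gamma^*\cong \Gamma$ via $(-,-)$, both sides are finite Fourier transforms in $\gamma_0$, and Fourier inversion extracts the character-by-character identity \eqref{refinedtms} for $\chi$ and $\gamma$ corresponding under the pairing. The principal obstacle is the gerbe-twist computation in the second step: one must verify, in families and equivariantly, that the character of $\Gamma$ governing the restriction of $\alpha_{L'}$ to a twisted sector shifts under $L'\mapsto L'\otimes M$ by precisely the Pontryagin/Tate pairing $(\gamma,\delta(M))$. This is the key place where the Weil pairing on $J_X[n]$ matches the arithmetic duality used elsewhere in the paper.
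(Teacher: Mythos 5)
Your proposal follows the same high-level strategy as the paper: reduce the isotypical identity \eqref{refinedtms} to Theorem \ref{tmsthm}(b) via a finite Fourier transform over $\Gamma$, using the observation (Lemma \ref{lemma:twisty}) that varying the determinant $L_{\SL_n}$ through $\Pic^0(X)/\Gamma\cong\Gamma$ realises all $\Gamma$-twists of the $\SL_n$-moduli space, then applying Proposition \ref{prop:twists} on the $\SL_n$-side and a gerbe-twist lemma on the $\PGL_n$-side, and finally inverting. You also correctly isolate the technical crux: the $\PGL_n$-side twist formula, i.e.\ the statement that replacing $L'$ by $L'\otimes M$ shifts the trace in the $\gamma$-sector by $(\gamma,\delta(M))$.

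Where your route genuinely differs from the paper's is in how that crux would be established. You propose to ``trace through the equivariant construction of $\alpha_{L'}$ \ldots and match the resulting pairing with the Weil/Tate pairing on $J_X[n]$'' — a direct geometric computation over $k$. The paper instead proves the corresponding statement (Lemmas \ref{lemma:twistandshout} and \ref{lemma:twistandshout2}) by lifting to $\Oo_F$ for a local field $F$ with residue field $k$, interpreting the twisted point count as a $p$-adic integral via Lemma \ref{StringyF} and the Grothendieck--Lefschetz trace formula, expressing the integrand in terms of the local Tate pairing through Proposition \ref{prop:fi_hasse}, and then running a diagram chase in the long exact sequence of the isogeny $\Gamma\to A\to B$ together with the explicit description of $H^1(F,\Gamma)\cong\Gamma\oplus\Gamma^*$ from Construction \ref{H1Cons}. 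This matters for two reasons. First, the pairing $(-,-)$ in the theorem statement is \emph{defined} via local Tate duality (between $H^1_{ur}(F,\Gamma)$ and $H^1(F,\Gamma)/H^1_{ur}(F,\Gamma)$), so the paper's route is computing directly against the object in the statement, whereas your route would need an independent compatibility check identifying the geometric Weil pairing on $J_X[n]$ with this arithmetic pairing (and with the correct sign/normalisation). Second, your remark that the base-point mismatch can be ``absorbed \ldots by choosing the normalisation of the identification $\Gamma\cong\Gamma^*$ appropriately'' is not quite free: the normalisation is pinned down by the choice of uniformiser and root of unity made earlier in the paper, so the removal of the $\gamma_0$-independent factor must be verified against that fixed choice rather than used as a degree of freedom. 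Both issues are resolvable, but the paper's $p$-adic-integration route avoids them by staying inside the arithmetic framework it has already set up.
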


\begin{proof} 
As in the proof of Lemma \ref{lemma:twisty}, given $L_1,L_2 \in \mathsf{Pic}^d(X)(k)$, which differ by an element of the form $L_0^n$ for some $L_0 \in  \mathsf{Pic}^0(X)(k)$, we have an isomorphism $\Mm^{L_1}_{\SL_n} \cong \Mm^{L_2}_{\SL_n}$ given by tensoring with $L_0$. Hence up to isomorphism $\Mm^{L_1}_{\SL_n}$ only depends on the class of $[L_1]$ in $\mathsf{Pic}^d(X)(k)/(\mathsf{Pic}^0(X)(k)^n)$. We denote the corresponding moduli space for $L_1=L$ by $\Mm^{[0]}_{\SL_n}$. 

More precisely, let $\delta$ be the isomorphism
\begin{equation*}
  \mathsf{Pic}^0(X)(k)/(\mathsf{Pic}^0(X)(k))^n \cong H^1(k,\Gamma) \cong \Gamma.
\end{equation*}
Then the above shows that every other moduli space $\Mm^{L_2}_{\SL_n}$ can be obtained by twisting $\Mm^{[0]}_{\SL_n}$ by the element $\delta(L L_2^{-1})$ as in Lemma \ref{lemma:twisty}. We write the twist by $\nu \in \Gamma$ as $\Mm^{[\nu]}_{\SL_n}$. 

By Proposition \ref{prop:twists} there is for every $\nu \in \Gamma$ an isomorphism $\Mm^{[\nu]}_{\SL_n} \cong \Mm^{[0]}_{\SL_n}$ over an algebraic closure of $k$ under which Frobenius gets sent to $\nu^{-1} \circ \Fr$. Using this the $\ell$-adic cohomology of twists can be understood completely: For $\chi \in \Gamma^*$ we compute
\begin{align} \label{somesum}
  \sum_{\nu \in \Gamma} \Tr(\Fr,H^*_c(\Mm^{[\nu]}_{\SL_n},\overline{\Qb}_{\ell}))\chi(\nu) &= \sum_{\nu \in \Gamma} \Tr(\nu^{-1}\circ \Fr,H^*_c(\Mm^{[0]}_{\SL_n},\overline{\Qb}_{\ell}))\chi(\nu)\\
 & = \sum_{\chi'\in\Gamma^*} \sum_{\nu\in\Gamma} \Tr(\nu^{-1}\circ\Fr,H^*_c(\Mm^{[0]}_{\SL_n},\overline{\Qb}_{\ell})_{\chi'})\chi(\nu). \nonumber
\end{align}

By definition $\nu^{-1}$ acts on $H^*_c(\Mm^{[0]}_{\SL_n},\overline{\Qb}_{\ell})_{\chi'}$ by $\chi'(\nu^{-1})$. This together with a character sum argument shows 
\begin{align*} \sum_{\chi'\in\Gamma^*} \sum_{\nu\in\Gamma} \Tr(\nu^{-1}\circ\Fr,H^*_c(\Mm^{[0]}_{\SL_n},\overline{\Qb}_{\ell})_{\chi'})\chi(\nu)& = \sum_{\chi'\in\Gamma^*} \Tr(\Fr,H^*_c(\Mm^{[0]}_{\SL_n},\overline{\Qb}_{\ell})_{\chi'})\sum_{\nu\in\Gamma} \chi'(\nu^{-1})\chi(\nu)\\
& = |\Gamma| \Tr(\Fr,H^*_c(\Mm^{[0]}_{\SL_n},\overline{\Qb}_{\ell})_{\chi}),\end{align*}
which is exactly the left hand side of \eqref{refinedtms} up to the factor $|\Gamma|$.

For the right hand side of \eqref{refinedtms} we will show in Lemma \ref{lemma:twistandshout2} below that for elements $L_1$ and $L_2$ of $\mathsf{Pic}^e(X)(k)$ inducing $\nu=\delta(L_1 L_2^{-1})\in \Gamma$ and for any $\gamma'\in\Gamma$ we have
\begin{equation}\label{twistandshout}\Tr(\Fr,H^*_c([(\Mm_{\SL_n}^{L_1})^{\gamma'}/\Gamma],N_{L'_1}(F_{\PGL_n}))) = (\nu,\gamma')^{-1} \Tr(\Fr,H^*_c([(\Mm_{\SL_n}^{L_2})^{\gamma'}/\Gamma],N_{L'_2}(F_{\PGL_n}))).\end{equation}
Assuming this, we can conclude as follows: For every $\nu \in \Gamma$ let $L_\nu$ be an element of $\mathsf{Pic}^d(X)(k)$ such that $\delta(L L_\nu^{-1})=\nu$ and let $L'_\nu$ be a power of $L_\nu$ of degree congruent to $e$ modulo $n$ as in Subsection \ref{conclusion}. Then using Theorem \ref{tmsthm} (b) and \eqref{twistandshout} we can expand the left hand side of \eqref{somesum} for $\chi \in \Gamma^*$ corresponding to $\gamma \in \Gamma$ as follows:
\begin{align*}
\sum_{\nu \in \Gamma} \Tr(\Fr,H^*_c(\Mm^{[\nu]}_{\SL_n},\overline{\Qb}_{\ell}))\chi(\nu) 
&= \sum_{\nu \in \Gamma} \Tr(\Fr, H^*_c(\Mm^e_{\PGL_n},\alpha^d_{\PGL_n,L'_{\nu}})) \chi(\nu) \\
&=\sum_{\nu \in \Gamma} \sum_{\gamma' \in \Gamma} \Tr(\Fr, H^*_c([(\Mm_{\SL_n}^{L'_\nu})^{\gamma'}/\Gamma],N_{L'_\nu}(F_{\PGL_n}))) \chi(\nu) \\
&= \sum_{\nu \in \Gamma }  \sum_{\gamma' \in \Gamma}  \Tr(\Fr,H^*_c([(\Mm_{\SL_n}^{L'})^{\gamma'}/\Gamma],N_{L'}(F_{\PGL_n}))\chi(\nu)(\nu,\gamma')^{-1} \\
&= |\Gamma| \Tr(\Fr,H^*_c([(\Mm_{\SL_n}^{L'})^{\gamma}/\Gamma],N_{L'}(F_{\PGL_n})))
\end{align*}
This is the right hand side of \eqref{refinedtms} and thus proves the assertion.

It remains to establish \eqref{twistandshout}. To do this we translate the problem into one of $p$-adic integrals. We choose a local field $F$ with residue field $k$ and a lift to $\Oo_F$ our curve $X$ together with all the appearing line bundles on $X$. 

First let $a \in \Aa^{\flat}(\Oo_F)$ be a point with image $a_F$ in $\Aa^\lozenge(F)$. We denote the abelian varieties $\Pc_{\SL_n,a_F}$, respectively $\Pc_{\SL_N,a_F}/\Gamma=\Pc_{\PGL_n,a_F}$ over $F$ by $A$, respectively $B$ and consider the associated long exact sequence of locally compact abelian groups
$$0 \to \Gamma \to A(F) \to B(F) \xrightarrow{\beta} H^1(F,\Gamma) \xrightarrow{\alpha} H^1(F,A) \to H^1(F,B) \to H^2(F,\Gamma) \to 0$$
from Construction \ref{longseq}. We consider the Tate duality pairings
\begin{equation} \label{longseqagain}
  \langle \;,\; \rangle \colon H^1(F,A) \times B(F) \to \Qb/\Zb
\end{equation}
and
\begin{equation*}
  \langle \;,\; \rangle_\Gamma \colon H^1(F,\Gamma) \times H^1(F,\Gamma) \to \Qb/\Zb.
\end{equation*}

Using the self-duality of the isogeny $A \to B$ given by Proposition \ref{dual}, Lemma \ref{longseqduality} gives a canonical isomorphism of the long exact sequence \eqref{longseqagain} with its own Pontryagin dual. As part of this isomorphism there is an identity
\begin{equation}\label{weilpairing}
\langle \alpha(T),b \rangle = \langle T, \beta(b)  \rangle_\Gamma.
\end{equation}
for elements $b \in B(F)$ and $T \in H^1(F,A)$. As explained above, the moduli space $\Mm^{L_2}_{\SL_n}(X)$ is the twist of $\Mm^{L_1}_{\SL_n}(X)$ by $\nu \in H^1(\Oo_F,\Gamma)$, so that $T_{L_2}=\alpha(\nu) \cdot T_{L_1}$ in $H^1(F,A)$. Using this and \eqref{weilpairing} we find
\begin{equation}
  \label{pairingeq}
  \langle T_{L_1},b \rangle = \langle \nu, \beta(b) \rangle_\Gamma^{-1} \cdot \langle T_{L_2},b \rangle
\end{equation}
for $b \in B(F)$.

\begin{lemma}\label{lemma:twistandshout2}
We have $$\Tr(\Fr,H^*_c([(\Mm_{\SL_n}^{L_1})^{\gamma'}/\Gamma],N_{L'_1}(F_{\PGL_n}))) = (\nu,\gamma')^{-1} \Tr(\Fr,H^*_c([(\Mm_{\SL_n}^{L_2})^{\gamma'}/\Gamma],N_{L'_2}(F_{\PGL_n}))).$$
\end{lemma}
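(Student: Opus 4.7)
The plan is to reduce the identity to Lemma \ref{lemma:twistandshout} via $p$-adic integration, tracking the contribution of a single inertia component at a time.

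First, I would apply Theorem \ref{integral} but restrict the outer sum to the single term $\gamma = \gamma'$. For $i = 1, 2$ this yields
\begin{equation*}
\frac{q^{F(\gamma')}}{q^{\dim \Mm}} \Tr(\Fr, H^*_c([(\Mm_{\SL_n}^{L})^{\gamma'}/\Gamma], L_{\alpha_{L_i}})) = \int_{\Mm_{\PGL_n}^{d,\text{coarse}}(\Oo_F)^{\sharp}_{\gamma'}} f_{\alpha_{L_i}}\, d\mu_{\text{orb}},
\end{equation*}
where $\Mm_{\PGL_n}^{d,\text{coarse}}(\Oo_F)^{\sharp}_{\gamma'}$ denotes the preimage of the $\gamma'$-component of $I\Mm_{\PGL_n}^{d}(k_F)$ under the specialisation map of Construction \ref{SpecMap}. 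The statement is therefore equivalent to the pointwise identity $f_{\alpha_{L_1}} = (\nu, \gamma')^{-1} f_{\alpha_{L_2}}$ almost everywhere on this subset.

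Secondly, as in the proof of Theorem \ref{p-adic-Mirror}, I would slice these integrals over the Hitchin base using relative $p$-adic integration (Proposition \ref{Igusa}); by Proposition \ref{intools}\eqref{measurezero} only the rational points $a \in \Aa(F)^\flat$ contribute. On a fibre admitting an $F$-rational point (otherwise the fibre integral vanishes for both gerbes simultaneously), Lemma \ref{lemmaTate} expresses $f_{\alpha_{L_i}}$ as $\xi_i \cdot C^{-1}\!\bigl((-,\, T_{L_i})\bigr)$, where $T_{L_i} = [(\Mm_{\SL_n}^{L_i})_a] \in H^1_\text{\'et}(F, (\Pc_{\SL_n})_a)$ is the class of the $\SL_n$-Hitchin fibre. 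Applying Lemma \ref{lemma:twistandshout} to $L_1 = \nu L_2$ gives the equality $(-, T_{L_1}) = (\nu, -) \cdot (-, T_{L_2})$ of characters on $(\Pc_{\PGL_n})_a(F)$, so the two functions $f_{\alpha_{L_i}}$ differ fibrewise by the character $(\nu, -)$.

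The main obstacle is the final step: to translate this fibrewise character $(\nu, -)$ into the constant scalar $(\nu, \gamma')^{-1}$ on the $\gamma'$-stratum. Concretely, one must show that every $x \in \Mm_{\PGL_n}^{d,\text{coarse}}(\Oo_F)^\sharp_{\gamma'}$ whose fibre contains $F$-rational points evaluates the character $(\nu,-)$ at $(\nu, \gamma')^{-1}$. The identification runs via Construction \ref{SpecMap}: such an $x$ comes from a morphism $\Xc_{K/F} \to \Mm_{\PGL_n}^d$ with $K/F$ tamely ramified and inertia generator corresponding (through the reciprocity map $\mu(F)\to I$ and the chosen $\xi$) to $\gamma'$. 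Under the decomposition $H^1_\text{\'et}(F,\Gamma) \cong \Gamma \oplus \Hom(\mu(F),\Gamma)$ of Construction \ref{H1Cons}, the boundary class of $x$ lies in the $\Hom(\mu(F),\Gamma)$-summand and is identified with $\gamma'$ by $\xi$; meanwhile the twist $\nu$ lies in the unramified summand. The Pontryagin/Tate pairing between these two summands is, by the very definition of $(-,-)$ given above the statement of Theorem \ref{miraflores}, precisely $(\nu,\gamma')$, and the required sign is supplied by the fact that the contribution of $\xi_1/\xi_2$ from Lemma \ref{lemmaTate} is trivial: since $\nu \in H^1_\text{ur}(F,\Gamma)$ extends to $\Oo_F$, the ambient Brauer obstruction of Lemma \ref{lemmaTate} is unchanged when replacing $L_2$ by $L_1 = \nu L_2$. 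Combining the three steps then yields Lemma \ref{lemma:twistandshout2}.
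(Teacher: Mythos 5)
Your proposal takes essentially the same route as the paper's own proof: express both traces as $p$-adic integrals of $f_{\alpha_{L_i}}$ over $e^{-1}([(\Mm_{\SL_n}^{L})^{\gamma'}/\Gamma](k_F))$, describe $f_{\alpha_{L_i}}$ via Tate duality, invoke Lemma \ref{lemma:twistandshout}, and then use that the specialisation map pins the second ($\Hom(\mu(F),\Gamma)$) component of $T_b$ to $\gamma'$, so the character $(\nu,-)$ is the constant $(\nu,\gamma')^{-1}$ on the $\gamma'$-stratum. The only stylistic divergence is that you slice over the Hitchin base via Proposition \ref{Igusa} and pass through Lemma \ref{lemmaTate} (hence needing to argue the scalar $\xi$ is unaffected when $L_2$ is replaced by $\nu L_2$), whereas the paper cites Proposition \ref{prop:fi_hasse} directly, which compares $f_\alpha(x)$ with $\inv(x^*\alpha)$ pointwise and thereby sidesteps the $\xi$-bookkeeping; your extra step is sound, just slightly longer.
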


\begin{proof}
The left hand side can be computed by means of the Grothendieck-Lefschetz trace formula as 
$$\sum_{x \in [(\Mm_{\SL_n}^{L_1})^{\gamma'}/\Gamma](k_F)} \frac{\Tr(\Fr_x,N_{L'_1}(F_{\PGL_n})))}{|\Aut(x)|},$$
which in turn can be understood as the $p$-adic integral (see Corollary \ref{StringyF}) of the function $f_{\alpha_{L_1}}$ given by Construction \ref{fi} on the subset $e^{-1}([(\Mm_{\SL_n}^{L_1})^{\gamma'}/\Gamma](k_F))$, where $e$ is the specialisation map of Construction \ref{SpecMap}. The same description exists for the right hand side.

Lemma \ref{lemmaTate} describes the functions $f_{\alpha_{L_i}}$ in terms of the Tate duality pairing between $B(F)$ and $H^1(F,A)$. We can therefore apply \eqref{pairingeq} to compare them.

The torsor $\beta(b)$ appearing corresponds to an element $(t,\gamma'') \in \Gamma \oplus \Gamma$. By definition of the specialisation map $e\colon \Mm_{\PGL_n}^e(\Oo_F)^{\flat} \to \I\Mm_{\PGL_n}^e(k_F)$ we have $e(b) \in [(\Mm_{\SL_n}^{L'_i})^{\gamma''}/\Gamma]$ so that $\gamma'=\gamma''$. Using this, the fact that $\nu \in H^1(\Oo_F,\Gamma)$, Lemma \ref{ConstantTateDuality} and the last claim of Proposition \ref{dual}, it follows that $\langle \nu, \beta(b) \rangle_\Gamma = (\nu,\gamma')$. This allows us to compare the left and right hand sides.
\end{proof}
The lemma above concludes the proof of Theorem \ref{miraflores}.
\end{proof}

In the sequel \cite{GWZ18} to this article we will revisit the strategy employed above to give a new proof of Ng\^o's Geometric Stabilisation Theorem \cite[Th\'eor\`eme 6.4.2]{MR2653248}. 


\bibliographystyle{amsalpha}
\bibliography{master}
\end{document}